\definecolor{darkblue}{rgb}{0,0,0.3}
\let\orgdescriptionlabel\descriptionlabel
\renewcommand*{\descriptionlabel}[1]{%
  \let\orglabel\label
  \let\label\@gobble
  \phantomsection
  \edef\@currentlabel{#1}%
  \let\label\orglabel
  \orgdescriptionlabel{#1}%
}
\tikzset{Rightarrow/.style={double equal sign distance,>={Implies},->},
RightarrowDashed/.style={double equal sign distance, dashed, >={Implies},->},
triple/.style={-,preaction={draw,Rightarrow,}},
tripleDashed/.style={-,dashed,preaction={draw,RightarrowDashed}},
quadruple/.style={preaction={draw,Rightarrow,shorten >=0pt},shorten >=1pt,-,double,double
distance=0.2pt}
quadrupleDashed/.style={preaction={draw,RightarrowDahed,shorten >=0pt},shorten >=1pt,-,dashed,double,double distance=0.2pt}}
\newtheorem{thm}{Theorem}[section]
\newtheorem*{thmintro}{Theorem}
\newtheorem{cor}[thm]{Corollary}
\newtheorem{lem}[thm]{Lemma}
\newtheorem{prop}[thm]{Proposition}
\theoremstyle{definition}
\newtheorem{define}[thm]{Definition}
\newtheorem{notate}[thm]{Notation}
\newtheorem{assume}[thm]{Assumption}
\theoremstyle{remark}
\newtheorem{rem}[thm]{Remark}
\newtheorem{examples}[thm]{Examples}
\newtheorem{const}[thm]{Construction}
\newtheorem{warning}[thm]{Warning}
\newenvironment{nscenter}
{\parskip=0pt\par\nopagebreak\centering}
{\par\noindent\ignorespacesafterend}
\newcommand{\A}{\mathcal{A}}
\renewcommand{\plus}[1]{\mathop{\amalg}\limits_{#1}}
\newcommand{\An}{\mathrm{An}}
\newcommand{\thr}{\mathrm{th}}
\newcommand{\thb}{\mathbf{th}}
\newcommand{\Kk}{\mathcal{K}}
\newcommand{\Km}{\underline{\Kk}}
\newcommand\nbd\nobreakdash
\newcommand{\Aa}{\mathcal{A}}
\newcommand{\s}{\mathcal{S}\mspace{-2.mu}\mathrm{et}_{\Delta}}
\newcommand{\Ss}{\mathcal{S}\mspace{-2.mu}\mathrm{et}_{\Delta}^{\,\mathrm{sc}}}
\renewcommand{\th}{\mathcal{S}\mspace{-2.mu}\mathrm{et}_{\Theta_2}}
\newcommand{\ndef}{\emph}
\newcommand\pdfoo{\texorpdfstring{$\infty$}{oo}}
\newcommand\pdftwo{\texorpdfstring{$2$}{2}}
\newcommand{\Cat}{{\mathcal{C}\mspace{-2.mu}\mathit{at}}}
\newcommand{\nCat}[1]{{#1}\hbox{\protect\nbd-}\kern1pt\Cat}	
\newcommand{\RelCat}{\mathcal{R}\mspace{-1.mu}\Cat}
\newcommand{\msCat}{\mathcal{C}at^+_{\Del}}
\newcommand{\Fun}{\mathrm{Fun}}
\newcommand{\op}{\mathrm{op}}
\newcommand{\Dop}{\Delta^\op}
\newcommand{\Seg}{\mathcal{S}\mspace{-2.mu}\mathrm{eg}}
\newcommand{\inj}{\mathrm{inj}}
\newcommand{\proj}{\mathrm{proj}}
\newcommand{\kan}{\mathrm{kan}}
\newcommand{\cmp}{\mathrm{cmp}}
\newcommand{\C}{\mathcal{C}}
\newcommand{\D}{\mathcal{D}}
\newcommand{\E}{\mathcal{E}}
\newcommand{\W}{\mathcal{W}}
\newcommand{\M}{\mathcal{M}}
\newcommand{\N}{\mathcal{N}}
\newcommand{\B}{\mathcal{B}}
\newcommand{\I}{\mathcal{I}}
\newcommand{\cO}{\mathcal{O}}
\newcommand{\bS}{\mathbf{S}}
\newcommand{\rN}{\mathrm{N}}
\newcommand{\fC}{\mathfrak{C}}
\newcommand{\ho}{\mathrm{ho}}
\newcommand{\bho}{\mathbf{ho}}
\DeclareMathOperator{\im}{Im}
\DeclareMathOperator{\him}{hIm}
\DeclareMathOperator{\Iso}{Iso}
\DeclareMathOperator{\Hom}{Hom}
\DeclareMathOperator{\Map}{Map}
\DeclareMathOperator{\Ho}{Ho}
\DeclareMathOperator{\Ne}{N}
\DeclareMathOperator{\ad}{ad}
\DeclareMathOperator{\eq}{eq}
\DeclareMathOperator{\Set}{Set}
\DeclareMathOperator{\sca}{sc}
\DeclareMathOperator{\thi}{th}
\DeclareMathOperator{\St}{St}
\DeclareMathOperator{\Eq}{Eq}
\DeclareMathOperator{\Psh}{PSh}
\def\alp{{\alpha}}
\def\bet{{\beta}}
\def\eps{{\varepsilon}}
\def\sig{{\sigma}}
\def\vphi{{\varphi}}
\def\Del{{\Delta}}
\def\Lam{{\Lambda}}
\def\vphi{{\varphi}}
\def\hrar{\hookrightarrow}
\def\ovl{\overline}
\def\wtl{\widetilde}
\renewcommand{\twocell}[5]{\ar@<#4ex>@{}[#1] \ar@<#4ex>@{=>}?(#3)+/d #5cm/;?(#3)+/u #5cm/^{#2}}
\newcommand{\mgr}{\otimes}
\renewcommand{\tocsection}[3]{%
  \indentlabel{\@ifnotempty{#2}{\bfseries\ignorespaces#1 #2\quad}}\bfseries#3} 
\renewcommand{\tocsubsection}[3]{%
  \indentlabel{\@ifnotempty{#2}{\hspace{1.6em}\ignorespaces#1 #2\quad}}#3}
\numberwithin{equation}{section}
\setlist[itemize]{leftmargin=*}
\setlist[enumerate]{leftmargin=*}
\title[On the equivalence of all models for (\pdfoo,\pdftwo)-categories]%
{On the equivalence of all models\\ for (\pdfoo,\pdftwo)-categories}
\author{Andrea Gagna}
\address{Universita Karlova\\ Matematicko-fyzik\'{a}ln\'{i} fakulta \\ Matematická sekce \\ Sokolovská 83 \\ 186 75 Praha 8 \\ Czech Republic}
\email{andrea.gagna@gmail.com}
\urladdr{https://sites.google.com/view/andreagagna/home}
\author{Yonatan Harpaz}
\address{Institut Galilée\\ Université Paris 13\\ 99 avenue Jean-Baptiste Clément\\ 93430 Villeta-neuse\\ France}
\email{harpaz@math.univ-paris13.fr}
\urladdr{https://www.math.univ-paris13.fr/~harpaz}
\author{Edoardo Lanari}
\address{Institute of Mathematics CAS \\ \v{Z}itn\'a 25 \\115 67   Praha 1\\ Czech Republic}
\email{edoardo.lanari.el@gmail.com}
\urladdr{edolana.github.io/}
\subjclass[2020]{18N65, 18N50, 18N40, 55U10, 55U35}
\begin{document}
\input{pent}
\input{square2}

\maketitle
\begin{abstract}
	The goal of this paper is to provide the last equivalence needed in order to identify all known models for \((\infty,2)\)-categories. We do this by showing that Verity's model of saturated \(2\)-trivial complicial sets is equivalent to Lurie's model of \(\infty\)\nobreakdash-bicategories, 
	which, in turn, has been shown to be equivalent to all other known models for \((\infty,2)\)-categories. A key technical input is given by identifying the notion of \(\infty\)-bicategories with that of \emph{weak} \(\infty\)-bicategories, a step which allows us to understand Lurie's model structure in terms of Cisinski--Olschok's theory.
	Several of our arguments use tools coming from a new theory of \emph{outer (co)car\-te\-sian fibrations}, further developed in a companion paper.
	In the last part of the paper we construct a homotopically fully faithful scaled simplicial nerve functor for \(2\)-cat\-e\-gories, give two equivalent descriptions of it, and show that the homotopy \(2\)-category of an \(\infty\)\nbd-bi\-cat\-e\-gory retains enough information to detect thin \(2\)-simplices.
\end{abstract}
\tableofcontents

\section*{Introduction}
Nowadays, \(\infty\)-categories are widely recognized as an extremely important tool to develop homotopy coherent mathematics. These were first introduced by Boardman and Vogt~\cite{BoardmanVogt},
and later developed by Joyal and Lurie, in terms of simplicial sets admitting fillers for inner horns~\cite{JoyalQCatsAndKan,HTT}. Just as in order to develop category theory for ordinary categories one cannot avoid the framework of 2-categories (in particular, one has to consider the 2-category \(\mathbf{Cat}\)), \((\infty,1)\)-categories are best understood when their theory is framed inside an \((\infty,2)\)-category. 

There are currently many models for \((\infty,2)\)-categories, and almost all of them have been proven to be equivalent
to one another in the works of Lurie, Bergner--Rezk, Ara, Barwick--Schommer-Pries and others (\cite{LurieGoodwillie},~\cite{BergnerRezkInftynI},~\cite{BergnerRezkInftynII},~\cite{AraQCatvsRezk},~\cite{BarwickSchommerPriesUnicity}).
Yet, an important one is still left out: the complicial model. This has been developed by Verity, initially in its strict version motivated by proving the Roberts--Street's conjecture~\cite{VerityComplicial}, and later on weakened in \cite{VerityWeakComplicialI}, where a model structure for \emph{weak complicial sets} is introduced.
These are \emph{stratified sets}, \ie simplicial sets bearing some extra structure in the form of a marking on \(n\)-simplices for \(n>0\), which satisfy an extension property with respect to a set of anodyne morphisms (the analogues of inner horns for \(\infty\)-categories).
When we specialize to those which are \emph{saturated}, \ie those in which the marked \(n\)-simplices satisfy the \(2\)-out-of-\(6\) property,
and \emph{\(2\)-trivial}, \ie every \(n\)-simplex is marked when \(n>2\), 
we get a model for \((\infty,2)\)\nbd-cat\-e\-gories known as 2-complicial sets.
In \cite{OzornovaRovelliNComplicial}, a model structure whose fibrant objects are the saturated \(n\)-trivial complicial sets is constructed,
based on a general principle established in~\cite{VerityWeakComplicialI}. In this paper we will only consider the case where \(n=2\).

So far, the complicial model has not been proven to be equivalent to any other known one. This has resulted in an undesirable gap between the theory as developed by Verity and later taken up in the works of
Riehl~\cite{RiehlOuverture} and Ozornova--Rovelli~\cite{OzornovaRovelliNComplicial,OzornovaRovelli2Complicial},
and the theory developed in other settings. In this paper we close this gap by providing a Quillen equivalence between 2-complicial sets
and Lurie's bicategorical model structure on scaled simplicial sets~\cite{LurieGoodwillie},
whose fibrant objects are called \emph{\(\infty\)-bicategories} and are not very explicit.
We do so in two steps: first, we show that \(\infty\)-bicategories coincide with \emph{weak} \(\infty\)-bicategories, where the latter are defined as those scaled simplicial sets with the extension property with respect to a certain set of anodyne morphisms. Second, using the identification above we \emph{redefine} Lurie's model structure on
scaled simplicial sets by invoking the combinatorial machinery of Cisinski--Olschok. 
The equivalence between the two definitions allows us to prove the result we previously mentioned:
\begin{thmintro}[Theorem~\ref{Quillen eq}]
	There is a canonical Quillen equivalence 
	\[ \xymatrixcolsep{1pc}
		\vcenter{\hbox{\xymatrix{
				**[l]\Ss \xtwocell[r]{}_{U}^{\iota
				}{'\perp}& **[r] \St_{2}}}},
	\]
where \(\Ss\) is the category of scaled simplicial sets endowed with
	Lurie's bicategorical model structure and \(\St_{2}\) is the category of stratified simplicial sets endowed with the
	2-complicial model structure.
\end{thmintro}
We may then conclude that the model of 2-complicial sets is equivalent to all other models, as depicted in Figure~\ref{fig:models} below
\footnote{The term \((\mathcal{S}\mathrm{et}^+_{\Box})_2\) in Figure~\ref{fig:models} refers to the 2-trivial comical model structure on marked cubical sets~\cite{cubical}, which appeared around the same time as the first version of the present work, together with a Quillen adjunction relating it to $\St_2$. This Quillen adjunction was later verified to be a Quillen equivalence in~\cite{DKM-equivalence}.}
(with arrows indicating right Quillen equivalences;
our contribution then corresponds to the two dotted arrows). 

\begin{figure*}
\centering
\begin{adjustwidth*}{}{-5em} 
	\begin{tikzpicture}[scale=3.5]
	\node (cubical) at (-2,1.5) {$(\mathcal{S}\mathrm{et}^+_{\Box})_2$};
	\node (css) at (-3,-1) {$\Fun(\Delta^\op, \s)_{\cmp}$};
	\node (mcss) at (-3,0) {$\Fun(\Delta^\op, \s^+)_{\cmp}$};
	\node (over) at (-3,1) {$(\s^+)_{/N(\Dop)}$};
	\node (seginj) at (-2,-1) {$\Seg^\inj_{\s}$};
	\node (mseginj) at (-2,0) {$\Seg^\inj_{\s^+}$};
	\node (segproj) at (-1,-1) {$\Seg^\proj_{\s}$};
	\node (msegproj) at (-1,0) {$\Seg^\proj_{\s^+}$};
	\node (csscss) at (-3,-2.5) {$\Fun\bigl(\Dop, \Fun(\Dop, (\s)_{\kan})_{\cmp}\bigr)_\cmp$};
	\node (jcat) at (0,-1) {$\nCat{\s}$};
	\node (mcat) at (0,0) {$\nCat{\s^+}$};
	\node (bicat) at (0,1) {$(\Ss)_{\mathrm{-bicat}}$};
	\node (wbicat) at (0,1.5) {$(\Ss)_{\mathrm{w-bicat}}$};
	\node (segthproj) at (-1,-2.5) {$\Seg^{\proj}_{\Fun(\Dop,(\s)_{\kan})_{\cmp}}$};
	\node (segthinj) at (-2,-2) {$\Seg^{\inj}_{\Fun(\Dop,(\s)_{\kan})_{\cmp}}$};
	\node (csscat) at (0,-2) {$\nCat{\Fun(\Dop,(\s)_{\kan})_{\cmp}}$};
	\node (strat) at (-1,1.5) {$\St_2$};
	\node (cssfold) at (-3,-3) {$\Fun((\Delta^\op)^{\times 2},(\s)_{\kan})_{\cmp}$};
	\node (cssth) at (-1.5,-3) {$\Fun(\Theta_2^\op, (\s)_{\kan})_{\cmp}$};
	\node (th) at (0,-3) {$\th$};
	\draw[->,>=latex] (strat) to node [above]{\cite{DKM-equivalence}} (cubical);
	\draw[->, >=latex] (segthinj) to node [above] {\cite{BergnerRezkInftynI}} (segthproj);
	\draw[<->, >=latex] (css) to  node [left] {\cite{JoyalTierneyQcatVSSegal}} (csscss);
	\draw[->, >=latex] (css) to node [above] {\cite{LurieGoodwillie}} (seginj);
	\draw[<->, >=latex] (csscss) to node [left] {$=$} (cssfold);
	\draw[->, >=latex] (mcss) to node [left] {\cite{HTT}} (css);
	\draw[->, >=latex] (mcss) to node [left] {\cite{LurieGoodwillie}} (over);
	\draw[->, >=latex] (mcss) to node [above] {\cite{LurieGoodwillie}} (mseginj);
	\draw[->, >=latex] (seginj) to node [above] {\cite{LurieGoodwillie}} (segproj);
	\draw[->, >=latex] (mseginj) to node [above] {\cite{LurieGoodwillie}} (msegproj);
	\draw[->, >=latex] (mseginj) to node [right] {\cite{LurieGoodwillie}} (seginj);
	\draw[->, >=latex] (msegproj) to node [right] {\cite{LurieGoodwillie}} (segproj);
	\draw[->, >=latex] (jcat) to node [above] {\cite{LurieGoodwillie}} (segproj);
	\draw[->, >=latex] (mcat) to node [above] {\cite{LurieGoodwillie}} (msegproj);
	\draw[->, >=latex] (mcat) to node [right] {\cite{HTT}} (jcat);
	\draw[->, >=latex] (mcat) to node [right] {\cite{LurieGoodwillie}} (bicat);
	\draw[->, >=latex] (over) to node [above] {\cite{LurieGoodwillie}} (bicat);
	\draw[<->, >=latex] (jcat) to node [right] {\cite{JoyalTierneyQcatVSSegal}} (csscat);
	\draw[<->, >=latex] (segproj) to node [right] {\cite{JoyalTierneyQcatVSSegal}} (segthproj);
	\draw[<->, >=latex] (seginj) to node [right] {\cite{JoyalTierneyQcatVSSegal}} (segthinj);

	\draw[->, >=latex] (csscat) to node [above] {\cite{BergnerRezkInftynI}} (segthproj);
	\draw[->, >=latex] (cssfold) to 
	node [above] {\cite{BergnerRezkInftynII}} (cssth);
	\draw[->, >=latex] (csscss) to node [above] {\cite{BergnerRezkInftynII}} (segthinj);
	\draw[<->, >=latex] (th) to node [above] {\cite{AraQCatvsRezk}} (cssth);
	\draw[->, >=latex] (csscat) to node [right] {\cite{GindiRigidification}} (th);
	
	\draw[<->, dotted, >=latex] (bicat) to node [right] {\S\ref{sec:weak-is-strong}} (wbicat);
	\draw[->, dotted, >=latex] (strat) to node [above] {\S\ref{sec:equivalence}} (wbicat);
	\end{tikzpicture}
	\end{adjustwidth*}
	\caption{Models for \((\infty, 2)\)-categories.}
	\label{fig:models}
\end{figure*}
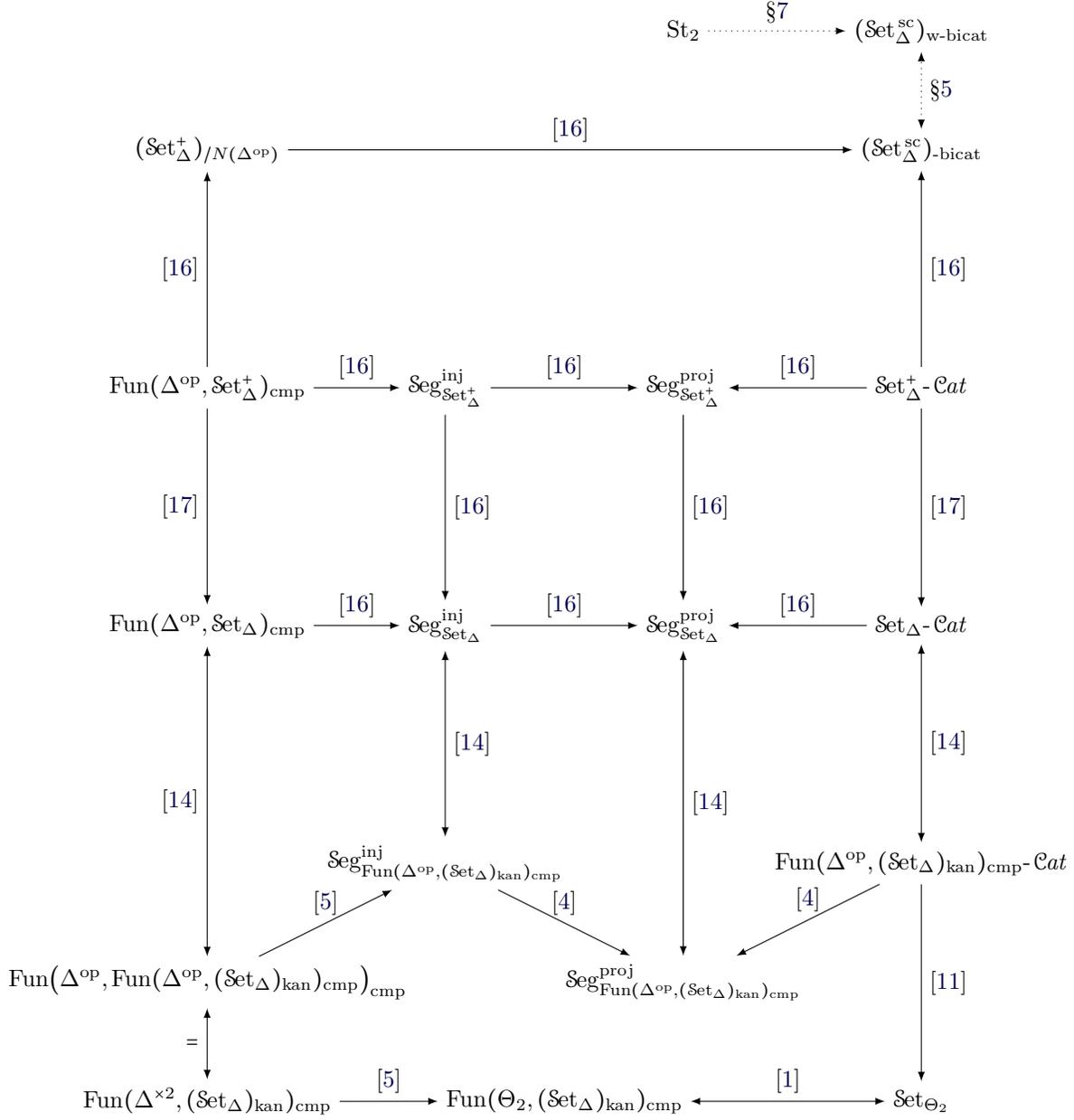

In the final part of the paper we construct a homotopy fully faithful nerve functor for \(2\)-categories, 
which embeds them (in the \(\infty\)-categorical sense) in the category of scaled simplicial sets as a particular class of \(\infty\)-bicategories.
Such a construction was already known for the 2-complicial model~\cite{OzornovaRovelli2Complicial} and for the cellular model of \(\Theta_2\)-sets~\cite{CampbellCellular}. We then show that the homotopy \(2\)-category of an \(\infty\)-bicategory has a conservativity property: a \(2\)\nbd-sim\-plex in \(X\) is thin if and only if it represents an invertible \(2\)-cell in~\(\ho_2(X)\).

Several of our arguments make use of a new theory of \emph{outer (co)cartesian fibrations}. 
This notion, which we introduce in this paper, is developed in greater detail in the companion paper \cite{GagnaHarpazLanariLaxLimits}, 
where we also develop a theory of 
\emph{lax limits} in the setting of scaled simplicial sets.

This paper is organized as follows.
In \S\ref{sec:prelim} we review all the necessary definitions and preliminary results concerning the model structures involved in what follows, including the one on scaled simplicial sets and that of \(n\)-trivial saturated complicial sets.

In \S\ref{sec:fibrations} we introduce the notion of outer (co)cartesian fibrations and develop some of its basic properties. We then generalize the join and slice constructions to the setting of scaled simplicial sets, yielding in particular a model for the hom-\(\infty\)\nbd-cat\-egories of an \(\infty\)-bicategory. 

In \S\ref{sec:thin} and \S\ref{sec:moving} we do some preparatory work for the subsequent proofs by developing a few technical tools and proving some key results concerning thin triangles in weak \(\infty\)-bicategories.
  
In \S\ref{sec:weak-is-strong} we prove one of the main results of this paper, namely that weak \(\infty\)\nbd-bi\-cat\-egories and \(\infty\)-bicategories coincide. In particular, this implies that the fibrant objects in Lurie's model structure can be detected by means of a generating set of anodyne morphisms. This part uses the technology of outer cartesian fibrations.

In \S\ref{sec:cis} we define the model structure for weak \(\infty\)-bicategories using the machinery developed in \cite{CisinskiPrefaisceaux} and \cite{OlschokLeft} and recorded in the appendix. Using the results of \S\ref{sec:weak-is-strong} we then prove that the fibrant objects are exactly Lurie's \(\infty\)\nbd-bi\-categories. 

In \S\ref{sec:equivalence} we construct a Quillen pair between the model structure for \(\infty\)\nbd-bi\-cat\-egories and that of \(2\)-trivial saturated complicial sets, and we show in Theorem~\ref{Quillen eq} that it is a Quillen equivalence. This is achieved by producing an explicit model for a fibrant replacement of the image of a weak \(\infty\)-bicategory under the left adjoint.

Finally, in \S\ref{sec:2-nerve} we construct a scaled simplicial nerve for \(2\)-categories in two different ways, and prove they coincide in Proposition~\ref{nerves coincide}. Then, in Proposition~\ref{nerve is fully faithful}, we show that this nerve is homotopy fully faithful.
The left adjoint of this scaled \(2\)-nerve gives a construction of the \ndef{homotopy \(2\)-category} associated to an \(\infty\)\nbd-bi\-category. We then prove a conservativity property by showing that the homotopy \(2\)-category detects thin triangles.

\section*{Acknowledgements}
The first author is supported by GA\v{C}R EXPRO 19-28628X.
The third author gratefully acknowledges the support of Praemium Academiae of M.~Markl and RVO:67985840. The authors thank H.~Gindi for the diagram of figure~\ref{fig:models}
and the anonymous reviewer for the precise and detailed recommendations.

\section*{Notation}
We will denote by \(\s\) the category of simplicial sets.
We will employ the standard notation \(\Del^n \in \s\) for the \(n\)-simplex,
and for \(\emptyset \neq S \subseteq [n]\) we write \(\Del^S \subseteq \Del^n\) the \((|S|-1)\)-dimensional face of \(\Del^n\) whose set of vertices is \(S\). For \(0 \leq i \leq n\) we will denote by \(\Lam^n_i \subseteq \Del^n\) the \(i\)'th horn in \(\Del^n\), that is, the subsimplicial set spanned by all \((n-1)\)-dimensional faces containing the \(i\)-th vertex. 
By an \(\infty\)\nbd-ca\-tegory we will always mean a \emph{quasi-category}, \ie a simplicial set \(X\) which admits extensions for all inclusions 
\(\Lambda^n_i\rightarrow\Delta^n\), for all \(n > 1\) and all \(0 < i < n\) (known as \emph{inner} horns). Given an \(\infty\)-category \(X\), we will denote its homotopy category by \(\ho(X)\). This is the ordinary category having as objects the 0-simplices of \(X\), and as morphisms \(x \rightarrow y\) the set of equivalence classes of 1-simplices \(f\colon x \rightarrow y\) of \(X\) under the equivalence relation generated by identifying \(f\) and \(f'\) if there is a 2-simplex \(H\) in \(X\) with \( H|_{\Del^{\{1,2\}}}=f, \ H|_{\Del^{\{0,2\}}}=f'\) and \(H|_{\Del^{\{0,1\}}}\) degenerate on~\(x\). We recall that the functor \(\ho\colon \nCat{\infty}\rightarrow \nCat{1}\) is left adjoint to the ordinary nerve functor.

\section{Preliminaries}\label{sec:prelim}

In this section we will review the main definitions and basic results concerning marked simplicial sets, marked-simplicial categories, scaled simplicial sets and stratified sets. In particular, we will recall the relevant model category structures and some of the Quillen adjunctions relating them. 

\subsection{Marked simplicial sets and marked-simplicial categories}

\begin{define}
	A \emph{marked simplicial set} is a pair \((X,E_X)\) where \(X\) is  simplicial set
	and \(E_X\) is a subset of the set of 1-simplices of \(X\), called \emph{marked} simplices, such that it contains the degenerate ones. A map of marked simplicial sets \(f\colon (X,E_X)\rightarrow (Y,E_Y)\) is a map of simplicial sets \(f\colon X \rightarrow Y\) satisfying \(f(E_X)\subseteq E_Y\).
\end{define}

The category of marked simplicial sets will be denoted by \(\s^+\). 

\begin{notate}\label{no:marked}
For simplicity, we will often speak only of the non-degenerate marked edges when considering a marked simplicial set. For example, if \(X\) is a simplicial set and \(E_X\) is any set of edges in \(X\) then we will denote by \((X, E_X)\) the marked simplicial set whose underlying simplicial set is \(X\) and whose marked edges are \(E_X\) together with the degenerate edges.  In addition, when there is no risk of ambiguity we will omit the set of marked 1-simplices and just denote \((X, E_X)\) by \(X\).\end{notate}

\begin{rem}
 \label{rem:marked_sset_presentable}
	The category \(\s^+\) of marked simplicial sets
	admits an alternative description, as the category of models of a limit sketch.
	Indeed, denote by \(\Delta^+\) the category whose objects and arrows are
	those of \(\Delta\), to which we add an object \([1]_t\)
	and arrows \(\varphi \colon [1] \to [1]_t\) and \(\zeta \colon [1]_t \to [0]\)
	together with the relation \(\zeta\varphi = \sigma^0\).
	Then \(\s^+\) it is the reflective localization of the presheaf category on
	\(\Delta^+\) at the morphism \([1]_t \coprod_{[1]} [1]_t \to [1]_t\),
	where we identify \(\Delta^+\) with its image under the Yoneda embedding,
	so that	the marked simplicial sets are precisely the presheaves \(X\) on \(\Delta^+\)
	for which the map \(X(\varphi) \colon X([1]_t) \to X([1])\) is injective.
	In particular, \(\s^+\) is a cartesian closed category.
\end{rem}

\begin{define}
 \label{def:marked_core}
    Given a marked simplicial set \((X, E_X)\) we define its \ndef{marked core}
    as the sub-simplicial set of \(X\) spanned by those \(n\)-simplices
    whose \(1\)-dimensional faces are marked, \ie they belong to \(E_X\).
    We will denote by \(\kappa (X, E_X)\) the marked core of \((X, E_X)\).
\end{define}

\begin{thm}[\cite{HTT}]\label{t:marked-categorical}
	There exists a model category structure on the category \(\s^+\) of marked simplicial sets in which cofibrations are exactly the monomorphisms and the fibrant objects are marked simplicial sets \((X, E_X)\) in which \(X\) is an \(\infty\)-category and \(E_X\) is the set of equivalences of \(X\), \ie \(1\)-simplices \(f\colon \Delta^1 \rightarrow X\) which are invertible in \(\ho(X)\). 
\end{thm}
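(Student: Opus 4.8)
The statement is \cite[\S3.1.3]{HTT}; here is how one proceeds. The goal is to realise the asserted model structure as the $S=\Del^0$ case of Lurie's cartesian model structure on $(\s^+)_{/S}$. For a quasi-category $Z$ write $Z^\natural$ for the marked simplicial set with underlying simplicial set $Z$ whose marked edges are precisely the equivalences of $Z$, and for $A,B\in\s^+$ let $\Map^\flat(A,B)$ be the simplicial set with $\Map^\flat(A,B)_n=\Hom_{\s^+}\bigl(A\times(\Del^n)^\flat,B\bigr)$, formed using the cartesian closed structure on $\s^+$ recalled above. One declares $f\colon(X,E_X)\to(Y,E_Y)$ to be a cofibration if it is a monomorphism; a weak equivalence if the map $\Map^\flat\bigl((Y,E_Y),Z^\natural\bigr)\to\Map^\flat\bigl((X,E_X),Z^\natural\bigr)$ is a weak homotopy equivalence of simplicial sets for every quasi-category $Z$; and a fibration if it has the right lifting property against all trivial cofibrations.

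Since $\s^+$ is presentable --- it is a reflective localization of a presheaf category, as noted above --- one may invoke J.~Smith's recognition theorem for combinatorial model categories, for which it suffices to verify: (i) the monomorphisms form the weakly saturated closure of the set $I$ consisting of the boundary inclusions $(\partial\Del^n)^\flat\hrar(\Del^n)^\flat$ ($n\geq0$) and the map $(\Del^1)^\flat\to(\Del^1)^\sharp$; (ii) the class $W$ of weak equivalences satisfies the two-out-of-three property and is accessible; (iii) every map with the right lifting property against $I$ lies in $W$; and (iv) $W\cap\mathrm{cof}$ is closed under pushout and transfinite composition. Items (i)--(iii) are routine: for (i), the boundary inclusions build the underlying simplicial set and $(\Del^1)^\flat\to(\Del^1)^\sharp$ supplies markings; for (ii), $W$ is cut out by a small family of conditions each detected by a functor preserving filtered colimits; and for (iii), a map with the right lifting property against $I$ has it against every monomorphism, whence $\Map^\flat(-,Z^\natural)$ carries it to a trivial Kan fibration.

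The crux is (iv), together with the description of the fibrant objects; both go through the auxiliary class $\An^+$ of \emph{marked anodyne} maps, the weakly saturated class generated --- following \cite[Def.~3.1.1.1]{HTT} --- by the inner horn inclusions $(\Lam^n_i)^\flat\hrar(\Del^n)^\flat$ ($0<i<n$), the ``final-edge'' outer horn inclusions $(\Lam^n_n,E)\hrar(\Del^n,E)$ with $E$ the degenerate edges together with $\Del^{\{n-1,n\}}$, the map $(\Lam^2_1)^\sharp\amalg_{(\Lam^2_1)^\flat}(\Del^2)^\flat\to(\Del^2)^\sharp$, and the maps $K^\flat\to K^\sharp$ for $K$ a Kan complex. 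One establishes: (a) for $g\colon A\to B$ in $\An^+$ and any quasi-category $Z$, the map $\Map^\flat(B,Z^\natural)\to\Map^\flat(A,Z^\natural)$ is a trivial Kan fibration; (b) a marked simplicial set is $\An^+$-injective iff it is of the form $Z^\natural$ for a quasi-category $Z$; and (c) an $\An^+$-injective \emph{map} that is a weak equivalence has the right lifting property against $I$, \ie is a trivial fibration. From (a), the class of maps $h$ with $\Map^\flat(h,Z^\natural)$ a trivial Kan fibration for all $Z$ is weakly saturated --- because $\Map^\flat(-,Z^\natural)$ turns colimits into limits, and trivial Kan fibrations are stable under limits and retracts --- so it contains the closure of $\An^+$; hence $\An^+\subseteq\mathrm{cof}\cap W$. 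For (iv): given a trivial cofibration $j$, factor it by the small object argument as a map in the closure of $\An^+$ followed by an $\An^+$-injective map $q$; two-out-of-three gives $q\in W$, so $q$ is a trivial fibration by (c), and as $j$ is a cofibration it has the left lifting property against $q$ and is therefore a retract of the first factor. Thus $\mathrm{cof}\cap W$ is the weakly saturated closure of $\An^+$, which is in particular stable under pushout and transfinite composition. Finally, the fibrant objects --- those with the right lifting property against all trivial cofibrations, equivalently against $\An^+$ --- are exactly the $Z^\natural$ by (b), which is the stated description. (Right properness and a compatible simplicial enrichment also hold; see \cite[\S3.1.3]{HTT}.)

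The main obstacle is the combinatorial work behind (a), (b) and (c): the stability of $\An^+$ under pushout-product with arbitrary monomorphisms --- proved by an induction that filters a monomorphism by attaching non-degenerate simplices one at a time, as in \cite[\S3.1.2]{HTT} --- and, most delicately, the half of (b) asserting that an $\An^+$-injective object carries \emph{exactly} the equivalences as marked edges, which is where the final-edge outer horns and the generators $K^\flat\to K^\sharp$ are essential, ruling out marked edges with only one-sided inverses. All of this is carried out in \cite[\S3.1]{HTT}.
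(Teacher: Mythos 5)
The paper does not actually prove this statement: its ``proof'' is the single sentence citing it as the \(S=\Del^0\) case of \cite{HTT}*Proposition 3.1.3.7, so what you have written is a reconstruction of Lurie's argument. Most of it is faithful, but your step (c) is false, and with it the verification of (iv) collapses. It is \emph{not} true that an \(\An^+\)-injective map which is a weak equivalence is a trivial fibration. Concretely, take \(q\colon \{0\} \hrar (\Del^1)^{\sharp}\). This map has the right lifting property against all four families of generating marked anodyne maps: for the inner and outer horn generators, any map \(\Del^n \to \Del^1\) whose restriction to the horn is constant at \(0\) is itself constant at \(0\) (the horns contain all vertices when \(n\geq 2\), and an order-preserving map \([1]\to[1]\) sending \(1\) to \(0\) is constant), and for \(K^{\flat}\to K^{\sharp}\) every edge of \(\{0\}\) is degenerate, hence marked. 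Moreover \(q\) is a weak equivalence, since \(\Map^{\flat}\bigl((\Del^1)^{\sharp},Z^{\natural}\bigr)\to Z\) is evaluation at the source on the full subcategory of \(\Fun(\Del^1,Z)\) spanned by the equivalences, which is a trivial fibration by Joyal's theorem. Yet \(q\) is not surjective on vertices, so it fails the lifting property against \(\emptyset \to \Del^0\) and is not a trivial fibration. Consequently your conclusion that \(\mathrm{cof}\cap W\) equals the weakly saturated closure of \(\An^+\) is also false (this \(q\) is a trivial cofibration with no lift against itself viewed as an \(\An^+\)-injective map); this is precisely why \cite{HTT} characterizes the fibrations of this model structure as the \(\An^+\)-injective maps only \emph{between fibrant objects}.

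The repair uses only ingredients you already have on the table. Show that a monomorphism \(j\) lies in \(W\) if and only if \(\Map^{\flat}(j,Z^{\natural})\) is a trivial fibration of simplicial sets for every \(\infty\)-category \(Z\): the nontrivial direction holds because \(\Map^{\flat}(j,Z^{\natural})\) is already a categorical fibration between \(\infty\)-categories whenever \(j\) is a monomorphism (a consequence of the pushout-product stability of marked anodyne maps that you invoke for (a)), and a categorical fibration which is a categorical equivalence has the right lifting property against all monomorphisms. Since trivial fibrations of simplicial sets are stable under pullback, retracts and inverse limits of towers, and \(\Map^{\flat}(-,Z^{\natural})\) converts pushouts and transfinite compositions into pullbacks and towers, this characterization makes \(\mathrm{cof}\cap W\) weakly saturated without ever identifying it with the closure of \(\An^+\); this is in effect how \cite{HTT} proceeds, in the guise of verifying the left-properness hypothesis of its recognition criterion. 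A smaller point: since \(\Map^{\flat}(A,Z^{\natural})\) is a quasi-category rather than a Kan complex, the weak equivalences should be defined by requiring a categorical equivalence of these (or a homotopy equivalence of the Kan complexes \(\Map^{\sharp}\)); ``weak homotopy equivalence of \(\Map^{\flat}\)'' is a priori too weak.
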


This is a special case of Proposition 3.1.3.7 in~\cite{HTT}, when \(S=\Delta^0\),
combined with the explicit description of the fibrant objects given
in Proposition 3.1.4.1 of~\opcit. We will refer to the model structure of Theorem~\ref{t:marked-categorical} as the \ndef{marked categorical model structure}, and its weak equivalences as \ndef{marked categorical equivalences}.

\begin{rem}
	Part (A0) of Proposition 3.1.5.1 of~\cite{HTT} shows that
	marked simplicial sets are a model for \((\infty,1)\)-categories. 
\end{rem}

Recall that a \ndef{relative category} is a pair \((\C, \W)\), where \(\C\) is a category and \(\W\) is a subcategory of \(\C\), called the subcategory of \ndef{weak equivalences in~\(\C\)}, containing all the objects of~\(\C\). We denote by \(\RelCat\) the category of small relative categories having as morphisms the functors which preserve weak equivalences.

\begin{define}\label{d:marked-nerve}    
    We define the \ndef{marked nerve}
    \[\rN^+ \colon \RelCat \to \s^+\,,\] 
    to be the functor which sends a relative category
    \((\C, \W)\) to the marked simplicial set \((\rN(\C), \mathrm{Arr}(\W))\), where
    \(\rN(\C)\) is the standard nerve of the small category \(\C\) and
    the marking \(\mathrm{Arr}(\W)\) consists of those edges of \(\rN(\C)\) which are contained in \(\rN(\W)\).
    The marked nerve functor admits a left adjoint 
    \[\overline{\ho} \colon \s^+ \to \RelCat\]
    which can be explicitly described as follows: a marked simplicial set \((X, E_X)\) is mapped to the
    relative category \((\ho(X), \him(X, E_X))\), where \(\ho \colon \s \to \Cat\)
    is the standard left adjoint to the nerve functor and
    \(\him \colon \s^+ \to \Cat\) is defined by mapping \((X, E_X)\) to
    the smallest subcategory containing the image of the functor
    \(\ho\,\kappa(X, E) \to \ho(X)\). 
\end{define}

\begin{rem}\label{r:preserves-products}
	The image of a functor is a functorial assignment and defines a functor
    $\Cat^{[1]} \to \Cat$ preserving finite products.
    Notice that also the functors $\ho \colon \s \to \Cat$
    and $\kappa \colon \s^+ \to \s$ both
    preserve finite products. Thus, the functor $\him \colon \s^+ \to \Cat$
    preserves finite products. 
    For any two marked simplicial sets
    \((X, E_X)\) and \((Y, E_Y)\) we have
    \begin{align*}
     \overline{\ho}\bigl((X, E_X) \times (Y, E_Y)\bigr) & \cong
     \overline{\ho}(X\times Y, E_X \times E_Y) \\ & \cong
     \bigl(\ho(X\times Y), \him (X\times Y, E_X \times E_Y)\bigr) \\ & \cong
     \bigl(\ho(X) \times \ho(Y), \him(X, E_X) \times \him(Y, E_Y)\bigr) \\ & \cong
     \bigl(\ho(X), \him(X, E_X)\bigr) \times \bigl(\ho(Y), \him(Y, E_Y)\bigr) \\ &\cong
     \overline{\ho}(X, E_X) \times \overline{\ho}(Y, E_Y)\,;
    \end{align*}
    hence, the functor \(\overline{\ho} \colon \s^+ \to \RelCat\) preserves finite products.
\end{rem}

    The canonical functor \(\iota \colon \Cat \to \RelCat\), mapping
    a small category \(\C\) to the relative category \((\C, \Iso(\C))\)
    having as weak equivalences the set of isomorphisms of \(\C\),
    has a left adjoint \(L \colon \RelCat \to \Cat\) which is the localization
    functor mapping a small relative category \((\C, \W)\) to the small category
    \(\C[\W^{-1}]\), that is to say, the category in which we have formally inverted
    the arrows of \(\W\). It is well-known that the functor \(L\) preserves finite products. Composing the adjunctions \(L \dashv \iota\) and \(\ovl{\ho} \dashv \rN^+\) we now obtain an adjunction
\begin{equation}\label{e:Lho}
\xymatrixcolsep{1pc}
\vcenter{\hbox{\xymatrix{
			**[l]\s^+ \xtwocell[r]{}_{\rN^+\iota}^{L\ovl{\ho}
			}{'\perp}& **[r] \Cat}}}
\end{equation}
in which the left adjoint \(L\ovl{\ho}\) preserves finite products.

Recall that the category \(\Cat\) carries the \emph{canonical model structure} in which the weak equivalences are the categorical equivalences, the fibrations are the isofibrations and the cofibrations are the functors which are injective on objects.

\begin{lem}\label{l:quillen}
The adjunction~\eqref{e:Lho} is a Quillen adjunction. In particular, the functor \(L\ovl{\ho}\) preserves weak equivalences (since all objects in \(\s^+\) are cofibrant).
\end{lem}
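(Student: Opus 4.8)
The plan is to establish the Quillen adjunction by showing that the left adjoint $L\ovl{\ho}$ preserves cofibrations and trivial cofibrations; the final assertion is then automatic, since a left Quillen functor preserves weak equivalences between cofibrant objects (Ken Brown's lemma) and every object of $\s^+$ is cofibrant.

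Preservation of cofibrations is formal. The objects of $L\ovl{\ho}(X,E)=\ho(X)[\him(X,E)^{-1}]$ are, by construction, the objects of $\ho(X)$, i.e.\ the $0$-simplices of $X$, since the localization $L$ does not change the set of objects; as a monomorphism of simplicial sets is injective on $0$-simplices, $L\ovl{\ho}$ sends monomorphisms to functors injective on objects, which are precisely the cofibrations in the canonical model structure on $\Cat$. For trivial cofibrations, recall that the marked categorical model structure of Theorem~\ref{t:marked-categorical} is cofibrantly generated, so its trivial cofibrations form the weakly saturated class generated by a set $J$ of generating trivial cofibrations (see \S3.1 of \cite{HTT}). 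Since $L\ovl{\ho}$ preserves colimits (being a left adjoint) and retracts, while the trivial cofibrations of $\Cat$ are stable under pushout, transfinite composition and retracts, it is enough to check that $L\ovl{\ho}(j)$ is a trivial cofibration in $\Cat$ for each $j\in J$; by the preceding paragraph $L\ovl{\ho}(j)$ is already a cofibration, so — the weak equivalences of $\Cat$ being the equivalences of categories — it suffices to see that it is an equivalence of categories.

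Now I would run through the generators, which are the marked anodyne maps of Definition~3.1.1.1 of \cite{HTT} (possibly together with further maps witnessing completeness of the marking). For an inner horn inclusion $(\Lambda^n_i)^\flat\hookrightarrow(\Delta^n)^\flat$, $0<i<n$, the marking is minimal, so $\ovl{\ho}$ endows source and target with the discrete relative structure and $L\ovl{\ho}$ restricts to $\ho\colon\s\to\Cat$; thus $L\ovl{\ho}(j)$ is $\ho(\Lambda^n_i)\to\ho(\Delta^n)=[n]$, an isomorphism of categories (it is a categorical equivalence, the inclusion being inner anodyne, and is bijective on objects). For the maps expressing closure of the marking under composition — and likewise for any completeness generator which enlarges the marking by composites of marked edges, or by edges that are already equivalences — applying $L\ovl{\ho}$ yields a single homotopy category localized at two subcategories with the same saturation (localizing at a set of arrows automatically inverts their composites, and inverting an already-invertible arrow changes nothing), so $L\ovl{\ho}(j)$ is an isomorphism. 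The remaining, and only non-formal, family is that of the right outer horn inclusions $(\Lambda^n_n,\mathcal{E})\hookrightarrow(\Delta^n,\mathcal{E})$ with the final edge $\Delta^{\{n-1,n\}}$ marked ($n\geq 1$): here $L\ovl{\ho}$ localizes both $\ho(\Lambda^n_n)$ and $[n]=\ho(\Delta^n)$ at the arrow $n-1\to n$. For $n\geq 4$ the horn $\Lambda^n_n$ already contains the full $2$-skeleton of $\Delta^n$, so $\ho(\Lambda^n_n)=[n]$ and there is nothing to do; for $n\in\{1,2,3\}$ a direct computation settles it — the mechanism being that once the marked edge is inverted, $n-1$ and $n$ become isomorphic, and cancelling this isomorphism from the relations coming from the faces of $\Lambda^n_n$ that contain the vertex $n$ recovers exactly the relations that the missing face $\Delta^{\{0,\dots,n-1\}}$ would have imposed. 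Hence $\ho(\Lambda^n_n)[(n-1\to n)^{-1}]\to[n][(n-1\to n)^{-1}]$ is an equivalence of categories in every case.

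The main obstacle is precisely this last family: without marking the final edge the inclusion $\Lambda^n_n\hookrightarrow\Delta^n$ is an outer horn inclusion and is not a categorical equivalence — already $\ho(\Lambda^2_2)$, the walking cospan, differs from $[2]$ — and what makes things work is exactly the formal inversion of the marked edge carried out by $L\ovl{\ho}$. Once all generators are handled, $L\ovl{\ho}$ preserves trivial cofibrations, so \eqref{e:Lho} is a Quillen adjunction, and the ``in particular'' clause follows as above.
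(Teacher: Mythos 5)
Your overall strategy for the trivial cofibrations has a genuine gap at the step ``run through the generators, which are the marked anodyne maps of Definition~3.1.1.1 of \cite{HTT}''. The marked categorical model structure of Theorem~\ref{t:marked-categorical} is indeed cofibrantly generated, but the generating set of trivial cofibrations it comes with is produced by the abstract combinatorial machinery (HTT~A.2.6.13: one takes \emph{all} trivial cofibrations between suitably small objects); it is \emph{not} the set of marked anodyne maps. Marked anodyne maps are only a convenient subclass of trivial cofibrations that detects fibrant objects -- just as inner anodyne maps do not generate the trivial cofibrations of the Joyal model structure -- and no explicit generating set for the trivial cofibrations of $\s^+$ is available. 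Your parenthetical hedge about ``further maps witnessing completeness of the marking'' cannot repair this, because the missing trivial cofibrations are not of any listable combinatorial shape; consequently the case analysis, however correct on the families you do treat (and your computations for the right outer horns with marked final edge are fine), cannot be completed. A reduction to an explicit family would be legitimate only via a different mechanism, e.g.\ the criterion that a cofibration-preserving left adjoint is left Quillen iff its right adjoint preserves fibrations between fibrant objects (\cite[Proposition E.2.14]{JoyalQCatsApplications}, quoted in the appendix as Remark~\ref{r:weaker-assumptions}), but you neither invoke this nor verify the corresponding characterization of fibrations between fibrant objects in $\s^+$.

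The paper's proof sidesteps the issue entirely: given a trivial cofibration $f$, instead of decomposing $f$ it tests the functor $\Fun(L\ovl{\ho}(-),\C)$ against an arbitrary ordinary category $\C$. Using that $L\ovl{\ho}$ preserves finite products (Remark~\ref{r:preserves-products}) and colimits, fullness and faithfulness are absorbed into surjectivity on objects for the pushout-products of $f$ with $\partial\Del^1 \to \Del^1$ and $\partial\Del^2 \to \Del^2$ (which are again trivial cofibrations, the model structure being cartesian), and surjectivity on objects becomes, by adjunction, an extension problem for a map $(X,E_X)\to \rN^+(\iota\C)$ along the trivial cofibration $f$ -- solvable because $\rN^+(\iota\C)=(\rN(\C),\Iso(\C))$ is fibrant. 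This uses only the \emph{definition} of trivial cofibration (lifting against fibrations with fibrant target) and requires no knowledge of generators. If you want to keep a generators-style argument, you should reorganize it around Remark~\ref{r:weaker-assumptions}; as written, the proof does not go through.
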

\begin{proof}
Since the objects of \(L\ovl{\ho}(X,E_X)\) are the vertices of \(X\) the functor \(L\ovl{\ho}\) clearly preserves cofibrations. Now let \(f\colon (X,E_X) \to (Y,E_Y)\) be a trivial cofibration of marked simplicial sets, so that \(L\ovl{\ho}(f) \colon L\ovl{\ho}(X,E_X) \to L\ovl{\ho}(Y,E_Y)\) is a cofibration. In order to prove that \(L\ovl{\ho}(f)\) is also an equivalence of categories it will suffice to show that for every category \(\C\) the induced map
\[\Fun(L\ovl{\ho}(Y,E_Y),\C) \to \Fun(L\ovl{\ho}(X,E_X),\C)\]
is trivial fibration of categories. Replacing \(f\) by its pushout-products with \(\partial\Del^1 \to \Del^1\) and \(\partial \Del^2 \to \Del^2\) and using Remark~\ref{r:preserves-products} we may reduce to showing that the induced map \(\Fun(L\ovl{\ho}(Y,E_Y),\C) \to \Fun(L\ovl{\ho}(X,E_X),\C)\) is surjective on objects, that is, every functor \(L\ovl{\ho}(X,E_X) \to \C\) extends to \(L\ovl{\ho}(Y,E_Y)\). Finally, since \(f\) is a trivial cofibration it will suffice to check that \(\rN^+(\iota\C) = (\rN(\C),\Iso(\C))\) is fibrant. Indeed, \(\rN(\C)\) is an \(\infty\)-category whose equivalences are exactly \(\Iso(\C)\).
\end{proof}

Since \(\s^+\) is a model for \(\infty\)-categories, using enrichment in marked simplicial sets one can form a model for the theory of \((\infty,2)\)-categories.

\begin{define}
	We let \(\msCat\) denote the category of categories enriched over marked simplicial sets. We will refer to these as \emph{marked-simplicial categories}.
\end{define}

By virtue of Proposition A.3.2.4 and Theorem A.3.2.24 of \cite{HTT}, the category \(\msCat\) is endowed with a model category structure in which the fibrant objects are
categories enriched over marked simplicial sets which are fibrant for the
marked categorical model structure and the weak equivalences
are the \ndef{Dwyer--Kan equivalences}. To define these, let us 
denote for a marked-simplicial category \(\E\) by \(\bho(\E)\) the category whose objects are the objects of \(\E\) and such that 
\[\Hom_{\bho(\E)}(x,y) := \Hom_{\Ho(\s^+)}(\Del^0,\Map_{\C}(x,y))\] 
is the set of morphisms in the homotopy category of the model category \(\s^+\) 
from \(\Del^0\) to \(\Map_{\C}(x,y)\). We point out that this does not require any special adaptation for $\E$ non-fibrant 
since all products in \(\s^+\) are homotopy products and hence the functor \(\Hom_{\Ho(\s^+)}(\Del^0,-)\colon \s^+ \to \Ho(\s^+)\) is product preserving.
The Dwyer-Kan equivalences are then defined to be the maps \(f\colon \C \to \D\) which are
\begin{itemize}
\item
\emph{fully-faithful:} in the sense that the maps
\(f_*\colon\Map_{\C}(x,y)\rightarrow \Map_{\D}(f(x),f(y))\)
are marked categorical equivalences for all pairs of vertices \((x, y)\) of \(\C\);
\item
\emph{essentially surjective:} in the sense that the functor of ordinary 
categories given by \(f_*\colon \bho(\C)\rightarrow\bho(\D)\) is essentially surjective.
\end{itemize}
We also note that the trivial fibrations in \(\msCat\) are the maps \(f\colon\C \rightarrow \D\) which are surjective on objects and such that \(f_*\colon\Map_{\C}(x,y)\rightarrow \Map_{\D}(f(x),f(y))\) is a trivial fibration of marked simplicial sets for every pair of objects \((x,y)\) in \(\C\). 

Now since the left adjoint in the adjunction~\eqref{e:Lho} above preserves products it induces in particular an adjunction on the level of enriched categories, which we will denote by
\begin{equation}\label{e:ho-ne} 
\xymatrixcolsep{1pc}
\vcenter{\hbox{\xymatrix{
			**[l]\msCat \xtwocell[r]{}_{\rN_*}^{\ho_*
			}{'\perp}& **[r] \nCat{2} }}}\, .
\end{equation}

In~\cite{LackModel2Cat,LackModelBicat} Lack builds a model structure on the category \(\nCat{2}\)
of \(2\)-categories, in which the weak equivalences are the \(2\)-categorical equivalences and the fibrations are the equiv-fibrations (that is, 2-functors which are isofibrations on each mapping category and admit lifts for invertible \(1\)-morphisms in the base). Furthermore, the trivial fibrations in the Lack model structure are the 2-functors which are surjective on objects and induce trivial fibrations on the level of mapping categories.
We note that \(2\)-categorical equivalences can be described in a way analogous to Dwyer--Kan equivalences. 
For this, let us 
denote for a 2-category $\E$ by \(\bho_1(\E)\) the category whose objects are the objects of \(\E\) and such that 
\[\Hom_{\bho_1(\E)}(x,y) := \Hom_{\Ho(\Cat)}(\ast,\Map_{\C}(x,y))\] 
is the set of morphisms in the homotopy category of the model category $\Cat$ (with respect to the canonical model structure) 
from \(\ast\) to \(\Map_{\C}(x,y)\), where we note that the projection to the homotopy category is product preserving since all objects in $\Cat$ are fibrant.
A \(2\)-functor \(f\colon \C \to \D\) between \(2\)-categories is then a \(2\)-categorical equivalence if and only if it is:
\begin{itemize}
\item
\emph{fully-faithful:} in the sense that the maps \(f_*\colon\Map_{\C}(x,y)\rightarrow \Map_{\D}(f(x),f(y))\) are categorical equivalences for all pairs of objects \((x, y)\) of \(\C\); 
\item
\emph{bi-essentially surjective:} in the sense that the functor of ordinary 
categories given by \(f_*\colon \bho_1(\C)\rightarrow\bho_1(\D)\) is essentially surjective. 
This is equivalent to saying that for every object \(d\) in \(\D\),
we can find an equivalence \(d' \to d\) of \(\D\) and an object \(c\) of \(\C\) such that
\(f(c) = d'\) (cf.~also paragraph~2.2 and Theorem~2.5 of~\cite{BergerMoerdijkHomotopyEnriched}).
\end{itemize}

\begin{prop}\label{p:quillen-2}
The adjunction~\eqref{e:ho-ne} is a Quillen adjunction. 
Furthermore, the functor \(\ho_*\) preserves weak equivalences.
\end{prop}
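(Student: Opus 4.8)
The plan is to deduce this from Lemma~\ref{l:quillen} together with a general principle: an adjunction between enrichment categories that is levelwise Quillen (and where the left adjoint preserves products) induces a Quillen adjunction on enriched categories. Concretely, I would argue as follows. First, to see that \(\ho_*\) preserves cofibrations, recall that cofibrations in \(\msCat\) are generated by the maps \(\emptyset \to \ast\) and by the pushout-products of \(\partial\Del^n \to \Del^n\) (in the marked categorical model structure, so really by generating cofibrations of \(\s^+\)) with the inclusions of the "free arrow" building blocks; since \(L\ovl{\ho}\) sends generating cofibrations of \(\s^+\) to cofibrations of \(\Cat\) by Lemma~\ref{l:quillen} and preserves products by Remark~\ref{r:preserves-products}, applying \(\ho_*\) to a generating cofibration of \(\msCat\) yields a cofibration of \(\nCat{2}\). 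As \(\ho_*\) is a left adjoint it preserves all colimits, hence sends arbitrary cofibrations to cofibrations.

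Next, for the trivial cofibrations it is cleaner to check the dual condition on the right adjoint: \(\rN_*\) preserves fibrations and trivial fibrations. A trivial fibration in \(\msCat\) is a map which is surjective on objects and a trivial fibration on each mapping object; since \(\rN^+\iota\) is right Quillen by Lemma~\ref{l:quillen} it preserves trivial fibrations of \(\s^+\), and surjectivity on objects is preserved on the nose, so \(\rN_*\) of a trivial fibration of \(2\)-categories is a trivial fibration of marked-simplicial categories. For ordinary fibrations one uses the explicit description recalled above: fibrations in \(\nCat{2}\) are the \(2\)-isofibrations, i.e.\ maps which are isofibrations on mapping categories and lift invertible \(1\)-morphisms; applying \(\rN^+\iota\) levelwise turns isofibrations of categories into fibrations of marked simplicial sets (again by Lemma~\ref{l:quillen}), and the invertible-\(1\)-morphism lifting property is transported because \(\rN^+\iota\) identifies equivalences in \(\rN(\C)\) with isomorphisms of \(\C\) — so the marking and the homotopy category are both computed correctly. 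Hence \(\rN_*\) preserves fibrations and trivial fibrations, i.e.\ \(\eqref{e:ho-ne}\) is a Quillen adjunction.

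Finally, for the last assertion, I would note that every object of \(\msCat\) is cofibrant (mapping objects in \(\s^+\) are all cofibrant, and the free-category-on-a-graph functor preserves cofibrancy), so \(\ho_*\), being left Quillen, preserves weak equivalences between cofibrant objects, hence all weak equivalences, by Ken Brown's lemma. Alternatively one can argue directly: a Dwyer--Kan equivalence \(f\colon \C \to \D\) is fully faithful and essentially surjective in the sense recalled above, \(\ho_*\) applied to the hom-maps gives \(L\ovl{\ho}\) of marked categorical equivalences, which are \(2\)-categorical equivalences of mapping categories by Lemma~\ref{l:quillen}, and essential surjectivity is preserved since \(\ho_*\) is the identity on objects and \(\bho(\ho_* \C)\) is a quotient of \(\bho(\C)\) compatible with \(f\).

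The main obstacle will be the fibration step: one must verify carefully that the $2$-isofibration condition — in particular the lifting of invertible $1$-morphisms in the base — is reflected by $\rN_*$, which amounts to checking that $\rN^+\iota$ interacts correctly with the passage to homotopy categories and with the marking. This is where the concrete descriptions of equivalences in $\rN(\C)$ versus isomorphisms in $\C$, already used in the proof of Lemma~\ref{l:quillen}, do the work.
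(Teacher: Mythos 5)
Your overall plan is workable and your ``direct'' fallback argument for the last assertion is essentially the paper's, but two of the steps as written contain genuine problems. The more serious one: the claim that every object of \(\msCat\) is cofibrant is false. In the model structure of Theorem A.3.2.24 of \cite{HTT} the cofibrant enriched categories are retracts of those built freely from cells; a general marked-simplicial category with strictly associative, nontrivial composition is not cofibrant, and cofibrancy of the mapping objects in \(\s^+\) is irrelevant to this (the parenthetical in Lemma~\ref{l:quillen} concerns \(\s^+\), not \(\msCat\)). So the Ken Brown argument for ``\(\ho_*\) preserves weak equivalences'' does not apply, and you must use your alternative: \(\ho_*\) preserves fully faithful functors because \(L\ovl{\ho}\) preserves weak equivalences (Lemma~\ref{l:quillen}), and it preserves essentially surjective functors via the commutative square comparing \(\bho(\C)\to\bho(\ho_*\C)\) with \(\bho(\D)\to\bho(\ho_*\D)\), whose horizontal maps are bijective on objects. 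This is exactly the paper's argument. Note that once weak-equivalence preservation is in hand, trivial cofibrations are preserved automatically (cofibration \(+\) weak equivalence), so your entire fibration analysis becomes unnecessary.

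The second issue is in that fibration analysis itself: fibrations in \(\msCat\) are only explicitly characterized --- as local fibrations inducing isofibrations on \(\bho\) --- for maps \emph{between fibrant objects} (Theorem A.3.2.24 of \cite{HTT}); there is no such description of arbitrary fibrations, so ``check that \(\rN_*\) preserves fibrations'' is not directly meaningful as stated. It can be patched, since every \(2\)-category is fibrant in Lack's model structure and \(\rN_*\) lands in fibrant objects, by invoking the criterion that an adjunction is Quillen as soon as the left adjoint preserves cofibrations and the right adjoint preserves trivial fibrations and fibrations between fibrant objects --- but you would need to say this explicitly. The paper sidesteps the issue entirely: it checks only that \(\rN_*\) preserves trivial fibrations (using the matching explicit descriptions on both sides, exactly as in your second paragraph), deduces that \(\ho_*\) preserves cofibrations, and then obtains trivial cofibrations from weak-equivalence preservation. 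Your generating-cofibrations argument for the cofibration step is also fine in principle, but the trivial-fibration route is shorter and is the one you already have.
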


\begin{proof}
We first note that by Lemma~\ref{l:quillen} and the analogous description of trivial fibrations on both sides we have that \(\rN_*\) preserves trivial fibrations, and so \(\ho_*\) preserves cofibrations. We will now show that \(\ho_*\) preserves weak equivalences (and hence in particular trivial cofibrations). First since \(L\ovl{\ho}\) preserves weak equivalences (Lemma~\ref{l:quillen}) we have that \(\ho_*\) preserves fully-faithful functors. To finish the proof it will hence suffice to show that \(\ho_*\) preserves essentially surjective functors. For this, let \(f\colon \C \to \D\) be an essentially surjective functors, that is, a functor such that \(f_*\colon \bho(\C) \to \bho(\D)\) is essentially surjective. 
The left derived functor \(\Ho(\s^+) \to \Ho(\Cat)\) of the left Quillen functor \(L\ovl{\ho}\), together with the (unique) isomorphism \(L\ovl{\ho}(\Del_0) \cong \ast\) in $\Ho(\Cat)$, determine a natural transformation
\(\bho(-) \Rightarrow \bho_1(\ho_*(-))\) which is entry-wise bijective on objects, and hence 
a commutative square of ordinary categories
\[ \xymatrix{
\bho(\C) \ar[r]\ar[d] & \bho_1(\ho_*(\C)) \ar[d] \\
\bho(\D) \ar[r] & \bho_1(\ho_*(\D)) \\
}\]
in which the horizontal maps are bijective on objects. In particular, given an object \(x\) in \(\bho_1(\ho_*(\D))\),
we may lift it to an object \(x' \in \bho(\D)\). Since the left vertical map is assumed essentially surjective there exists a \(y' \in \bho(\C)\) and an isomorphism \(\alp\colon f(y') \overset{\cong}{\longrightarrow} x\) in \(\bho(\D)\). The images of \(y'\) and \(\alp\) on the right hand side then show that \(x\) is in the essential image of the right vertical map, as desired.
\end{proof}

\subsection{Scaled simplicial sets and \pdfoo-bicategories}

We now introduce scaled simplicial sets, which form another model for the theory of (\pdfoo,\pdftwo)-categories.
Indeed, they were introduced Lurie in~\cite{LurieGoodwillie} and shown in~\cite[Theorem~4.2.7]{LurieGoodwillie}
to support a model category structure
Quillen equivalent to that of marked-scaled categories introduced in the previous section.

\begin{define}[{\cite[\S 3.1]{LurieGoodwillie}}]
	A \emph{scaled simplicial set} is a pair \((X,T_X)\) where \(X\) is  simplicial set and \(T_X\) is a subset of the set of 2-simplices of \(X\), called the subset of \emph{thin} simplices, containing the degenerate ones. A map of scaled simplicial sets \(f\colon (X,T_X)\rightarrow (Y,T_Y)\) is a map of simplicial sets \(f\colon X \rightarrow Y\) satisfying \(f(T_X)\subset T_Y\).
\end{define}

The category of scaled simplicial sets will be denoted by \(\Ss\). 

\begin{notate}\label{d:flat-sharp}
	Given a simplicial set \(X\) we will denote by \(X_{\flat} = (X,\deg_2(X))\) the scaled simplicial consisting of \(X\)
	with only degenerate triangles \(\deg_2(X)\) as thin \(2\)-simplices, and by \(X_{\sharp} = (X,X_2)\) the scaled simplicial set consisting of \(X\) with all triangles thin.
\end{notate}

\begin{rem}\label{alternative def}
	The category \(\Ss\) admits an alternative description, as the category of models of a limit sketch, similarly to marked simplicial sets in Remark~\ref{rem:marked_sset_presentable}.
	In particular, it is a reflective localization of a presheaf category.
	In fact, we can define a category \(\Delta_{\sca}\) having as set of objects the set
	\(\{[k]\}_{k\geq 0}\cup \{[2]_t\}\), obtained from \(\Delta\) by adding
	an extra object and maps \(\varphi \colon [2]\rightarrow[2]_t\) and
	\(\zeta^i \colon  [2]_t\rightarrow [1]\) for \(i=0,1\) satisfying
	the relations \(\zeta^i\varphi = \sigma^i\), for \(i=0, 1\).
	The category \(\Ss\) is then the reflective localization
	of the category of presheaves on \(\Delta_{\sca}\) 
	at the morphism \([2]_t\coprod_{[2]} [2]_t \rightarrow [2]_t\),
	where we have identified an object of \(\Delta_{\sca}\) with its corresponding representable presheaf. 
	Equivalently, the local objects are those presheaves \(X\colon\Delta_{\sca}^{\mathrm{op}}\rightarrow\mathbf{Set}\) for which \(X([2]_t)\rightarrow X([2])\) is a monomorphism.
	In particular, the category \(\Ss\) is cartesian closed and it is easy to check
	that the reflector functor \(\Psh(\Delta_{\sca}) \to \Ss\) preserves monomorphisms and commutes with finite products.
\end{rem}

\begin{rem}
 \label{rem:monos_scaled}
	The class of monomorphisms of scaled simplicial sets corresponds to the class of maps for
	which the underlying map of simplicial sets is a monomorphism.
	This class is the cellular completion
	of the set of minimally marked boundaries and of the thin triangle inclusion:
	\[
		\bigl\{\partial\Delta^n_\flat \to \Delta^n_\flat : n \geq 0\bigr\} \cup
		\bigl\{\Delta^2_\flat \to \Delta^2_\sharp\bigr\}
	\]
\end{rem}

\begin{notate}\label{no:scaled}
For simplicity, we will often speak only of the non-degenerate thin 2-simplices when considering a scaled simplicial set. For example, if \(X\) is a simplicial set and \(T\) is any set of triangles in \(X\) then we will denote by \((X,T)\) the scaled simplicial set whose underlying simplicial set is \(X\) and whose thin triangles are \(T\) together with the degenerate triangles. If \(L \subseteq K\) is a subsimplicial set then we use \(T|_L := T \cap L_2\) to denote the set of triangles in \(L\) whose image in \(K\) is contained in \(T\). 
\end{notate}

\begin{define}[{\cite[Definition~3.1.3]{LurieGoodwillie}}]
	\label{anodyne defi}
	Let \(\bS\) be the set of maps of scaled simplicial sets consisting of:
	\begin{description}
		\item[An1\label{item:scaled_anodyne_i}]  scaled inner horns inclusions \(\bigl(\Lam^n_i,\{\Del^{\{i-1,i,i+1\}}\}|_{\Lam^n_i}\bigr)
		\subseteq \bigl(\Del^n,\{\Del^{\{i-1,i,i+1\}}\}\bigr)\) 
		for \(n \geq 2\) and \(0 < i < n\).		
		\item[An2\label{item:scaled_anodyne_ii}]  the map
		\( (\Delta^4,T)\rightarrow \bigl(\Delta^4,T\cup \{\Delta^{\{0,3,4\}}, \ \Delta^{\{0,1,4\}}\}\bigr)\), where
		\[T:=\bigl\{\Delta^{\{0,2,4\}}, \ \Delta^{\{ 1,2,3\}}, \ \Delta^{\{0,1,3\}}, \ \Delta^{\{1,3,4\}}, \ \Delta^{\{0,1,2\}}\bigr\};\]
		\item[An3\label{item:scaled_anodyne_iii}]  the set of maps
		\[ \Bigl(\Lambda^n_0\plus{\Delta^{\{0,1\}}}\Delta^0,\{\Delta^{\{0,1,n\}}\}|_{\Lam^n_0 \coprod_{\Delta^{\{0, 1\}}} \Delta^0}\Bigr)
		\rightarrow \Bigl(\Delta^n\plus{\Delta^{\{0,1\}}}\Delta^0,\{\Delta^{\{0,1,n\}}\}\Bigr)\]
		for \(n\geq 3\).
	\end{description}
\end{define}

We call \(\bS\) the set of \ndef{generating anodyne morphisms}, and its weak saturation is the class of \ndef{(scaled) anodyne maps}.

\begin{rem}
\label{rmk:j's are anod}
As observed in Remark 3.1.4 of \cite{LurieGoodwillie}, the inclusions of scaled simplicial sets \(j_i\colon (\Del^3,T_i)\to \Del^3_{\sharp}\), for \(i=1,2\), where \(T_i\) is the collection of 2-simplices of \(\Del^3\) containing the \(i\)'th vertex, 
are both anodyne. 
\end{rem}

\begin{define}\label{d:scaled-fib}
	We will say that a map of scaled simplicial sets \(X \to Y\) is a \emph{scaled fibration} if it has the right lifting property with respect to scaled anodyne maps.
\end{define}

The scaled fibrations whose codomain is a point are of special interest:
\begin{define}[{\cite[Definition~4.1.1]{LurieGoodwillie}}]\label{d:weak-bicat}
	A \emph{weak \(\infty\)-bicategory} is a scaled simplicial set \(\C\) which admits extensions along all scaled anodyne maps. 
\end{define}

\begin{rem}\label{r:kerodon}
In the recent online project~\cite[\href{https://kerodon.net/tag/01W9}{Tag 01W9}]{LurieKerodon} Lurie works with a variant of Definition~\ref{d:weak-bicat}, which he refers to simply as \((\infty,2)\)-categories. These are a-priori just simplicial sets, but have an internally determined collection of thin triangles defined via an extension property with respect to inner horns (see \cite[\href{https://kerodon.net/tag/00AD}{Tag 00AD}]{LurieKerodon}). In particular, any \((\infty,2)\)-category in the above sense determined a scaled simplicial set \(X\) which satisfies~\ref{item:scaled_anodyne_i} by construction and both~\ref{item:scaled_anodyne_iii} and its opposite by definition. 
Applying~\cite[\href{https://kerodon.net/tag/01XL}{Tag 01XL}]{LurieKerodon} together with \cite[\href{https://kerodon.net/tag/01Y0}{Tag 01Y0}]{LurieKerodon} one also concludes that \(X^{\op}\) satisfies~\ref{item:scaled_anodyne_ii}, so that \(X^{\op}\) is a weak \(\infty\)-bicategory. The same then holds for \(X\) by Corollary~\ref{weak are strong} of the present paper. 
On the other hand, any weak \(\infty\)-category \(X\) arises in this manner from an \((\infty,2)\)-category in the sense of ~\cite[\href{https://kerodon.net/tag/01W9}{Tag 01W9}]{LurieKerodon}. Indeed, since the condition of being a weak \(\infty\)-bicategory is \(\op\)-invariant by Corollary~\ref{weak are strong}, this amounts to verifying that any triangle in a weak \(\infty\)-bicategory which satisfies the property in \cite[\href{https://kerodon.net/tag/00AD}{Tag 00AD}]{LurieKerodon} is thin. This is true for \(\infty\)-bicategories which are scaled nerves of fibrant marked-simplicial categories by~\cite[Lemma 3.2.6]{GagnaHarpazLanari2Fib} (applied with \(\B=\ast\)), and hence for all \(\infty\)-bicategories by~\cite[Lemma 2.0.22]{GagnaHarpazLanari2Fib}.
\end{rem}

\begin{define}
	\label{def:core}
	The \emph{core} of a weak \(\infty\)-bicategory \(X\)
	is the sub-scaled simplicial set of \(X\) 
	spanned by those \(n\)-simplices whose 2-dimensional faces are thin.
	We shall denote by \(X^{\thi}\) the core of \(X\) to emphasize that
	we are considering the subobject ``spanned by thin triangles''.
\end{define}

\begin{warning}
	In \cite[\href{https://kerodon.net/tag/01XA}{Tag 01XA}]{LurieKerodon}, Lurie uses the term \emph{pith} in place of core, and denotes it by \(\operatorname{Pith}(\operatorname{\mathcal{C}})\).
\end{warning}

\begin{rem}
\label{rem:core-oo-cat}
We observe that the set of maps~\ref{item:scaled_anodyne_ii} ensures
that marked 2-simplices of weak \(\infty\)-bicategories satisfy a \emph{saturation} property.
The first set~\ref{item:scaled_anodyne_i} guarantees, among other things, that the core of an \(\infty\)-bicategory \(X\) is an \(\infty\)-category.
\end{rem}

\begin{define}
\label{d: equivalences}
	Let \(\C\) be a weak \(\infty\)-bicategory. We will say that an edge \(e\colon x \to y\) in \(\C\) is \emph{invertible}
	if it is invertible when considered in the core \(\infty\)-category \(\C^{\thi}\) of \(\C\), that is,
	if the corresponding arrow in the homotopy category \(\ho(\C^{\thi})\) is an isomorphism.
	In this case we will also say that \(e\colon x \to y\) is \emph{an equivalence} in \(\C\).
	
	If \(X\) is an arbitrary scaled simplicial set then we say that an edge in \(X\) is invertible if its image in \(\C\) is invertible for every scaled anodyne \(X \hrar \C\) with \(\C\) a weak \(\infty\)-bicategory.
\end{define}

\begin{rem}
	Notice that the notion of invertible edge for a scaled simplicial set is well-defined,
	since it does not depend on the choice of the weak \(\infty\)-bicategory \(\C\) and of the
	scaled anodyne map \(X \hrar \C\).
\end{rem}

\begin{rem}
\label{rem:eq}
	More explicitly, if \(\C\) is a weak \(\infty\)-bicategory then \(e\colon x \to y\) is invertible in \(\C\) if and only if there exist triangles of the form
	\[\begin{tikzcd}[column sep=small]
	x\ar[rr,equal]\ar[dr,"e"{swap}] & {}& x & & y\ar[rr,equal]\ar[dr,"f"{swap}] &{} & y \\
	&y \ar[u,phantom, "\simeq"]\ar[ur,"{f'}"{swap}]&&&& x \ar[u,phantom, "\simeq"] \ar[ur,"e"{swap}]
	\end{tikzcd}\]
	Indeed, this condition is equivalent to saying that \(e\) (or better, its homotopy class)
	is an isomorphism in \(\ho(\C^{\thi})\) (see~\cite[\href{https://kerodon.net/tag/004Z}{Tag~004Z}]{LurieKerodon}).
\end{rem}

\begin{define}[{\cite[Definition~3.1.10]{LurieGoodwillie}}]
	The \ndef{scaled coherent nerve} is a functor \(\mathrm{N}^{\sca}\colon\msCat\rightarrow \Ss \)
	defined by letting the underlying simplicial set of \(\mathrm{N}^{\sca}(\C)\) be the coherent nerve of
	the marked-simplicial category \(\C\) regarded as a simplicially enriched category (as in Definition 1.1.5.5 of \cite{HTT}),
	and setting its thin 2-simplices to be those maps \(f\colon\mathfrak{C}(\Delta^2)\rightarrow \C \)
	that send the unique non-degenerate 1-simplex of \(\mathfrak{C}(\Delta^2)(0,2)\) to a marked 1-simplex in \(\C(f(0),f(2))\).
\end{define}

The functor \(\mathrm{N}^{\sca}\) admits a left adjoint \(\mathfrak{C}^{\sca}\colon \Ss \rightarrow \msCat\), whose explicit description can be found in Definition 3.1.10 of \cite{LurieGoodwillie}.

\begin{thm}[{\cite[Theorem 4.2.7]{LurieGoodwillie}}]
	\label{coherent nerve eq}
	There exists a model structure on the category \(\Ss\) of scaled simplicial sets, characterized as follows:
	\begin{itemize}
		\item a map \(f \colon (X, T_X) \rightarrow (Y, T_Y)\) is a cofibration if and only if it is a monomorphism;
		\item a map \(f \colon (X, T_X) \rightarrow (Y, T_Y)\) is a weak equivalence if and only
		if the marked-simplicial functor \(\mathfrak{C}^{\sca}(f)\colon \mathfrak{C}^{\sca}(X, T_X)\rightarrow \mathfrak{C}^{\sca}(Y, T_Y)\) is a weak equivalence in \(\msCat\).
	\end{itemize}
	Moreover, the adjoint pair 
	\[ \xymatrixcolsep{1pc}
	\vcenter{\hbox{\xymatrix{
				**[l]\Ss \xtwocell[r]{}_{\mathrm{N}^{\sca}}^{\mathfrak{C}^{\sca}
				}{'\perp}& **[r] \msCat}}}\]
	is a Quillen equivalence with respect to the model structures defined above.
\end{thm}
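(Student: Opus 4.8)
This is quoted verbatim from \cite[Theorem~4.2.7]{LurieGoodwillie}, so in the paper it is simply cited; I describe here the strategy one would follow to prove it from scratch. The plan is to invoke a recognition theorem for combinatorial model categories (Jeff Smith's theorem, in the form recorded in \cite[\S A.2.6]{HTT}). One takes for generating cofibrations the set \(I\) consisting of the scaled boundary inclusions together with the single scaling map \(\Del^2_{\flat}\to \Del^2_{\sharp}\); these generate precisely the monomorphisms of \(\Ss\) under pushout and transfinite composition, which is possible because \(\Ss\) is a reflective localization of a presheaf category at which the reflector preserves monomorphisms (Remark~\ref{alternative def}). The candidate class \(W\) of weak equivalences is, by definition, the class of maps \(f\) with \(\mathfrak{C}^{\sca}(f)\) a Dwyer--Kan equivalence of marked-simplicial categories. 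That \(W\) has the two-out-of-three property and is closed under retracts is immediate from functoriality of \(\mathfrak{C}^{\sca}\); its accessibility as a full subcategory of \(\Ss^{[1]}\) follows since \(\Ss\) and \(\msCat\) are combinatorial, \(\mathfrak{C}^{\sca}\) is accessible (being a left adjoint), and the Dwyer--Kan equivalences form an accessible subcategory of \(\msCat^{[1]}\). Finally one checks that every \(I\)-injective map — i.e.\ every morphism with the right lifting property against all monomorphisms — is carried by \(\mathfrak{C}^{\sca}\) to a trivial fibration of \(\msCat\), hence lies in \(W\).

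The heart of the argument is the construction of a set \(J\) of generating trivial cofibrations with \(\cof(J)=\cof(I)\cap W\). One takes \(J\) to consist of suitable pushout-products of the generating scaled anodyne maps \(\bS\) of Definition~\ref{anodyne defi}, together with maps forcing invertible edges to become genuine equivalences (a scaled analogue of the maps \(\{0\}_{\sharp}\hrar E_{\sharp}\) used in the marked-simplicial world). The key lemma is then that \(\mathfrak{C}^{\sca}\) carries each map of \(J\) to a trivial cofibration of \(\msCat\); this is proved by explicitly computing \(\mathfrak{C}^{\sca}\) on small scaled simplicial sets — rigidification-type arguments identifying the relevant maps of mapping marked-simplicial-sets up to marked categorical equivalence. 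Granting the lemma, \(\cof(J)\subseteq \cof(I)\cap W\) is automatic, and for the reverse inclusion one argues in the usual way: a \(J\)-injective map between fibrant objects is a ``bicategorical fibration'', and a fibrant-replacement-and-lifting argument then exhibits every monomorphism in \(W\) as a retract of a map in \(\cof(J)\). Smith's theorem now yields the model structure, with \(I\) as generating cofibrations and \(J\) as generating trivial cofibrations.

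It remains to see that \((\mathfrak{C}^{\sca}\dashv \mathrm{N}^{\sca})\) is a Quillen equivalence. It is a Quillen pair because \(\mathfrak{C}^{\sca}\) sends the generators \(I\) to cofibrations of \(\msCat\) — using that \(\mathfrak{C}\) takes monomorphisms of simplicial sets to cofibrations of simplicial categories, together with the evident behaviour on the scaling generator — and sends \(J\) to trivial cofibrations by the lemma above. Since every object of \(\Ss\) is cofibrant and \(W\) is created by \(\mathfrak{C}^{\sca}\), it then suffices, for the equivalence, to prove that for every fibrant marked-simplicial category \(\mathcal{D}\) the counit \(\varepsilon_{\mathcal{D}}\colon \mathfrak{C}^{\sca}\mathrm{N}^{\sca}(\mathcal{D})\to \mathcal{D}\) is a Dwyer--Kan equivalence. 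Essential surjectivity of \(\varepsilon_{\mathcal{D}}\) is clear, as it is the identity on objects; full faithfulness amounts to showing that \(\mathfrak{C}^{\sca}\mathrm{N}^{\sca}(\mathcal{D})(x,y)\to \mathcal{D}(x,y)\) is a marked categorical equivalence for all \(x,y\), which one obtains by the same cofinality-and-filtration analysis of the mapping spaces of \(\mathfrak{C}\mathrm{N}\) that underlies the unmarked Quillen equivalence between quasi-categories and simplicial categories, carried out while tracking the markings throughout. The main obstacle is precisely this pair of rectification computations — \(\mathfrak{C}^{\sca}\) on the set \(J\) and on coherent nerves; everything else (the recognition-theorem bookkeeping, the accessibility verification, and the preservation of cofibrations) is formal and proceeds as in \cite{HTT}.
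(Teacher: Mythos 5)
The paper offers no proof of this statement---it is imported verbatim from \cite[Theorem~4.2.7]{LurieGoodwillie}---and your outline faithfully reproduces the strategy of the cited proof: a Smith-type recognition argument with cofibrations generated by the scaled boundary inclusions and \(\Del^2_{\flat}\to\Del^2_{\sharp}\), weak equivalences created by \(\mathfrak{C}^{\sca}\), and the rectification computation showing the counit \(\mathfrak{C}^{\sca}\rN^{\sca}(\D)\to\D\) is a Dwyer--Kan equivalence as the essential content. One phrasing slip worth correcting: an \(I\)-injective map is not ``carried by \(\mathfrak{C}^{\sca}\) to a trivial fibration'' (a left adjoint does not preserve right lifting properties); rather, such a map admits a section and a compatible cylinder homotopy, hence is a homotopy equivalence whose image under \(\mathfrak{C}^{\sca}\) is a weak equivalence of \(\msCat\)---which is the standard argument and leaves the rest of your sketch intact.
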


We will refer to the model structure of Theorem~\ref{coherent nerve eq} as the \ndef{bicategorical model structure}.
Following~\cite{LurieGoodwillie} we will refer to the fibrant objects of this model category as \emph{\(\infty\)-bicategories}
and to its weak equivalences as \ndef{bicategorical equivalences};
moreover we will call \ndef{bicategorical fibrations} its fibrations.

\begin{prop}[{\cite[Proposition~3.1.13]{LurieGoodwillie}}]
	\label{oo-bicat are weak oo-bicat}
	If \(f\) belongs to \(\bS\) then \(\mathfrak{C}^{\sca}(f)\) is a trivial cofibration of \(\s^+\)-categories. Therefore, every \(\infty\)-bicategory is a weak \(\infty\)-bicategory. Similarly, every bicategorical fibration  
	is a scaled fibration.
\end{prop}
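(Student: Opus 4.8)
The plan is to establish the statement first for the generating anodyne maps \(f \in \bS\), and then to deduce the two ``therefore'' assertions by formal model-categorical arguments. Since \(\mathfrak{C}^{\sca}\) is a left adjoint it preserves colimits, and monomorphisms of scaled simplicial sets are generated under colimits by the boundary inclusions \((\partial\Del^n)_\flat \hookrightarrow \Del^n_\flat\) together with the scaling \(\Del^2_\flat \hookrightarrow \Del^2_\sharp\); applying \(\mathfrak{C}^{\sca}\) to these --- using its explicit description in \cite[Def.~3.1.30]{LurieGoodwillie} --- yields maps which are identities on objects (or add one object), and which on mapping objects are either boundary inclusions of cubes or the marking map of the edge of \(\Del^1\), hence in all cases cofibrations of \(\s^+\)-categories. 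Thus \(\mathfrak{C}^{\sca}\) preserves cofibrations, and since every \(f \in \bS\) is a monomorphism it remains to show that \(\mathfrak{C}^{\sca}(f)\) is a Dwyer--Kan equivalence. Each \(f \in \bS\) is moreover bijective on objects, so the point is to verify that the induced maps on mapping objects are marked categorical equivalences.

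Here one uses that the mapping objects of \(\mathfrak{C}^{\sca}(\Del^n, T)\) underlie the cubes \(\Map_{\mathfrak{C}(\Del^n)}(i,j) = \rN(P_{i,j})\), with \(P_{i,j}\) the poset of subsets of \(\{i, i+1, \dots, j\}\) containing \(i\) and \(j\), carrying the marking determined by \(T\) (which depends only on the thin triangles lying in the face \(\Del^{[i,j]}\)). For the scaled inner horn inclusions the map on mapping objects is built from \(\mathfrak{C}(\Lam^n_i) \to \mathfrak{C}(\Del^n)\) --- a trivial cofibration of simplicial categories by \cite{HTT} --- together with the marking forced by the thin triangle \(\Del^{\{i-1,i,i+1\}}\), and one checks it stays a marked categorical equivalence. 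For the \(\Del^4\)-saturation map the underlying simplicial set is unchanged and, by the locality just mentioned, only the mapping object \(\Map(0,4) \cong (\Del^1)^{\times 3}\) is affected; there the two new thin triangles mark two new edges of the cube, and one checks by hand that this inclusion of marked cubes is a trivial cofibration --- the combinatorial reflection of the fact that those two triangles are forced to be thin by the five thin triangles of \(T\). The collapsed outer horn maps are the delicate case: \(\mathfrak{C}^{\sca}\) turns the pushout collapsing \(\Del^{\{0,1\}}\) to a point into a pushout of \(\s^+\)-categories along a map that identifies the objects \(0\) and \(1\), and one must compute the mapping objects of both corners of the resulting square and compare; the mechanism is that collapsing \(\Del^{\{0,1\}}\) makes the edge \(0 \to 1\) an equivalence, which is precisely recorded by the extra thin triangle \(\Del^{\{0,1,n\}}\), and I expect the comparison to be carried out by induction on \(n\), reducing to low-dimensional cubes. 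This last analysis --- tracking \(\mathfrak{C}^{\sca}\) of a pushout that is not a cofibration through to the level of mapping objects --- is the main obstacle, and constitutes the bulk of \cite[Section~3.1]{LurieGoodwillie}.

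Once \(\mathfrak{C}^{\sca}(f)\) is known to be a trivial cofibration of \(\s^+\)-categories for every \(f \in \bS\), the same holds for every scaled anodyne \(f\): indeed \(\mathfrak{C}^{\sca}\) preserves colimits, and the trivial cofibrations of \(\msCat\) are closed under pushout, transfinite composition and retracts. In particular \(\mathfrak{C}^{\sca}(f)\) is a weak equivalence, so \(f\) is a weak equivalence of scaled simplicial sets by the very definition of the bicategorical model structure (Theorem~\ref{coherent nerve eq}); being also a monomorphism, \(f\) is a trivial cofibration. Consequently every fibrant object of \(\Ss\) --- that is, every \(\infty\)-bicategory --- has the right lifting property against all scaled anodyne maps, hence is a weak \(\infty\)-bicategory; and every bicategorical fibration, having by definition the right lifting property against all trivial cofibrations, has it in particular against the scaled anodyne maps, hence is a scaled fibration.
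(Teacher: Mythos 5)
Your proposal is correct and takes essentially the same approach as the paper: the paper offers no proof of the first assertion beyond the citation to \cite[3.1.13]{LurieGoodwillie}, and your outline accurately reconstructs the structure of Lurie's argument (reduction to the generators of \(\bS\), analysis of the mapping objects \(\rN(P_{i,j})\) with their induced markings, with the collapsed outer horns as the genuinely delicate case), deferring the hard combinatorics to \cite[\S 3.1]{LurieGoodwillie} exactly as the paper does. Your derivation of the two ``therefore'' clauses --- weak saturation plus preservation of colimits and retracts by \(\mathfrak{C}^{\sca}\), then the defining lifting properties of fibrant objects and fibrations --- is the standard argument and is correct.
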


We will prove a converse to Proposition~\ref{oo-bicat are weak oo-bicat} in \S\ref{sec:weak-is-strong} below (see Theorem~\ref{t:fibrant}).

\begin{rem}\label{r:cartesian}
The bicategorical model structure on \(\Ss\) is \emph{cartesian}. Indeed, the cofibrations are the monomorphisms and are closed under pushout products, and it follows from~\cite[Lemma 4.2.6]{LurieGoodwillie} via a 2-out-of-3 argument the pushout product of a cofibration and a trivial cofibration is a trivial cofibration. 
\end{rem}

\begin{rem}
 \label{rem:biequivalences_closed_op}
	If \(f \colon (X, T_X) \to (Y, T_Y)\) is a bicategorical equivalence
	then also \(f^\op \colon (X, T_X)^\op \to (Y, T_Y)^\op\) is so.
	This is clear, since for every pair of vertices \((x, y)\) of \(X\)
	we have that
	\(\Map_{\mathfrak{C}^{\sca}(X, T_X)}(x, y) \cong \Map_{\mathfrak{C}^{\sca}((X, T_X)^\op)}(y, x)\).
\end{rem}

\begin{rem}\label{rem:outer_horn-2}
	The scaled outer horn
	\(\bigl(\Lambda^2_0 \amalg_{\Delta^{\{0, 1\}}} \Delta^0, \emptyset\bigr)
	\to \bigl(\Delta^2 \amalg_{\{0, 1\}} \Delta^0, \{\Delta^2_2\}\bigr)\)
	is a trivial cofibration of the bicategorical model structure.
	Indeed, it is a monomorphism and one easily checks that its image
	via \(\mathfrak{C}^{\sca}\) is a Dwyer--Kan equivalence, see~\cite[Lemma 3.5.13]{LurieGoodwillie}.
\end{rem}

\subsection{Stratified sets}

In this subsection we introduce the notion of stratified sets, and define the model category for complicial sets and other variations such as \(n\)\nbd-triv\-i\-al complicial sets.

\begin{define}[\cite{VerityWeakComplicialI}]
	A \emph{stratified set} is a pair \((X,M_X)\) where \(X\) is a simplicial sets and \(M_X\) is a collection of its \(n\)-simplices for \(n > 0\) that contains the degenerate ones.
	A map \(f\colon (X,M_X)\rightarrow (Y,M_Y)\) of stratified sets is a map of simplicial sets \(f \colon X \rightarrow Y\) such that \(f(M_X)\subset M_Y\). 
	We denote the category of stratified sets by \(\St\).
\end{define}

\begin{rem}
	As with the case of scaled simplicial sets, the category of stratified sets admits a description in terms of a reflective localization of a category of presheaves, see~\cite[Observation 108]{VerityWeakComplicialI}.
\end{rem}

\begin{notate}\label{no:stratified}
If \(X\) is a simplicial set and \(M\) is any set of simplices in \(X\) then we will denote by \((X,M)\) the stratified set whose underlying simplicial set is \(X\)
and whose marked simplices are \(M\) together with the degenerate simplicies of \(X\). If \(L \subseteq K\) is a subsimplicial set then we use \(M|_L\) to denote the set of simplices in \(L\) whose image in \(K\) is contained in~\(M\). 
\end{notate}

\begin{warning}\label{w:flat}
	Comparing Notation~\ref{no:stratified} and Notation~\ref{no:scaled} reveals a certain abuse of notation: given a simplicial set \(K\) and a collection of triangles \(T\), the symbol \((K,T)\) can either mean the scaled simplicial set whose thin triangles are \(T\) plus the degenerate ones or the stratified set on \(K\) whose marked simplices are \(T\) plus the degenerate ones. A similar ambiguity exists between Notation~\ref{no:stratified} and~\ref{no:marked}. We believe however that in the way these terms are used in this paper there will be no place where an actual confusion can arise. We also note that the category \(\Ss\) of scaled simplicial set can be identified with the full subcategory of \(\St\) consisting of those stratified sets for which all non-degenerate marked simplices are of dimension~\(2\). Under this identification the ambiguity above disappears. This full inclusion \(\iota\colon \Ss\rightarrow \St\) is also the left counterpart of the Quillen adjunction we consider in~\S\ref{sec:equivalence}.
\end{warning}

\begin{rem}
In the specific case of \(\Del^0\) we will simplify notation and denote \((\Del^0,\emptyset)\) by \(\Del^0\), since here there is really no possibility for confusion.
\end{rem}

\begin{define}\label{d:flat}
Given an integer \(k\) we will denote by \(\thr_k(K)\) the stratified set whose underlying simplicial set is \(K\) and whose marked simplices are the degenerate ones and all those in dimension \(> k\). In the case \(k=0\) we will simply write \(\thr(K)\) to \(\thr_0(K)\). If \((X,M)\) is a stratified set then we will denote by \(\thb_k(X,M)\) the stratified set whose marked simplices consist of \(M\) together with all simplices of dimension \(> k\).
\end{define}

\begin{define}[{\cite[Definition~31]{VerityWeakComplicialI}}]
	Given two stratified sets \((X,M_X)\) and \((Y,M_Y)\), we define their \emph{join} \(X\ast Y\) to be the stratified set 
	whose underlying simplicial set consists of Joyal's join of simplicial sets \(X \ast Y\),
	and such that an \(n\)-simplex \((x,y)\in (X \ast Y)_n\) with \(x\in X_k\) and \(y \in Y_{n-k-1}\) for \(-1 \leq k \leq n\) 
	is marked if and only if \(x\) is marked in \(X\) \emph{or} \(y\) is marked in \(Y\) (including the case in which both are marked).
	Here we employ the convention that \(X_{-1}\) and \(Y_{-1}\) contain a single element which is \emph{not} considered as marked for the purpose of this definition. 
\end{define}

We let \(\Delta^3_{\eq}\) be the stratified set whose underlying simplicial set is \(\Delta^3\), whose marked 1-simplices are \(\Del^{\{0,2\}}\) and \(\Del^{\{1,3\}}\) and whose \(n\)-simplices for \(n\) strictly greater than 1 are all marked. 

\begin{define}[\cite{RiehlOuverture}, \cite{VerityWeakComplicialI}]\label{d:complicial-anodyne}
	The class of \emph{complicial horns} is the weak saturation of the set of inclusions 
	\[(\Lam^n_i,M_i|_{\Lam^n_i}) \to (\Del^n,M_i)\] 
	for \(n \geq 1\) and \(0 \leq i \leq n\), where \(M_i\) consists of all the degenerate simplices and all the simplices which contain the vertices in the set
	\[\{i-1,i,i+1\}\cap \{0,\ldots, \ n\} = \{j \in [n] : |j-i|\leq 1\},\]
	whose size may be either \(2\) or \(3\) depending on \(i\) and \(n\).
	
	The class of \emph{thinness extensions} is the weak saturation of the set of inclusions of the form 
	\[ (\Del^n,M_i') \to (\Del^n,M_i'') \] 
	with \(n \geq 2\) where \(M_i'\) consists of the simplices of \(M_i\) as above together with the two \((n-1)\)-faces opposite to the vertices \(i-1\) and \(i+1\),
	while \(M_i''\) consists of the the simplices of \(M_i\) and together with all \((n-1)\)-faces. 
	
	The class of \emph{\(k\)-trivializing morphisms} is the weak saturation of the set of inclusions 
	\[(\Delta^n,\emptyset)\rightarrow (\Delta^n, \{\Del^n\})\] 
	for \(n > k\). We consider the class of \(\infty\)-trivializing morphisms to be the empty class. 
	
	Finally, the class of \emph{saturation morphisms} is the weak saturation of the set of inclusions of the form \(\Delta^3_{\eq}\ast (\Delta^n,\emptyset)\rightarrow \thr(\Delta^3)\ast (\Delta^n,\emptyset)\) for \(n \geq -1\), where \(\Del^{-1} = \emptyset\) by convention.
\end{define}

The following definition isolates the stratified sets of interest.
\begin{define}[\cite{RiehlOuverture}, \cite{VerityWeakComplicialI}]
	\label{compl set defi}
	A stratified set \((X,M)\) is a \emph{complicial set} if it has the right lifting property with respect to complicial horns and thinness extensions.
	
	A stratified set \((X,M)\) is \emph{\(k\)-trivial} if it has the right lifting property with respect to \(k\)-trivializing morphisms.
	
	A stratified set \((X,M)\) is \emph{saturated} if it has the right lifting property with respect to saturation morphisms.
\end{define} 

The following result establishes the existence of a model category structure for (saturated, \(n\)-trivial) complicial sets.

\begin{thm}[\cite{OzornovaRovelliNComplicial}, \cite{RiehlOuverture}, \cite{VerityWeakComplicialI}]
	For every \(0 \leq n \leq \infty\) there exists a model category structure on the category \(\St\) of stratified sets characterized by the following properties:
	\begin{itemize}
		\item a map \(f \colon (X, M_X) \rightarrow (Y, M_Y)\) in \(\St\) is a cofibration if and only if it is a monomorphism;
		\item a stratified set \((X,M_X)\) is fibrant if and only if it is an \(n\)-trivial saturated complicial set.
	\end{itemize}
	We denote this model category structure by \(\St_n\).
\end{thm}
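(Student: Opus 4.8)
The plan is to obtain this model structure as an instance of Cisinski--Olschok's general recognition principle for cofibrantly generated model structures on a presheaf topos, following the strategy Verity developed for weak complicial sets. As recalled above, \(\St\) is a reflective localization of a presheaf category of sets, hence a Grothendieck topos, and its monomorphisms form the cofibrations of a cofibrantly generated weak factorization system; a cellular model is given by the boundary inclusions \((\partial\Del^n,\emptyset)\hookrightarrow(\Del^n,\emptyset)\) together with the elementary marking inclusions \((\Del^n,\emptyset)\hookrightarrow(\Del^n,\{\Del^n\})\) for \(n\geq 1\), since any monomorphism of stratified sets is built from these by successively attaching non-degenerate simplices and then adjoining markings.

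First I would fix an interval object, for instance \(J:=\thr(\Del^1)\), the \(1\)-simplex with its non-degenerate edge marked, which endows \(\St\) with the cartesian cylinder \((-)\times J\). Note that the two endpoint inclusions \(\{0\}\hookrightarrow J\) and \(\{1\}\hookrightarrow J\) already occur among the complicial horns of Definition~\ref{d:complicial-anodyne} (they are the cases \(n=1\), \(i=0,1\)). Let \(S\) be the set of generating maps consisting of all complicial horns, all thinness extensions, the \(k\)-trivializing morphisms for \(k=n\) (the empty set if \(n=\infty\)), and the saturation morphisms; each of these is a monomorphism of stratified sets. The Cisinski--Olschok machinery recorded in the appendix (cf.\ \cite{CisinskiPrefaisceaux,OlschokLeft}) then produces a cofibrantly generated model structure on \(\St\) whose cofibrations are exactly the monomorphisms and whose fibrant objects are precisely the stratified sets that are injective with respect to a certain weakly saturated class \(\An_J(S)\) of monomorphisms; by construction \(\An_J(S)\) is the smallest weakly saturated class containing \(S\) and the pushout-products of the endpoint inclusions of \(J\) with the generating cofibrations, so in particular it contains \(S\).

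It remains to identify the fibrant objects with the \(n\)-trivial saturated complicial sets. One inclusion is immediate: since \(\An_J(S)\supseteq S\), every fibrant object has the right lifting property against \(S\), hence---unwinding Definition~\ref{compl set defi}---is a complicial set which is moreover \(n\)-trivial and saturated. For the converse it suffices to show that \(\An_J(S)\) collapses onto the weak saturation \(\overline{S}\) of \(S\) alone. Because \(\{0\}\hookrightarrow J\) and \(\{1\}\hookrightarrow J\) already belong to \(S\), the pushout-products generating \(\An_J(S)\) over \(\overline{S}\) are pushout-products of members of \(S\) with generating cofibrations, so this reduces to the statement that \(\overline{S}\) is closed under pushout-product with the boundary inclusions and with the elementary marking inclusions. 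This pushout-product stability is the heart of the matter, and it is exactly what Verity's theory of complicial anodyne extensions establishes \cite{VerityWeakComplicialI}, extended to absorb the trivializing and saturation morphisms as in \cite{OzornovaRovelliNComplicial,RiehlOuverture}: for each generating anodyne map and each generating cofibration one exhibits the pushout-product as a transfinite composite of pushouts of members of \(S\), via an explicit filtration of the non-degenerate simplices of the product according to their incidence with the distinguished vertices and faces of the anodyne generator in question. The main obstacle is precisely this combinatorial bookkeeping---lengthy but mechanical once the correct filtration has been chosen---and in writing up the result I would simply invoke the cited theorems. Letting \(n\) range over \(0\leq n\leq\infty\), with the \(\infty\)-trivializing class empty as stipulated in Definition~\ref{d:complicial-anodyne}, yields the family of model structures \(\St_n\).
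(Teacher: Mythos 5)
The paper offers no proof of this statement: it is imported wholesale from the cited references, so there is no internal argument to compare against. Your sketch is a correct reconstruction of how those references actually establish the result, namely Verity's Cisinski-style recognition theorem for anodyne classes of stratified inclusions, applied by Ozornova--Rovelli to the generating set consisting of complicial horns, thinness extensions, \(n\)-trivializing morphisms and saturation morphisms. Two remarks. First, the references take the fully marked walking isomorphism \(E\) as the interval rather than \(\thr(\Del^1)\); your choice still works, precisely because the endpoint inclusions \(\{\eps\}\hookrightarrow\thr(\Del^1)\) are the marked outer \(1\)-horns and hence lie in \(S\), and because the characterization of the model structure by cofibrations plus fibrant objects makes the choice of (good) cylinder immaterial. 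Second, you correctly locate the entire mathematical content in the closure of \(\overline{S}\) under pushout-product with the cellular model for the monomorphisms; be aware that this is genuinely delicate for the \emph{outer} complicial horns (the unmarked analogue fails for quasi-categories, so the marking is doing real work) and that extending it to the saturation morphisms is the main new input of \cite{OzornovaRovelliNComplicial} beyond \cite{VerityWeakComplicialI}. Since you defer exactly that content to the same sources the paper cites, your argument is no less complete than the paper's own treatment of the statement.
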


\begin{rem}
The thinness extensions  \((\Del^n,M_i') \to (\Del^n,M_i'')\) for \(n \geq 4\) are already contained in the weakly saturated closure of the \(2\)-trivializing morphisms. When working in the model category \(\St_2\) one can hence restrict the attention to the thinness extensions of dimensions \(2\) and \(3\). Concretely, these consist of the following maps:
\begin{itemize}
\item
\((\Del^2,M_{(2,i)}) \to \thr(\Del^2)\), for \(i=0,1,2\), when \(M_{(2,i)}\) consists of all faces of \(\Del^2\) of dimension \(\geq 1\) which contain the vertex \(i\);
\item
\((\Del^3,M_{(3,i)}) \to \thr(\Del^3)\), for \(i=0,1,2,3\), when \(M_{(3,i)}\) consists of all faces of dimension \(\geq 2\) which contain the vertex \(i\)
as well as the edge \(\{i-1,i,i+1\} \cap \{0,1,2,3\}\) in the case \(i=0,3\).
\end{itemize}

For instance, \(M_{(2,0)}\) contains precisely \(\{\Delta^{\{0,1,2\}},\Delta^{\{0,1\}},\Delta^{\{0,2\}}\}\) plus the degenerate simplices.
\end{rem}

\begin{rem}\label{r:closure-nat}
The model category \(\St_2\) is cartesian, see~\cite[Theorem A]{OzornovaRovelliNComplicial}. This has in particular the following implication on the collection of marked simplices in a given fibrant stratified set \((X,M_X)\): if \(h\colon \thr(\Del^1) \times (\Del^n,\emptyset) \to (X,M_X)\) is a map, which we consider as encoding a natural equivalence from \(\sig_0 := h|_{\{0\} \times \Del^n}\) to \(\sig_1 := h|_{\{1\} \times \Del^n}\), then \(\sig_0\) is marked if and only if \(\sig_1\) is marked. Indeed, for \(e=0,1\) the pushout-product of the anodyne map \(\Delta^{\{e\}} \to \thr(\Delta^1)\)
with the monomorphism \(\Delta^n_\flat \to (\Delta^n, \{\Delta^{\{0, 1, \dots, n\}}\})\)
is a trivial cofibration.
\end{rem}

\section{Outer cartesian fibrations}\label{sec:fibrations}

In the \((\infty,2)\)-categorical setting one often encounters \((\infty,2)\)-categories which are \emph{fibered} into \((\infty,1)\)-categories over a given base. To describe such situations effectively one requires a robust theory which encompasses the four types of \emph{variance} such a fibration may encode. 
One such type, which we will call \ndef{inner cocartesian fibration}, was studied in~\cite{LurieGoodwillie}. It corresponds to the situation where the fiber \(\D_x\) depends \emph{covariantly} on both \(1\)-morphisms and \(2\)-morphisms. If \(\pi\colon\C \rightarrow \D\) is an inner cocartesian fibration then \(\pi^{\op}\colon\C^{\op} \rightarrow \D^{\op}\) is an \ndef{inner cartesian fibration} (whose fibers are the opposites of the fibers of \(\pi\)). These correspond to the situation where \(\D_x\) depends contravariantly on both \(1\)-morphisms and \(2\)-morphisms. The reason we use the term ``inner'' to describe these two types of fibrations is that they are inner fibrations in the sense of Joyal--Lurie on the level of the underlying simplicial sets. By contrast, the fibrations which describe a dependence which is covariant in \(1\)-morphisms and contravariant in \(2\)-morphisms (or the other way around) are generally not inner fibrations. Instead, we call them \ndef{outer (co)cartesian fibrations}. To our knowledge these types of fibrations have not yet appeared in the literature. 

In this section we introduce the notion of outer cartesian fibration and investigate its basic properties. We will begin in \S\ref{s:out} by presenting the main definitions. The prototypical example we have in mind is that of the outer cartesian fibration represented by a given object. We will construct this fibration using the slice construction in \S\ref{s:join} and show that its fibers model the corresponding mapping \(\infty\)-categories in~\S\ref{s:map}. Finally, in \S\ref{s:lift} we will show that outer cartesian fibrations satisfy a lifting property for natural transformations --- a key feature which we will exploit in later parts of the paper.

\subsection{Outer fibrations and cartesian edges}\label{s:out}
We begin by introducing the basic definitions.

\begin{define}\label{d:weak}
	We will say that a map of scaled simplicial sets \(X \rightarrow Y\) is a \ndef{weak fibration} if it has the right lifting property with respect to the following types of maps:
	\begin{description}
		\item[An1\label{item:weak-an}]
		All scaled inner horn inclusions:
		\[ (\Lam^n_i,\{\Del^{\{i-1,i,i+1\}}\}|_{\Lam^n_i}) \subseteq (\Del^n,\{\Del^{\{i-1,i,i+1\}}\}) \] 
		for \(n \geq 2\) and \(0 < i < n\).
		\item[W2\label{item:weak_outer-0}]
		The scaled horn inclusions, for all \(n \geq 2\):
		\[\Bigl(\Lam^n_0 \plus{\Del^{\{0,1\}}}\Del^0,
			\{\Del^{\{0,1,n\}}\}|_{\Lam^n_0 \plus{\Del^{\{0,1\}}}\Del^0}\Bigr)
		  \subseteq \Bigl(\Del^n\plus{\Del^{\{0,1\}}}\Del^0,\{\Del^{\{0,1,n\}}\}\Bigr).
		\]
		\item[W3\label{item:weak_outer-n}]
		The scaled horn inclusions, for all \(n \geq 2\):
		\[\Bigl(\Lam^n_n \plus{\Del^{\{n-1,n\}}}\Del^0,
			\{\Del^{\{0,n-1,n\}}\}|_{\Lam^n_n \plus{\Del^{\{n-1,n\}}}\Del^0}\Bigr) 
			\subseteq \Bigl(\Del^n\plus{\Del^{\{n-1,n\}}}\Del^0,\{\Del^{\{0,n-1,n\}}\}\Bigr).
		\]
	\end{description}
\end{define}

\begin{rem}\label{r:bicategorical-weak}
	The maps~\ref{item:scaled_anodyne_i}, \ref{item:weak_outer-0} and~\ref{item:weak_outer-n}
	in Definition~\ref{d:weak} are trivial cofibrations with respect to the bicategorical model structure: indeed, the maps of the first type are scaled anodyne,
	the maps in~\ref{item:weak_outer-0} are scaled anodyne of type~\ref{item:scaled_anodyne_iii}
	for \(n\geq 3\) and for \(n=2\) the map is a trivial cofibration by Remark~\ref{rem:outer_horn-2}; finally, the maps of the third set~\ref{item:weak_outer-n} are the opposite of a scaled anodyne map (see Remark~\ref{rem:biequivalences_closed_op}).
	It follows that every bicategorical fibration is a weak fibration. 
\end{rem}

\begin{define}[cf.~{\cite[Remark~2.4.1.4]{HTT}}]
\label{d:cartesian}
	Let \(p\colon X \rightarrow Y\) be a weak fibration. We will say that an edge \(e\colon \Del^1 \rightarrow X\) is \ndef{\(p\)-cartesian} if the dotted lift exists in any diagram of the form
	\[ \xymatrix{
		(\Lam^n_n,\{\Del^{\{0,n-1,n\}}\}|_{\Lam^n_n}) \ar^-{\sig}[r]\ar[d] & X \ar^p[d] \\
		(\Del^n,\{\Del^{\{0,n-1,n\}}\}) \ar@{.>}[ur]\ar[r] & Y \\
	}\]
	with \(n \geq 2\) and \(\sig|_{\Del^{\{n-1,n\}}} = e\). 
\end{define}

\begin{define}
\label{d:outer-fibration}
	Let \(p\colon X \rightarrow Y\) be a weak fibration.
	We will say that \(p\) is an \ndef{outer fibration} if:
\begin{description}
\item[O1\label{item:outer_fibration-i}]
it detects thin simplices, that is, a triangle in \(X\) is thin if and only if its image in \(Y\) is thin, that is \(p \colon X \to Y\) satisfies the right lifting
property with respect to the map \((\Delta^2, \emptyset) \to (\Delta^2, \Delta^2_2)\);
\item[O2\label{item:outer_fibration-ii}]
the map \(p \colon X \to Y\) satisfies the right lifting property with respect to the inclusion 
\[\Bigl(\Lam^n_0 \coprod_{\Del^{\{0,1\}}}\Del^0, \emptyset\Bigr) \subseteq
  \Bigl(\Del^n\coprod_{\Del^{\{0,1\}}}\Del^0, \emptyset\Bigr)\]
  for \(n \geq 2\);
\item[O3\label{item:outer_fibration-iii}]
the map \(p \colon X \to Y\) satisfies the right lifting property with respect to the inclusion 
 \[
	\Bigl(\Lam^n_n \coprod_{\Del^{\{n-1,n\}}}\Del^0, \emptyset\Bigr) \subseteq
  \Bigl(\Del^n\coprod_{\Del^{\{n-1,n\}}}\Del^0, \emptyset\Bigr)
\]
for \(n \geq 2\).
\end{description}
\end{define}

\begin{rem}
	The second and third lifting properties~\ref{item:outer_fibration-ii} and~\ref{item:outer_fibration-iii} above are conditions on the
	underlying simplicial set of the map \(p \colon X \to Y\).
\end{rem}

\begin{warning}
	After a preprint of this paper was released, Lurie introduced the term
	\emph{interior fibration} \cite[\href{https://kerodon.net/tag/01WF}{Tag 01WF}]{LurieKerodon}
	to encode what we just defined as outer fibrations. Our choice is motivated by the intent of highlighting that \emph{special outer horns} admit fillers against such maps.
\end{warning}

\begin{rem}\label{r:weak-scaled}
	If \(p\colon X \to Y\) is a weak fibration and detects thin triangles 
	then \(p\) is a scaled fibration (Definition~\ref{d:scaled-fib}). 
	In particular, every outer fibration is a scaled fibration.
\end{rem}

\begin{define}
	Let \(p\colon X \rightarrow Y\) be a map scaled simplicial sets.
	We will say that \(p\) is an \ndef{outer cartesian fibration} if the following conditions hold:
	\begin{enumerate}[leftmargin=*]
		\item
		The map \(p\) is an outer fibration.
		\item
		For every edge \(e\colon y \rightarrow y'\) in \(Y\) and every \(x' \in X\) such that \(p(x')=y'\) there exists a \(p\)-cartesian edge \(f\colon x\rightarrow x'\)
		of \(X\) such that \(p(f) = e\).
	\end{enumerate}
	Dually, we will say that \(p\colon X \rightarrow Y\) is an \ndef{outer cocartesian fibration} if \(p^{\op}\colon X^{\op} \rightarrow Y^{\op}\) is an outer cartesian fibration.
\end{define}

\begin{rem}\label{r:base-change}
	It is straightforward to verify that the classes of weak fibrations, outer fibrations and outer (co)cartesian fibrations are all closed under base change. In addition, cartesian edges in a pulled back fibration are detected in the original one, that is, if
	\[ \xymatrix{
	X \ar[r]\ar[d]^{p} & X' \ar[d]^{p'} \\
	Y \ar[r] & Y' 
	}\]
	is a pullback square whose vertical maps are weak fibrations, and \(e\colon x \to y\) is an arrow in \(X\) whose image in \(X'\) is \(p'\)-cartesian, then \(e\) is \(p\)-cartesian.  
\end{rem}

\begin{rem}
 \label{rem:outer_cartesian_maximal_marking}
	Suppose that \(p \colon X \to Y\) is an outer cartesian fibration
	and that~\(Y\), and so \(X\), is endowed with the maximal scaling,
	\ie all triangles of \(Y\) and of \(X\) are thin.
	Then the map of simplicial sets \(\bar{p}\) underlying \(p\) is a cartesian fibration
	of simplicial sets (see, for instance, \cite[Definition~2.4.2.1]{HTT}
	or~\cite[\href{https://kerodon.net/tag/01UA}{Tag 01UA}]{LurieKerodon}).
	Indeed, condition~\ref{item:scaled_anodyne_i} for weak fibrations implies that \(\bar{p}\)
	is an inner fibration of simplicial sets, and we have enough \(\bar{p}\)\nbd-carte\-sian edges since these agree in this case with the \(p\)-cartesian ones, as in Definition \ref{d:cartesian}.	
	Notice that if \(Y\), and so \(X\), is a weak \(\infty\)-bicategory,
	or equivalently if their underlying simplicial sets are \(\infty\)-categories,
	then conditions~\ref{item:weak_outer-0} and~\ref{item:weak_outer-n} for weak fibrations,
	which in this case are equivalent
	to conditions~\ref{item:outer_fibration-ii} and~\ref{item:outer_fibration-iii} for outer fibrations, are already satisfied
	(see, for instance, \cite[\href{https://kerodon.net/tag/01H0}{Tag 01H0}]{LurieKerodon}).
	Hence, in this particular case the map \(p \colon X \to Y\) is an outer
	cartesian fibration if and only if the underlying map \(\bar{p}\)
	of simplicial sets is a cartesian fibration.
\end{rem}

\begin{rem}\label{r:iso}
	Let \(p\colon X \to Y\) be an outer cartesian fibration and assume in addition that \(Y\) is a weak \(\infty\)-bicategory. Then \(X\) is a weak \(\infty\)-bicategory as well by Remark~\ref{r:weak-scaled}. The condition that \(p\) detects thin triangles then implies that
	\(X^{\thi} \cong X \times_{Y} Y^{\thi}\), and so the induced map \(p^{\thi}\colon X^{\thi} \to Y^{\thi}\) 
	is a cartesian fibration of \(\infty\)-categories by Remark~\ref{rem:outer_cartesian_maximal_marking}. 
	In particular, \(p^{\thi}\) is a categorical fibration and so an isofibration,
	that is, admits lifts for equivalences (see, for instance,
	\cite[\href{https://kerodon.net/tag/01EN}{Tag 01EN}]{LurieKerodon}).
	We may hence conclude that every outer cartesian fibration is an isofibration.
\end{rem}

\begin{rem}\label{r:fibers}
	It follows from Remarks~\ref{r:weak-scaled}, \ref{r:base-change}
	and~\ref{rem:outer_cartesian_maximal_marking} that if \(X \to Y\) is an outer fibration then for every \(y \in Y\) the fiber \(X_y\) is a weak \(\infty\)-bicategory
	in which every triangle is thin.
	Forgetting the scaling, we may simply consider these fibers as \(\infty\)-categories
	(cf.~Remark~\ref{rem:core-oo-cat}).
\end{rem}

\begin{rem}\label{r:cart-equiv}
	Let \(p\colon X \to Y\) be a weak fibration between weak \(\infty\)-bicategories. 
	If \(e\colon x \to y\) is a \(p\)-cartesian edge of \(X\) then it is also cartesian with respect to the inner fibration of \(\infty\)-categories \(X^{\thi} \to Y^{\thi}\). This implies, in particular, that any \(p\)-cartesian edge which lies above an equivalence in \(Y\) is necessarily an equivalence in \(X\) (see~\cite[Proposition~2.4.1.5]{HTT}).
\end{rem}

Our approach is that outer cartesian fibrations encode the data of functors from \(Y\) to \(\Cat_\infty\) which are contravariant on the level of \(1\)-morphisms but covariant on the level of \(2\)-morphisms. Similarly, outer cocartesian fibration encode functors which are covariant on the level of \(1\)-morphisms but contravariant on the level of \(2\)-morphisms. For example, we will see below how one can encode in this manner representable functors using a suitable slice construction (see \S\ref{s:map}).
A more comprehensive treatment and justification 
of this can be found in~\cite{GagnaHarpazLanariLaxLimits}.

\subsection{The join and slice constructions}\label{s:join}

Let \(\C\) be a weak \(\infty\)-bicategory and let \(y \in \C\) be a vertex. 
Define a scaled simplicial set \(\ovl{\C}_{/y}\) as follows. 
The \(n\)-simplices of \(\ovl{\C}_{/y}\) are given by \((n+1)\)-simplices 
\(\Del^{n+1}_{\flat} \to \C\) of \(\C\) which send \(\Del^{\{n+1\}}\) to \(y\), 
and a triangle in \(\ovl{\C}_{/y}\), corresponding to a \(3\)-simplex \(\sig\colon\Del^3_\flat \to \C\), is declared to be thin \(\ovl{\C}_{/y}\) if \(\sig|_{\Del^{\{0,1,2\}}}\) is thin in \(\C\). 
The scaled simplicial set \(\ovl{\C}_{/y}\) admits a natural map \(\ovl{\C}_{/y} \to \C\) 
sending \(\sig\colon\Del^{n+1}_{\flat} \to \C\) to \(\sig|_{\Del^{\{0,\dots,n\}}}\). 
If all the triangles in \(\C\) are thin (that is, if \(\C\) is actually 
an \(\infty\)-category) then the same holds for \(\ovl{\C}_{/y}\) and 
the projection \(\ovl{\C}_{/y} \to \C\) is a right fibration which 
classifies the presheaf on \(\C\) represented by \(y\). 
In this section we will detail the definitions and results in the outline just provided and
will do so in a more general case: 
we will show that for a general weak \(\infty\)-bicategory \(\C\) 
the projection \(\ovl{\C}_{/y} \to \C\) is an \emph{outer cartesian fibration}. 
In \S\ref{s:map} we will show that the fibers of these fibrations are models 
for the mapping \(\infty\)-categories in \(\C\) with target~\(y\).

In what follows it will be useful to work in a setting where both marking and scaling are allowed.
The scaling still has the function of keeping track of triangles that we think as
filled by an invertible \(2\)-cell, while the marking on edges allows us to
keep record of the cartesian edges when dealing with fibrations.

\begin{define}
	A \emph{marked-scaled} simplicial set is a triple \((X,E,T)\) where \(X\) is a simplicial set, \(E \subseteq X_1\) is a collection of edges containing all the degenerate edges and \(T \subseteq X_2\) is collection of \(2\)-simplices containing all the degenerate \(2\)-simplices. In particular, if \((X,E,T)\) is a marked-scaled simplicial set then \((X, E_X)\) is a marked simplicial set and \((X,T)\) is a scaled simplicial set. A map of marked-scaled simplicial sets \(f\colon (X,E_X,T_X)\rightarrow (Y,E_Y,T_Y)\) is a map of simplicial sets \(f\colon X \rightarrow Y\) satisfying \(f(E_X) \subseteq E_Y\) and \(f(T_X)\subseteq T_Y\).
	The collection of scaled simplicial sets and their morphisms forms a category which will be denoted by \(\Set^{+,\sca}_\Del\). 
\end{define}

\begin{rem}
	As for marked simplicial sets and scaled simplicial sets, the category of marked-scaled
	simplicial sets is the category of models of a limit sketch.
	Indeed, denote by \(\Delta_{+,\sca}\) the category \(\Delta\), to which we add
	the objects \([1]_t\) and \([2]_t\) and the morphisms \(\varphi_k \colon [k] \to [k]_t\),
	with \(k = 1, 2\), and \(\zeta^i_k \colon [k]_t \to [k-1]\),
	with \(k=1, 2\) and \(0\leq i \leq k-1\). We impose the relations
	\(\zeta^i_k \varphi_k = \sigma^i_k \colon [k] \to [k-1] \),
	for \(k = 1, 2\) and \(0 \leq i \leq k-1\),
	and \(\sigma^0\zeta^0_2 = \sigma^0 \zeta^1_2 \colon [2]_t \to [0]\).
	The category of marked-scaled simplicial sets is (equivalent to) the reflective
	localization of the category of presheaves on \(\Delta_{+, \sca}\) at the
	morphisms \([k]_t \coprod_{[k]} [k]_t \to [k]_t\), for \(k = 1, 2\).
	Here we have identified the category \(\Delta_{+, \sca}\) to its image
	via the Yoneda embedding. In particular, the category of marked-scaled
	simplicial sets is (finitely) locally presentable and it is cartesian closed.
\end{rem}

\begin{notate}
	Given a simplicial set \(K\), a set \(E\) of edges in \(K\) and a set \(T\) of triangles in \(K\) we will denote by \((K,E,T)\) the marked-scaled simplicial set whose underlying simplicial set is \(K\), whose marked edges are the degenerate ones and those contained in \(E\) and whose thin triangles are the degenerate ones and those contained in \(T\). 
	As in Notation~\ref{no:scaled}, if \(i \colon L \subseteq K\) is a subsimplicial set then we use \(E|_L\) and \(T|_L\) to denote the set of edges and triangles, respectively, in \(L\) whose image via \(i\) is contained in \(E\) and \(T\), respectively. 
\end{notate}

Given a scaled simplicial set \(S\) we will denote by \((\Set^{+,\sca}_\Del)_{/S}\) the category of marked-scaled simplicial sets \((X,E_X,T_X)\) equipped with a map of scaled simplicial sets \((X,T_X) \to S\). Notice that, if \(S^\sharp\) denotes the marked-scaled simplicial set whose underlying scaled simplicial set is \(S\)
and such that all the edges are marked, then the category \((\Set^{+,\sca}_\Del)_{/S}\) is isomorphic to the slice category \((\Set^{+,\sca}_\Del)_{/S^\sharp}\).

\begin{define}\label{d:anodyne-marked}
	Let \(S\) be a scaled simplicial set. We will denote by \(\A_S\) the smallest weakly saturated class of maps in the category \((\Set^{+,\sca}_\Del)_{/S}\) containing the following maps:
	\begin{description}
		\item[C1\label{item:marked_outer-1}]
		The inclusion \((\Lam^n_i,\varnothing,\{\Del^{\{i-1,i,i+1\}}\}|_{\Lam^n_i}) \subseteq (\Del^n,\varnothing,\{\Del^{\{i-1,i,i+1\}}\})\) for \(0 < i < n\) and every map \((\Del^n,\{\Del^{\{i-1,i,i+1\}}\}) \rightarrow S\).
		\item[C2\label{item:marked_outer-2}]
		The inclusion \((\Lam^n_n,\{\Del^{\{n-1,n\}}\}|_{\Lam^n_n},\varnothing) \subseteq (\Del^n,\{\Del^{\{n-1,n\}}\},\varnothing)\)
		for every \(n \geq 1\) and every map \(\Del^n_{\flat} \rightarrow S\). 
		\item[C3\label{item:marked_outer-3}]
		The inclusion \((\Lam^n_0 \plus{\Del^{\{0,1\}}}\Del^0,\varnothing,\varnothing) \subseteq (\Del^n\plus{\Del^{\{0,1\}}}\Del^0,\varnothing,\varnothing)\) for every \(n \geq 2\) and every map \(\Del^n_{\flat} \plus{\Del^{\{0,1\}}}\Del^0 \rightarrow S\).
		\item[C4\label{item:marked_outer-4}]
		The inclusion \(\Del^2 \subseteq (\Del^2,\emptyset,\{\Del^2\})\) for every \(\Del^2_{\sharp} \to S\).
	\end{description}
\end{define}

\begin{rem}
	The set of maps~\ref{item:marked_outer-1}, \ref{item:marked_outer-2},
	\ref{item:marked_outer-3} and~\ref{item:marked_outer-4} are part of the
	defining set of the notion of outer cartesian anodyne maps,
	which is defined and studied in~\cite[\S 2.4]{GagnaHarpazLanariLaxLimits}.
	Notice that the maps of type~\ref{item:marked_outer-2} impose that the
	image of the edge \(\Delta^{\{n-1, 1\}}\) acts as a cartesian edge
	(cf.~Definition~\ref{d:cartesian}). The maps of type~\ref{item:marked_outer-3},
	on the other hand, impose that the image of the triangle \(\Delta^{\{0, 1, n\}}\),
	(that we may think as a ``2-cell'') acts somehow as a cocartesian 2-cell.
\end{rem}

\begin{prop}\label{p:char}
Let \(p\colon X \to S\)  
be a map of marked-scaled simplicial sets. If \(f\) satisfies the right lifting property with respect to the class \(\A_S\) of Definition~\ref{d:anodyne-marked} then
\begin{itemize}
\item
The map of scaled simplicial sets \(\ovl{p}\colon \ovl{X} \to \ovl{S}\) underlying \(p\) is an outer fibration.
\item
Every marked edge in \(X\) is \(\ovl{p}\)-cartesian.
\item
For every marked arrow \(f\colon x \to y\) in \(S\), and every lift \(y' \in X\) of \(y\), there exists a marked edge \(e'\colon x' \to y'\) in \(X\) such that \(p(e') = e\).
\end{itemize}
In particular, the base change of \(\ovl{p}\) along the marked core (see~\ref{def:marked_core}) of \(S\) is an outer cartesian fibration.
\end{prop}

\begin{proof}
If we interpret the maps~\ref{item:weak-an},~\ref{item:outer_fibration-i},~\ref{item:outer_fibration-ii} and~\ref{item:outer_fibration-iii} as maps of marked-scaled simplicial sets with minimal marking on edges then the first three correspond to the generators~\ref{item:marked_outer-1},~\ref{item:marked_outer-4} and~\ref{item:marked_outer-3} of \(\A_S\), while the last one is a pushout of the generator~\ref{item:marked_outer-2}. We conclude that the map of scaled simplicial sets underlying \(p\) is an outer fibration.
	The lifting property against maps of type~\ref{item:marked_outer-2} for \(n \geq 2\) then further implies that every marked edge in \(X\) is \(\ovl{p}\)-cartesian, and for \(n=1\) implies that every marked edge in \(S\) admits marked lifts for every lift of its codomain.
	
For the last claim, note that in the marked core of $S$ all edges are (by definition) marked, and hence all edges admit $\ovl{p}$-cartesian lifts by the above.
\end{proof}

\begin{define}\label{d:join}
	Let \((Z,E_Z,T_Z)\) be a marked-scaled simplicial set and \((K,T_K)\) a scaled simplicial set. We define the \ndef{join} \((Z \ast K,T_{Z\ast K})\) to be the \emph{scaled simplicial set} whose underlying simplicial set is the ordinary join of simplicial sets \(Z \ast K\) and whose thin triangles \(T_{Z\ast K}\) are given by the subset
	\[ T_Z \amalg [E_Z \times K_0] \amalg \emptyset \amalg T_K\] of the set \[  Z_2 \amalg [Z_1 \times K_0] \amalg [Z_0 \times K_1] \amalg K_2 = (Z \ast K)_2 .\]
\end{define}

\begin{define}\label{d:marked-slice}
	For a fixed scaled simplicial set \((K,T_K)\) we may consider 
	the assignment given by \((Z,E_Z,T_Z) \mapsto (Z \ast K,T_{Z \ast K})\) 
	as a functor \(\Set^{+,\sca}_{\Del} \to \Set^{\sca}_{(K,T_K)/}\). 
	As such, it admits a right adjoint \(\Set^{\sca}_{(K,T_K)/} \to \Set^{+,\sca}_{\Del}\)
	and it is therefore a colimit preserving functor. 
	Indeed, given a map of scaled simplicial sets \(f\colon(K,T_K) \to X\), 
	considered as an object of \(\Set^{\sca}_{(K,T_K)/}\),
	define a simplicial set \(X_{/f}\) whose \(n\)-simplices, for \(n \geq 0\), are
	given by the set
	\[ 
		\Hom_{\Set^{\sca}_{(K,T_K)/}}\bigl({}{^\flat}\Delta^n \ast (K, T_K), X\bigr),
	\]
	where we have set \(^\flat\Delta^n = (\Delta^n, \emptyset, \emptyset)\).
	We define its marked edges to be the maps \(^\flat\Delta^1 \ast (K, T_K) \to X\)
	factoring through
	\((\Delta^1, \Delta^1_1, \emptyset) \ast (K, T_K) \to X\)
	and its thin triangles to be the maps \(^\flat\Delta^2 \ast (K, T_K) \to X\)
	factoring through
	\((\Delta^2, \emptyset, \Delta^2_2) \ast (K, T_K) \to X\).
	It is not hard to check that for every marked-scaled simplicial set \((Z, E_Z, T_Z)\)
	we get a (natural) bijection
	\[
		\Hom_{\s^{+, \sca}}\bigl((Z, E_Z, T_Z), X_{/f}\bigr) =
		\Hom_{\Set^{\sca}_{(K,T_K)/}}\bigl((Z \ast K,T_{Z \ast K}),X\bigr).
	\]
	We will denote by \(\ovl{X}_{/f}\) the scaled simplicial set underlying \(X_{/f}\).
\end{define}

\begin{warning}
	Lurie introduces in~\cite[Notation~4.1.5]{LurieGoodwillie} a notion of slice
	\(\ovl{\mathcal{C}}^{X/}\) over a vertex \(X\) of a scaled simplicial set \(\mathcal{C}\). We then point out that when considering the vertex \(X\) of \(\mathcal{C}\) as a map \(\Delta^0_\flat \to \mathcal{C}\), Lurie's scaled simplicial set
	\(\ovl{\mathcal{C}}^{X/}\) differs from \(\ovl{\mathcal{C}}_{X/}\) as defined above.
	Nonetheless, Lurie's scaled slice can be recovered using
	a ``thick'' version of the slice construction, that we study in detail
	in~\cite[\S4.2]{GagnaHarpazLanariLaxLimits} and show being
	bicategorical equivalent to that obtained from Definition~\ref{d:marked-slice} 
	(see~\cite[Corollary~4.2.11]{GagnaHarpazLanariLaxLimits}).
	To avoid confusion, let us also note that while we use the notation \(\C_{X/}\) without the over-line to denote a marked-scaled simplicial set refining \(\ovl{\C}_{X/}\), Lurie uses the notation \(\mathcal{C}^{X/}\) to denote a certain simplicial set, which contains, but not is equal to, the underlying simplicial set of \(\ovl{\C}^{X/}\). 
\end{warning}

\begin{rem}
 \label{rem:join_underlying_id}
 	Consider a simplicial set \(X\) endowed with two scalings \(T_X \subseteq T_X'\)
 	and a monomorphism of marked-scaled simplicial sets \(f \colon (K, E_K, T_K) \hookrightarrow (L, E_L, T_L)\).
 	In order to simplify the argument, we identify \((K, E_K, T_K)\) as a marked-scaled
 	simplicial subset of \((L, E_L, T_L)\).
 	The scaling of \((L, E_L, T_L) \ast (X, T_X')\) is by definition given by the subset
 	\(T_L \amalg [E_L \times X_0] \amalg \emptyset \amalg T_X'\) of the set 
 	\[ 
 		(L \ast X)_2 = L_2 \amalg [L_1 \times X_0] \amalg [L_0 \times X_1] \amalg X_2.
 	\]
 	Setting \(g \colon (X, T_X) \to (X, T_X')\), we may consider the pushout-join
 	of \(f\) and \(g\), given by
 	\[
 		(L, E_L, T_L) \ast (X, T_X) \bigcup (K, E_K, T_K) \ast (X, T_X').
 	\]
 	Its underlying simplicial set is clearly \(L \ast X\), while its scaling is the union
 	of the subsets 
 	\[
 		T_L \amalg [E_L \times X_0] \amalg \emptyset \amalg T_X
 		\quad\text{and}\quad
 		T_K \amalg [E_K \times X_0] \amalg \emptyset \amalg T_X'
 	\]
 	of \((L \ast X)_2\). This union is precisely the set \(T_{L\ast X}\).

 	A completely similar argument shows that if \(g \colon (X, T_X) \hookrightarrow (Y, T_Y)\)
 	is a monomorphism of scaled simplicial sets and \(f\) is a monomorphism
 	of marked-scaled simplicial sets of the type \((L, \emptyset, T_L) \hookrightarrow (L, \emptyset, T_L')\),
 	then the pushout-join of \(f\) and \(g\) is isomorphic to \((L, \emptyset, T_L') \ast (Y, T_Y)\).
\end{rem}

\begin{lem}\label{l:pushout-join}
	Let \(f\colon (X,E_X,T_X) \to (Y,E_Y,T_Y)\) be a map of marked-scaled simplicial sets and \(g\colon (A,T_A) \to (B,T_B)\) a map of scaled simplicial sets.
	If \(f\) belongs to \(\A_S\) and \(g\) is a monomorphism then the map of scaled simplicial sets
	\begin{equation}\label{e:pushout-join} 
	(X \ast B,T_{X\ast B}) \coprod_{(X \ast A,T_{X\ast A})} (Y \ast A,T_{Y\ast A} )\to (Y \ast B,T_{Y \ast B}) 
	\end{equation}
	is in the weakly saturated closure of maps of type~\ref{item:scaled_anodyne_i}
	and~\ref{item:scaled_anodyne_iii} in Definition~\ref{anodyne defi}. In particular, it is scaled anodyne.
\end{lem}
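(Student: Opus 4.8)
The plan is to run the classical ``join versus pushout-product'' argument, carefully tracking the scaling and the marking. A standard manipulation with weakly saturated classes reduces us to the case where \(f\) is one of the four generating maps of \(\A_S\) from Definition~\ref{d:anodyne-marked} and \(g\) is a generating cofibration of \(\Ss\): for fixed \(g\) this uses that \(Z \mapsto (Z\star B, T_{Z\star B})\) preserves colimits (so that \eqref{e:pushout-join} carries cobase changes, transfinite composites and retracts in the variable \(f\) to the same), and for fixed \(f\) it is the usual skeletal induction on \(g\). By the limit-sketch presentation of \(\Ss\) recalled in Remark~\ref{alternative def}, we may take \(g\) to be either \(\partial\Del^m_\flat\hookrightarrow\Del^m_\flat\) for some \(m\geq 0\) or \(\Del^2_\flat\hookrightarrow\Del^2_\sharp\).

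If \(g = \Del^2_\flat\hookrightarrow\Del^2_\sharp\) then \eqref{e:pushout-join} is the identity on underlying simplicial sets, and reading off Definition~\ref{d:join} (using \(T_X\subseteq T_Y\), \(E_X\subseteq E_Y\) and \(T_{\Del^2_\flat}\subseteq T_{\Del^2_\sharp}\)) one sees that both sides carry the same thin triangles; thus \eqref{e:pushout-join} is an isomorphism, and we may assume \(g = \partial\Del^m_\flat\hookrightarrow\Del^m_\flat\). The combinatorial heart of the proof is then the identity, checked directly on faces inside \(\Del^n\star\Del^m = \Del^{n+m+1}\),
\[ (\Lam^n_i\star\Del^m)\cup_{\Lam^n_i\star\partial\Del^m}(\Del^n\star\partial\Del^m) \;=\; \Lam^{n+m+1}_i \qquad (0\leq i\leq n),\]
together with the observation that the index \(i\) is an inner vertex of \(\Del^{n+m+1}\) whenever \(i>0\); in particular the \emph{outer} right-horn inclusion \(\Lam^n_n\hookrightarrow\Del^n\) is carried to the \emph{inner}-horn inclusion \(\Lam^{n+m+1}_n\hookrightarrow\Del^{n+m+1}\), which is the reason that no map of type~(2) of Definition~\ref{anodyne defi} is ever needed.

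With this identity I would treat the remaining three families of generators of \(\A_S\). For family~(1) (the inner horn \(\Lam^n_i\hookrightarrow\Del^n\) with \(0<i<n\), no marked edges and unique thin triangle \(\Del^{\{i-1,i,i+1\}}\)) it shows that \eqref{e:pushout-join} is exactly the type-(1) generator \((\Lam^{n+m+1}_i,\{\Del^{\{i-1,i,i+1\}}\}|_{\Lam^{n+m+1}_i})\hookrightarrow(\Del^{n+m+1},\{\Del^{\{i-1,i,i+1\}}\})\) of Definition~\ref{anodyne defi}. For family~(2) (the right horn \(\Lam^n_n\hookrightarrow\Del^n\) with marked edge \(\Del^{\{n-1,n\}}\), \(n\geq 1\)) the underlying map is the inner horn \(\Lam^{n+m+1}_n\hookrightarrow\Del^{n+m+1}\), while by Definition~\ref{d:join} the marked edge makes precisely the triangles \(\Del^{\{n-1,n,k\}}\) for \(n+1\leq k\leq n+m+1\) thin; since each of these already lies in \(\Lam^{n+m+1}_n\), the map \eqref{e:pushout-join} is a cobase change of the type-(1) generator \((\Lam^{n+m+1}_n,\{\Del^{\{n-1,n,n+1\}}\}|_{\Lam^{n+m+1}_n})\hookrightarrow(\Del^{n+m+1},\{\Del^{\{n-1,n,n+1\}}\})\). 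For family~(3) (the outer \(0\)-horn with the edge \(\Del^{\{0,1\}}\) collapsed, \(n\geq 2\)) I would use that \(Z\mapsto Z\star\Del^m\) preserves pushouts to identify \eqref{e:pushout-join} with a cobase change of the type-(3) generator \((\Lam^{n+m+1}_0\plus{\Del^{\{0,1\}}}\Del^0,\{\Del^{\{0,1,n+m+1\}}\}|_{\Lam^{n+m+1}_0})\hookrightarrow(\Del^{n+m+1}\plus{\Del^{\{0,1\}}}\Del^0,\{\Del^{\{0,1,n+m+1\}}\})\), the collapse having rendered the distinguished triangle degenerate. In each family the degenerate case \(m=0\), where \(g\) is \(\varnothing\hookrightarrow\Del^0\), is covered by the same computation now inside \(\Del^{n+1}\).

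The main obstacle here is bookkeeping rather than conceptual. One must verify the displayed horn identity on faces; one must check, in family~(2), that each triangle \(\Del^{\{n-1,n,k\}}\) really lies in the source horn \(\Lam^{n+m+1}_n\), so that the comparison with the type-(1) generator is an honest cobase change and not something larger; and one must confirm, in family~(3), that the join genuinely distributes over the defining pushout \(\Del^n\plus{\Del^{\{0,1\}}}\Del^0\), so that the output again has the collapsed-horn shape. Pinning down the generating cofibrations of \(\Ss\) is the only step that makes a (small) appeal to the presheaf-type structure of \(\Ss\).
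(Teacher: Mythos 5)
Your proof is correct and follows essentially the same route as the paper's: reduce to the generators of \(\A_S\) and of the monomorphisms of \(\Ss\), dispose of \(\Del^2_\flat\hookrightarrow\Del^2_\sharp\) as an isomorphism, and identify the pushout-join in each remaining case with a (pushout of a) generating scaled anodyne map of type (1) or (3). If anything, your handling of family (2) is slightly more careful than the paper's, which records only \(\Del^{\{n-1,n,n+1\}}\) as thin in the target where the join produces all of \(\Del^{\{n-1,n,k\}}\) for \(k>n\); as you observe, these extra triangles already lie in the horn, so the discrepancy is absorbed by a cobase change.
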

\begin{proof}
	It suffices to check the claim on generators, and so we may assume that \(g\) is either the inclusion \((\partial \Del^n)_\flat \hrar \Del^n_\flat\)
	or the inclusion \(\Del^2_\flat \hrar \Del^2_{\sharp}\) (see Remark~\ref{rem:monos_scaled}), 
	and \(f\) is one of the generating maps appearing in Definition~\ref{d:anodyne-marked}.
	When \(g\) is the map \(\Del^2 \hrar \Del^2_{\sharp}\) then~\eqref{e:pushout-join} is an isomorphism by Remark~\ref{rem:join_underlying_id}. 
	We may hence assume that \(g\) is the inclusion \(\partial \Del^n \hrar \Del^n\) for some \(n \geq 0\). Let us now consider the various possibilities for \(f\) case by case.
	The combinatorics of the underlying simplicial sets is well-known, see for instance~\cite[Lemma~3.3]{JoyalQCatsAndKan}.
	\begin{description}
		\item[C1]
		When \(f\) is the inclusion \((\Lam^m_i,\varnothing,\{\Del^{\{i-1,i,i+1\}}\}|_{\Lam^m_i}) \subseteq (\Del^m,\varnothing,\{\Del^{\{i-1,i,i+1\}}\})\)
		for \(0 < i < m\) the map~\eqref{e:pushout-join} is isomorphic to the map
		\[ \bigl(\Lam^{[m]\ast[n]}_{i},\{\Del^{\{i-1,i,i+1\}}\}\bigr) \to \bigl(\Del^{[m]\ast[n]},\{\Del^{\{i-1,i,i+1\}}\}\bigr) \]
		which is a map of type~\ref{item:scaled_anodyne_i}.
		\item[C2]
		When \(f\) is the inclusion \((\Lam^m_m,\{\Del^{\{m-1,m\}}\}|_{\Lam^m_m},\varnothing) \subseteq (\Del^m,\{\Del^{\{m-1,m\}}\},\varnothing)\) for \(m \geq 1\)
		the map~\eqref{e:pushout-join} is isomorphic to the map
		\[
			\bigl(\Lam^{[m]\ast[n]}_{m}, T|_{\Lam^{[m]\ast[n]}}\bigr) \to \bigl(\Del^{[m]\ast[n]}, T\bigr),
		\]
		where \(T\) is the set of triangles \(\{m-1, m, k\}\) of \(\Del^{[m]\ast[n]}\), for \(m < k \leq m+n+1\).
		This map is isomorphic to the pushout of
		\[ \bigl(\Lam^{[m]\ast[n]}_{m},\{\Del^{\{m-1,m,m+1\}}\}|_{\Lam^{[m]\ast[n]}}\bigr) \to \bigl(\Del^{[m]\ast[n]},\{\Del^{\{m-1,m,m+1\}}\}\bigr), \]
		which is a map of type~\ref{item:scaled_anodyne_i}, along
		\[\bigl(\Lam^{[m]\ast[n]}_{m},\{\Del^{\{m-1,m,m+1\}}\}|_{\Lam^{[m]\ast[n]}}\bigr) \to \bigl(\Lam^{[m]\ast[n]}_{m}, T|_{\Lam^{[m]\ast[n]}}\bigr).\]
		\item[C3]
		When \(f\) is the inclusion \((\Lam^m_0 \plus{\Del^{\{0,1\}}}\Del^0,\varnothing,\varnothing) \subseteq (\Del^m \plus{\Del^{\{0,1\}}}\Del^0,\varnothing,\varnothing)\) for \(m \geq 2\) the map~\eqref{e:pushout-join} is isomorphic to the map
		\begin{equation}\label{eq:pushout-join-3}
			\bigl(\Lam^{[m]\ast [n]}_0 \plus{\Del^{\{0,1\}\ast[n]}}\Del^{[0]\ast [n]},\varnothing\bigr) \to
			\bigl(\Del^{[m] \ast [n]} \plus{\Del^{\{0,1\}\ast[n]}}\Del^{[0]\ast[n]},\varnothing\bigr).
		\end{equation}
		which is a pushout of the outer horn inclusion \((\Lam^n_0)_{\flat} \subseteq \Delta^n_{\flat}\). In addition, for any subset \(S\) of \([n]\), the simplex \(\Delta^{\{0, 1\} \ast S}\) of \(\Lam^{[m]\ast [n]}_0 \plus{\Del^{\{0,1\}\ast[n]}}\Del^{[0]\ast [n]}\)
		is degenerate; in particular, in the domain of~\eqref{eq:pushout-join-3} the edge \(\Delta^{\{0,1\}}\) is degenerate and triangle \(\Delta^{\{0,1,n\}}\) is thin. It is therefore also the pushout of the map
\[		\bigl(\Lam^{[m]\ast [n]}_0 \plus{\Del^{\{0,1\}}}\Del^{[0]},\{\Del^{\{0,1,n\}}\}\bigr) \to
			\bigl(\Del^{[m] \ast [n]} \plus{\Del^{\{0,1\}}}\Del^{[0]},\{\Del^{\{0,1,n\}}\}\bigr).\]
		which is a map of type~\ref{item:scaled_anodyne_iii}.		
		\item[C4]
		When \(f\) is the inclusion \(\Del^2 \subseteq (\Del^2,\emptyset,\{\Del^2\})\),
		then the map~\eqref{e:pushout-join} is an isomorphism by Remark~\ref{rem:join_underlying_id}. \qedhere
	\end{description}
\end{proof}

\begin{define}
	The \ndef{marked core} of a marked-scaled simplicial set \(X\) is the (marked-)scaled simplicial subset
	spanned by those \(n\)-simplices whose 1-dimensional faces are marked in \(X\).
\end{define}

\begin{cor}\label{c:slice-3}
	Let \(p\colon X \to S\) be a map of scaled simplicial sets which satisfies the right lifting property with respect to maps of type~\ref{item:scaled_anodyne_i} 
	and~\ref{item:scaled_anodyne_iii} in Definition~\ref{anodyne defi}. Let \(f\colon K \to X\) be a map of scaled simplicial sets and let \(i\colon L \hrar K\)
	be an inclusion of scaled simplicial sets. Then the map 
	\[ \bar{q}\colon \ovl{X}_{/f} \to \ovl{X}_{/fi} \times_{\ovl{S}_{/pfi}} \ovl{S}_{/pf} \]
	is an outer fibration such that every marked edge of \(X_{/f}\) is \(\bar{q}\)-cartesian
  and every marked edge in the codomain admits a marked (and hence \(q\)-cartesian) lift.
  In particular, the base change of \(\bar{q}\)
	to the marked core of \(q \colon X_{/f} \to X_{/fi} \times_{S_{/pfi}} S_{/pf}\) is an outer cartesian fibration. 
\end{cor}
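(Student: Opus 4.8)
The plan is to run the proof of Corollary~\ref{c:slice} in a relative form over the base $T := \overline{X}_{/fi}\times_{\overline{S}_{/pfi}}\overline{S}_{/pf}$, deducing everything from Proposition~\ref{p:char} and Lemma~\ref{l:pushout-join}. I would regard $X_{/f}$ as a marked-scaled simplicial set over the scaled simplicial set $T$ by means of $q$, and unwind lifting problems against the generating maps of Definition~\ref{d:anodyne-marked} using the adjunction defining the slices $X_{/f}$, $X_{/fi}$, $S_{/pf}$, $S_{/pfi}$. The point is that the scaling implicit in the mapping property characterising those slices is precisely the join-scaling of Definition~\ref{d:join}, so that a lifting problem for $q$ against a monomorphism of marked-scaled simplicial sets $g\colon(A,E_A,T_A)\to(B,E_B,T_B)$ translates, over the base, into a lifting problem for $p\colon X\to S$ against the pushout-join map
\[ (A\ast K)\coprod_{(A\ast L)}(B\ast L)\longrightarrow B\ast K \]
equipped with its join-scalings, subject to the compatibility conditions over $S$ recorded by the given maps into $T$.

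Applying Lemma~\ref{l:pushout-join} with its two inputs taken to be $g$ and the inclusion $i\colon(L,T_L)\hookrightarrow(K,T_K)$ would show that this pushout-join lies in the weakly saturated closure of the maps of type (1) and (3) in Definition~\ref{anodyne defi}, against which $p$ has the right lifting property by hypothesis; hence each such lifting problem for $q$ admits a solution whenever it is well-posed. Arguing as in the proof of Proposition~\ref{p:char}: the lifting problems against the generating maps of types (1), (3) and (4) and against those of type (2) for $n\geq 2$ are all automatically well-posed, since commutativity of a type (2) square with $n\geq 2$ already forces the base edge $\Del^{\{n-1,n\}}$ to be marked in $X_{/fi}$ and in $S_{/pf}$ separately, which is exactly the scaling-compatibility needed to form the translated problem. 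This shows that $q$ is an outer fibration and that the marked edges of $X_{/f}$ are $q$-cartesian. For the existence of cartesian lifts one uses the case $n=1$ of the type (2) maps, which yields a marked --- hence $q$-cartesian --- lift of an edge of the base over a prescribed endpoint; here commutativity imposes no constraint on the base edge, so the translated lifting problem is well-posed precisely when that edge lies in the marked core $\kappa\bigl(X_{/fi}\times_{S_{/pfi}}S_{/pf}\bigr)$. Consequently, over this marked core every edge admits a $q$-cartesian lift of any chosen endpoint, and the base change of $q$ there is an outer cartesian fibration.

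The step I expect to require the most care is this bookkeeping of markings and scalings through the join--slice adjunction: one has to check that the join-scaling of a translated generating map matches the scaling of the structure map to $S$ induced by the given map into $T$, and to pin down why this compatibility is automatic for the type (2) maps when $n\geq 2$ but fails, in general, when $n=1$ --- which is precisely what confines the cartesian-lift property to the marked core and prevents $q$ itself from being an outer cartesian fibration in general.
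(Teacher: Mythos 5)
Your proposal is correct and follows exactly the route the paper takes: the paper's entire proof of this corollary is the one-line remark that it ``can be deduced from Proposition~\ref{p:char} and Lemma~\ref{l:pushout-join} in the same manner'' as Corollary~\ref{c:slice}, which is precisely the relative pushout-join translation you carry out. Your additional bookkeeping --- in particular the observation that for the type~(2) generators with \(n\geq 2\) the marked edge \(\Del^{\{n-1,n\}}\) already lies in \(\Lam^n_n\) and so is forced to be marked downstairs, whereas for \(n=1\) this fails and confines the existence of cartesian lifts to the marked core --- correctly accounts for why the conclusion here is weaker than in Corollary~\ref{c:slice}, a point the paper leaves implicit.
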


\begin{proof}
	For any monomorphism \(j \colon A \to B\) of marked-scaled simplicial sets,
	adjunction yields the following correspondence of lifting problems
	\[
		\begin{tikzcd}
			A \ar[r]\ar[d, "j"']	& X_{/f} \ar[d, "q"]				\\
			B \ar[r]\ar[ur, dotted]	& X_{/fi} \times_{S_{/pfi}} S_{/pf}
		\end{tikzcd}
		\quad \leftrightsquigarrow \quad
		\begin{tikzcd}
			A \ast K \cup_{A \ast L} B \ast L \ar[r]\ar[d, "j \hat{\ast} i"']	& X \ar[d, "p"]	\\
			B \ast K \ar[r] \ar[ru, dotted]							& S
		\end{tikzcd},
	\]
	where \(j \hat{\ast} i\) is the pushout-join of \(j\) with \(i\) (with respect to the join operation of Definition~\ref{d:join}), and the square on the right is considered in the category of scaled simplicial sets under \(K\). As applying to the right square the forgetful functor \((\Ss)_{K/} \to \Ss\) induces a bijection on the set of possible dotted lifts, we may equivalently view it as a lifting problem in scaled simplicial sets. The desired result now follows by combining Lemma~\ref{l:pushout-join} with Proposition~\ref{p:char}.
\end{proof}

The particular case of Corollary~\ref{c:slice-3} 
where \(S = \Delta^0\) and \(L = \emptyset\) gives the following:

\begin{cor}\label{c:slice}
	Let \(X\) be a scaled simplicial set which satisfies the extension property with respect to maps
  of type~\ref{item:scaled_anodyne_i} and~\ref{item:scaled_anodyne_iii} in Definition~\ref{anodyne defi}. 
  Given a map \(f\colon K \to X\) of scaled simplicial sets, the map \(q\colon \ovl{X}_{/f} \to X\)
  is an outer cartesian fibration, every marked edge in \(X_{/f}\) is \(q\)-cartesian and
  every edge of \(X\) has a marked lift. 
\end{cor}

\begin{proof}
	It is enough to notice that all the edges of \(X_{/\emptyset}\) are marked, that is it coincides with its marked core, and its underlying scaled simplicial set is \(X\).
\end{proof}

In the situation of Corollary~\ref{c:slice}, if \(K\) is a point and the image of \(f\) is the vertex \(y \in X\) then we will denote \(X_{/f}\) and \(\ovl{X}_{/f}\) simply by \(X_{/y}\) and \(\ovl{X}_{/y}\), respectively. In particular, the outer cartesian fibration \(q\colon\ovl{X}_{/y} \to X\) is the one we alluded to in the beginning of this section. We consider \(q\) as the outer cartesian fibration \emph{represented} by \(y\). A precise formulation and proof of this statement now appears in~\cite{GagnaHarpazLanariLaxLimits}, though a partial justification is already offered by Proposition~\ref{p:mapping-1} below.

\begin{rem}\label{r:closure}
	Let \(X\) be a scaled simplicial set and let \(y \in X\) a vertex. Then for \(i=1,2\), if the scaled simplicial set \(X\) satisfies the extension property with respect to the inclusion \((\Del^3,T_i) \to \Del^3_{\sharp}\) (where \(T_i\) denotes all triangles which contain the vertex \(i\)) then the marked-scaled simplicial set \(X_{/y}\) satisfies the extension property with respect to the inclusions \((\Del^2,E_i,\{\Del^2\}) \to (\Del^2,\Del^2,\{\Del^2\})\) (where \(E_i\) denotes all edges containing \(i\)). In particular, if \(X\) is a weak \(\infty\)-bicategory and \(\sig\colon\Del^2 \to X_{/y}\) is a thin triangle such that \(\sig|_{\Del^{\{1,2\}}}\) is marked then \(\sig|_{\Del^{\{0,1\}}}\) is marked if and only if \(\sig|_{\Del^{\{0,2\}}}\) is marked (see Remark~\ref{rmk:j's are anod}).
\end{rem}

\subsection{Mapping categories}\label{s:map}

Let \(\C\) be a weak \(\infty\)-bi\-cat\-egory with underlying simplicial set \(\ovl{\C}\) and let \(x,y \in \ovl{\C}\) be two vertices.
Recall the following explicit model for the mapping \(\infty\)-category from \(x\) to \(y\) in \(\C\) constructed in~\cite[\S 4.2]{LurieGoodwillie} (see also~\cite[Notation 4.1.5]{LurieGoodwillie} and~\cite[\S 4.2.1]{HTT} for the ``fattened'' slice construction used there).

\begin{define}
\label{def:mapping-oo-cat}
Let \(\Hom_{\C}(x,y)\) be the marked simplicial set whose \(n\)-simplices are given by maps \(f\colon\Del^1 \times \Del^n \to \ovl{\C}\)
such that \(f|_{\{0\} \times \Del^n}\) is constant on \(x\), \(f|_{\{1\} \times \Del^n}\) is constant on \(y\), and the triangle \(f|_{\Del^{\{(0,i),(1,i),(1,j)\}}}\) is thin 
for every \(0 \leq i\leq j \leq n\). The marked edges of \(\Hom_{\C}(x,y)\) are the edges corresponding to those maps \(\Del^1 \times \Del^1 \to \ovl{\C}\) which send both triangles of \(\Del^1 \times \Del^1\) to thin triangles.
\end{define}
As shown in~\cite[\S 4.2]{LurieGoodwillie}, the assumption that \(\C\) is a weak \(\infty\)-bicategory implies that the marked simplicial set \(\Hom_{\C}(x,y)\) is \emph{fibrant} in the marked categorical model structure, that is, it is an \(\infty\)-category whose marked edges are exactly the equivalences.

This construction can also be understood in terms of the \ndef{Gray product} of scaled simplicial sets.
We further study this tensor product in~\cite{GagnaHarpazLanariGrayLaxFunctors}, where
we also compare it with Vertity's Gray product of 2-complicial sets (see~\cite[\S2]{VerityComplicial} and~\cite{OzornovaRovelliVerityGray}),
showing that they induce the same tensor product on the \(\infty\)-category of \((\infty, 2)\)-categories.
For our purpose here let us consider the following limited variant:
\begin{define}
	Let \(K\) and \(L\) be two simplicial sets. The \ndef{Gray product} \(K \mgr L\) is the \emph{scaled} simplicial set whose underlying simplicial set is the cartesian product \(K \times L\) and such that a \(2\)-simplex \(\sig\colon \Del^2 \to K \times L\) is thin if and only if the following conditions hold:
	\begin{enumerate}
		\item
		The image of \(\sig\) in both \(K\) and \(L\) is degenerate.
		\item
		The \(1\)-simplex \(\sig|_{\Del^{\{1,2\}}}\) maps to a degenerate edge in \(K\) or \(\sig|_{\Del^{\{0,1\}}}\) maps to a degenerate edge in \(L\) (or both). 
	\end{enumerate}
\end{define}

In terms of this Gray product we can describe the \(n\)-simplices in \(\Hom_{\C}(x,y)\) as maps \(f\colon\Del^1 \mgr \Del^n \to \C\) such that \(f|_{\{0\} \times \Del^n}\) is constant on \(x\) and \(f|_{\{1\} \times \Del^n}\) is constant on \(y\). An edge \(\Del^1 \mgr \Del^1 \to \C\) is marked exactly when it factors through the map \(\Del^1 \mgr \Del^1 \to \Del^1_{\flat} \times \Del^1_{\flat}\). 

Now recall from \S\ref{s:join} that for \(y \in \C\), the map of scaled simplicial sets \(p\colon\ovl{\C}_{/y} \to \C\) is an outer cartesian fibration (Corollary~\ref{c:slice}) and all the marked edges of \(\C_{/y}\) are \(p\)-cartesian. If \(x \in \C\) is then another vertex then by Remark~\ref{r:fibers} the fiber \((\ovl{\C}_{/y})_x\) is a weak \(\infty\)-bicategory in which all triangles are thin, and every marked edge in \(({\C}_{/y})_x\) is an equivalence. 
Forgetting the scaling, we will denote by \(\Hom^{\triangleright}
_{\C}(x,y)\) the underlying \emph{marked} simplicial set of \((\C_{/y})_x\). In particular,  \(\Hom^{\triangleright}
_{\C}(x,y)\) is an \(\infty\)-category endowed with a marking, and all marked edges are equivalences.

\begin{const}\label{cn:map}
	We construct a natural map of marked simplicial sets
	\begin{equation}\label{e:i} 
	i\colon \Hom^{\triangleright}_{\C}(x,y) \to \Hom_{\C}(x,y) .
	\end{equation}
	Explicitly the \(n\)-simplices of \(\Hom^{\triangleright}_{\C}(x,y)\) are given by maps 
	\[g\colon (\Del^n \ast \Del^0)_{\flat} = \Del^{n+1}_{\flat} \to \C\]
	which send \(\Del^{\{0,\dots,n\}}\) to \(x\) and \(\Del^{\{n+1\}}\) to \(y\) (where an edge is marked exactly when it corresponds to a thin triangle), 
	while the \(n\)-simplices of \(\Hom_{\C}(x,y)\) correspond to maps \(\Del^1 \mgr \Del^n \to \C\) 
	(where an edge \(\Del^1 \mgr \Del^1 \to \C\) is marked exactly with it factors through 
	\(\Del^1_{\flat} \times \Del^1_{\flat}\)). The map~\ref{e:i} is then obtained by precomposing
	along the natural map 
\begin{equation}\label{e:product-to-join}
\Del^1 \mgr \Del^n \to (\Del^n \ast \Del^0)_{\flat} = \Del^{n+1}_{\flat}
\end{equation}
	which on vertices sends \((0, i)\) to \(i\) and \((1, i)\) to \(n+1\)
	(this map indeed sends the thin triangles \(\Del^{\{(0,i),(1,i),(1,j)\}}\) of \(\Del^1 \mgr \Del^n\) to degenerate triangles).
	Notice that by definition, if \(\sigma \colon \Delta^n_\flat \ast \Delta^0 \to \C\) is an \(n\)-simplex in \(\Hom^{\triangleright}_{\C}(x,y)\),
	then the precomposed map \(\Delta^1 \mgr \Delta^n\) is such that the restriction to \(\Delta^{\{0\}} \mgr \Delta^n\)
	is mapped to \(x\) and the restriction to \(\Delta^{\{1\}} \mgr \Delta^n\) is mapped to \(y\), so that \(i(\sigma)\)
	is indeed an \(n\)-simplex of \(\Hom_{\C}(x, y)\).
	It is straightforward to verify that this assignment is compatible with face and degeneracy maps, and with the markings on both sides.
\end{const}

\begin{rem}\label{r:fibrant}
	Inspecting Construction~\ref{cn:map} we see that the map~\eqref{e:i} detects marked edges.
	Since the marked edges in \(\Hom_{\C}(x,y)\) are exactly the equivalences, as \(\Hom_{\C}(x,y)\) is a fibrant marked simplicial set,
	it follows that every equivalence in \(\Hom^{\triangleright}_{\C}(x,y)\) is marked.
	In particular, the marked edges in \(\Hom^{\triangleright}_{\C}(x,y)\) are exactly the equivalences
	and \(\Hom^{\triangleright}_{\C}(x,y)\) is fibrant with respect to the marked categorical model structure.
\end{rem}

\begin{prop}\label{p:mapping-1}
	Let \(\C\) be a weak \(\infty\)-bicategory. Then the map
	\[ i\colon \Hom^{\triangleright}_{\C}(x,y) \to \Hom_{\C}(x,y) \]
	of Construction~\ref{cn:map} is a marked categorical equivalence of fibrant marked simplicial sets for every \(x,y \in \C\).
\end{prop}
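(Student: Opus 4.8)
The plan is to reduce the claim to the statement that the underlying map of simplicial sets of \(i\) is an equivalence of \(\infty\)-categories, and then to prove the latter by exhibiting \(i\) as a fiber of a comparison map between two fibrations over \(\C\). For the reduction, recall that both \(\Hom^{\triangleright}_{\C}(x,y)\) and \(\Hom_{\C}(x,y)\) are fibrant in the marked categorical model structure — the source by Remark~\ref{r:fibrant}, the target by~\cite[\S 4.2]{LurieGoodwillie} — and that in both of them the marked edges are exactly the equivalences. Since an equivalence of \(\infty\)-categories automatically preserves and reflects equivalences, a map between marked simplicial sets of this shape is a marked categorical equivalence if and only if its underlying map of simplicial sets is an equivalence of \(\infty\)-categories, so it suffices to establish the latter. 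It is worth observing at the outset that when \(n=0\) the pinch map \(\Del^1 \times_{\gr} \Del^0 \to (\Del^0 \ast \Del^0)_{\flat} = \Del^1_{\flat}\) is an isomorphism, so \(i\) is already bijective on objects; what remains is therefore full faithfulness.

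Next I would realize both sides as fibers over the vertex \(x\). On one side, \(\Hom^{\triangleright}_{\C}(x,y)\) is by construction the underlying marked simplicial set of the fiber over \(x\) of the outer cartesian fibration \(q\colon \ovl{\C}_{/y} \to \C\) of Corollary~\ref{c:slice}, whose fibers are \(\infty\)-categories with all triangles thin by Remark~\ref{r:fibers}. On the other side, as the source varies, Lurie's construction of \(\Hom_{\C}(x,y)\) assembles into a scaled simplicial set \(\mathcal{P}_y \to \C\), the ``fat'' (Gray) slice, whose \(n\)-simplices are the maps \(h\colon \Del^1 \times_{\gr} \Del^n \to \C\) with \(h|_{\{1\} \times \Del^n}\) constant at \(y\), the structure map to \(\C\) being restriction along \(\{0\} \times \Del^n\); unwinding the definition of the Gray product identifies the fiber of \(\mathcal{P}_y \to \C\) over \(x\) with \(\Hom_{\C}(x,y)\). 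One then checks that \(\mathcal{P}_y \to \C\) is an outer fibration (in fact an outer cartesian fibration) by running the dévissage of Lemma~\ref{l:pushout-join} and Proposition~\ref{p:char} with the ordinary cone \(Z \mapsto Z \ast \Del^0\) replaced by its Gray-product analogue, the generating maps one pushes out against still being of type~(1) and~(3) in Definition~\ref{anodyne defi}. Finally, the pinch maps \(\Del^1 \times_{\gr} \Del^n \to \Del^{n+1}_{\flat}\) of Construction~\ref{cn:map} assemble into a map \(u\colon \ovl{\C}_{/y} \to \mathcal{P}_y\) over \(\C\), and its fiber over \(x\) is \(i\).

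The crux is then to prove that \(u\) is a bicategorical equivalence. Granting this, restrict everything to thin cores: by Remark~\ref{r:iso} the projections \((\ovl{\C}_{/y})^{\thi} \to \C^{\thi}\) and \((\mathcal{P}_y)^{\thi} \to \C^{\thi}\) are cartesian fibrations of \(\infty\)-categories, hence categorical fibrations, and \(u^{\thi}\) is a map between them over \(\C^{\thi}\); since \(u\) is a bicategorical equivalence of weak \(\infty\)-bicategories, \(u^{\thi}\) is an equivalence of \(\infty\)-categories, and an equivalence over the base between categorical fibrations restricts to an equivalence on the fiber over \(x\), which (everything being thin once we are over \(x\)) is exactly \(i\). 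To prove that \(u\) is a bicategorical equivalence one cannot just invoke anodyneness of the individual pinch maps, since these are not equivalences — \(\Del^1 \times \Del^n\) is not categorically equivalent to \(\Del^{n+1}\) — but rather one argues as in the comparison of the two slice models for mapping objects in~\cite[\S 4.2.1]{HTT}: one filters the Gray cylinders \(\Del^1 \times_{\gr} \Del^n\) by shuffles starting from \(\Del^{n+1}_{\flat}\), and shows, using that \(\C\) is a weak \(\infty\)-bicategory, that the successive attachments are pushouts of maps of type~(1) and~(3) in Definition~\ref{anodyne defi}, so that \(u\) is assembled from scaled anodyne maps in a fibered manner. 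Alternatively, since \(i\) is already bijective on objects, one may skip the fibration bookkeeping and prove full faithfulness of \(i\) directly by the same dévissage, identifying the mapping spaces on both sides with spaces of \(2\)-cells of \(\C\).

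I expect the main obstacle to be exactly this last point: the scaled analogue of the strict-versus-fat-slice comparison. Because the bare pinch maps are not equivalences, the argument must genuinely use the join/Gray-join structure together with the fibrancy of \(\C\); it requires a careful shuffle filtration of \(\Del^1 \times_{\gr} \Del^n\) and a precise accounting of which triangles become thin at each stage, so that every elementary attachment is a pushout of a map of type~(1) or~(3) and nothing stronger is secretly used. A secondary, more clerical difficulty is setting up \(\mathcal{P}_y\) and verifying that \(\mathcal{P}_y \to \C\) is an outer (cartesian) fibration — Lurie's input yields only fibrancy of each individual \(\Hom_{\C}(x,y)\), not the relative statement over \(\C\) — together with the identification of the strict fiber over \(x\) with the homotopy fiber, for which the outer-fibration hypotheses on \(q\) and on \(\mathcal{P}_y \to \C\) are precisely what is needed.
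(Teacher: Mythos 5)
Your proposal is correct in outline and its technical core coincides with the paper's, but the packaging is genuinely different. The paper does not introduce a global ``fat slice'' \(\mathcal{P}_y \to \C\) at all: it observes that \(i\) is bijective on vertices, reduces to full faithfulness, models the mapping spaces on both sides by the right mapping spaces \(([\Hom^{\triangleright}_{\C}(x,y)]_{/\alp})_{\bet}\) and \((\Hom_{\C}(x,y)_{/\alp})_{\bet}\), and proves that the induced map has contractible homotopy fibers via a bespoke relative lifting lemma (Lemma~\ref{l:core}). That lemma is exactly the shuffle filtration of \(\Del^1 \times_{\gr} \Del^m\) you identify as the crux: inner attachments of type~(1), a final attachment of type~(3), and a careful use of Remark~\ref{rmk:j's are anod} to certify that \(\Del^{\{(0,0),(0,1),(1,m)\}}\) is thin so that the outer-horn anodyne applies. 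So your ``alternative'' at the end is essentially the paper's proof. Your main route (compare the two slice fibrations over \(\C\) and pass to fibers, using right properness/Remark~\ref{r:iso} on thin cores) is viable and buys a more conceptual statement, but it costs extra infrastructure — a Gray-join analogue of Lemma~\ref{l:pushout-join} to make \(\mathcal{P}_y \to \C\) an outer cartesian fibration — and one phrase in it should not be taken literally: \(u\) cannot be ``assembled from scaled anodyne maps,'' because the pinch maps \(\Del^1 \times_{\gr} \Del^n \to \Del^{n+1}_{\flat}\) are epimorphisms, not monomorphisms. The correct mechanism for showing \(u\) is an equivalence is a relative lifting property of precisely the shape of Lemma~\ref{l:core} (or of \cite[\S 4.2.1]{HTT}), i.e.\ a trivial-fibration-style surjectivity statement, so either way the argument funnels through the same filtration you describe.
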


The proof of Proposition~\ref{p:mapping-1} will require the following lemma:

\begin{lem}\label{l:core}
	Let \(\C\) be a weak \(\infty\)-bicategory and for \(m \geq 2\) consider a diagram of marked simplicial sets
	\begin{equation}\label{e:core} 
	\xymatrix{
		(\partial \Del^{\{1,\dots,m\}},\emptyset) \ar@{^{(}->}[d]\ar^-{f_0}[r] & \Hom^{\triangleright}_{\C}(x,y) \ar[d] \\
		(\Lam^m_0,\{\Del^{\{0,1\}}\}) \ar^-{h_0}[r] & \Hom_{\C}(x,y) \\
	}
	\end{equation}
	where \(\partial \Del^{\{1,\dots,m\}}\) and \(\Lam^m_0\) are considered as simplicial subsets of \(\Del^m = \Del^{\{0,\dots,m\}}\). 
	Then there exists an extension of~\eqref{e:core} to a diagram the form
	\begin{equation}\label{e:core-2} 
	\xymatrix{
		(\partial \Del^{\{1,\dots,m\}},\emptyset) \ar@{^{(}->}[d] \ar@{^{(}->}[r] & (\Del^{\{1,\dots,m\}},\emptyset) \ar@{^{(}->}[d]\ar^-{f}[r] & \Hom^{\triangleright}_{\C}(x,y) \ar[d] \\
		(\Lam^m_0,\{\Del^{\{0,1\}}\}) \ar@{^{(}->}[r] & (\Del^m,\{\Del^{\{0,1\}}\}) \ar^-{h}[r] & \Hom_{\C}(x,y) \\
	}
	\end{equation}
\end{lem}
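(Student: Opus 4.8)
The plan is to recast the lifting problem~\eqref{e:core} as a single extension problem for a map into $\C$, and to solve the latter by exhibiting the relevant inclusion of scaled simplicial sets as scaled anodyne, so that it is filled because $\C$ is a weak $\infty$-bicategory.

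\textbf{Unwinding the data.} Using the Gray-product description of $\Hom_\C(x,y)$, the horn $h_0$ is the same as a map $H_0\colon\Del^1\times_{\gr}\Lam^m_0\to\C$ which is constant equal to $x$ on $\{0\}\times\Lam^m_0$ and constant equal to $y$ on $\{1\}\times\Lam^m_0$; by Construction~\ref{cn:map}, $f_0$ is the same as a map $F_0\colon(\partial\Del^{\{1,\ldots,m\}}\ast\Del^0)_{\flat}\to\C$ sending $\partial\Del^{\{1,\ldots,m\}}$ constantly to $x$ and the cone point to $y$; and commutativity of~\eqref{e:core} says exactly that $F_0$ precomposed with the canonical collapse $r\colon\Del^1\times_{\gr}\partial\Del^{\{1,\ldots,m\}}\to(\partial\Del^{\{1,\ldots,m\}}\ast\Del^0)_{\flat}$ (sending $(i,0)\mapsto i$, $(i,1)\mapsto\ast$) coincides with the restriction of $H_0$ to $\Del^1\times_{\gr}\partial\Del^{\{1,\ldots,m\}}$. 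Dually, providing $f$ means exactly exhibiting a factorization of $H|_{\Del^1\times_{\gr}\Del^{\{1,\ldots,m\}}}$ through the analogous collapse over $\Del^{\{1,\ldots,m\}}$. Gluing these descriptions, the pair $(f_0,h_0)$ becomes a map $\Phi_0\colon Z_0\to\C$ of scaled simplicial sets, where $Z_0$ is obtained from $\Del^1\times_{\gr}\Lam^m_0$ by collapsing $\{0\}\times\Lam^m_0$ to $x$ and $\{1\}\times\Lam^m_0$ to $y$, declaring thin the two triangles of the sub-prism over the marked edge $\Del^{\{0,1\}}$, and collapsing the sub-prism over $\partial\Del^{\{1,\ldots,m\}}$ along $r$; and producing the desired $(f,h)$ amounts to extending $\Phi_0$ along the inclusion $Z_0\hookrightarrow Z_1$, where $Z_1$ is built the same way from the full prism $\Del^1\times_{\gr}\Del^m$, collapsing the two ends, scaling the $\Del^{\{0,1\}}$-sub-prism, and collapsing the sub-prism over all of $\Del^{\{1,\ldots,m\}}$. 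Here one checks, using that $\Lam^m_0$ does not contain the face $\Del^{\{1,\ldots,m\}}$, that $Z_0\to Z_1$ is a monomorphism and that these scalings are precisely the ones making $\Phi_0$ encode $(f_0,h_0)$.

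\textbf{Reduction to anodyne-ness.} It then suffices to prove that $Z_0\hookrightarrow Z_1$ is scaled anodyne: granting this, the extension $\Phi\colon Z_1\to\C$ exists because a weak $\infty$-bicategory admits extensions along all scaled anodyne maps, and reading $\Phi$ back through the identifications above yields $f$ and $h$ with the required compatibilities. To see that $Z_0\hookrightarrow Z_1$ is scaled anodyne I would filter it in two stages. In the first stage one passes from the part of the prism lying over $\Lam^m_0$ to the part lying over $\Del^m$ by attaching, in a suitable order, the non-degenerate top simplices of $\Del^1\times_{\gr}\Del^m$ indexed by the $m+1$ monotone lattice paths from $(0,0)$ to $(1,m)$; the interior attachments are pushouts of inner scaled horn inclusions of type~(1) in Definition~\ref{anodyne defi}, while the outermost one, together with the collapse of $\{0\}\times\Del^m$ and the extra scaling on the $\Del^{\{0,1\}}$-sub-prism, is a pushout of a type~(3) (outer, $\Del^{\{0,1\}}$-collapsed) generating anodyne map --- this is where $m\geq 2$ is used. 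In the second stage one passes, over the face $\Del^{\{1,\ldots,m\}}$, from the already-filled prism to its quotient along $r$, and checks that this quotient map is again a composite of pushouts of type~(1) and type~(3) maps, the marked edge $\Del^{\{0,1\}}$ of the statement being exactly the collapsed edge occurring in those generators.

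\textbf{Main obstacle.} The delicate point is this last combinatorial verification. Because the quotient maps involved (the two end-collapses and the collapses along $r$) mean one is not merely adjoining non-degenerate simplices to a subcomplex of a fixed ambient complex, at each stage of the filtration one must verify both that the square in question is a genuine pushout along a \emph{single} generating scaled anodyne map and that no thin triangle is spuriously created or lost; organizing the interaction between the prism shuffles and the cone collapse into one monotone filtration is the main bookkeeping to be done.
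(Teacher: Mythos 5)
Your reduction to an extension problem for the Gray prism $\Del^1\times_{\gr}\Del^m$ and your shuffle filtration (inner horns plus one outer step of type (3), with $m\geq 2$ needed for the latter) match the skeleton of the paper's argument, but the quotient-based implementation has genuine gaps. First, your second stage --- ``passing from the already-filled prism to its quotient along $r$'' --- describes a non-injective quotient map, which cannot be scaled anodyne (all generating anodynes are monomorphisms). The degeneracies forced by the retraction $r$ over $\Del^{\{1,\ldots,m\}}$ cannot be produced by anodyne extension; the paper instead never forms the quotient: it keeps both ends $\{\eps\}\times\Del^m$ and the forced-degenerate shuffles of the sub-prism as honest degenerate simplices inside a subcomplex of the genuine prism, \emph{defines} the map on those shuffles explicitly (checking well-definedness against the given data), and only then performs an unconstrained sequence of horn fillings. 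Relatedly, since your $Z_1$ collapses entire faces ($\{0\}\times\Del^m$, $\{1\}\times\Del^m$, the sub-prism over $\Del^{\{1,\ldots,m\}}$) while the type-(3) generators only accommodate collapsing the single edge $\Del^{\{0,1\}}$, each cell attachment in the quotient requires a separate argument that the identifications are supported on the relevant horn; this is exactly the bookkeeping you defer, and it is where the approach is most fragile.

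Second, and more seriously, the final attachment is \emph{not} a pushout of a type-(3) generator with the scaling you prescribe. That generator carries the scaling $\{\Del^{\{0,1,n\}}\}$ on its source, so to apply it to the last shuffle $\sig_{m+1}=((0,0),\ldots,(0,m),(1,m))$ you must first know that the triangle $\Del^{\{(0,0),(0,1),(1,m)\}}$ is sent to a \emph{thin} triangle of $\C$. This triangle is not thin in the Gray product, nor is it among the triangles you declare thin in $Z_1$ (you only scale the two triangles over the edge $\Del^{\{0,1\}}$ of the base). The paper establishes its thinness by starting from the hypothesis that $h_0$ marks $\Del^{\{0,1\}}$ --- which gives thinness of $\Del^{\{(0,0),(0,1),(1,1)\}}$ --- and applying the half-saturation property of Remark~\ref{rmk:j's are anod} twice, to the $3$-simplices on $\{(0,0),(1,0),(1,1),(1,m)\}$ and $\{(0,0),(0,1),(1,1),(1,m)\}$. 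Your proposal omits this verification entirely and instead attributes the role of the marked edge to the edge-collapse in the type-(3) generator; but that collapse is already supplied by the constancy of the map on $\{0\}\times\Del^m$, independently of the marking. Without the saturation argument the outer horn cannot be filled, so this is a missing step rather than deferred bookkeeping.
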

\begin{proof}
	Let
	\[ Z_0 := \Del^1 \mgr \Lam^m_0 \coprod_{\partial \Del^1 \mgr \Lam^m_0} \partial \Del^1 \mgr \Del^m \subseteq \Del^1 \mgr \Del^m .\] 
	Unwinding the definitions, what we need to prove amounts to solving an extension problem of the form
	\[
	\xymatrix{
		Z_0 \ar[r]^{g_0}\ar[d] & \C \\
		\Del^1 \mgr \Del^m \ar@{.>}[ur]_-{g} & \\
	}
	\]
	under the assumptions that
	\begin{enumerate}[label=(\roman{*}), ref=(\roman{*})]
		\item \label{i:assume}
		the map \(g_0\) sends \(\Del^{\{0\}} \mgr \Del^m\) to the vertex \(x\) and \(\Del^{\{1\}} \mgr \Del^m\) to the vertex \(y\);
		\item \label{ii:assume}
		the restriction of \(g_0\) to \(\Del^1 \mgr \partial \Del^{\{1,\dots,m\}}\) factors through \(\partial \Del^{\{1,\dots,m\}} \ast \Del^0\); and
		\item \label{iii:assume}
		the map \(g_0\) sends \(\Del^{\{(0,0),(0,1),(1,1)\}}\) to a thin triangle in \(\C\);
	\end{enumerate}
	and with the additional constraint that 
	\begin{enumerate}
		\item[(\(\ast\))]\label{i:star}
		the restriction of \(g\) to \(\Del^1 \mgr \Del^{\{1,\dots,m\}}\) factors through \(\Del^{\{1,\dots,m\}} \ast \Del^0\).
	\end{enumerate}
	Here, we note that Condition~\ref{i:star} is compatible with Condition~\ref{ii:assume} since in both cases the factorization is along the map induced by the natural (and surjective) map~\eqref{e:product-to-join}. 
	Now for \(i=0,\dots,m-1\) let \(\sigma_i\colon \Del^m \to \Del^1 \times \Del^{\{1,\dots,m\}}\) be the \(m\)-simplex given on vertices by the formula
	\[ \sigma_i(j) = \left\{\begin{matrix} (0,j+1) & j \leq i \\ (1,j) & j > i \end{matrix}\right. .\]
	These are exactly all the non-degenerate \(m\)-simplices of the simplicial set \(\Del^1 \times \Del^{\{1,\dots,m\}}\). For \(i=0,\dots,m-2\) (that is, for all except the last one) our constraint~\hyperref[i:star]{(\(\ast\))} above requires that the desired lift \(g\)
	sends \(\sigma_i\) to the degenerate \(m\)-simplex in \(\C\) whose image is the \((i+1)\)-simplex \(g_0 \circ (\sigma_i|_{\Del^{\{0,\dots,i+1\}}})\).
	Let \(Z_1 \subseteq \Del^1 \mgr \Del^{m}\) be the union of \(Z_0\) and the simplices \(\sigma_i\) for \(i \leq m-2\). 
	We define a map \(g_1\colon Z_1 \to \C\) by setting \({g_1}|_{Z_0} = {g_0}|_{Z_0}\) and on the simplex \(\sig_i\) letting
	\(g_1 \circ \sigma_i\) be the composition \(\Del^{\{0,\dots,m\}} \to \Del^{\{0,\dots,i+1\}} \to \C\), where the first map collapses \(i+1,\dots,m\) to \(i+1\)
	and the second is given by \(g_0 \circ ({\sigma_i}|_{\Del^{\{0,\dots,i+1\}}})\). This is indeed well-defined on \(Z_0 \cap \im(\sig_i)\) since \(g_0\) is assumed to
	satisfy~\ref{i:assume} and~\ref{ii:assume}. We now observe that any map \(\Del^1 \otimes \Del^{\{1,...,m\}} \to \C\) whose restriction to \(Z_1 \cap (\Del^1 \otimes \Del^{\{1,...,m\}}) = \cup_{i=0}^{m-2} \im(\sig_i)\) agrees with \(g_1\) as constructed above will necessarily factor through \(\Del^{\{1,...,m\}} \ast \Del^0\). 
We are hence left with solving the unconstrained extension problem
	\[
	\xymatrix{
		Z_1 \ar[r]^{g_1}\ar[d] & \C \\
		\Del^1 \mgr \Del^m \ar@{.>}[ur]_-{g} & \ .
	}
	\]
	Note that \(Z_1\) is obtained from the boundary \(\partial (\Del^1 \times \Del^m) = \partial \Del^1 \times \Del^m \coprod_{\partial \Del^1 \times \partial \Del^m} \Del^1 \times \partial \Del^m\) by removing the \(m\)-simplex \(\sig_{m-1}\). Now for \(i=1,\dots,m+1\) let \(\tau_i\colon \Del^{m+1} \to \Del^1 \times \Del^{m}\) be the \((m+1)\)-simplex given on vertices by the formula:
	\[ \tau_i(j) = \left\{\begin{matrix} (0,j) & j < i \\ (1,j-1) & j \geq i \end{matrix}\right. .\]
	We note that these are exactly all the top-dimensional simplices of the product \(\Delta^1 \times \Delta^m\).
	For \(j=2,\dots,m+2\) let \(Z_j \subseteq \Del^1 \mgr \Del^m\) be the union of \(Z_1\) and the simplices \(\tau_i\) for \(i<j\). We then observe that \(Z_{m+2} = \Del^1 \mgr \Del^m\) and that for \(j=1,\dots,m\) we have a pushout square
	\[ \xymatrix{
		(\Lam^{m+1}_{j},\{\Del^{\{j-1,j,j+1\}}\}) \ar[r]\ar[d] & Z_{j} \ar[d] \\
		(\Del^{m+1},\{\Del^{\{j-1,j,j+1\}}\}) \ar[r] & Z_{j+1} \\
	}\]
	In particular \(Z_j \to Z_{j+1}\) is scaled anodyne for \(j=1,\dots,m\) and since \(\C\) is a weak \(\infty\)-bicategory we may extend \(g_1\) to a map \(g_{m+1}\colon Z_{m+1} \to \C\). Finally, note that the \((m+1)\)-simplex \(\tau_{m+1}\) contains the \(m\)-simplex \(\sig_{m-1}\) as the face opposite \(0\), and this is the only face of \(\tau_{m+1}\) which is not in the image of \(Z_{m+1} \to Z_{m+2}\). 
	We therefore have a pushout square
	\[ \xymatrix{
		(\Lam^{m+1}_{0})_{\flat} \ar[r]\ar[d] & Z_{m+1} \ar[d] \\
		\Del^{m+1}_{\flat} \ar[r] & Z_{m+2} \ ,
	}\]
	and by~\ref{i:assume} above the composite map \((\Lam^{m+1}_0)_{\flat} \to Z_{m+1} \to \C\) 
	collapses \(\Del^{\{0,1\}}\) to \(x\). 
		Now the left vertical map in the above square is not scaled anodyne.
	Nonetheless, since the map
	\[
		(\Lam^{m+1}_0 \coprod_{\Del^{\{0,1\}}}\Del^0,\{\Del^{\{0,1,m+1\}}\}) \to (\Del^{m+1} \coprod_{\Del^{\{0,1\}}}\Del^0,\{\Del^{\{0,1,m+1\}}\})
	\]
	is scaled anodyne we could finish the proof provided that we show that the following condition
	is satisfied: The composite map \((\Lam^{m+1}_0)_{\flat} \to Z_{m+1} \to \C\) 
	sends \(\Del^{\{0,1,m+1\}}\) to a thin triangle,
	or, equivalently, that \(g_{m+1}\) sends \(\Del^{\{(0,0),(0,1),(1,m)\}}\) to a thin triangle.
	We know that the map \(g_{m+1}\):
	\begin{itemize}
		\item sends the triangle \(\Del^{\{(0,0),(0,1),(1,1)\}}\) to a thin triangle by~\ref{iii:assume},

		\item sends every triangle of the form \(\Del^{\{(0,i),(1,i),(1,j)\}}\) to a thin triangle by the definition of the Gray product, and

		\item sends \(\{1\} \times \Del^m\) to a point.
	\end{itemize}
	 It suffices to apply Remark~\ref{rmk:j's are anod} to the \(3\)-simplex \(g_{m+1}(\Del^{\{(0,0),(1,0),(1,1),(1,m)\}})\)
	 to deduce that \(g_{m+1}\) sends \(\Del^{\{(0,0),(1,1),(1,m)\}}\) to a thin triangle
	 and then similarly to apply Remark~\ref{rmk:j's are anod} to the \(3\)-simplex \(g_{m+1}(\Del^{\{(0,0),(0,1),(1,1),(1,m)\}})\)
	 to deduce that \(g_{m+1}\) sends \(\Del^{\{(0,0),(0,1),(1,m)\}}\) to a thin triangle, as desired.
\end{proof}

\begin{proof}[Proof of Proposition~\ref{p:mapping-1}]
	We first note that the map \(i\colon\Hom^{\triangleright}_{\C}(x,y) \to \Hom_{\C}(x,y)\)
  is a morphism of naturally marked \(\infty\)-categories. We will thus drop the marking and consider only the underlying \(\infty\)-categories. Observe that this map is bijective
  on vertices. It will hence suffice to show that it is fully-faithful.
	
	Given vertices \(\alp,\bet \in \Hom^{\triangleright}_{\C}(x,y)\), let \(X_{\alp,\bet} = (\Hom^{\triangleright}_{\C}(x,y)_{/\beta})_{\alpha}\) and similarly \(Y_{\alp,\bet} = (\Hom_{\C}(x,y)_{/i\beta})_{i\alpha}\). As established in~\cite[\S\S 1.2.2]{HTT} these are Kan complexes which model the corresponding mapping spaces on both sides. We hence need to show that the map 
	\[ X_{\alp,\bet} \to Y_{\alp,\bet} \] 
	is a homotopy equivalence of Kan complexes. Now a morphism \(i \colon X\to Y\) of Kan complexes is a homotopy equivalence if and only if the homotopy fiber of \(i\) at any vertex \(z\) of \(Y\)
	is contractible (see, for instance, \cite[\href{https://kerodon.net/tag/00ZP}{Tag 00ZP}]{LurieKerodon} applied to a factorization of \(i\) as anodyne followed by a Kan fibration). In addition, since \(Y_{z/} \to Y\) is a Kan fibration between Kan complexes
  (this follows, for instance, from the dual of~\cite[Corollary~3.20]{JoyalQCatsApplications}) and \(Y_{z/} \simeq \ast\), the homotopy fiber of \(i\) over a point \(y \in Y\) can be modelled by the Kan complex \(X \times_{Y} Y_{z/}\).
	To finish the proof we now show that the Kan complex \((X_{\alp,\beta}) \times_{Y_{\alp,\beta}} (Y_{\alp,\beta})_{z/}\) is contractible for every \(z \in Y_{\alp,\beta}\)
  (see, for instance, \cite[\S 8]{JoyalQCatsApplications}). This amounts to showing that for any \(m \geq 2\), any diagram	
	\begin{equation}
	\label{diag:lifting_problem_homotopy_fiber}
	\begin{tikzcd}
		{\partial \Del^{\{1,\dots,m-1\}}} \ar[d, hook]\ar[r] & {X_{\alp,\bet}} \ar[d] \\
		{\Lam^{m-1}_0} \ar[r] & {Y_{i\alp,i\bet}}
	\end{tikzcd},
	\end{equation}
  where the image of \(\Delta^{\{0\}}\) of the bottom map is \(z\),
	extends to a diagram of the form
	\begin{equation}
	\label{diag:lift_homotopy_fiber}
	\begin{tikzcd}
		\partial \Del^{\{1,\dots,m-1\}} \ar[d, hook] \ar[r, hook] & \Del^{\{1,\dots,m-1\}} \ar[d, hook] \ar[r] & X_{\alp,\bet} \ar[d] \\
		\Lam^{m-1}_0 \ar[r, hook] & \Del^{m-1} \ar[r] & Y_{i\alp,i\bet}
	\end{tikzcd}.
	\end{equation}
  Indeed, diagram~\eqref{diag:lifting_problem_homotopy_fiber} corresponds to a
  map
  \(\partial \Delta^{\{1, \dots, m-1\}} \to X_{\alp,\bet} \times_{Y_{i\alpha, i\beta}} (Y_{i\alpha, i\beta})_{z/}\), while diagram~\eqref{diag:lift_homotopy_fiber} corresponds to
  an extension
  \[
    \begin{tikzcd}
      \partial \Delta^{\{1, \dots, m-1\}} \ar[r] \ar[d] &
          X_{\alp,\bet} \times_{Y_{i\alpha, i\beta}} (Y_{i\alpha, i\beta})_{z/} \\
      \Delta^{\{1, \dots, m-1\}} \ar[ur] & 
    \end{tikzcd}.
  \]
  Now, by adjunction diagram~\eqref{diag:lifting_problem_homotopy_fiber} corresponds to
  a diagram
  \[
    \begin{tikzcd}
    \Lambda^{\{1, \dots, m\}}_m \ar[d, hook]\ar[r, "{\ovl{f_0}}"] & \Hom^{\triangleright}_{\C}(x,y) \ar[d]\\
    \Lam^{m}_0 \ar[r, "h_0"] &  \Hom_{\C}(x,y)
    \end{tikzcd}.
  \]
  Setting \(f_0 \colon \partial \Delta^{\{1, \dots, m\}} \to \Hom^{\triangleright}_{\C}(x, y)\)
  to be the gluing of \(\ovl{f_0}\) together with \({f_0}_{|\Delta^{\{1, \dots, m-1\}}}\)
  degenerate at \(\alpha\) and noticing that \({h_0}_{|\Delta^{\{0, 1\}}}\) is degenerate at \(i\alpha\), the extension of diagram~\eqref{diag:lift_homotopy_fiber}
	is a direct consequence of Lemma~\ref{l:core} (applied to diagrams of the form~\eqref{e:core} for which \(f\) sends \(\Del^{\{1,\dots,m-1\}}\) to \(\alp\) and \(\Del^{\{m\}}\) to \(\beta\) and \(h\) sends \(\Del^{\{0,\dots,m-1\}}\) to~\(i\alp\)).
\end{proof}

We conclude this section by deducing a characterization of bicategorical equivalences in terms of \(\Hom^{\triangleright}_{\C}(x,y)\). We first recall the following fundamental result of Lurie:
\begin{lem}[{\cite[Lemma~4.2.3]{LurieGoodwillie}}]\label{l:lurie-hom}
Let \(\C\) be a weak \(\infty\)-bicategory, and \(x,y\) a pair of objects in it. Then \(\mathfrak{C}^{\sca}(\C)(x,y)\) and \(\Hom_{\C}(x,y)\) are weakly equivalent as marked simplicial sets.
\end{lem}
Since the functor \(\mathfrak{C}^{\sca}(\bullet)\) creates weak equivalences, and since the weak equivalences in \(\msCat\) are the Dwyer--Kan ones, we obtain the following corollary by combining Lemma~\ref{l:lurie-hom} with Proposition~\ref{p:mapping-1}:
\begin{cor}
\label{eq = ff + ess surj}
  Let \(f \colon \C \to \D\) be a map of weak \(\infty\)-bicategories. Then \(f\) is a bicategorical equivalence if and only if it is fully-faithful and essentially surjective. More precisely, if and only if:
  \begin{itemize}
    \item The induced map \(f_{x,y}\colon\Hom^{\triangleright}_{\C}(x,y) \to \Hom^{\triangleright}_{\D}(f(x),f(y))\) is an equivalence of marked simplicial sets for every pair of objects \((x,y)\) in \(\C\).
     \item For every object \(d \in \D\) there exists an object \(c \in \C\) and an invertible edge \(h\colon f(c) \overset{\sim}{\to} d\) in \(\D\).
     
   \end{itemize} 
\end{cor}

\subsection{Cartesian lifts of natural transformations}\label{s:lift}
In this section we prove a result concerning cartesian lifts of natural transformations. It will be used in the proof of Lemma \ref{l:saturation-anodyne}, but we present it now because some of the tools employed in proving it (e.g. the filtration of Construction~\ref{cn:filtration}) will be used throughout the paper. 
\begin{define}
\label{d: pointwise nat transf}
  Let \(A_0 \subseteq A,X\) be scaled simplicial sets, with \(X\) an \(\infty\)-bicategory. By a \ndef{natural transformation} of functors \(A \to X\) relative to \(A_0\) we will mean a map \(H\colon \Delta^1_{\flat} \times A \to X\) such that \(H|_{\Del^1 \times A_0}\) factors through the projection \(\Del^1 \times A_0 \to A_0\). In this case we will say that \(H|_{\Del^{\{0\}} \times A}\) is the \ndef{source} of \(H\) and \(H_{\Del^{\{1\}} \times A}\) is its \ndef{target}. We will say that \(H\) is
  \begin{itemize}
  \item \emph{constant} if it factors through the projection \(\Del^1 \times A \to A\); 
  \item \ndef{pointwise degenerate} 
  if the edge \(H(\Delta^1 \times \{a\})\) is degenerate for every vertex \(a \in A\); 
  \item \ndef{pointwise invertible} 
  if the edge \(H(\Delta^1 \times \{a\})\) is invertible for every vertex \(a \in A\);
  \item \ndef{pointwise \(p\)-cartesian} with respect to a weak fibration  \(p \colon X \to Y\) if the edge \(H(\Delta^1 \times \{a\})\) is \(p\)-cartesian for every vertex \(a \in A\).
  \end{itemize}
\end{define}

Our goal in this subsection is to prove the following:
\begin{prop}[Lifting natural transformations]\label{p:natural-lift}
	Let \(p\colon X \rightarrow Y\) be a weak fibration of scaled simplicial sets which detects thin triangles and let \(A \subseteq B\) be an inclusion of scaled simplicial sets. Consider a lifting problem of the form
	\[
    \begin{tikzcd}
    \Del^{\{1\}} \times B \displaystyle\mathop{\coprod}_{\Del^{\{1\}} \times A} \Del^1_{\flat} \times A  \ar[r, "f"]\ar[d] & X \ar[d, "p"] \\
    \Del^1_{\flat} \times B \ar[r, "H"]\ar[ur, dotted, "\wtl{H}"{description}] & Y
    \end{tikzcd}
  \]
	such that \(f\) sends every edge of the form \(\Del^1_{\flat} \times \{a\}\) for \(a \in A\) to a \(p\)-cartesian edge. Suppose given for every \(b \in B\) a \(p\)-cartesian edge \(e_b\colon \Del^1_{\flat} \times \{b\} \to X\) such that \(e_b(\Del^{\{1\}} \times \{b\}) = f(\Del^{\{1\}}\times \{b\})\), \(p(e_b) = H|_{\Del^1_{\flat} \times \{b\}}\) and \(e_a = f|_{\Del^1_{\flat} \times \{a\}}\) for every \(a \in A \subseteq B\). Then there exists a dotted lift \(\wtl{H}\colon \Del^1_{\flat} \times B \rightarrow X\) as indicated with \(\wtl{H}\vert_{\Del^1 \times \{b\}} = e_b\) for every \(b \in B\).  
\end{prop}

\begin{rem}
The statement of Proposition~\ref{p:natural-lift} can be loosely stated as follows: given a natural transformation \(\Del^1_{\flat} \times B \to Y\), any lift of its target to \(X\) can be extended to a lift of the entire natural transformation, in a manner that is furthermore pointwise \(p\)-cartesian. In addition, this lift can be chosen to respect any lift of the same nature fixed in advance along a given scaled simplicial subset \(A \subseteq B\), and the \(p\)-cartesian edges \(\wtl{H}(\Del^1_{\flat} \times \{b\})\) can be any prescribed collection of \(p\)-cartesian lifts of the edges \(H(\Del^1_{\flat} \times \{b\})\). 
\end{rem}

\begin{rem}
Inspecting the proof of Proposition~\ref{p:natural-lift} below shows that the condition that \(p\) detects thin triangles is only needed if \(B\) contains thin triangles that are not in \(A\). 
\end{rem}

In the proof of Proposition~\ref{p:natural-lift} we
will make use of the following type of filtration
that we will also employ in a few other proofs later on:

\begin{const}\label{cn:filtration}
	Fix \(n \geq  0\).
	For \(i=0,\dots,n\), let \(S_i\) be the collection of all triangles in \(\Del^{n+1}\) which are either degenerate or contain the edge \(\Del^{\{i,i+1\}}\), and let 
	\[\tau_i\colon (\Del^{n+1},S_i) \rightarrow \Del^1_{\flat} \times \Del^n_{\flat}\] be the map given on vertices by the formula
	\[ \tau_i(m) = \left\{\begin{matrix} (0,m) & m \leq i \\ (1,m-1) & m > i \end{matrix}\right. \]
	For \(k=0,\dots,n+1\) let \(Z^k \subseteq \Del^{1}_{\flat} \times \Del^n_{\flat}\) be the union of 
	\([\Del^1_{\flat} \times \partial \Del^{n}_{\flat}] \coprod_{\Del^{\{0\}}\times \partial \Del^{n}_{\flat}}[\Del^{\{0\}}\times \Del^{n}_{\flat}]\)	
	and the simplices \(\tau_i\) for \(n \geq i \geq k\)
	(defining \(\tau_{n+1}\) to be empty).
	We then have an ascending filtration of scaled simplicial sets
	\begin{equation}\label{e:filtration-0} 
	[\Del^1_{\flat} \times \partial \Del^{n}_{\flat}] \coprod_{\Del^{\{0\}}\times \partial \Del^{n}_{\flat}}[\Del^{\{0\}}\times \Del^{n}_{\flat}] = Z^{n+1} \subseteq Z^{n} \subseteq \dots \subseteq Z^0 =\Del^1_{\flat} \times   \Del^{n}_{\flat}
	\end{equation}
	and for each \(k=0,\dots,n\) we have a pushout square of scaled simplicial sets
	\[ 
	\xymatrix{
		(\Lam^{n+1}_k, {S_k}|_{\Lam^{n+1}_k}) \ar[r]\ar[d] & Z^{k+1} \ar[d] \\
		(\Del^{n+1},S_k) \ar[r] & Z^k \\
	}\]
	where the composed map \((\Del^{n+1},S_k) \rightarrow Z^k \rightarrow \Del^1_{\flat} \times \Del^{n}_{\flat}\) is the simplex \(\tau_k\).
  Notice that for \(k>0\) the set \(S_k\) contains the triangle \(\Delta^{\{k-1, k, k+1\}}\) by definition,
  so the left vertical map is a pushout of a morphism of type~\ref{item:scaled_anodyne_i} and hence a scaled anodyne map. In the final step \(k=0\) we obtain that the left vertical map is an outer horn inclusion, and in particular not scaled anodyne.
 
	Dually, for \(k=0,\dots,n+1\) let \(Z_k \subseteq \Del^{n}_{\flat} \times \Del^1_{\flat}\) be the union of 
	\[[\Del^1_{\flat} \times \partial \Del^{n}_{\flat}] \coprod_{\Del^{\{1\}}\times \partial \Del^{n}_{\flat}}[\Del^{\{1\}}\times \Del^{n}_{\flat}]\]
	and the simplices \(\tau_{i}\), for \(0 \leq i < k\).
	We then have an ascending filtration of scaled simplicial sets
	\begin{equation}\label{e:filtration-1} 
	[\Del^1_{\flat} \times \partial \Del^{n}_{\flat}] \coprod_{\Del^{\{1\}}\times \partial \Del^{n}_{\flat}}[\Del^{\{1\}}\times \Del^{n}_{\flat}]	
	= Z_0 \subseteq Z_1 \subseteq \dots \subseteq Z_{n+1} = \Del^1_{\flat} \times \Del^{n}_{\flat}  
	\end{equation}
	and for each \(k=0,\dots,n\) we have a pushout square of scaled simplicial sets
	\[ 
	\xymatrix{
		(\Lam^{n+1}_{k+1}, {S_k}|_{\Lam^{n+1}_{k+1}}) \ar[r]\ar[d] & Z_{k} \ar[d] \\
		(\Del^{n+1},S_k) \ar[r] & Z_{k+1} \\
	}\]
	where the composed map \((\Del^{n+1},S_k) \rightarrow Z_k \rightarrow \Del^1_{\flat} \times \Del^{n}_{\flat}\) is the simplex \(\tau_{k}\).
  Again, the left vertical maps are scaled anodyne morphisms, except for the final step \(k = n\). 

  On the underlying simplicial sets, these filtrations already appear in the literature, see for instance~\cite[Proposition 2.1.2.6]{HTT}.
\end{const}

\begin{rem}
	Although the above construction is very similar to that used
	in the proof of Lemma~\ref{l:core}, observe that the latter
	was a filtration for \(\Delta^1 \mgr \Delta^n\) and the
	starting simplicial subset there (regardless of whether one chooses what was called \(Z_0\)
	or \(Z_1\) in the proof) was different from \(Z_0\) of the above construction.
\end{rem}

\begin{proof}[Proof of Proposition~\ref{p:natural-lift}]
Let \(A_0,B_0\) be the sets of vertices of \(A\) and \(B\), respectively, considered as discrete scaled simplicial sets. Let \(A' = A \coprod_{A_0} B_0\). Then the \(p\)-cartesian edges \(\{e_b\}\) together with the map \(f\) assemble to form a map 
\[f'\colon \Del^{\{1\}} \times B \coprod_{\Del^{\{1\}} \times A'} \Del^1_{\flat} \times A'\to X\] 
which extends \(f\). Replacing \(A\) with \(A'\), we may hence assume without loss of generality that the inclusion \(A \subseteq B\) is bijective on vertices.  
We then have a bijective correspondence of lifting problems
  \[
    \begin{tikzcd}
    \Del^{\{1\}} \times B \displaystyle\mathop{\coprod}_{\Del^{\{1\}} \times A} \Del^1_{\flat} \times A  \ar[r, "f"]\ar[d] & X \ar[d, "p"] \\
    \Del^1_{\flat} \times B \ar[r, "H"]\ar[ur, dotted, "\wtl{H}"{description}] & Y
    \end{tikzcd}
    \quad\leftrightsquigarrow\quad
    \begin{tikzcd}
      A \ar[r] \ar[d]           & X^{\Delta^1} \ar[d] \\
      B \ar[r] \ar[ur, dotted]  & X^{\{1\}} \times_{Y^{\{1\}}} Y^{\Delta^1}
    \end{tikzcd}
  \]
  through which the statement of Proposition~\ref{p:natural-lift} translates to the statement that a dotted lift in the right square exists whenever the top horizontal map sends every vertex of \(A\) to a \(p\)-cartesian arrow. The collection of injective maps \(A \to B\) which are bijective on vertices and have this property is weakly saturated, and it will hence suffice to prove the claim for a suitable set of generators, which we can take to be the set consisting of the inclusion \(\partial \Del^n_{\flat} \hrar \Del^n_{\flat}\) for \(n \geq 1\) and the inclusion \(\Del^2_{\flat} \subseteq \Del^2_{\sharp}\). In the latter case the left vertical map in the left square is an isomorphism on the level of underlying simplicial sets, and so the desired lift exists by the assumption that \(p\colon X \to Y\) detects thin triangles. It is left to prove the case of \(\partial \Del^n_{\flat} \hrar \Del^n_{\flat}\) for \(n \geq 1\).
  We need to construct a lift in a square of the form
	\[
    \begin{tikzcd}
    \Del^{\{1\}} \times \Del^n_{\flat} \displaystyle\mathop{\coprod}_{\Del^{\{1\}} \times \partial \Del^n_{\flat}} \Del^1_{\flat} \times \partial \Del^n_{\flat}  \ar[r, "f"]\ar[d] & X \ar[d, "p"] \\
    \Del^1_{\flat} \times \Del^n_{\flat} \ar[r, "H"]\ar[ur, dotted, "\wtl{H}"{description}] & Y
    \end{tikzcd}
  \] 
  under the assumption that \(\Del^1_{\flat} \times \Del^{\{i\}}\) maps to a \(p\)-cartesian edge for every \(i\). In fact, we will only need to use this assumption for \(i=n\).
  For this we use the filtration~\eqref{e:filtration-1} of Construction~\ref{cn:filtration} to construct the lift step by step.
The steps \(k=0, \dots, n-1\) of the filtration
  can be lifted since these are pushouts of maps of type~\ref{item:scaled_anodyne_i}, and we assume that \(p\) is a weak fibration. The last step of the filtration is a pushout of the map
\begin{equation}\label{eq:outer-horn}
(\Lam^{n+1}_{n+1},S_n) \to (\Del^{n+1},S_n)
\end{equation}
when \(n \geq 2\) (where \(S_n\) is the set of all triangles containing the edge \(\Del^{\{n,n+1\}}\), which are all contained in \(\Lam^{n+1}_{n+1}\)) and of the map
\[ (\Lam^2_2)_{\flat} \to \Del^2_{\sharp} \]
when \(n=1\). Since \(S_n\) contains the triangle \(\Del^{\{0,n,n+1\}}\) we see that in either case the desired lift exists
since the edge corresponding to \(\Del^{\{n,n+1\}} \subseteq \Lam^{n+1}_{n+1}\) maps to a \(p\)-cartesian edge of \(X\) (it coincides with the image of \(\Del^1 \times \Del^{\{n\}}\) under \(f\), which is \(p\)-cartesian by assumption). 
\end{proof}

\begin{rem}
  In the second part of the proof, that is when we already reduced the inclusion \(A \subseteq B\) to the boundary inclusions,
  we use the condition for which \(f\) maps edges
  of the form \(\Delta^1 \times \{a\}\) to \(p\)-cartesian edges only for \(a = n\), \ie the last vertex of the simplex.
  But notice that in fact we need the full power of that assumption for the first part of the proof,
  that is in order to prove the weak saturation of the class of inclusions of simplicial sets satisfying that
  particular kind of left lifting property and therefore reduce to the boundary inclusions.
\end{rem}

\section{Thin triangles in weak \pdfoo-bicategories}\label{sec:thin}

In this section we will establish some useful properties of thin triangles which we will need in the subsequent sections. We begin with the following lemma which turns an arbitrary \(2\)-simplex into a \(2\)-morphism:

\begin{lem}\label{l:moving-1}
	\label{reduced form}
	Given a 2-simplex \(\alpha\) in a weak \(\infty\)-bicategory \(\C\), there exists a 3-simplex \(\Upsilon_{\alpha}\colon (\Delta^3,\Delta^{\{1,2,3\}}) \to \C\) of the form
	\begin{nscenter}
		\begin{tikzpicture}[scale=1.2]
		\squares{%
			/squares/label/.cd,
			0=$0$, 1=$1$, 2=$2$, 3=$3$,
			01={}, 12={$f$}, 23={$g$}, 03={$h$}, 02={$f$}, 13={$h'$},
			012={$=$}, 023={$\alpha$}, 013={$\hat{\alpha}$}, 123={$\simeq$},
			0123={$\Upsilon_{\alpha}$},
			/squares/arrowstyle/.cd,
			01={equal}, 012={phantom, description},
			/squares/labelstyle/.cd,
			012={anchor=center}
		}
		\end{tikzpicture}
	\end{nscenter} 
that is, the restriction of \(\Upsilon_{\alp}\) to \(\Del^{\{0,2,3\}}\) is \(\alp\) and its restriction to \(\Delta^{\{0,1,2\}}\) is degenerate along \(\Delta^{\{0,1\}}\). 
\end{lem}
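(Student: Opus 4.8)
The plan is to realize $\Upsilon_{\alpha}$ as a filler of a single scaled inner $3$\nbd-horn of the kind a weak $\infty$\nbd-bicategory is required to fill. Write the given $2$\nbd-simplex as $\alpha\colon a\to b\to c$, with $\alpha|_{\Del^{\{0,1\}}}=f$, $\alpha|_{\Del^{\{1,2\}}}=g$ and $\alpha|_{\Del^{\{0,2\}}}=h$. First I would produce the thin triangle that will occupy the face $\Del^{\{1,2,3\}}$: since $\C$ is a weak $\infty$\nbd-bicategory, the extension property against the generating scaled anodyne map of Definition~\ref{anodyne defi}(\ref{item:scaled_anodyne_i}) with $n=2$, $i=1$ (whose domain carries no non\nbd-degenerate thin triangle, so that it is just $\Lam^2_1\hrar\Del^2_\sharp$) applied to the map $\Lam^2_1\to\C$ classifying the composable pair $(f,g)$ yields a thin $2$\nbd-simplex $\theta$ with $\theta|_{\Del^{\{0,1\}}}=f$ and $\theta|_{\Del^{\{1,2\}}}=g$; set $h':=\theta|_{\Del^{\{0,2\}}}$.

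Next I would assemble a map of scaled simplicial sets $(\Lam^3_2,\{\Del^{\{1,2,3\}}\}|_{\Lam^3_2})\to\C$. Concretely, $\Lam^3_2$ is spanned by its three faces $d_0=\Del^{\{1,2,3\}}$, $d_1=\Del^{\{0,2,3\}}$ and $d_3=\Del^{\{0,1,2\}}$, and I send $d_1$ to $\alpha$ (under the identification $0,1,2\mapsto 0,2,3$), $d_0$ to $\theta$ (under $0,1,2\mapsto 1,2,3$), and $d_3$ to the degenerate triangle $s_0f$ (under $0,1,2\mapsto 0,1,2$, so that $\Del^{\{0,1\}}$ goes to the constant edge on $a$ and $\Del^{\{1,2\}},\Del^{\{0,2\}}$ both go to $f$). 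A direct check on the shared edges $\Del^{\{2,3\}}$, $\Del^{\{1,2\}}$, $\Del^{\{0,2\}}$ --- which receive $g$, $f$, $f$ respectively --- shows these assignments agree on overlaps, hence glue to a well\nbd-defined simplicial map $\Lam^3_2\to\C$; and since the only non\nbd-degenerate thin triangle of the source, namely $\Del^{\{1,2,3\}}$, is sent to the thin triangle $\theta$, this is a map of scaled simplicial sets.

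Finally, the inclusion $(\Lam^3_2,\{\Del^{\{1,2,3\}}\}|_{\Lam^3_2})\hrar(\Del^3,\{\Del^{\{1,2,3\}}\})$ is exactly the instance $n=3$, $i=2$ of Definition~\ref{anodyne defi}(\ref{item:scaled_anodyne_i}), hence scaled anodyne, so it extends over $\C$ to a map $\Upsilon_{\alpha}\colon(\Del^3,\{\Del^{\{1,2,3\}}\})\to\C$. By construction $\Upsilon_{\alpha}|_{\Del^{\{0,1\}}}$ is the constant edge on $a$, $\Upsilon_{\alpha}|_{\Del^{\{0,2,3\}}}=\alpha$, $\Upsilon_{\alpha}|_{\Del^{\{0,1,2\}}}=s_0f$ is degenerate (the face marked ``$=$'' in the diagram), and $\Upsilon_{\alpha}|_{\Del^{\{1,2,3\}}}=\theta$ is thin; the remaining face $\hat{\alpha}:=\Upsilon_{\alpha}|_{\Del^{\{0,1,3\}}}$ then automatically has its three edges equal to the constant edge on $a$, to $h'$ and to $h$, which is precisely the asserted ``$2$\nbd-morphism'' form of $\alpha$. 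Reading off the edges $\Del^{\{1,2\}}\mapsto f$ and $\Del^{\{0,3\}}\mapsto h$ as well, $\Upsilon_{\alpha}$ reproduces exactly the labelled square.

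Since each step is a single application of the defining extension property of a weak $\infty$\nbd-bicategory, there is no genuine obstacle; the only point that needs care is the bookkeeping in the middle step --- matching the faces of the horn with $\alpha$, $\theta$ and $s_0f$ under the correct vertex identifications so that the boundary of the filler is the prescribed one, and verifying the single scaling hypothesis needed to invoke Definition~\ref{anodyne defi}(\ref{item:scaled_anodyne_i}) for $n=3$, which is nothing but the thinness of $\theta$ secured in the first step.
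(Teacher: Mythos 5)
Your proof is correct and follows exactly the paper's argument: first fill the inner $2$\nbd-horn $(f,g)$ along $\Lam^2_1\hrar\Del^2_{\sharp}$ to get the thin triangle occupying $\Del^{\{1,2,3\}}$, then glue it with $\alpha$ and the degenerate triangle $s_0f$ into a map $(\Lam^3_2,\{\Del^{\{1,2,3\}}\})\to\C$ and fill along the generating scaled anodyne inclusion of Definition~\ref{anodyne defi}(\ref{item:scaled_anodyne_i}) with $n=3$, $i=2$. The extra bookkeeping you supply (checking the face identifications on shared edges and the single scaling condition) is exactly what the paper leaves implicit.
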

\begin{proof}
	We first construct a thin triangle \(\Del^{\{1,2,3\}}_{\sharp} \to \C\) by extending the 2\hyp{}dimensional horn \((f,g)\colon \Lambda^2_1\rightarrow \C\) along the scaled anodyne map \(\Lambda^2_1 \subseteq \Del^2_{\sharp}\), thus obtaining the bottom right triangle in the right square above. This triangle together with \(\alp\) and the degenerate triangle in the left square determine a map \(H\colon(\Lambda^3_2,\{\Delta^{\{1,2,3\}}\}) \rightarrow \C\). 
	Extending along the scaled anodyne inclusion 
	\[(\Lambda^3_2,\{\Delta^{\{1,2,3\}}\}) \subseteq (\Del^3,\{\Del^{\{1,2,3\}}\})\] 
	we get the desired \(3\)-simplex \(\Upsilon_{\alpha}\).
\end{proof}

\begin{rem}\label{r:hat-alp}
	We identify \(\hat{\alp}\) with an edge \([\hat{\alp}]\) in the marked simplicial set \((\C_{/\alp(2)})_{\alp(0)}\), which in turn is weakly equivalent to the mapping \(\infty\)-category \(\Hom_{\C}(\alp(0),\alp(2))\) by Proposition~\ref{p:mapping-1}. It follows from Remark \ref{rmk:j's are anod} that \(\alpha\) is thin if and only if \(\hat{\alpha}\) is thin, that is, if and only if the edge \([\hat{\alp}]\) is marked.
\end{rem}

\begin{prop}\label{p:detects}
	Let \(f\colon\C \to \D\) be a bicategorical equivalence of weak \(\infty\)-bicategories and let \(\sig\colon \Del^2 \to \C\) be a triangle. Then \(\sig\) is thin in \(\C\) if and only if \(f\sig\) is thin in \(\D\).
\end{prop}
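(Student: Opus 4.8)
The plan is to reduce thinness of a triangle to the invertibility of an edge in a mapping \(\infty\)-category, where the behaviour under the equivalence \(f\) is transparent. Fix \(\sig\colon \Del^2 \to \C\) and write \(x = \sig(0)\), \(y = \sig(2)\). Applying Lemma~\ref{l:moving-1} we obtain a \(3\)-simplex \(\Upsilon_{\sig}\) of \(\C\) exhibiting a reduced form \(\hat{\sig}\) of \(\sig\), which by Remark~\ref{r:hat-alp} we regard as an edge \([\hat{\sig}]\) of the marked simplicial set \(\Hom^{\triangleright}_{\C}(x,y)\), so that \(\sig\) is thin in \(\C\) if and only if \([\hat{\sig}]\) is marked. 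By Remark~\ref{r:fibrant} the marked edges of \(\Hom^{\triangleright}_{\C}(x,y)\) are exactly its equivalences, so \(\sig\) is thin precisely when \([\hat{\sig}]\) is an equivalence of the \(\infty\)-category \(\Hom^{\triangleright}_{\C}(x,y)\). Now \(f\Upsilon_{\sig}\) is again a \(3\)-simplex of the shape appearing in Lemma~\ref{l:moving-1} --- this uses only that \(f\) preserves degenerate and thin triangles --- hence exhibits \(f\hat{\sig}\) as a reduced form of \(f\sig\), and thus \(f\sig\) is thin in \(\D\) if and only if \([f\hat{\sig}]\) is an equivalence of \(\Hom^{\triangleright}_{\D}(fx,fy)\). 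The slice construction of \S\ref{s:join}, being functorial, provides a functor \(f_{x,y}\colon \Hom^{\triangleright}_{\C}(x,y) \to \Hom^{\triangleright}_{\D}(fx,fy)\) sending \([\hat{\sig}]\) to \([f\hat{\sig}]\); so it suffices to prove that \(f_{x,y}\) is a categorical equivalence, since such a functor induces an equivalence on homotopy categories, hence is conservative, and therefore both preserves and reflects equivalences.

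To prove that \(f_{x,y}\) is a categorical equivalence we compare it with the other model of the mapping \(\infty\)-categories. Construction~\ref{cn:map} is natural in the scaled simplicial set, so we obtain a commutative square
\[ \xymatrix{
\Hom^{\triangleright}_{\C}(x,y) \ar[r]^-{i}\ar[d]_-{f_{x,y}} & \Hom_{\C}(x,y) \ar[d]^-{f_*} \\
\Hom^{\triangleright}_{\D}(fx,fy) \ar[r]^-{i} & \Hom_{\D}(fx,fy) \\
} \]
whose horizontal maps are marked categorical equivalences of fibrant marked simplicial sets by Proposition~\ref{p:mapping-1}. By two-out-of-three it therefore suffices to show that \(f_*\colon \Hom_{\C}(x,y) \to \Hom_{\D}(fx,fy)\) is a marked categorical equivalence. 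Since \(f\) is a bicategorical equivalence, by definition of the bicategorical model structure (Theorem~\ref{coherent nerve eq}) the functor \(\mathfrak{C}^{\sca}(f)\) is a Dwyer--Kan equivalence of marked-simplicial categories, hence in particular fully faithful; and by~\cite[\S 4.2]{LurieGoodwillie} the marked simplicial set \(\Hom_{\C}(x,y)\) computes, naturally in the weak \(\infty\)-bicategory \(\C\), the mapping object \(\Map_{\mathfrak{C}^{\sca}(\C)}(x,y)\) (and likewise for \(\D\)). Combining these two facts with one last application of two-out-of-three shows that \(f_*\) is a marked categorical equivalence, which completes the proof.

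The only step requiring genuine care is the bookkeeping in the first paragraph: one must check that \(f\) carries a \(3\)-simplex witnessing \(\hat{\sig}\) to a legitimate witness for \(f\sig\), and that the edge of \(\Hom^{\triangleright}_{\D}(fx,fy)\) so produced is indeed \(f_{x,y}([\hat{\sig}])\). Both follow immediately by unwinding the description of the projection \(\ovl{\C}_{/y} \to \C\) from \S\ref{s:join} and using that \(f\) preserves thin triangles; the fact that neither the reduced form nor the witnessing \(3\)-simplex is canonical is harmless here, precisely because thinness is independent of those choices (Remark~\ref{r:hat-alp}). Everything else in the argument is formal, the substantive inputs being Proposition~\ref{p:mapping-1}, the characterization of weak equivalences in Theorem~\ref{coherent nerve eq}, and Lurie's analysis of mapping objects.
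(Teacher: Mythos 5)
Your proof is correct and follows essentially the same route as the paper's: reduce to the reduced form $\hat{\sig}$ via Lemma~\ref{l:moving-1} and Remark~\ref{r:hat-alp}, reinterpret thinness as markedness of an edge of $\Hom^{\triangleright}_{\C}(x,y)$, and conclude because the induced map on $\Hom^{\triangleright}$ is a marked categorical equivalence of fibrant marked simplicial sets (Proposition~\ref{p:mapping-1} plus fully-faithfulness of $f$) and hence detects marked edges. You are in fact slightly more careful than the paper at two points it leaves implicit --- checking that $f\Upsilon_{\sig}$ witnesses $f\hat{\sig}$ as a reduced form of $f\sig$, and spelling out the comparison square between $\Hom^{\triangleright}$, $\Hom$, and $\Map_{\mathfrak{C}^{\sca}(-)}$ --- but the substance is identical.
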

\begin{proof}
	Let us depict \(\sig\) as a diagram
	\[ \xymatrix{
		& y \ar^{g}[dr] & \\
		x\ar[ur]^{l}\ar[rr]_{h} & \twocell{u}{\alp}{0.4}{0}{0.12} & z \\
	}\]
	which is commutative up to a (not-necessarily-invertible \(2\)-cell) \(\alp\colon h \Rightarrow gl\). 
	Applying Lemma~\ref{l:moving-1} and Remark~\ref{r:hat-alp} we may reduce to the case where \(l\) is degenerate. Similarly, we can use \(f(\Upsilon_{\alpha})\) as \(\Upsilon_{f(\alpha)}\) (following the notation of the abovementioned lemma and remark). This allows us to consider \(\alp\) as encoding an edge \([\alpha]\) in \(\Hom^{\triangleright}_{\C}(x,z)\), which is marked if and only if \(\alp\) is a thin \(2\)-simplex. Similarly, \(f(\alpha)\) can be thought of as encoding an edge \([f(\alpha)]\) in \(\Hom^{\triangleright}_{\D}(f(x),f(z))\), which is marked if and only if \(f(\alp)\) is a thin \(2\)-simplex. Since \(f\) is a bicategorical equivalence, 
Corollary~\ref{eq = ff + ess surj} implies that the map
	\begin{equation}\label{e:f}
	\Hom^{\triangleright}_{\C}(x,z) \to \Hom^{\triangleright}_{\D}(f(x),f(z))
	\end{equation}
	is a marked categorical equivalence. Since \(\Hom^{\triangleright}_{\C}(x,z)\) and \(\Hom^{\triangleright}_{\D}(f(x),f(z))\) are fibrant marked simplicial sets (see Remark~\ref{r:fibrant}) their marked edges are exactly the respective equivalences, and hence~\eqref{e:f} detects marked edges. We may then conclude that \(\alp\) is thin in \(\C\) if and only if \(f\alp\) is thin in \(\D\), as desired.
\end{proof}

\begin{prop}\label{p:joyal-2}
	Let \(\C\) be a weak \(\infty\)-bicategory and \(\sig\colon (\Del^3,T) \to \C\) a map of scaled simplicial sets, where \(T = \{\Del^{\{0,1,3\}},\Del^{\{0,2,3\}}\}\). Then the following holds:
  \begin{enumerate}
    \item If \(\sig(\Del^{\{0,1\}})\) is an equivalence and \(\sig(\Del^{\{0,1,2\}})\) is thin then \(\sig(\Del^{\{1,2,3\}})\) is thin.
    \item If \(\sig(\Del^{\{2,3\}})\) is an equivalence and \(\sig(\Del^{\{1,2,3\}})\) is thin in \(\C\) then \(\sig(\Del^{\{0,1,2\}})\) is thin. 
  \end{enumerate} 
\end{prop}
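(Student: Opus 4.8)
The plan is to reduce the second (dual) assertion to the first by a symmetric argument, replacing the slice construction and the outer cartesian fibrations of \S\ref{s:join} by their duals (equivalently, passing to opposites); under this symmetry the roles of the faces $\Del^{\{0,1,2\}}$ and $\Del^{\{1,2,3\}}$ are interchanged, the two faces constituting $T$ are swapped, and the edge $\Del^{\{0,1\}}$ is replaced by $\Del^{\{2,3\}}$, so the second statement becomes an instance of the first. For the first assertion, set $d:=\sigma(3)$ and consider the slice $q\colon\ovl{\C}_{/d}\to\C$ of \S\ref{s:join}. By Corollary~\ref{c:slice} this is an outer cartesian fibration whose marked edges (those of the marked\nobreakdash-scaled slice $\C_{/d}$) are $q$-cartesian; the hypotheses of that corollary hold because $\C$, being a weak $\infty$-bicategory, admits extensions along all scaled anodyne maps. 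The $3$-simplex $\sigma$ determines a $2$-simplex $\bar{\sigma}\colon\Del^2\to\ovl{\C}_{/d}$, and unwinding Definition~\ref{d:join} one sees that $\bar\sigma$ is a thin triangle precisely when $\sigma(\Del^{\{0,1,2\}})$ is thin, while for $\{i,j\}\subseteq\{0,1,2\}$ the edge $\bar\sigma|_{\Del^{\{i,j\}}}$ is marked in $\C_{/d}$ precisely when $\sigma(\Del^{\{i,j,3\}})$ is thin in $\C$. Thus $\bar\sigma$ is thin, its edges $\bar\sigma|_{\Del^{\{0,1\}}}$ and $\bar\sigma|_{\Del^{\{0,2\}}}$ are marked (hence $q$-cartesian), and what we must show is that $\bar\sigma|_{\Del^{\{1,2\}}}$ is marked. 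Moreover $\ovl{\C}_{/d}$ is itself a weak $\infty$-bicategory (Remark~\ref{r:weak-scaled}), and since $\bar\sigma|_{\Del^{\{0,1\}}}$ is a $q$-cartesian edge lying over $q(\bar\sigma|_{\Del^{\{0,1\}}})=\sigma(\Del^{\{0,1\}})$, which is an equivalence, Remark~\ref{r:cart-equiv} shows that $\bar\sigma|_{\Del^{\{0,1\}}}$ is an equivalence in $\ovl{\C}_{/d}$.

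It therefore remains to prove a two-out-of-three property for $q$-cartesian edges: for an outer cartesian fibration $p\colon X\to Y$ and a thin triangle $\bar\sigma\colon\Del^2\to X$ whose edges $\bar\sigma|_{\Del^{\{0,2\}}}$ and $\bar\sigma|_{\Del^{\{0,1\}}}$ are $p$-cartesian, with $\bar\sigma|_{\Del^{\{0,1\}}}$ moreover an equivalence, the third edge $\bar\sigma|_{\Del^{\{1,2\}}}$ is $p$-cartesian — and then to note, conversely to Corollary~\ref{c:slice}, that a $q$-cartesian edge of the slice $\ovl{\C}_{/d}$ is automatically marked in $\C_{/d}$ (the comparison of cartesian and marked edges of a slice being a routine check, cf.\ \cite{GagnaHarpazLanariFibrationsI}); applied to $\bar\sigma|_{\Del^{\{1,2\}}}$ this gives the proposition. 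For the two-out-of-three property itself: being thin, $\bar\sigma$ exhibits in the core $\infty$-category $X^{\thi}$ the edge $\bar\sigma|_{\Del^{\{0,2\}}}$ as a composite $\bar\sigma|_{\Del^{\{1,2\}}}\circ\bar\sigma|_{\Del^{\{0,1\}}}$, so inverting the equivalence $\bar\sigma|_{\Del^{\{0,1\}}}$ exhibits $\bar\sigma|_{\Del^{\{1,2\}}}$ as equivalent, in the arrow $\infty$-category of $X$, to the composite of the $p$-cartesian edge $\bar\sigma|_{\Del^{\{0,2\}}}$ with an equivalence; one then concludes by the two standard closure properties of $p$-cartesian edges — invariance under equivalence of edges, and stability under composition with equivalences — which hold for outer (co)cartesian fibrations by the filtration arguments of Construction~\ref{cn:filtration} applied to the lifting property of Definition~\ref{d:cartesian}. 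I expect this last point — establishing these closure properties directly from Definition~\ref{d:cartesian}, where the relevant outer horns carry the scaling $\{\Del^{\{0,n-1,n\}}\}$ — to be the main obstacle; granting it, the proposition follows.

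Alternatively, one can argue more directly in $\C$ itself: the sub-horn $\sigma|_{\Lam^3_0}$ has all of its $2$-faces thin, hence factors through the core $\infty$-category $\C^{\thi}$, and its edge $\Del^{\{0,1\}}$ is an equivalence there, so Joyal's special outer horn theorem yields another filler $\tau\colon\Del^3\to\C^{\thi}$ with $\tau|_{\Lam^3_0}=\sigma|_{\Lam^3_0}$; the face $\tau(\Del^{\{1,2,3\}})$ is thin, and comparing $\sigma$ and $\tau$ inside $\ovl{\C}_{/d}$ (they agree there on the horn $\Lam^2_0$, whose edge $\Del^{\{0,1\}}$ is an equivalence) shows that $\sigma(\Del^{\{1,2,3\}})$ and $\tau(\Del^{\{1,2,3\}})$ are equivalent edges in the core of $\ovl{\C}_{/d}$, so that $\sigma(\Del^{\{1,2,3\}})$ is thin as soon as one knows thinness to be invariant under such equivalences. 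Either route isolates the same essential input; I would carry out the first, as it fits the framework of \S\ref{sec:fibrations}.
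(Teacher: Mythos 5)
Your reduction is set up correctly: the translation into the slice \(\ovl{\C}_{/d}\) (thinness of \(\sigma(\Del^{\{0,1,2\}})\) becoming thinness of \(\bar\sigma\), thinness of \(\sigma(\Del^{\{i,j,3\}})\) becoming markedness of \(\bar\sigma|_{\Del^{\{i,j\}}}\) in \(\C_{/d}\)) is accurate, Corollary~\ref{c:slice} and Remarks~\ref{r:weak-scaled} and~\ref{r:cart-equiv} apply as you say, and the dualization of the second claim is fine. But the argument is not complete, and the gap is exactly where you suspect it. You rely on two facts that are nowhere established in this paper: (i) the cancellation calculus for \(p\)-cartesian edges of an outer cartesian fibration (in the form: in a thin triangle with \(\bar\sigma|_{\Del^{\{0,2\}}}\) cartesian and \(\bar\sigma|_{\Del^{\{0,1\}}}\) a cartesian equivalence, the edge \(\bar\sigma|_{\Del^{\{1,2\}}}\) is cartesian), and (ii) that a \(q\)-cartesian edge of \(\ovl{\C}_{/d}\) is automatically marked in \(\C_{/d}\). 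Neither is a formal consequence of Definition~\ref{d:cartesian}: (i) needs invariance of cartesianness under equivalence of edges together with stability under composition with equivalences, each a multi-step horn-filling argument sensitive to the scaling \(\{\Del^{\{0,n-1,n\}}\}\) carried by the outer horns; (ii) needs uniqueness of cartesian lifts up to an equivalence in the fibre, combined with the identification of the marked edges of \((\ovl{\C}_{/d})_x\) with its equivalences (Remark~\ref{r:fibrant}) and the closure property of Remark~\ref{r:closure}. These statements are true and provable, but they carry essentially all of the difficulty of the proposition; as written, your proof defers its essential content to unproven lemmas (or to the companion paper), so it does not yet go through. Your ``alternative'' argument via Joyal's special outer horn theorem in \(\C^{\thi}\) hits the same wall, as you note, since it ultimately needs markedness of edges of the slice to be invariant under homotopy of fillers of a fixed horn.

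For comparison, the paper avoids the cartesian-edge calculus entirely by strictifying: using Proposition~\ref{p:detects} one replaces \(\C\) by \(\rN^{\sca}(\E)\) for a fibrant marked-simplicial category \(\E\); the adjoint of \(\sigma\) then produces, after precomposing with the equivalence \(\sigma(\Del^{\{0,1\}})\) (which induces a marked-categorical equivalence on mapping objects, hence detects marked edges), a commutative square in a fibrant marked simplicial set \(\Map_{\E}(-,-)\) three of whose sides are marked because the corresponding triangles of \(\sigma\) are thin; the fourth side is then marked by two-out-of-three for equivalences, and this fourth side is exactly the \(2\)-cell of \(\sigma|_{\Del^{\{1,2,3\}}}\). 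If you want to salvage your route within the confines of this paper, you would need to prove (i) and (ii) first; given the availability of Proposition~\ref{p:detects} and the Quillen equivalence \(\fC^{\sca} \dashv \rN^{\sca}\), the strictification argument is considerably shorter.
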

\begin{proof}
We prove the first claim. The proof of the second is completely analogous. Since \(\fC^{\sca} \dashv \rN^{\sca}\) is a Quillen equivalence there exists a fibrant \(\Set^+_\Del\)-enriched category \(\E\) equipped with a bicategorical equivalence \(f\colon \C \to \rN^{\sca}(\E)\). By Proposition~\ref{p:detects} we have that \(\sig|_{\Del^{\{1,2,3\}}}\) is thin in \(\C\) if and only if \(f\sig|_{\Del^{\{1,2,3\}}}\) is thin in \(\rN^{\sca}(\E)\). We may hence reduce to the case where \(\C = \rN^{\sca}(\E)\). Let \(\sig^{\ad}\colon \fC^{\sca}(\Del^3,T) \to \E\) be the adjoint map. Then the restriction of \(\sig^{\ad}\) to \(\fC^{\sca}(\Del^{\{1,2,3\}}_{\flat})\) determines a diagram in \(\E\) of the form
	\[ \xymatrix{
		& y \ar^{f_{2,3}}[dr] & \\
		x\ar[ur]^{f_{1,2}}\ar[rr]_{f_{1,3}} & \twocell{u}{\alp}{0.4}{0}{0.12} & z \\
	}\]
	where the \(\alp\) corresponds to an edge in the marked simplicial set \(\Map_{\E}(x,z)\) going from \(f_{1,3}\) to \(f_{2,3} \circ f_{1,2}\). To show that \(\sig|_{\Del^{\{1,2,3\}}}\) is thin in \(\rN^{\sca}(\E)\) we need to show that \(\alp\) is a marked edge of \(\Map_{\E}(x,z)\). Now since \(\sig|_{\Del^{\{0,1\}}}\) is an equivalence in \(\rN^{\sca}(\E)\) we have that the edge \(f_{0,1}\colon x' \to x\) in \(\E\) determined by \(\sig^{\ad}|_{\fC(\Del^{\{0,1\}})}\) is an equivalence in \(\E\), i.e., admits an inverse up to homotopy. This implies that the pre-composition map
	\[\begin{tikzcd} \Map_\E(x,z) \ar[r, "{f_{0,1}^*}"] & \Map_\E(x',z) \end{tikzcd}\]
	is a categorical equivalence of (fibrant) marked simplicial sets. Such an equivalence detects marked edges (which coincide in this case with the collection of equivalences), and hence it will suffice to show that the edge \(f_{0,1}^*\alp\colon \Del^1 \to \Map_{\E}(x',z)\) is marked. Now the map \(\sig^{\ad}_*\colon \Map_{\fC^{\sca}(\Del^3,T)}(0,3) \to \Map_{\E}(x',z)\) determines a commutative square of the form
	\begin{equation}\label{e:square-map} 
	\xymatrix{
		f_{0,3} \ar[r]\ar[d] & f_{1,3} \circ f_{0,1} \ar^{f_{0,1}^*\alp}[d] \\
		f_{2,3} \circ f_{0,2} \ar[r] & f_{2,3} \circ f_{1,2} \circ f_{0,1} \\
	}
	\end{equation}
	in the marked simplicial set \(\Map_{\E}(x',z)\). Since \(\sig\) sends the triangles \(\Del^{\{0,1,3\}}, \Del^{\{0,2,3\}}\) and \(\Del^{\{0,1,2\}}\) to thin triangles in \(\Ne^{\sca}(\E)\) it follows that the two horizontal arrows and the left vertical arrow in~\eqref{e:square-map} are marked. Since \(\Map_{\E}(x',z)\) is fibrant it follows that \(f_{0,1}^*\alp\) is marked as well, and so the proof is complete.
\end{proof}

\begin{cor}\label{c:joyal-2}
Let \(\C\) be a weak \(\infty\)-bicategory and let \(\rho_0,\rho_1\colon\Del^2\to \C\) be two triangles. Let \(h\colon \rho_0 \to \rho_1\) be a natural transformation from \(\rho_0\) to \(\rho_1\), i.e., a map \(h\colon \Del^1_{\flat} \times \Del^2_{\flat} \to \C\) such that \(h|_{\Del^{\{i\}} \times \Del^2} = \rho_i\). Then the following holds:
\begin{enumerate}
\item
If \(h|_{\Del^1 \times \{0\}}\) is an equivalence in \(\C\) and \(\rho_0\) is thin then \(\rho_1\) is thin.
\item
If \(h|_{\Del^1 \times \{2\}}\) is an equivalence in \(\C\) and \(\rho_1\) is thin then \(\rho_0\) is thin.
\end{enumerate}
\end{cor}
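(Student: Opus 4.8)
The plan is to deduce this from Proposition~\ref{p:joyal-2}. I will spell out~(1); statement~(2) follows by the symmetric argument, replacing the first half of Proposition~\ref{p:joyal-2} by its dual half throughout (equivalently, after the reduction below one may pass to $\rN^{\sca}(\E)^{\op}$ and invoke~(1)).

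For~(1), the natural transformation $h\colon\Del^1\times\Del^2\to\C$ restricts, along the first non-degenerate shuffle of the prism $\Del^1\times\Del^2$, to a $3$-simplex $\sigma\colon\Del^3\to\C$ sitting on the vertices $(0,0),(1,0),(1,1),(1,2)$. Three features of $\sigma$ are forced on us: its edge $\sigma|_{\Del^{\{0,1\}}}$ is $h|_{\Del^1\times\{0\}}$, hence an equivalence by hypothesis; its face $\sigma|_{\Del^{\{1,2,3\}}}$ is exactly $\rho_1$; and its faces $\sigma|_{\Del^{\{0,1,2\}}}$ and $\sigma|_{\Del^{\{0,1,3\}}}$ are triangles of the form $h|_{\Del^{\{(0,i),(1,i),(1,j)\}}}$, hence thin by the definition of a natural transformation. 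So by Proposition~\ref{p:joyal-2} (applied with $T=\{\Del^{\{0,1,3\}},\Del^{\{0,2,3\}}\}$) it suffices to prove that the one remaining face $\sigma|_{\Del^{\{0,2,3\}}}$ — the triangle $h$ spanned by $(0,0),(1,1),(1,2)$ — is thin; once this is known, Proposition~\ref{p:joyal-2} yields that $\rho_1=\sigma|_{\Del^{\{1,2,3\}}}$ is thin, as wanted. This single triangle is exactly where the hypothesis ``$\rho_0$ thin'' has to be used, since it is the only face of $\sigma$ not made thin by the shape of $h$.

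To prove that triangle thin I would argue as in the proof of Proposition~\ref{p:joyal-2}: reduce, via Proposition~\ref{p:detects}, to the case $\C=\rN^{\sca}(\E)$ for a fibrant marked-simplicial category $\E$ — this is legitimate because thinness of $\rho_0$, thinness of the triangle in question, and invertibility of $h|_{\Del^1\times\{0\}}$ are all detected by a bicategorical equivalence, and the composite of $h$ with such an equivalence is again a natural transformation — then pass to the adjoint $h^{\ad}\colon\fC^{\sca}(\Del^1\times\Del^2)\to\E$. Restricting $h^{\ad}$ to the two remaining shuffles of the prism and to a suitable pair of vertices, one extracts a diagram of $2$-morphisms in the fibrant mapping simplicial set $\Map_{\E}(\rho_0(0),\rho_1(2))$ in which ``$\rho_0$ thin'' says that one distinguished edge is marked, the thin triangles forced by the shape of $h$ contribute further marked edges, and the shuffles of the prism contribute commuting squares; moreover, since $h|_{\Del^1\times\{0\}}$ is an equivalence, precomposition with it is an equivalence of fibrant marked simplicial sets and hence detects marked edges. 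A diagram chase of the same flavour as the one concluding the proof of Proposition~\ref{p:joyal-2} — propagating ``marked'' around commuting squares via the fact that a commuting square in a fibrant marked simplicial set whose first three sides are marked has its fourth side marked — then shows that the edge classifying the triangle spanned by $(0,0),(1,1),(1,2)$ is marked, i.e.\ that that triangle is thin. The step I expect to be most delicate is precisely this last one: choosing the right two shuffles and their shared $2$-faces, and checking that exactly enough of the relevant edges of $\Map_{\E}(\rho_0(0),\rho_1(2))$ are visibly marked for the chase to close up.
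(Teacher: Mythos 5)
Your reduction of part~(1) to the single $3$-simplex on $(0,0),(1,0),(1,1),(1,2)$ is exactly the paper's final step: both arguments end by applying Proposition~\ref{p:joyal-2} to that shuffle, with the hypothesis that $h|_{\Del^1\times\{0\}}$ is an equivalence entering only there. The problem is the remaining face $h|_{\Del^{\{(0,0),(1,1),(1,2)\}}}$, which you correctly isolate as the crux but then only sketch: you propose strictifying via Proposition~\ref{p:detects} and running a marked-edge chase in $\Map_{\E}(\rho_0(0),\rho_1(2))$, and you explicitly leave the details (``choosing the right two shuffles \dots\ checking that exactly enough of the relevant edges are visibly marked'') open. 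As written this is a genuine gap at the acknowledged delicate point. Two further frictions in the sketch: the $2$-cell encoded by $\rho_0$ lives in $\Map_{\E}(\rho_0(0),\rho_0(2))$, so it must first be pushed forward along $h|_{\Del^1\times\{2\}}$ before it can enter your chase; and the precomposition-with-$h|_{\Del^1\times\{0\}}$ device you invoke is not needed for this step at all, since thinness of that triangle uses only that $\rho_0$ is thin, not the equivalence hypothesis.

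The paper closes exactly this gap with a two-line elementary argument you have available and do not use, namely Remark~\ref{rmk:j's are anod}. Every triangle of $\Del^1_{\flat}\times\Del^2_{\flat}$ other than the four diagonal ones $\alp_i$ is thin, because it projects to a degenerate triangle of $\Del^2$. Applying Remark~\ref{rmk:j's are anod} to the shuffle $\Del^{\{(0,0),(0,1),(0,2),(1,2)\}}$ --- all of whose triangles through the vertex $(0,2)$ are thin, one of them being $\rho_0$ --- shows that $h|_{\Del^{\{(0,0),(0,1),(1,2)\}}}$ is thin; applying it again to $\Del^{\{(0,0),(0,1),(1,1),(1,2)\}}$, whose triangles through $(0,1)$ are now all thin, shows that $h|_{\Del^{\{(0,0),(1,1),(1,2)\}}}$ is thin. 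No passage to $\rN^{\sca}(\E)$ and no diagram chase in mapping objects is required. If you replace your third paragraph by these two applications, your argument coincides with the paper's; your proposed chase would presumably also close up, but it redoes by hand, in the strictified model, precisely what the remark already provides.
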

\begin{proof}
For \(i=0,1,2,3\) let \(\alp_i\colon\Del^2 \to \Del^1 \times \Del^2\) be the \(2\)-simplex given by 
\[ \alp_i(j) = \left\{\begin{matrix} (0,j) & j < i \\ (1,j) & j \geq i \end{matrix}\right. \]
By the definition of the cartesian product of scaled simplicial sets all the triangles in \(\Del^1_{\flat} \times \Del^2_{\flat}\) are thin except the \(\alp_i\)'s. To prove (1), assume that \(\rho_0 = h|_{\alp_3}\) is thin in \(\C\). 
Applying Remark~\ref{rmk:j's are anod} to the \(3\)-simplex \(\Del^{\{(0,0),(0,1),(0,2),(1,2)\}}\) we get that \(h|_{\alp_2}\) is thin. Applying Remark~\ref{rmk:j's are anod} to the \(3\)-simplex \(\Del^{\{(0,0),(0,1),(1,1),(1,2)\}}\) we get that \(h|_{\alp_1}\) is thin. Finally, applying the first point of Proposition~\ref{p:joyal-2} to the \(3\)-simplex \(\Del^{\{(0,0),(1,0),(1,1),(1,2)\}}\) we get that \(\rho_1 = h|_{\alp_0}\) is thin. To prove (2), assume that \(\rho_1 = h|_{\alp_0}\) is thin in \(\C\). 
Applying Remark~\ref{rmk:j's are anod} to the \(3\)-simplex \(\Del^{\{(0,0),(1,0),(1,1),(1,2)\}}\) we get that \(h|_{\alp_1}\) is thin. Applying Remark~\ref{rmk:j's are anod} to the \(3\)-simplex \(\Del^{\{(0,0),(0,1),(1,1),(1,2)\}}\) we get that \(h|_{\alp_2}\) is thin. Finally, applying the second point of Proposition~\ref{p:joyal-2} to the \(3\)-simplex \(\Del^{\{(0,0),(0,1),(0,2),(1,2)\}}\) we get that \(\rho_0 = h|_{\alp_3}\) is thin.
\end{proof}

\section{The moving lemma}\label{sec:moving}
The goal of this section is to prove a crucial technical result, which we call the \emph{moving lemma}. It will be used in Section 7 to prove the existence of a Quillen adjunction between the bicategorical model structure on scaled simplicial sets and the model structure on stratified sets for saturated \(2\)-trivial complicial sets.\\
Throughout this section let us fix a surjective map of simplices \(\rho\colon \Del^n \to \Del^m\) and a section \(\sig\colon \Del^m \to \Del^n\) such that \(\sig(i)\) is minimal in \(\rho^{-1}(i)\) for every \(i \in [m]\). We will denote such a pair by \(\sig \dashv \rho\), since these maps constitute an adjunction between the posets \([n]\) and \([m]\).\\

\begin{define}\label{d:admissible}
Let \(A \subseteq \Del^n\) be a subsimplicial set. We will say that \(A\) is \emph{\((\sig \dashv \rho)\)\nbd-ad\-missible} if it satisfies the following two properties:
\begin{itemize}
\item
The image \(\sig\rho(A)\) is contained in \(A\).
\item
If \(A\) contains a vertex \(i \in [n]\) which is not in the image of \(\sig\) then \(A\) contains every simplex of \(\Del^n\) which has \(i\) as a terminal vertex.
\end{itemize}
We will refer to non-degenerate simplices in \(A\) via the corresponding subset \(I \subseteq [n]\). Similarly, we will refer to the non-degenerate simplices in \(\Del^1 \times A\) via the corresponding subset \(J \subseteq [1] \times [n]\). Given such a \(J\) we denote \(J_0 := \{j \in [n]\:|\: (0,j) \in J\}\) and \(J_1 := \{j \in [n] \:|\: (1,j) \in J\}\), and when \(J_0 \neq \emptyset\) we call \(m(J) := \max(J_0)\) the \emph{index} of \(J\).
\end{define}

\begin{examples}\label{ex:admissible}\
\begin{enumerate}
\item
The \(n\)-simplex \(\Del^n\) itself and the empty set \(\emptyset \subseteq \Del^n\) are \((\sig \dashv \rho)\)-admissible for every \(\sig \dashv \rho\).
\item
The boundary \(\partial \Del^n \subseteq \Del^n\) of the \(n\)-simplex is \((\sig \dashv \rho)\)-admissible as soon as \(n\) is in the image of \(\sig\). We will use this case in the proof of Proposition~\ref{p:step-1} below.
\item
The 0-th horn \(\Lam^n_0 \subseteq \Del^n\) is \((\sig \dashv \rho)\)-admissible as soon as \(n\) is in the image of \(\sig\). We will use this case in the proof of Lemma~\ref{l:outer-anodyne} below.
\end{enumerate}
\end{examples}

\begin{define}\label{d:transformation} 
Let \((X,T_X)\) be a scaled simplicial set and \(A \subseteq \Del^n\) a
\((\sig \dashv \rho)\)\nbd-ad\-missible subsimplicial set. By a \emph{\((\sig \dashv \rho)\)-transformation} we will mean a map of simplicial sets \(h \colon \Del^1 \times A \to X\) which satisfies the following properties:
\begin{enumerate}
\item\label{i:trans-1}
Suppose that \(J \subseteq [1] \times [n]\) is such that \(J_0 \neq \emptyset\), \(\Del^{J_0} \subseteq A\), \(i:=\max(J_0)\) is in the image of \(\sig\) and \(J_1 = \{(1,i)\}\). Then \(h|_{\Del^J}\) factors through the retraction \(\Del^{J}\to \Del^{J \setminus \{(0,i)\}}\) which maps \((0,i)\) to \((1,i)\).
\item\label{i:trans-2} 
Suppose that \(J \subseteq [1] \times [n]\) is such that \(J_0 \neq \emptyset\), \(\Del^{J_0} \subseteq A\), and \(i:=\max(J_0)\) is not in the image of \(\sig\). Assume also that \(J_0\) contains \(i-1\), and \(J_1\) is either \(\emptyset\) or \(\{(1,i)\}\). Then \(h|_{\Del^J}\) factors through the retraction \(\Del^{J}\to \Del^{J \setminus \{(0,i)\}}\) which maps \((0,i)\) to \((0,i-1)\).
\item\label{i:trans-3}
For each edge \(\Del^{\{i,j\}}\subseteq A\) the triangle \(h(\Del^{\{(0,i),(1,i),(1,j)\}})\) is thin in \(\C\). 
\end{enumerate}
\end{define}

\begin{rem}\label{rem:trans_deg_edges}
We make the following observations:
\begin{enumerate}
  \item Condition~\eqref{i:trans-1} implies that the map \(h\) sends the edge \(\Del^1 \times \{\sig(k)\} \subseteq \Del^1 \times A\) to a degenerate edge in \(X\) for every \(k \in [m]\).
  \item Given a \((\sig \dashv \rho)\)-transformation \(h \colon \Del^1 \times A \to X\) and any map \(f\colon (X,T_X) \to (Y,T_Y)\) of scaled simplicial sets, the composite \(f\circ h \colon \Del^1 \times A \to Y\) is still a \((\sig \dashv \rho)\)-transformation.
\end{enumerate}

\end{rem}

\begin{rem}\label{rem:transdeg_triangles}
Given an \(\infty\)-bicategory \(\C := (\ovl{\C},T_{\C})\), the notion of a \((\sig \dashv \rho)\)-trans\-for\-ma\-tion \(h\colon\Del^1 \times A \to \ovl{\C}\) is weaker than that of a natural transformation in \(\C\), as \(h\) may fail to extend to a map of scaled simplicial sets \(\Del^1_{\flat} \times A_{\flat} \to \C\). Nonetheless, by Condition~\eqref{i:trans-3} \(h\) sends every triangle of the form \(\Del^{\{(0,i),(1,i),(1,j)\}}\) to a thin triangle in \(\C\), and hence extends to a scaled map \(\Del^1 \mgr A \to \C\). We consequently consider it as a \emph{lax natural transformation}, a concept we will investigate further in~\cite{GagnaHarpazLanariGrayLaxFunctors} and~\cite{GagnaHarpazLanariLaxLimits}. We note that this lax natural transformation is of a rather special kind, due to the degeneracy conditions imposed by Conditions~\eqref{i:trans-1} and~\eqref{i:trans-2}. These conditions imply, in particular, that for an edge \(\Del^{\{i,j\}} \subseteq A\), if either:
\begin{enumerate}
   \item \(j\) is in the image of \(\sig\), or
   \item \(j=i+1\) and \(\rho(i)=\rho(j)\) 
 \end{enumerate}
 then the triangle \(h(\Del^{\{(0,i),(0,j),(1,j)\}})\) is degenerate and hence thin as well, so that \(h|_{\Del^1 \times \Del^{\{i,j\}}}\) extends to a natural transformation \(\Del^1_{\flat} \times \Del^{\{i,j\}}_{\flat} \to \C\). Note that the equality \(\rho(i)=\rho(j)\) in the second point above means in particular that \(j\) does not belong to the image of \(\sigma\).
\end{rem}

\begin{notate}\label{n:L-A}
Given a \((\sig \dashv \rho)\)-admissible simplicial set  \(A\subseteq \Del^n\), we shall denote by \(L_A\) the collection of triangles of
\(\Del^1 \times A\) which are either of the form \(\Del^{\{(0,i),(1,i),(1,j)\}}\) for \(i < j \in [n]\) or of the form \(\Del^{\{(0,i),(0,j),(1,j)\}}\) if either \(j\) is in the image of \(\sig\) or \(j=i+1\) and \(\rho(i)=\rho(j)\). In particular, if \((X,T_X)\) is a scaled simplicial set then by Remark~\ref{rem:transdeg_triangles} every \((\sig \dashv \rho)\)-transformation \(h\colon \Del^1 \times A \to X\) extends to a map of scaled simplicial sets \((\Del^1 \times A, L_A) \to (X,T_X)\).
\end{notate}

\begin{rem}\label{r:degenerate-moving}
By induction, Condition~\eqref{i:trans-2} implies, for example, that for every \(j \in [m]\) the simplex \(g|_{\Del^{\{0\} \times \rho^{-1}(j)}}\) degenerates to a point, provided \(\mathrm{max}(\rho^{-1}(j))\in A\). Note that this last condition is equivalent to having \(\Delta^{\rho^{-1}(j)} \subset A\) since \(A\) is \((\sig \dashv\rho)\)-admissible. We warn the reader however that the map \(g|_{\{0\} \times A}\) does not in general factor through the image of \(A\) in \(\Del^m\).
\end{rem}

\begin{rem}\label{ob:filtration}
Let \(\sig \dashv \rho\) be as above and assume that \(n \geq 2\) and that \(n\) is in the image of \(\sig\) (in particular, both \(\Del^n\) and \(\partial \Del^n\) are \((\sig \dashv \rho)\)-admissible, see Examples~\ref{ex:admissible}).
We may then consider the set of triangles \(L_{\Del^n}\) of \(\Del^1 \times \Del^n\) defined in Notation~\ref{n:L-A}. Since \(n \geq 2\) all these triangles are contained in \(\Del^1 \times \partial \Del^n\), and so we also have \(L_{\partial \Del^n} = L_{\Del^n}\).
Consider the filtration
\[
[(\Del^1 \times \partial \Del^{n},L_{\Del^n})] \coprod_{\Del^{\{0\}}\times \partial \Del^{n}_{\flat}}[\Del^{\{0\}}\times \Del^{n}_{\flat}] = Z^{n+1} \subseteq Z^{n} \subseteq \dots \subseteq Z^0 = (\Del^1 \times \Del^{n},L_{\Del^n})
\]
whose underlying filtration of simplicial sets is as in the filtration~\eqref{e:filtration-0} of Construction~\ref{cn:filtration}, and the scaling on \(Z^k\) is given by restricting \(L_{\Del^n}\). 
Since \(L_{\Del^n}\) contains every triangle of the form \(\Del^{\{(0,i),(0,i+1),(1,i+1)\}}\) (because either \(i+1\) is in the image of \(\sigma\) or we must have \(\rho(i) = \rho(i+1)\)), we have that the inclusion \(Z^{k+1} \subseteq Z^k\)
is a pushout of a map in the set \eqref{item:scaled_anodyne_i} for \(k \in \{1,\dots,n\}\), so it is an anodyne map.
In the last step of the filtration, since \(L_{\Del^n}\) contains all the triangles \(\Del^{\{(0,0),(1,0),(1,i)\}}\) we obtain a pushout square of the form
\[\begin{tikzcd}
(\Lam^{n+1}_0,S_0) \ar[r]\ar[d] & Z^1 \ar[d] \\
(\Del^{n+1}_0,S_0) \ar[r] & Z^0 
\end{tikzcd} \]
where \(S_0\) is the set of all triangles which contain the edge \(\Del^{\{0,1\}}\). Now suppose that \(\C = (\ovl{\C},T_{\C})\) be a weak \(\infty\)-bicategory and \(h\colon \Del^1 \times \partial \Del^n\to \ovl{\C}\) be a \((\sig \dashv \rho)\)-transformation.
It then follows that any extension of \(h|_{\Del^{\{0\}} \times \partial \Del^n}\) to \(\{0\} \times \Del^n\) can be prolonged along the above filtration all the way to
\(\Del^1 \times \Del^n\): indeed, in the last step of the filtration the extension exists since 
the edge \(h_{|\Del^1\times \{0\}}\) must be degenerate by Remark~\ref{rem:trans_deg_edges} and the map \((\Lam^{n+1}_0,S_0) \coprod_{\Del^{\{0,1\}}_{\flat}} \Del^0 \to (\Del^{n+1}_0,S_0)_{\Del^{\{0,1\}}_{\flat}} \Del^0\) is scaled anodyne (it is a pushout of a generating scaled anodyne map of type~\ref{item:scaled_anodyne_iii}). 
\end{rem}

The following is the key lemma of this section. Unlike the situation in Remark~\ref{ob:filtration}, it allows us to extend an \((\sig \dashv \rho)\)-transformations given an extension of its value at \(1\) (as opposed to \(0\)). The combination of Remark~\ref{ob:filtration} and Lemma~\ref{l:moving-lemma} is what gives the notion of an \((\sig \dashv \rho)\)-transformation its power in practice. 

\begin{lem}[The moving lemma]\label{l:moving-lemma}
Let \((\sig \dashv \rho)\) be as above and let \(A \subseteq B \subseteq \Del^n\) be an inclusion of \((\sig \dashv \rho)\)-admissible subsimplicial sets of \(\Del^n\). Let \(\C = (\ovl{\C},T_{\C})\) be a scaled simplicial set which satisfies the right lifting property with respect to scaled inner horn inclusions (that is, the horn inclusions of type~\ref{item:scaled_anodyne_i} in Definition~\ref{anodyne defi}). 
Suppose that we are given a map 
\[ g\colon \Del^1 \times A \coprod_{\Delta^{\{1\}} \times A} \Delta^{\{1\}} \times B \to \ovl{\C} \]
whose restriction to \(\Del^1 \times A\) is a \((\sig \dashv \rho)\)-transformation. 
Then \(g\) extends to a \((\sig \dashv \rho)\)-transformation \(h\colon\Del^1 \times B \to \ovl{\C}\). 
\end{lem}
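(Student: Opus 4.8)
The plan is to proceed by induction on the number of non-degenerate simplices of $B$ not contained in $A$, which reduces us to the case where $B = A \cup \Del^I$ for a single non-degenerate simplex $\Del^I \subseteq \Del^n$ with $\partial \Del^I \cup (\sig\rho(\Del^I)) \subseteq A$ but $\Del^I \not\subseteq A$. The $(\sig \dashv \rho)$-admissibility of $B$ forces some combinatorial constraints on $I$: writing $i := \max(I)$, either $i$ is in the image of $\sig$, or $i$ is not in the image of $\sig$ and then (by admissibility, since $i$ is a terminal vertex of $\Del^I$) all faces of $\Del^n$ with terminal vertex $i$ must already be in $A$, which essentially forces the relevant degeneracy-retraction structure. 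First I would dispose of the trivial cases: if $\Del^I$ is degenerate along $\rho$ in an appropriate sense, or if $i$ is such that conditions~\hyperref[i:trans-1]{(1)} or~\hyperref[i:trans-2]{(2)} of Definition~\ref{d:transformation} pin down $h|_{\Del^1 \times \Del^I}$ uniquely in terms of $g$ (via a retraction onto a face already covered), then there is nothing to extend and one only needs to check that the prescribed value is still a $(\sig \dashv \rho)$-transformation, which is a formal consequence of the retraction identities.

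In the remaining (generic) case we must genuinely extend $g$ over $\Del^1 \times \Del^I$. Here I would use a filtration of $\Del^1 \times \Del^I$ by the simplices $\tau_j$ of Construction~\ref{cn:filtration} (for the copy of $\Del^I$), building up from
\[ W_0 := \Del^1 \times \partial\Del^I \coprod_{\{1\} \times \partial\Del^I} \{1\} \times \Del^I \]
on which $g$ is already defined, to all of $\Del^1 \times \Del^I$. The point is that the intermediate inclusions $W_{k+1} \subseteq W_k$ fit into pushout squares along horns $(\Lam^{|I|}_k, T_k|) \to (\Del^{|I|}, T_k)$ where $T_k$ contains the edge $\Del^{\{k,k+1\}}$; since the triangles of $\Del^1 \times \Del^I$ of the form $\Del^{\{(0,a),(1,a),(1,b)\}}$ and the degenerate ones $\Del^{\{(0,a),(0,a+1),(1,a+1)\}}$ (with $\rho(a) = \rho(a+1)$, or $a+1$ in the image of $\sig$) are all forced to be thin — this is precisely the scaling $L_I$ of Notation~\ref{n:L-A} — these horns are \emph{inner} scaled anodyne (type~(1) of Definition~\ref{anodyne defi}), except possibly for the outermost ones indexed $k=0$ or $k = |I|$. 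For the outer horns the hypotheses of the lemma do not directly apply, but the admissibility of $A$ together with Remark~\ref{rem:trans_deg_edges} ensures that the relevant ``tip'' edge $\Del^1 \times \{\min(I)\}$ or the behaviour at $\max(I)$ is already degenerate (because $\min(I)$ is in the image of $\sig$ whenever $0 \in I$, or is forced degenerate by condition~(1)/(2)), so the outer horn is either not needed at all or degenerates to an inner one. Thus every extension step can be solved using only the inner-horn lifting property we are granted.

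The main obstacle, and the step requiring genuine care, is the bookkeeping at the two ends of the filtration: showing that in every admissible configuration of $I$ relative to $\sig \dashv \rho$, the would-be ``outer'' horn inclusions either collapse to inner ones or are rendered unnecessary by the degeneracy constraints of conditions~(1) and~(2). This is exactly the analogue of what happens in Remark~\ref{ob:filtration} for the case $B = \partial\Del^n$, $A = \varnothing$, and one should expect to split into subcases according to whether $\max(I)$ and $\min(I)$ lie in the image of $\sig$. Once the extension over $\Del^1 \times \Del^I$ is produced, I would verify that conditions~\hyperref[i:trans-1]{(1)},~\hyperref[i:trans-2]{(2)},~\hyperref[i:trans-3]{(3)} of Definition~\ref{d:transformation} hold for the new cells: condition~(3) holds because we scaled by $L_I$ throughout, and conditions~(1),~(2) only constrain simplices $J$ with $J_1 \subseteq \{(1,i)\}$, for which either the relevant face was already in $A$ (and the constraint was inherited from $g$) or it was explicitly collapsed in the trivial-case analysis above. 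Assembling the inductive step completes the proof.
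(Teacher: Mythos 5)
Your overall toolkit is right --- the scaling \(L\) does make the ``middle'' horn fillings inner, and the level-\(0\) simplices do get their values by forced degeneracies rather than by outer horn filling --- but the organization of the induction has a genuine gap, in two related places. First, the reduction to \(B = A \cup \Del^I\) for a single simplex with \(\partial\Del^I \subseteq A\) is not available: the intermediate stages must themselves be \((\sig\dashv\rho)\)-admissible for the notion of \((\sig\dashv\rho)\)-transformation (and the inductive hypothesis) to apply, and adding a vertex \(i\) not in the image of \(\sig\) forces you to add simultaneously \emph{every} face of \(\Del^n\) with terminal vertex \(i\). For instance, with \(\rho\colon\Del^2\to\Del^1\) collapsing \(\{1,2\}\), the only admissible subcomplex of \(\Del^2\) strictly containing \(\Del^{\{0,1\}}\) is \(\Del^2\) itself, so the last ``step'' adds a vertex, two edges and a triangle at once and cannot be broken into single-simplex steps. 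Second, and more seriously, inside such a block the degeneracy conditions (1) and (2) of Definition~\ref{d:transformation} couple simplices of different dimensions in a way that defeats any dimension-increasing order: the value of \(h\) on a face is often determined not by a condition on that face itself but by condition (2) applied to a \emph{larger} simplex containing it. In the example above, condition (2) applied to \(J=\{(0,0),(0,1),(0,2)\}\) forces \(h\) on its face \(\{(0,0),(0,2)\} = \{0\}\times\Del^{\{0,2\}}\) to equal \(h|_{\{0\}\times\Del^{\{0,1\}}}\) under the collapse \(2\mapsto 1\); but neither condition (1) nor (2) applies directly to \(\{(0,0),(0,2)\}\) (its index \(2\) is not in the image of \(\sig\) and it does not contain \((0,1)\)), so your ``trivial-case analysis'' would not detect this constraint when you process the edge \(\Del^{\{0,2\}}\), and the value you assign there (e.g.\ by the terminal, outer step of your filtration of \(\Del^1\times\Del^{\{0,2\}}\)) will in general be incompatible with the degeneracy forced one dimension higher.

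This is exactly the difficulty the paper's proof is built around: it runs a single global induction over the simplices \(J\) of \(\Del^1\times B\) not already determined, totally ordered by dimension, then by the index \(\max(J_0)\), then by \(|J_1|\), and restricted to a distinguished subfamily \(\B_0\) (your three behaviours: forced collapse onto \((1,i)\) when \(\max(J_0)\) is in the image of \(\sig\); forced collapse onto \((0,i-1)\) when it is not; inner horn filling at \((1,\max(J_0))\) when \(|J_1|\geq 2\)). The real content is the verification, case by case, that at the moment a given \(J\in\B_0\) is processed exactly one of its codimension-one faces is still undefined, and that this face is precisely the one whose value the forced retraction (or the horn filler) creates --- which is where the admissibility of \(A\) and the tie-breaking by \(|J_1|\) are used. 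Any correct argument has to contain this bookkeeping in some form; your proposal delegates it to ``the trivial-case analysis'' and to the claim that the outer horns ``collapse to inner ones'', and at that level of detail the coherence problem above is not addressed.
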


\begin{rem}\label{r:gray}
Considering for simplicity the case where \(A\) is empty, 
Lemma~\ref{l:moving-lemma} enables one to take a given map \(B \to \ovl{\C}\) and modify it in such a way that it becomes degenerate in certain specific ways (see, e.g., Remark~\ref{r:degenerate-moving}). We will use this ``moving trick'' in the proofs of Proposition~\ref{p:step-1} and Lemma~\ref{l:outer-anodyne} below.
\end{rem}

\begin{rem}
Though we formulate Lemma~\ref{l:moving-lemma} in a rather generic manner, we will only apply it in cases where \(B\) is either \(\Del^n, \partial \Del^n\) or \(\Lam^n_0\) and \(A\) is either \(\emptyset,\partial \Del^n\) or~\(\Lam^n_0\). The reader who wishes to have a concrete picture in mind while reading the proof below is invited to take \(A=\emptyset\) and \(B=\Del^n\).
\end{rem}

\begin{proof}[Proof of Lemma~\ref{l:moving-lemma}]
Let \(\B\) be the collection of non-degenerate simplices of \(\Del^1 \times B\) which are not contained in \(\Del^1 \times A \coprod_{\Delta^{\{1\}} \times A} \Delta^{ \{1\}} \times B\). We will follow the notation for non-degenerate simplices introduced in Definition~\ref{d:admissible}.
Let \(\B_0 \subseteq \B\) be the subset of those simplices \(J \subseteq [1] \times [n]\) belonging to one of the following mutually exclusive cases:
\begin{enumerate}[label=(\roman{*}), ref=(\roman{*})]
\item
The index \(m(J)\) belongs to the image of \(\sig\) and \(J_1 = \{m(J)\}\). 
\item
The index \(m(J)\) does not belong to the image of \(\sig\), \(J\) contains \((0,m(J)-1)\) and \(J_1 \subseteq \{m(J)\}\).
\item
\(|J_1| \geq 2\) and \(J\) contains the edge \(\{(0,m(J)),(1,m(J))\}\).
\end{enumerate}
Choose a total ordering on \(\B_0\) such that \(J' < J\) whenever the dimension of \(J'\) is smaller than that of \(J\), or whenever they have the same dimension but the index of \(J'\) is smaller than that of \(J\), or whenever they have the same dimension and the same index but \(|J'_1| < |J_1|\). More precisely, we arbitrarily extend to a total order that set of relations, by choosing a total order on those simplices who have the same dimension, index and cardinality of vertices at height \(1\). For \(J \in \B_0\) let \(Z_{<J} \subseteq \Del^1 \times B\) be the subsimplicial set given by the union of \(\Del^1 \times A \coprod_{\Delta^{\{1\}} \times A} \Delta^{\{1\}} \times B\) with all the simplices \(J' \in \B_0\) such that \(J' < J\), and similarly let \(Z_{\leq J} \subseteq \Del^1 \times B\) be the subsimplicial set given by the union of \(\Del^1 \times A \coprod_{\Delta{\{1\}} \times A} \Delta^{\{1\}} \times B\) with all the simplices \(J' \in \B_0\) such that \(J' \leq J\).

We note that out of the simplices in \(\B\), Conditions (1)--(3) of Definition~\ref{d:transformation} only concern simplices which are in \(\B_0\) (more specifically, Condition (1) concerns simplices of type (i), Condition (2) simplices of type (ii) and Condition (3) simplices of type (iii)). We now show by induction that for every \(J \in \B_0\) the map \(g\) extends to a map \(g_{\leq J}\colon Z_{\leq J} \to \ovl{\C}\) such that Conditions (1)--(3) hold for the simplices in \(Z_{\leq J}\). We note that every simplex in \(\B\) which is not in \(\B_0\) is a face of a simplex in \(\B_0\) and so \(Z_{\leq J} = \Del^1 \times B\) when \(J\) is the maximal element of \(\B_0\).

Let now \(J\) be a simplex in \(\B_0\) and assume we have already extended the map \(g\) to a map \(g_{< J}\colon Z_{<J} \to \ovl{\C}\) in a way that Conditions (1)--(3) of Definition~\ref{d:transformation} hold. To extend \(g_{< J}\) to a map \(g_{\leq J}\colon Z_{\leq J} \to \ovl{\C}\) we deal with each of the cases (i)--(iii) separately:
\begin{enumerate}[label=(\roman{*}), ref=(\roman{*})]

\item
Suppose that \(J\) is a simplex of type (i) and index \(i := m(J)\). We first note that \(Z_{<J}\) already contains all the maximal faces of \(J\) except the face opposite to the vertex \((1,i)\). Indeed, the faces opposite to vertices of the form \((0,j)\) for \(j < i\) are either in \(\Del^1 \times A\) or are simplices in \(\B_0\) of type (i) of lower dimension. The face opposite to the vertex \((0,i)\) is either in \(\Delta^{\{1\}} \times B\) or is a maximal face of a simplex \(J'\) in \(\B_0\) of type (iii) and of index \(j = \max(J_0 \setminus \{i\}) < i\), and is hence contained in \(Z_{<J}\). On the other hand, the face opposite \((1,i)\) can neither belong to \(\Del^1 \times A\) nor to \(\Delta^{\{1\}} \times B\), does not belong to \(\B_0\), and is a maximal face of a unique simplex in \(\B_0\), namely \(J\). It is hence not contained in \(Z_{< J}\). 
In summary, we have a pushout square of simplicial sets of the form:
\[\xymatrix{
\Lam^J_{(1,i)} \ar[r]\ar[d] & Z_{< J} \ar[d] \\
\Del^J \ar[r] & Z_{\leq J} \\
}\]
To respect Condition~\eqref{i:trans-1} we now define \(g_{\leq J}\colon Z_{\leq J} \to \ovl{\C}
\) by mapping \(\Del^J\) in a degenerate manner via the retraction \(\Del^J \to \Del^{J \setminus \{(0,i)\}}\) which maps \((0,i)\) to \((1,i)\) (so that \((g_{\leq J})|_{\Del^{J}}\) is determined by \((g_{< J})|_{\Del^{J \setminus \{(0,i)\}}}\)). We note that this definition is compatible with all the faces which are already in \(Z_{<J}\): this is automatically true for the face opposite \((0,i)\), and holds for the other ones thanks to our assumption that \(g_{< J}\) satisfies Condition~\eqref{i:trans-1}.

\item
Suppose that \(J\) is a simplex of type (ii) and index \(i := m(J)\). 
Then we claim that \(Z_{<J}\) already contains all the maximal faces of \(J\) except the one opposite to the vertex \((0,i-1)\). 
Indeed, the faces opposite to vertices of the form \((0,j)\) for \(j < i-1\) are either in \(\Del^1 \times A\) or are simplices in \(\B_0\) of type (ii) of lower dimension. The face opposite \((1,i)\) (if \((1,i) \in J\)) is itself a simplex of type (ii) and a lower dimension.
For the face opposite the vertex \((0,i)\), if \(J_1=\{(1,i)\}\) then this face is also a maximal face of a simplex in \(\B_0\) of type (iii) and of index \(i-1\). If \(J_1=\emptyset\) then we argue as follows: if \(i-1\) is in the image of \(\sigma\) then we view this face as the one opposite to \((1,i-1)\) in \(J\setminus\{(0,i)\}\cup\{(1,i-1)\}\), which is of type (i) and smaller index than \(J\).
If \(i-1\) is not in the image of \(\sigma\) and \(i-2\in J_0\), then we view this face as the one opposite the vertex \((1,i-1)\) of \(J\setminus\{(0,i)\}\cup \{(1,i-1)\}\), which is of type (ii) and smaller index than \(J\).
Finally, if \(i-1\) is not in the image of \(\sigma\) and \(i-2 \notin J_0\), then we view this face as the one opposite to the vertex \((0,i-2)\) in \(J\setminus \{(0,i)\}\cup\{(0,i-2)\}\), which is of type (ii) and a smaller index than \(J\). 

On the contrary, the face opposite to \((0,i-1)\) cannot belong to \(\Delta^{\{1\}} \times B\) nor to \(\Del^1 \times A\) (indeed, since \(A\) is \((\sig \dashv \rho)\)-admissible, if \(A\) contained the simplex spanned by \(J_0 \setminus \{i-1\}\) then it would contain \(J_0\)), and is not an element of \(\B_0\). If we consider which simplices in \(\B_0\) other than \(J\) contain this face as a maximal face we see that they must either be of type (iii) and the same index as \(J\) or of type (ii) and a bigger index. They are hence necessarily bigger than \(J\) in the total order we chose, and hence this face is not contained in \(Z_{<J}\).
In summary, there is a pushout square of simplicial sets of the form:
\[\xymatrix{
\Lam^J_{(0,i-1)} \ar[r]\ar[d] & Z_{< J} \ar[d] \\
\Del^J \ar[r] & Z_{\leq J} \\
}\]
To respect Condition~\eqref{i:trans-2} we now define \(g_{\leq J}\colon Z_{\leq J} \to \ovl{\C}\) by mapping \(\Del^J\) in a degenerate manner via the retraction \(\Del^J \to \Del^{J \setminus \{(0,i)\}}\)
 which maps \((0,i)\) to \((0,i-1)\). We note that this definition is compatible with all the faces which are already in \(Z_{<J}\): this is automatically true for the face opposite \((0,i)\), and holds for the other ones thanks to our assumption that \(g_{< J}\) satisfies Condition~\eqref{i:trans-2}.

\item
Suppose that \(J\) is a simplex of type (iii) and index \(i := m(J)\). We first note that \(Z_{<J}\) already contains all the maximal faces  of \(J\) except the face opposite to the vertex \((1,i)\). Indeed, the faces opposite vertices of the form \((0,j)\) for \(j < i\) are either in \(\Del^1 \times A\) or are simplices of type (iii) of a smaller dimension, and the same holds for faces opposite vertices of the form \((1,j)\) for \(j > i\) such that \((1,j)\) is not maximal in \(J\). The face opposite to the vertex \((0,i)\) is either in \(\Delta^{\{1\}} \times B\) or is a maximal face of a simplex in \(\B_0\) of type (iii) and of index \(i' = \max(J_0 \setminus \{i\}) < i\), and is hence contained in \(Z_{<J}\). Call \((1,k)\) the maximal vertex in \(J\), then the face opposite \((1,k)\) is either in \(\Del^1 \times A\) or in \(\B_0\) or it must be the case that the index \(i\) is not in the image of \(\sig\), \(J_0\) does not contain \(i-1\) and \(J_1 = \{(1,i),(1,k)\}\). Indeed, if \(\vert J_1\vert >2\), then the face opposite to the vertex \((1,k)\) would still be of type (iii), and thus in \(\B_0\). This forces \(J_1 = \{(1,i),(1,k)\}\), which in turn implies that the index \(i\) is not in the image of \(\sig\) and \(J_0\) does not contain \(i-1\), otherwise the face opposite to the vertex \((1,k)\) would be, respectively, of type (i) or (ii). Given this, the face opposite \((1,k)\) is the maximal face of the simplex \((J \setminus \{(1,k)\}) \cup \{(0,i-1)\}\), which belongs to \(\B_0\) since \(B\) is \((\sig \dashv \rho)\)-admissible, is of the same index and dimension as \(J\) but is still smaller than \(J\) with respect to our linear order since its intersection with \(\{1\} \times [n]\) has less elements than that of \(J\). 
We may then conclude that this face is also contained in \(Z_{<J}\). On the other hand, the face opposite \((1,i)\) is not in \(\B_0\) and 
is the maximal face of at most two simplices in \(\B_0\), the simplex \(J\) and potentially one more simplex \(J'\) whose index in this case is \(\min(J_1 \setminus \{i\}) > i\).
This face is consequently not contained in \(Z_{< J}\). 
We may thus conclude that we have a pushout square of simplicial sets of the form
\[\xymatrix{
\Lam^J_{(1,i)} \ar[r]\ar[d] & Z_{< J} \ar[d] \\
\Del^J \ar[r] & Z_{\leq J} \\
}\]
where the vertical map is an inner horn inclusion since \((1,i)\) is not maximal nor minimal in \(J\). Now if \(|J| = 3\) then, by our assumption on \(\C\), we can extend \(g_{< J}\) to \(g_{\leq J}\) in such a way that \(\Del^J\) is sent to a thin triangle, 
thus assuring that Condition~\eqref{i:trans-3} continues to hold for \(g_{\leq J}\). On the other hand, if \(|J| > 3\) then, since we assumed that \(g_{< J}\) satisfies Condition~\eqref{i:trans-3}, we have that \(g_{< J}\) maps \(\Del^{\{(0,i),(1,i),(1,l)\}}\) to a thin triangle, where \(l = \min(J_1 \setminus \{i\})\). By our assumption on \(\C\) we can then extend \(g_{< J}\) to a map \(g_{\leq J}\colon Z_{\leq J} \to \ovl{\C}\).\qedhere
\end{enumerate} 
\end{proof}

\begin{cor}\label{c:moving-scaled}
Let \((\sig \dashv \rho)\) and \(A \subseteq B \subseteq \Del^n\) be as
in the previous lemma and \(L_A,L_B\) as in Notation~\ref{n:L-A}. Let \(\C = (\ovl{\C},T_{\C})\) be a scaled simplicial set which satisfies the right lifting property 
with respect to the maps of type~\ref{item:scaled_anodyne_i} in Definition~\ref{anodyne defi}. 
Suppose that we are given a map 
\[ g\colon (\Del^1 \times A,L_A) \coprod_{\Delta^{\{1\}} \times A_{\flat}} \Delta^{\{1\}} \times B_{\flat} \to \C \]
such that the underlying simplicial map of its restriction to \(\Del^1 \times A\) is a \((\sig \dashv \rho)\)-transformation. 
Then \(g\) extends to a map of scaled simplicial sets
\(h\colon (\Del^1 \times B, L_B) \to \C\)
whose underlying simplicial map is a \((\sig \dashv \rho)\)-transformation. 
\end{cor}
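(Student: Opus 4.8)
The plan is to reduce to the unscaled moving lemma (Lemma~\ref{l:moving-lemma}), the point being that, by Notation~\ref{n:L-A}, every $(\sig \dashv \rho)$-transformation automatically upgrades to a map of scaled simplicial sets with the scaling $L_B$ on its source; so there is really no new combinatorics to do.

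First I would pass to underlying simplicial sets. Since the forgetful functor $\Ss \to \s$ preserves colimits, the simplicial set underlying the domain of $g$ is $\Del^1 \times A \coprod_{\{1\} \times A} \{1\} \times B$, and $g$ determines a simplicial map $\ovl{g}$ from this object to $\ovl{\C}$ whose restriction to $\Del^1 \times A$ is, by hypothesis, a $(\sig \dashv \rho)$-transformation. The assumption that $\C$ has the right lifting property against the maps of type (1) of Definition~\ref{anodyne defi} is precisely the hypothesis on $\C$ in Lemma~\ref{l:moving-lemma}, so that lemma applies and extends $\ovl{g}$ to a $(\sig \dashv \rho)$-transformation $h\colon \Del^1 \times B \to \ovl{\C}$.

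It then remains to check that $h$, viewed as a map into $\C = (\ovl{\C},T_\C)$, carries every triangle of $L_B$ to a thin triangle, so that it defines a map of scaled simplicial sets $(\Del^1 \times B, L_B) \to \C$. By the description of $L_B$ in Notation~\ref{n:L-A}, there are two types of triangle to treat: those of the form $\Del^{\{(0,i),(1,i),(1,j)\}}$ with $i < j$ are sent to thin triangles directly by Condition~\hyperref[i:trans-3]{(3)} of Definition~\ref{d:transformation}; those of the form $\Del^{\{(0,i),(0,j),(1,j)\}}$ with $j$ in the image of $\sig$, or with $j = i+1$ and $\rho(i)=\rho(j)$, are sent to \emph{degenerate} triangles by Conditions~\hyperref[i:trans-1]{(1)} and~\hyperref[i:trans-2]{(2)} (this is exactly Remark~\ref{rem:transdeg_triangles}), hence to thin triangles. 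Finally, $h$ extends $g$ as a map of scaled simplicial sets: on underlying simplicial sets it extends $\ovl{g}$ by construction, and it respects the scalings because on $\Del^1 \times A$ it reproduces $\ovl{g}|_{\Del^1 \times A}$ and the source scaling there is $L_A$, while on $\{1\} \times B$ the source carries the flat scaling and there is nothing to check. I do not expect a real obstacle here, since the whole combinatorial argument lives in Lemma~\ref{l:moving-lemma}; the only thing to get right is the bookkeeping identifying the target scaling $L_B$ with the triangles forced to be thin by the transformation conditions, which is recorded in Notation~\ref{n:L-A}.
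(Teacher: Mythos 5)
Your proof is correct and is exactly the paper's argument: the paper's entire proof of this corollary reads ``Combine Lemma~\ref{l:moving-lemma} and Remark~\ref{rem:transdeg_triangles}'', which is precisely the reduction you carry out, with the bookkeeping about $L_B$ spelled out. No issues.
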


\begin{proof}
Combine the Lemma~\ref{l:moving-lemma} and Remark~\ref{rem:transdeg_triangles}.
\end{proof}

\section{Weak \pdfoo-bicategories are \pdfoo-bicategories}\label{sec:weak-is-strong}
Our goal in this section is to prove the following result:
\begin{thm}\label{t:fibrant}
	Let \(\C\) be a weak \(\infty\)-bicategory. Then \(\C\) is an \(\infty\)-bicategory, \ie, \(\C\) is fibrant in \(\Set^{\sca}_\Del\).
\end{thm}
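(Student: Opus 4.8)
The plan is the following. One implication, that every $\infty$-bicategory is a weak $\infty$-bicategory, is already Proposition~\ref{oo-bicat are weak oo-bicat}, so only the converse needs proof. Since fibrant objects of a model category are closed under retracts, it suffices to exhibit $\C$ as a retract, in $\Ss$, of an $\infty$-bicategory. I would therefore factor the map $\C \to \Del^0$ in the bicategorical model structure of Theorem~\ref{coherent nerve eq} as a trivial cofibration $j\colon \C \hookrightarrow \widehat{\C}$ followed by a fibration $\widehat{\C} \to \Del^0$, so that $\widehat{\C}$ is an $\infty$-bicategory; both $\C$ and $\widehat{\C}$ are then weak $\infty$-bicategories (the latter by Proposition~\ref{oo-bicat are weak oo-bicat}), and $j$, being a bicategorical equivalence between weak $\infty$-bicategories, is fully faithful, essentially surjective, and detects thin triangles by Proposition~\ref{p:detects}. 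Everything thus reduces to producing a retraction $r\colon \widehat{\C} \to \C$ with $r j = \mathrm{id}_{\C}$.

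I would read off such an $r$ from a map $H\colon \Del^1_{\flat} \times \widehat{\C} \to \widehat{\C}$ of scaled simplicial sets which restricts to $\mathrm{id}_{\widehat{\C}}$ on $\{1\} \times \widehat{\C}$, to the constant homotopy on $j$ along $\Del^1 \times \C$, and whose restriction to $\{0\} \times \widehat{\C}$ factors through the monomorphism $j$; setting $r$ equal to that restriction then automatically forces $r j = \mathrm{id}_{\C}$. To build $H$, note that $\widehat{\C} \to \Del^0$ is a bicategorical fibration, hence a weak fibration by Remark~\ref{r:bicategorical-weak}; essential surjectivity of $j$ provides, for each vertex $v$ of $\widehat{\C}$, an equivalence $e_v\colon jx_v \to v$ with $x_v \in \C$ (taken degenerate when $v$ already lies in $j(\C)$), and, by Joyal's special outer horn theorem, every equivalence in a weak $\infty$-bicategory is a cartesian edge over a point. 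Feeding the family $\{e_v\}$ as the prescribed cartesian lifts into Proposition~\ref{p:natural-lift} produces a first approximation to $H$ satisfying all of the above except, possibly, the requirement that $H|_{\{0\} \times \widehat{\C}}$ factor through $j$ in dimensions above $0$.

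Repairing this last point is where the real work lies, and it is the step I expect to be the main obstacle: there is no formal reason for the higher simplices of $H|_{\{0\} \times \widehat{\C}}$ to land in the non-full subobject $j(\C)$. The fix is to construct $H$ not all at once but by induction on the skeleta of $\widehat{\C}$, and, at the stage of a nondegenerate simplex $\sig\colon \Del^n \to \widehat{\C}$, to first extend the $\{0\}$-face across $\Del^n$ \emph{inside} $\C$ --- which is possible because full-faithfulness of $j$, expressed through the mapping-category model $\Hom^{\triangleright}$ and the equivalence of Proposition~\ref{p:mapping-1}, controls exactly the relevant obstruction --- and then to propagate this to an extension of $H$ over $\Del^1 \times \Del^n$ by means of the moving lemma (Lemma~\ref{l:moving-lemma}) and Remark~\ref{ob:filtration}, applied to the collapse $\sig \dashv \rho$ that conjugates $\sig$ by the chosen equivalences $e_{v}$. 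Throughout, Corollary~\ref{c:joyal-2} together with Proposition~\ref{p:detects} guarantees that thinness is respected, so that $H$ --- and hence $r$ --- is a genuine map of scaled simplicial sets. Granting this, $r$ is a retraction of $j$, exhibiting $\C$ as a retract of the $\infty$-bicategory $\widehat{\C}$; therefore $\C$ is fibrant, \ie an $\infty$-bicategory.
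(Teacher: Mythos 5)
Your proposal follows essentially the same route as the paper: the paper also factors $\C \to \Del^0$ through a fibrant $\D$ and exhibits $\C$ as a deformation retract of $\D$, with the retraction built simplex-by-simplex using the mapping-category equivalences of Proposition~\ref{p:mapping-1}, the moving lemma, and the special-outer-horn/cartesian-lift technology --- exactly the ingredients you identify; this construction is packaged there as Propositions~\ref{p:step-1} and~\ref{p:step-2}. The one point you gloss over, that equivalences admit the required outer-horn fillers in a weak $\infty$-bicategory, is precisely the content of the paper's Lemma~\ref{l:joyal}, which is available and nontrivial, so your sketch is sound.
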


We advance towards Theorem~\ref{t:fibrant} in a sequence of lemmas. We begin with an \(\infty\)-bicategorical extension of Joyal's result on special outer horns~\cite[Theorem 3.4]{JoyalQCatsAndKan}.

\begin{lem}[Special outer horns]\label{l:joyal}
	Let \(\C\) and \(\D\) be weak \(\infty\)-bicategories and let 
	\[p\colon \C \to \D\] be a map which satisfies the right lifting property with respect to maps of type~\ref{item:scaled_anodyne_i} and \ref{item:scaled_anodyne_iii} in Definition~\ref{anodyne defi}. 
	For \(n \geq 2\) let \(T\) be the collection of all triangles in \(\Del^n\) which are either degenerate or contain the edge \(\Del^{\{0,1\}}\), and let \(T_0 \subseteq T\) be the subset of those triangles which are contained in \(\Lam^n_0\). Then the dotted lift exists in any square of the form
	\begin{equation}\label{e:weak-problem} 
	\begin{tikzcd}
		(\Lam^n_0,T_0) \ar[r, "f"]\ar[d] & \C \ar[d, "p"] \\
		(\Del^n,T) \ar[ur, dotted] \ar[r, "g"] & \D
  \end{tikzcd}
	\end{equation}
	such that \(f|_{\Del^{\{0,1\}}}\) is an equivalence in \(\C\).
\end{lem}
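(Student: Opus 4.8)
The plan is to reduce the lifting problem \eqref{e:weak-problem} to the special case in which the edge $f|_{\Del^{\{0,1\}}}$ is \emph{degenerate}, and then to solve that case by the right lifting property of $p$ against maps of type (3) in Definition~\ref{anodyne defi}.

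First I would settle the degenerate case. If $f|_{\Del^{\{0,1\}}}$ is degenerate, then $f$ is constant on $\Del^{\{0,1\}}$ and hence factors through the collapse $\Lam^n_0 \hrar \Lam^n_0 \plus{\Del^{\{0,1\}}} \Del^0$, and likewise $g$ factors through $\Del^n \hrar \Del^n \plus{\Del^{\{0,1\}}} \Del^0$; the right lifting property of $p$ against maps of type (3) then produces a lift $\bar g \colon \Del^n \to \C$. To see that $\bar g$ is a map of scaled simplicial sets $(\Del^n, T) \to \C$ one checks the thinness of the triangles of $T$: an elementary count shows that for $n \geq 3$ every triangle of $T$ already lies in $\Lam^n_0$, so $T = T_0$ and there is nothing new; and for $n = 2$ the only genuinely new thin triangle $\Del^2$ becomes degenerate after collapsing $\Del^{\{0,1\}}$, hence is thin in $\C$ automatically.

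For the general case I would use the moving lemma (Lemma~\ref{l:moving-lemma}) to straighten $f$ into a map with degenerate initial edge. Take the adjunction $\sig \dashv \rho$ with $\rho \colon \Del^n \to \Del^{n-1}$ the surjection collapsing $\Del^{\{0,1\}}$, so that vertex $1$ is the unique vertex outside the image of $\sig$; one verifies that $\Lam^n_0 \subseteq \Del^n$ is $(\sig \dashv \rho)$-admissible in the sense of Definition~\ref{d:admissible}. Applying Lemma~\ref{l:moving-lemma} with $A = \emptyset$ and $B = \Lam^n_0$ to $f$, placed at height $1$, produces a $(\sig \dashv \rho)$-transformation $F \colon \Del^1 \times \Lam^n_0 \to \ovl{\C}$ with $F|_{\{1\} \times \Lam^n_0} = f$; by Remark~\ref{rem:trans_deg_edges} and Remark~\ref{r:degenerate-moving} the restriction $f' := F|_{\{0\} \times \Lam^n_0}$ sends $\Del^{\{0,1\}}$ to a degenerate edge, and by Condition~\hyperref[i:trans-3]{(3)} of Definition~\ref{d:transformation} together with Remark~\ref{rmk:j's are anod} it is still a map of scaled simplicial sets $(\Lam^n_0, T_0) \to \C$. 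A second application of Lemma~\ref{l:moving-lemma}, with $A = \Lam^n_0$ and $B = \Del^n$, to the map on $\Del^1 \times \Lam^n_0 \plus{\{1\} \times \Lam^n_0} \{1\} \times \Del^n$ that restricts to $p \circ F$ and to $g$ (these agree on $\{1\} \times \Lam^n_0$ since $pF|_{\{1\}} = pf = g|_{\Lam^n_0}$) produces a $(\sig \dashv \rho)$-transformation $G \colon \Del^1 \times \Del^n \to \D$ with $G|_{\{1\} \times \Del^n} = g$; one then solves the degenerate-edge problem for $f'$ over $g' := G|_{\{0\} \times \Del^n}$ by the first step (using Corollary~\ref{c:joyal-2} to see that $g'$ has the required scaling), obtaining a lift $\bar g' \colon (\Del^n, T) \to \C$ extending $f'$ with $p\bar g' = g'$.

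Finally I would transport $\bar g'$ back along $F$: the map on $\Del^1 \times \Lam^n_0 \cup \{0\} \times \Del^n$ defined by $F$ and $\bar g'$ fits over $G$, and I would lift $G$ through $p$ to a map $H \colon \Del^1 \times \Del^n \to \C$ extending it; then $H|_{\{1\} \times \Del^n}$ restricts to $f$ on $\Lam^n_0$, satisfies $p(H|_{\{1\}}) = g$, and, by Remark~\ref{rmk:j's are anod} and Corollary~\ref{c:joyal-2} applied to the conditions built into $F$ and $\bar g'$, is a map of scaled simplicial sets $(\Del^n, T) \to \C$ — the desired lift. I expect the difficulty to concentrate in this transport step. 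The inclusion $\Del^1 \times \Lam^n_0 \cup \{0\} \times \Del^n \hrar \Del^1 \times \Del^n$ must be filled by hand using the filtration of Construction~\ref{cn:filtration} over the two cells of $\Del^n$ absent from $\Lam^n_0$ (namely $\Del^{\{1,\ldots,n\}}$ and the top cell), the first of which is \emph{not} $(\sig \dashv \rho)$-admissible, so Remark~\ref{ob:filtration} does not apply verbatim; the intermediate stages are scaled inner horns, against which $p$ lifts by hypothesis, while the single left-outer stage carries a \emph{degenerate} $\Del^{\{0,1\}}$-edge by construction of $F$ and is therefore a map of type (3), against which $p$ also lifts. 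A secondary, more bookkeeping-type point — needed for the moving lemma to be legitimately applied over $\D$ — is that $p$ carries the thin triangles arising in the construction of $F$ to thin triangles of $\D$; this is dealt with by observing that those triangles are $p$-images of triangles filled in as thin, or by enlarging the scaling on $\D$ as one proceeds.
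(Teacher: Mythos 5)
Your overall strategy is genuinely different from the paper's: you reduce to the case of a degenerate initial edge via the moving lemma and the type~(3) lifts, whereas the paper reformulates the lifting problem through the slice construction of \S\ref{s:join}, using Corollary~\ref{c:slice-3} to turn it into the problem of lifting an equivalence along an isofibration of weak \(\infty\)-bicategories (Remarks~\ref{r:iso} and~\ref{r:cart-equiv}). Unfortunately, your transport step has a genuine gap, and it sits exactly where you predicted the difficulty would concentrate. To extend the partial map from \(\Del^1 \times \Lam^n_0 \cup \{0\}\times\Del^n\) to \(\Del^1\times\Del^n\) over \(G\), you must at some point fill the prism over the face \(\Del^{\{1,\ldots,n\}}\), which is absent from \(\Lam^n_0\); its boundary prism \(\Del^1\times\partial\Del^{\{1,\ldots,n\}}\) and its height-\(0\) face are available, but its height-\(1\) face \(\{1\}\times\Del^{\{1,\ldots,n\}}\) is not, and that face is a face of only one top-dimensional simplex of the sub-prism, namely the one whose initial edge is \(\Del^1\times\{1\}\). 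So the filtration over this face unavoidably ends with a left outer horn \(\Lam^n_0 \subseteq \Del^n\) of the \emph{same} dimension \(n\) as the original problem, whose initial edge is \(F(\Del^1\times\{1\})\). Since vertex \(1\) is not in the image of \(\sig\), this edge is not degenerate: it runs from \(f'(1)=f(0)\) to \(f(1)\) and is homotopic to \(f(\Del^{\{0,1\}})\), hence an equivalence but nothing better. Filling that horn is therefore an instance of the very lemma you are proving, not a type~(3) lift, and your claim that there is a ``single left-outer stage'' with a degenerate \(\Del^{\{0,1\}}\)-edge is false --- there are two outer stages, and only the one over the top cell has a degenerate initial edge. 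No reordering of the filtration avoids this, because \(\{1\}\times\Del^{\{1,\ldots,n\}}\) can only enter together with that one simplex. (This is precisely why the paper's Proposition~\ref{p:step-1}, which uses the same moving-and-transporting pattern, works relative to \(\partial\Del^n\) rather than \(\Lam^n_0\).)

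Two smaller points. First, your justification of the degenerate case for \(n=2\) is off: \(\Del^2\plus{\Del^{\{0,1\}}}\Del^0\) still has a non-degenerate \(2\)-simplex, so the image of \(\Del^2\) does not become degenerate; the conclusion nevertheless holds because the target of the type~(3) map explicitly scales \(\Del^{\{0,1,n\}}\). Second, to see that \(f'\) and \(g'\) carry the triangles \(\Del^{\{0,1,j\}}\) to thin triangles you should invoke Corollary~\ref{c:joyal-2}(2) (the edge \(\Del^1\times\{j\}\) is degenerate for \(j\geq 2\)); Remark~\ref{rmk:j's are anod} alone does not suffice, since the triangle you want to deduce is the one opposite the vertex \((1,j)\), and only \(j_1\) and \(j_2\) are anodyne. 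These are repairable, but the circularity in the transport step is not, within your framework; you would need something like the paper's fibrational argument to produce the missing face of the lift.
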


\begin{proof}
	Let \(f_0 := f|_{\partial \Del^{\{2,\dots,n\}}}\) and let \(f_1:= f|_{\Del^{\{2,\dots,n\}}}\). By Corollary~\ref{c:slice-3} the projection 
	\[ q\colon \ovl{\C}_{/f_1} \to \ovl{\C}_{/f_0} \times_{\ovl{\D}_{/pf_0}}\ovl{\D}_{/pf_1} \] 
	is an outer fibration whose base change to the marked core of \(\C_{/f_0} \times_{\D_{/pf_0}}\D_{/pf_1}\) is an outer cartesian fibration.
	Unwinding the definitions, we see that finding a solution to the lifting problem~\eqref{e:weak-problem} is equivalent to finding a lift in a diagram of marked-scaled simplicial sets of the form
	\begin{equation}\label{e:weak-problem-2} 
	\begin{tikzcd}
		\Del^{\{0\}} \ar[r]\ar[d] & \C_{/f_1} \ar[d, "q"] \\
		(\Del^1,\{\Del^1\},\emptyset) \ar[r, "\eta"] & \C_{/f_0} \times_{\D_{/pf_0}}\D_{/pf_1}
  \end{tikzcd}
	\end{equation}
	where \(\eta\) is a marked edge of \(\C_{/f_0} \times_{\D_{/pf_0}}\D_{/pf_1}\) whose image in \(\C\) is an equivalence. By Corollary~\ref{c:slice} the map
	\[q_{\C}\colon\ovl{\C}_{/f_0} \to \C\] 
	is an outer cartesian fibration and the edge \(e_{\eta}\) of \(\ovl{\C}_{/f_0}\) determined by the corresponding component of \(\eta\) is \(q_\C\)-cartesian (since it is marked in \(\C_{/f_0}\)). By Remark~\ref{r:weak-scaled} we have that \(\ovl{\C}_{/f_0}\) is a weak \(\infty\)-bicategory, and by applying Remark~\ref{r:cart-equiv} to the map \(q_{\C}\) we see that the edge \(e_{\eta}\) is an equivalence. Now consider the pullback square
\[
\xymatrix{
\ovl{\C}_{/f_0} \times_{\ovl{\D}_{/pf_0}}\ovl{\D}_{/pf_1} \ar[r]\ar[d]^{r} & \ovl{\D}_{/pf_1} \ar[d]^{r'} \\
\ovl{\C}_{/f_0} \ar[r] & \ovl{\D}_{/pf_0}
}\]
By	Corollary~\ref{c:slice-3} the right vertical map \(r'\) is an outer fibration and the marked arrows of \(\D_{/pf_1}\) are \(r'\)-cartesian. By Remark~\ref{r:base-change} we then have that the left vertical map \(r\) is also an outer fibration and every marked edge in \(\C_{/f_0} \times_{\D_{/pf_0}} \D_{/pf_1}\) is \(r\)\nbd-car\-tesian. In particular, \(\eta\) is a \(r\)-cartesian lift of \(e_{\eta}\). By Remark~\ref{r:weak-scaled} we then have that \(\ovl{\C}_{/f_0} \times_{\ovl{\D}_{/pf_0}}\ovl{\D}_{/pf_1}\) is a weak \(\infty\)-bicategory, and by Remark~\ref{r:cart-equiv} we have that \(\eta\) is an equivalence.

	Now the marked edges in \(\C_{/f_0} \times_{\D_{/pf_0}}\D_{/pf_1}\) are closed under composition. Indeed, for \(n \geq 3\), any edge of \(\D_{/pf_1}\) mapping to
  a marked edge in \(\D_{pf_0}\) is already marked and so
  we have that the marked edges of \(\C_{/f_0} \times_{\D_{/pf_0}}\D_{/pf_1}\) are precisely those that map to marked edges in \(\C_{/f(i)}\) for every \(i=2,...,n\). When \(n=2\), they are precisely those that map to marked edges in \(\D_{/pf(2)}\), and so closure under composition follows from Remark~\ref{r:closure}.  
	The marked core of \(\C_{/f_0} \times_{\D_{/pf_0}}\D_{/pf_1}\) is consequently also a weak \(\infty\)-bicategory. 
Since the base change of \(q\) to the marked core of  \(\C_{/f_0} \times_{\D_{/pf_0}}\D_{/pf_1}\) is an outer cartesian fibration, this base change is also an isofibration by Remark~\ref{r:iso}. We may thus conclude that the dotted lift in~\eqref{e:weak-problem-2} exists, as desired.
\end{proof}

\begin{prop}\label{p:step-1}
	Let \(p\colon\C \rightarrow \D\) be a bicategorical equivalence of weak \(\infty\)-bicategories and consider a square of the form
	\[ \xymatrix{
		\partial \Del^n_{\flat} \ar[r]^{f}\ar[d] & \C \ar[d]^{p} \\
		\Del^n_{\flat} \ar[r]^{g} & \D \\
	}\]
	with \(n \geq 1\). Then there exists an extension \(\ovl{f}\colon\Del^n_{\flat} \rightarrow \C\) and a natural transformation \(T\colon \Del^1_{\flat} \times \Del^n_{\flat} \rightarrow \D\) from \(g\) to \(p\ovl{f}\) relative to \(\partial \Del^n_\flat\) 
	(see Definition \ref{d: pointwise nat transf}).
\end{prop}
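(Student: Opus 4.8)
The plan is to factor $p$ as $\C \xrightarrow{\,j\,} \C' \xrightarrow{\,q\,} \D$ with $j$ scaled anodyne (via the small object argument with the set $\bS$ of Definition~\ref{anodyne defi}) and $q$ a scaled fibration. By Proposition~\ref{oo-bicat are weak oo-bicat} the map $j$ is a bicategorical equivalence, hence so is $q$, and $\C'$ is again a weak $\infty$-bicategory. The class of maps satisfying the conclusion of the proposition is closed under composition: given such lifts for $j$ and for $q$, one extends $f$ first through $q$ (obtaining $h\colon\Del^n_\flat\to\C'$ with $h|_{\partial\Del^n}=jf$ and an equivalence $T_q$ from $g$ to $qh$), then sets $\ovl f := rh$ for a retraction $r\colon\C'\to\C$ of $j$ (which exists since $\C$ is a weak $\infty$-bicategory), and concatenates $T_q$ with $q$ applied to a transformation from $h$ to $jrh$ inside $\C'$, using inner-horn fillers of the kind supplied by the filtrations of Construction~\ref{cn:filtration}. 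It therefore suffices to treat two cases: $p$ scaled anodyne, and $p$ a scaled fibration which is a bicategorical equivalence.

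In the \emph{scaled anodyne case}, let $r\colon\D\to\C$ be a retraction with $rp=\mathrm{id}_\C$ and set $\ovl f := rg$, so $\ovl f|_{\partial\Del^n}=r(pf)=f$; it remains to build the natural equivalence $T\colon\Del^1_\flat\times\Del^n_\flat\to\D$ from $g$ to $p\ovl f=prg$ which is constant on $\partial\Del^n$. Here I would invoke the moving lemma (Lemma~\ref{l:moving-lemma}) with $A=\partial\Del^n$ and $B=\Del^n$: one feeds it the map on $(\Del^1\times\partial\Del^n)\cup_{\{1\}\times\partial\Del^n}(\{1\}\times\Del^n)$ equal to $pf\circ\mathrm{pr}$ on the first piece and to $g$ on the second, and, for a suitable choice of the pair $\sig\dashv\rho$, the resulting $(\sig\dashv\rho)$-transformation is degenerate in exactly the directions needed so that its $0$-face is $g$, its $1$-face factors through $r$, and its restriction to $\partial\Del^n$ is the identity on $pf$. (Alternatively, if one has the stability of scaled anodyne maps under pushout--products with $\partial\Del^1_\flat\hookrightarrow\Del^1_\flat$, then $T$ is obtained simply by restricting along $g$ a lift in the mapping-cylinder square for $p$.) In the \emph{scaled fibration case}, $q$ detects thin triangles by Proposition~\ref{p:detects} and enjoys the special outer horn lifting property by Lemma~\ref{l:joyal}; the strategy is to use the moving lemma to replace $g$, rel $\partial\Del^n$, by a suitably degenerate map $g'$ for which extending the partial lift $f$ becomes a sequence of lifting problems against inner horns, against the maps of type (3) of Definition~\ref{anodyne defi}, and against special outer horns — each solvable either because $q$ is a scaled fibration or by Lemma~\ref{l:joyal} — so that one gets $\ovl f$ with $q\ovl f=g'$, and the transformation $g\Rightarrow g'=q\ovl f$ produced along the way is the desired $T$.

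The main obstacle is exactly this last reduction: one must arrange the passage from $g$ to $g'$ so that (i) the interpolating transformation is strictly constant on $\partial\Del^n$ and levelwise invertible, (ii) the faces of $g'$ that remain to be filled are inner horns or special outer horns whose hypotheses (an edge being an equivalence, a triangle being thin) are actually met, and (iii) every auxiliary triangle introduced is thin, so that $T$ is an honest natural transformation of scaled simplicial sets rather than merely a map on underlying simplicial sets. Choosing the $(\sig\dashv\rho)$-admissible subobjects of $\Del^n$ and the adjoint pair $\sig\dashv\rho$ that control these degeneracies is the technical heart of the argument, and is precisely the situation the moving lemma of Section~\ref{sec:moving} was set up to handle.
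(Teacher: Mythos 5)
Your Case~1 (the scaled anodyne case) is fine via the parenthetical argument: choose a retraction $r$, set $\ovl f = rg$, and obtain $T$ by restricting along $g$ a homotopy $\mathrm{id}_{\D}\simeq pr$ rel $\C$, which exists because the pushout-product of the scaled anodyne map $p$ with $\partial\Del^1_\flat\hookrightarrow\Del^1_\flat$ is again scaled anodyne (\cite[Prop.~3.1.8]{LurieGoodwillie}); your primary suggestion of invoking the moving lemma here does not work, since that lemma \emph{constructs} the $0$-face of the transformation (forcing it to be degenerate in prescribed ways) rather than letting you prescribe it to be $prg$, and it only produces a lax transformation. The reduction by factoring $p$ as (scaled anodyne)$\,\circ\,$(scaled fibration) is legitimate but buys essentially nothing, because all of the content of the proposition survives into the fibration case.

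The genuine gap is in Case~2. The lifting problem there has the \emph{entire} boundary $\partial\Del^n$ prescribed in $\C$, and no amount of ``moving'' $g$ rel $\partial\Del^n$ can convert this into a sequence of inner-horn, type~(3), or special-outer-horn fillings: every horn-filling problem has a free face, whereas here there is none, and $\partial\Del^n_\flat\subseteq\Del^n_\flat$ is not a trivial cofibration. Correspondingly, your argument never uses that $q$ is a bicategorical \emph{equivalence} in an essential way (Lemma~\ref{l:joyal} and scaled-fibration lifting hold for maps that are not equivalences, for which the statement is false already at $n=2$). The missing idea --- and the actual use of the moving lemma in the paper --- is to degenerate $f$ and $g$ so that the problem becomes a square
\[
\begin{tikzcd}
\partial\Del^{\{0,\dots,n-1\}}_\flat \arrow[r] \arrow[d] & (\ovl{\C}_{/y})_x \arrow[d] \\ \Del^{\{0,\dots,n-1\}}_\flat \arrow[r] & (\ovl{\D}_{/p(y)})_{p(x)}
\end{tikzcd}
\]
for the induced map of mapping $\infty$-categories, which is an equivalence of $\infty$-categories by Proposition~\ref{p:mapping-1} since $p$ is fully faithful; one then solves it up to homotopy rel boundary by the standard $(\infty,1)$-categorical statement about $\Fun^{\simeq}(\Del^{n-1},-)\to\Fun^{\simeq}(\partial\Del^{n-1},-)$, and finally reassembles $\ovl f$ and an honest (non-lax) transformation $T$ using the filtration of Remark~\ref{ob:filtration} and the pushout-product stability of scaled anodyne maps. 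Without this appeal to full faithfulness on mapping categories, Case~2 cannot be completed.
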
 
\begin{proof}
If \(n=1\) the claim amounts to the induced map of (naturally marked) \(\infty\)-categories \(\Hom_{\C}(x,y) \to \Hom_{\D}(p(x),p(y))\) being essentially surjective for every \(x,y \in \C\): indeed, a marked arrow in \(\Hom_{\D}(p(x),p(y))\) from \(\alp\) to \(\beta\) precisely corresponds to a natural transformation \(\Del^1_{\flat} \times \Del^1_{\flat} \to \D\) relative to \(\partial \Del^1_{\flat}\) with source \(\alp\) and target \(\beta\). 
We may hence assume that \(n \geq 2\). 
We now proceed in several steps as follows:
\smallskip \\ {[\textbf{Step 1}]} \smallskip \\
We begin by ``moving'' \(f\) so that it sends all the vertices except the last one to \(x\). More precisely, let \(\ovl{\C}\) be the underlying simplicial set of \(\C\). Applying Corollary~\ref{c:moving-scaled} 
with \(A=\emptyset, B=\partial \Del^n\) and \(\rho\colon\Del^n\to \Del^1\) the map which sends \(\{0,\dots,n-1\}\) to \(0\) and \(n\) to \(1\) (in which case \(A\) and \(B\) are indeed \((\sig \dashv \rho)\)-admissible, see Examples~\ref{ex:admissible}) we find a map of scaled simplicial sets
\begin{equation}\label{e:h} 
h\colon (\Del^1 \times \partial\Del^n,L_{\partial \Del^n}) \to \C ,
\end{equation}
whose underlying simplicial map \(\Del^1 \times \partial\Del^n \to \ovl{\C}\) is a \((\sig \dashv \rho)\)-transformation, and such that \(h|_{\{1\} \times \partial\Del^n_{\flat}} = f\) (here the scaling \(L_{\partial \Del^n}\) is as in Notation~\ref{n:L-A}).
In addition, by Remark~\ref{rem:trans_deg_edges}, we have that \(h|_{\Del^1 \times \{0\}}\) and \(h|_{\Del^1 \times \{n\}}\) are degenerate on \(x\) and \(y\) respectively, and \(h|_{\{0\} \times \Del^{\{0,\dots,n-1\}}_{\flat}}\) is degenerate on \(x\) (see Remark~\ref{r:degenerate-moving}). Let us set \(f' := h|_{\{0\} \times \partial\Del^n_{\flat}}\).
The map \(f'\) then determines a commutative square of scaled simplicial sets
\begin{equation}\label{e:C-over-y}
\begin{tikzcd}
\partial\Del^{\{0,\dots,n-1\}}_{\flat} \ar[r]\ar[d] & \ovl{\C}_{/y} \ar[d, "{\pi_{\C}}"] \\
\Del^{\{0,\dots,n-1\}}_{\flat} \ar[r] & \C
\end{tikzcd}
\end{equation}
in which \(y= h(0,n)\) and the bottom horizontal map is given by the restriction of \(f'\) to \(\{0\} \times \Del^{\{0,\dots,n-1\}}_{\flat}\), and is hence constant with image the vertex \(x\). In particular, the square~\eqref{e:C-over-y} determines a map \(f'_x\colon \partial \Del^{\{0,\dots,n-1\}}_{\flat} \to (\ovl{\C}_{/y})_x\).
\smallskip \\ {[\textbf{Step 2}]} \smallskip \\
Projecting down to \(\D\) we may now consider the map
\begin{equation}\label{e:map-to-D} 
(\Del^1 \times \partial\Del^n,L_{\partial \Del^n}) \coprod_{\Del^{\{1\}} \times \partial \Del^n_{\flat}} \Del^{\{1\}} \times \Del^n_{\flat} \to \D 
\end{equation}
determined by \(ph\) and \(g\). 
Applying again Corollary~\ref{c:moving-scaled} with respect to \(A=\partial\Del^n\)
and \(B=\Del^n\) we can extend~\eqref{e:map-to-D} to a map
\[ H\colon (\Del^1 \times\Del^n,L_{\Del^n}) \to \D, \]
whose underlying simplicial map is a \((\sig \dashv \rho)\)-transformation, so that, in particular, \(H\) maps \(\{0\} \times \Del^{\{0,\dots,n-1\}}\) to \(p(x)\). If we then write \(g' := H|_{\{0\} \times \Del^n}\) for the restriction of \(H\) to \(\{0\} \times \Del^n\) we obtain that \(g'\) determines a map  
\(g'_x\colon \Del^{\{0,\dots,n-1\}} \to (\ovl{\D}_{/p(y)})_{p(x)}\) which fits in a commutative square
\begin{equation}\label{e:on-mapping-cats}
\begin{tikzcd}
\partial \Del^{\{0,\dots,n-1\}}_{\flat} \ar[r, "{f'_{x}}"] \ar[d] & (\ovl{\C}_{/y})_x  \ar[d, "{p_\ast}"] \\
\Del^{\{0,\dots,n-1\}}_{\flat} \ar[r, "{g'_{x}}"] & (\ovl{\D}_{/p(y)})_{p(x)}
\end{tikzcd}
\end{equation}
together with the map \(f'_{x}\colon \partial \Del^{\{0,\dots,n-1\}} \to (\ovl{\C}_{/y})_x
\) constructed at the end of the previous step.
\smallskip \\ {[\textbf{Step 3}]} \smallskip \\
We now invoke (for the first and only time) the assumption that \(p\colon\C \rightarrow \D\) is a bicategorical equivalence of weak \(\infty\)-bicategories. Proposition~\ref{p:mapping-1} now implies that the map \((\ovl{\C}_{/y})_{x} \rightarrow (\ovl{\D}_{/p(y)})_{p(x)}\) is an equivalence of weak \(\infty\)-bicategories in which every triangle is thin. It then follows that in the square of Kan complexes
\[ \xymatrix{
\Fun^{\simeq}(\Del^{\{0,\dots,n-1\}}_{\flat}, (\ovl{\C}_{/y})_x) \ar[r]\ar[d] & \Fun^{\simeq}(\Del^{\{0,\dots,n-1\}}_{\flat}, (\ovl{\D}_{/p(y)})_{p(x)}) \ar[d] \\
\Fun^{\simeq}(\partial\Del^{\{0,\dots,n-1\}}_{\flat}, (\ovl{\C}_{/y})_x) \ar[r] & \Fun^{\simeq}(\partial\Del^{\{0,\dots,n-1\}}_{\flat}, (\ovl{\D}_{/p(y)})_{p(x)}) \\
}\]
the horizontal maps are equivalences and the vertical maps are categorical fibrations, which implies that this square induces equivalences on vertical fibers. We may therefore conclude that the map \(f'_{x}\colon \partial\Del^{\{0,\dots,n-1\}}_{\flat} \to (\ovl{\C}_{/y})_x\) extends to a map 
\[ \ovl{f}'_{x}\colon \Del^{\{0,\dots,n-1\}}_{\flat} \to (\ovl{\C}_{/y})_x, \]  
such that we have a pointwise invertible natural transformation 
\[\eta_{x}\colon \Del^1_{\flat} \times \Del^{n-1}_{\flat} \to (\ovl{\D}_{/p(y)})_{p(x)}\] 
from \(g'_{x}\) to \(p_\ast\ovl{f}'_{x}\), 
whose restriction 
to \(\partial \Del^{n-1}_{\flat}\) is constant on \(p_\ast f'_{x}\) (which makes sense thanks to the commutativity of \eqref{e:on-mapping-cats}). 
The map \(\ovl{f}'_{x}\) then determines a map \(\ovl{f}'\colon \Del^n_{\flat} \to \C\), and upon composing with the map
\[  \Delta^1 \times \Delta^n \cong \Delta^1 \times (\Delta^{n-1}\ast \Delta^{\{n\}}) \to \left(\Delta^1 \times \Delta^{n-1}\right)\ast \Delta^{\{n\}}\]
given on vertices by \((i,j) \mapsto (i,j)\) for \((i,j)\in [1] \times [n-1]\) and \((i,n) \mapsto n\) for \(i \in [1]\) ,
the natural equivalence \(\eta_{x}\) determines a natural transformation \(\eta\colon \Del^1_{\flat} \times \Del^n_{\flat} \to \D\) from \(g'\) to \(p\ovl{f}'\) relative to \(\partial \Del^n_\flat\).
\smallskip \\ {[\textbf{Step 4}]} \smallskip \\
To facilitate the argument that follows let us introduce the shorthand notation 
\[X \colon= (\Del^1 \times \partial \Del^n,L_{\partial \Del^n}) \coprod_{\Del^{\{0\}} \times \partial \Del^n_{\flat}} \Del^{\{0\}} \times \Del^n_{\flat} \to (\Del^1 \times \Del^n,L_{\Del^n}) =\colon Y,\]
The map \(h\) of~\eqref{e:h} together with \(\ovl{f}'\) constructed in the previous step together determine a map \(\vphi \colon X \to \C\) which sends the edge \(\Del^1 \times \{0\}\) to a degenerate edge (since so does \(h\) by Remark~\ref{rem:trans_deg_edges}). Using the filtration of Remark~\ref{ob:filtration} we now see that the map \(X \coprod_{\Del^1_{\flat} \times \{0\}} \Del^0 \to Y \coprod_{\Del^1_{\flat} \times \{0\}} \Del^0\) is scaled anodyne and hence the map \(\vphi\) extends to a map \(\ovl{\vphi}\colon Y \to \C\). Restricting \(\ovl{\vphi}\) to \(\Del^{\{1\}} \times \Del^n\) we now obtain an extension of our original map \(f\colon \partial \Del^n_{\flat} \to \C\) to a map \(\ovl{f}\colon\Del^n_{\flat} \to \C\).
\smallskip \\ {[\textbf{Step 5}]} \smallskip \\
To finish the proof we now need to construct a natural transformation from \(g\) to \(p\ovl{f}\) relative to \(\partial \Del^n_\flat\). 
Consider the following two generally different maps 
\[p\ovl{\vphi},H \colon Y \to \D,\] 
whose restriction to \(X\) gives the two maps \(p\vphi,H|_{X}\colon X \to \D\). By construction, the map \(p\vphi\)
is determined by the pair \((ph, p\ovl{f}')\),
while the map \(H_{|X}\) is determined by the pair \((ph, g')\). 
These two maps are related via a levelwise invertible natural transformation \(\ovl{\eta} \colon \Del^1_{\flat} \times X \to \D\) glued from the natural transformation \(\eta\) on 
\(\Del^{\{0\}} \times \Del^n_{\flat} \subseteq X\)
and the constant natural transformation on \((\Del^1 \times \partial \Del^n_{\flat},L_{\partial \Del^n}) \subseteq X\) (from \(ph\) to itself). We may hence consider the resulting extension problem 
\begin{equation}\label{e:last-extension}
\begin{tikzcd}
\partial \Del^1 \times Y \displaystyle\mathop{\coprod}_{\partial \Del^1 \times X} \Del^1_{\flat} \times X \ar[d]\ar[rr, "{\bigl((H,p\ovl{\vphi}), \ovl{\eta}\bigr)}"] && \D \\
\Del^1_{\flat} \times Y \ar[dotted, urr] &&
\end{tikzcd}.
\end{equation}
By the above, the top horizontal map sends \(\Del^1_{\flat} \times \Del^1_{\flat} \times \{0\} \subseteq \Del^1_{\flat} \times X\) to a point. Since the map \(X \coprod_{\Del^1_{\flat} \times \{0\}} \Del^0 \to Y \coprod_{\Del^1_{\flat} \times \{0\}} \Del^0\) is scaled anodyne and scaled anodyne maps are closed under pushout-products with arbitrary inclusions~\cite[Proposition~3.1.8]{LurieGoodwillie} the extension problem~\eqref{e:last-extension} admits a solution \(\psi\colon \Del^1_{\flat}\times Y \to \D\). Restricting 
the natural transformation \(\psi\colon \Del^1_{\flat} \times Y \to \D\) to \(\Del^{\{1\}} \times \Del^n_{\flat} \subseteq Y\) now yields 
a natural transformation 
from \(g\colon \Del^n_{\flat} \to \D\) to \(p\ovl{f} \colon \Del^n_{\flat} \to \D\) relative to \(\partial \Del^n_\flat\), as desired.
\end{proof}

\begin{prop}\label{p:step-2}
	Let \(p\colon \C \rightarrow \D\) be a bicategorical equivalence of weak \(\infty\)-bicategories and let \(j\colon A \rightarrow B\) an injective map of scaled simplicial sets. Suppose that we are given a commutative diagram of the form
	\begin{equation}\label{e:diagram3}
	\xymatrix{
		\Del^{\{1\}} \times A \ar[d]\ar^-{f_0}[r] & \C \ar^{p}[d] \\
		\Del^{\{0\}} \times B \displaystyle\mathop{\coprod}_{\Del^{\{0\}} \times A }\Del^1_{\flat} \times A \ar^-{h_0}[r] & \D \\
	}
	\end{equation}
	such that \((h_0)|_{\Del^1 \times \{a\}}\) is an equivalence in \(\D\) for every \(a \in A\). Then~\eqref{e:diagram3} extends to a diagram of the form
	\begin{equation}\label{e:diagram4}
	\xymatrix{
		\Del^{\{1\}} \times A \ar[d]\ar[r] & \Del^{\{1\}}\times B  \ar^-{f}[r]\ar[d] & \C \ar^{p}[d] \\
		\Del^{\{0\}} \times B \displaystyle\mathop{\coprod}_{\Del^{\{0\}} \times A}\Del^1_{\flat} \times A \ar[r] & \Del^1_{\flat} \times B \ar^-{h}[r] & \D \\
	}
	\end{equation}
	whose external rectangle is~\eqref{e:diagram3} and such that \(h|_{\Del^1 \times \{b\}}\) is an equivalence in \(\D\) for every \(b \in B\).
\end{prop}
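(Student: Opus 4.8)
The plan is to argue by a standard transfinite induction on the cells of $B$ relative to $A$, thereby reducing to the case where $j$ is one of the generating cofibrations of $\Ss$, namely $\partial\Del^n_\flat \hookrightarrow \Del^n_\flat$ for $n \geq 0$ or $\Del^2_\flat \hookrightarrow \Del^2_\sharp$. At each stage of the induction the data~\eqref{e:diagram3} restricts on the newly attached cell to data of exactly the same shape, and the condition that the relevant transformation be a levelwise equivalence is preserved throughout (for a cell $\partial\Del^0 \hookrightarrow \Del^0$ one uses that $p$, being a bicategorical equivalence, is essentially surjective on objects). The case $j\colon \Del^2_\flat \hookrightarrow \Del^2_\sharp$ is then immediate from \S\ref{sec:thin}: writing $g_0 := h_0|_{\Del^{\{0\}} \times B}$, the triangle $g_0$ is thin in $\D$ and $h_0|_{\Del^1 \times A}$ is a levelwise equivalence transformation from it to $p r_0$, so Corollary~\ref{c:joyal-2} forces $p r_0$ to be thin and then Proposition~\ref{p:detects} forces $r_0$ itself to be thin in $\C$; the (unique) extension $r\colon \Del^2_\sharp \to \C$ then exists, and by the proof of Corollary~\ref{c:joyal-2} the map $h_0$ already carries every triangle of $\Del^1_\flat \times \Del^2_\flat$ to a thin one, so it underlies the required $h\colon \Del^1_\flat \times \Del^2_\sharp \to \D$.

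The case $j\colon \partial\Del^n_\flat \hookrightarrow \Del^n_\flat$ with $n \geq 1$ is the crux. It coincides with Proposition~\ref{p:step-1} except that the boundary square commutes only up to the prescribed levelwise equivalence $h_0|_{\Del^1 \times \partial\Del^n}$, so the idea is to \emph{straighten} this boundary datum before invoking Proposition~\ref{p:step-1}. To do so, I extend $h_0$, currently defined on $\Del^1 \times \partial\Del^n \cup \Del^{\{0\}} \times \Del^n$, to a map $G\colon \Del^1_\flat \times \Del^n_\flat \to \D$ along the ascending filtration~\eqref{e:filtration-0} of Construction~\ref{cn:filtration}. Its intermediate steps are inner-horn fillings, legitimate because the ``middle'' triangle of each horn — which has the form $\Del^{\{(0,k-1),(0,k),(1,k)\}}$, with degenerate projection to $\Del^n$ — lies in $\Del^1_\flat \times \partial\Del^n_\flat$, where $h_0$ is defined and respects thinness (for $n=1$ the horns carry no scaling constraint at all, so there is nothing to check). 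The one remaining step uses a special outer horn whose leading edge is $h_0|_{\Del^1 \times \{0\}}$, an equivalence in $\D$, and is therefore solved by Lemma~\ref{l:joyal} applied to $\D \to \Del^0$. Writing $g_0' := G|_{\Del^{\{1\}} \times \Del^n}$ we obtain $g_0'|_{\partial\Del^n} = p r_0$, so that $(r_0, g_0')$ is now a strictly commuting boundary square.

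Applying Proposition~\ref{p:step-1} to $(r_0, g_0')$ yields an extension $\ovl{r}\colon \Del^n_\flat \to \C$ of $r_0$ and a levelwise equivalence transformation $T$ from $g_0'$ to $p\ovl{r}$ which restricts to the identity transformation on $\partial\Del^n$. I then set $r := \ovl{r}$ and take $h$ to be the ``vertical composite'' of $G$ and $T$: first extend over $\Del^2 \times \partial\Del^n$ by the evident degeneracies (so that $h$ will restrict to $h_0$ on $\Del^1 \times \partial\Del^n$), then fill the resulting inclusion along the pushout-product of $\Lambda^2_1 \hookrightarrow \Del^2_\sharp$ with $\partial\Del^n_\flat \hookrightarrow \Del^n_\flat$ (a scaled anodyne map), and finally restrict to $\Del^{\{0,2\}} \times \Del^n$. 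Since $G$ and $T$ are levelwise equivalences and $T$ is constant over $\partial\Del^n$, the transformation $h$ so obtained runs from $g_0$ to $p r$, is a levelwise equivalence, and extends $h_0$; together with $r$ this produces the diagram~\eqref{e:diagram4}.

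The main obstacle I anticipate is the straightening step: the inclusion one must fill there is not scaled anodyne, and the construction succeeds only because of the fortunate conjunction of (i) every inner horn thrown up by the filtration having a thin middle triangle — for the elementary combinatorial reason above — and (ii) the single genuinely outer horn being special, so that Lemma~\ref{l:joyal} applies. Verifying these two points, and checking that all the fillings can be chosen compatibly with the scalings so that the pieces assemble into honest maps of scaled simplicial sets, is the delicate part; the cell-induction bookkeeping and the compatibility of the degenerate fillings in the final vertical composition are routine by comparison.
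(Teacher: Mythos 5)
Your proposal is correct and follows essentially the same route as the paper's own proof: reduction to the generating cofibrations, treating \(\Del^2_{\flat} \subseteq \Del^2_{\sharp}\) via Proposition~\ref{p:detects} and Corollary~\ref{c:joyal-2}, and for \(\partial\Del^n_{\flat} \subseteq \Del^n_{\flat}\) extending along the filtration of Construction~\ref{cn:filtration} (inner scaled anodyne steps plus Lemma~\ref{l:joyal} for the final \(\Lam^{n+1}_0\) step, which uses the hypothesis that \(h_0|_{\Del^1 \times \{0\}}\) is an equivalence) before invoking Proposition~\ref{p:step-1} and composing the two natural transformations. The only deviation is presentational: you make explicit the vertical composition (via the pushout-product of \(\Lam^2_1 \subseteq \Del^2_{\sharp}\) with \(\partial\Del^n_{\flat} \subseteq \Del^n_{\flat}\)) and the scaling checks in the filtration, both of which the paper leaves implicit.
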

\begin{proof}
	Working simplex by simplex, it will suffice to prove the case where the map \(j\colon A \hrar B\) is one of the inclusions \(j_n\colon\partial \Del^n_{\flat} \subseteq \Del^n_{\flat}\) or the inclusion \(\Del^2_{\flat} \subseteq \Del^2_{\sharp}\). We then note that for \(j_0\colon\varnothing \subseteq \Del^0\) the desired statement is equivalent to \(p\) being essentially surjective. When \(j\) is the inclusion \(\Del^2_{\flat} \subseteq \Del^2_{\sharp}\) the horizontal maps in the left square of~\eqref{e:diagram4} are both isomorphisms on the underlying simplicial sets. In this case the result can be obtained by invoking the fact that \(p\) detects thin triangles (Proposition~\ref{p:detects}) and that the collection of thin triangles is closed under levelwise invertible natural transformations (Corollary~\ref{c:joyal-2}). We may hence assume that \(j=j_n\) for some \(n \geq 1\).
	
	Let
	\begin{equation}\label{e:filt-0-again} 
	[\Del^1_{\flat} \times \partial \Del^{n}_{\flat}] \coprod_{\Del^{\{0\}}\times \partial \Del^{n}_{\flat}}[\Del^{\{0\}}\times \Del^{n}_{\flat}] = Z^{n+1} \subseteq Z^{n} \subseteq \dots \subseteq Z^0 =\Del^1_{\flat} \times   \Del^{n}_{\flat}
	\end{equation}
	be the filtration~\eqref{e:filtration-0} of Construction~\ref{cn:filtration}.	
	Then the inclusions \(Z^{k+1} \subseteq Z^k\) are scaled anodyne for \(k \geq 1\) and so we may extend the map \(h_0\) along the filtration~\eqref{e:filt-0-again} all the way to a map \(h'\colon Z^1 \rightarrow \D\). 
	In the last filtration step we have a pushout diagram of the form
	\[ \xymatrix{
		(\Lam^{n+1}_0,{S_0}|_{\Lam^{n+1}_0}) \ar[r]\ar[d] & Z^1 \ar[d] \\
		(\Del^{n+1},S_0) \ar[r] & Z^0 \\
	}\]
	where the composed map \(\Del^{n+1} \rightarrow Z_0 \rightarrow \Del^n_{\flat} \times \Del^1_{\flat}\) is the simplex \(\tau_0\). Our assumption that \({h_0}|_{\{0\} \times \Del^1}\) is an equivalence in \(\D\) implies that \(h' \circ \tau_0\colon\Lam^{n+1}_0 \to \D\) sends \(\Del^{\{0,1\}}\) to an equivalence, and hence by Lemma~\ref{l:joyal} we may extend \(h'\colon Z^1 \rightarrow \D\) to a map 
	\[h''\colon Z^0 = \Del^1_{\flat} \times \Del^n_{\flat} \rightarrow \D.\] 
	We have thus constructed a natural transformation 
	from \({h_0}|_{\{0\} \times \Del^n_{\flat}}\) to some \(n\)-simplex \(\sig = h''\vert_{\Del^{\{1\}}\times \Delta^n}\), extending the given natural transformation \(h_0\) on \(\partial \Del^n_{\flat} \subseteq \Del^n_{\flat}\). In particular, \(\sig|_{\partial \Del_{\flat}^n} = pf_0\). Applying Proposition~\ref{p:step-1} to \(f_0\) and \(\sig\) we may conclude that there exists a natural transformation \(H\colon \Delta^1_{\flat}\times \Delta^n_{\flat} \to \D\) relative to \(\partial \Delta^n_{\flat}\) satisfying \(H\vert_{\Delta^{\{0\}}\times \Delta^n_{\flat}}  = \sig\) and \(H\vert _{\Delta^{\{1\}}\times \Delta^n_{\flat}} = p\tau\) for some \(\tau \colon \Delta^n_{\flat} \to \C\) such that \(\sig|_{\partial \Del^n} = f_0\).  
	In particular, \(H\) is levelwise invertible. We now construct a composite natural transformation \(H \circ h''\)  
	by solving the lifting problem 
  \[\xymatrix{
    \left(\Lambda^2_1\right)_{\flat} \ar^{(h'',H)}[rr]\ar[d] && \D^{\Delta^n_{\flat}} \ar[d] \\
    \Delta^2_{\sharp} \ar^{\sigma^1}[r]\ar@{.>}[urr]^{\Gamma} & \Delta^1 \ar^{h_0}[r] & \D^{\partial\Delta^n_{\flat}} \\
  }\]
where \(\sigma^1\) is the surjective map that sends \(1\) to \(1\), 
 and the right-hand side vertical map is the bicategorical fibration given by restriction along  
 \(\partial \Delta^n_{\flat} \to \Delta^n_{\flat}\) (recall that the bicategorical model structure is cartesian, see Remark~\ref{r:cartesian}).  
 We now obtain the desired extension by setting \(f = \tau\) and \(h = H\circ h'':= \Gamma_{\vert \Delta^{\{0,2\}}}\). 
\end{proof}

\begin{proof}[Proof of Theorem~\ref{t:fibrant}]
	Let \(\C\) be a weak \(\infty\)-bicategory. Choose a trivial cofibration \(p\colon\C \hrar \D\) where \(\D\) is fibrant. Then in particular \(\D\) is a weak \(\infty\)-bicategory. Applying Proposition~\ref{p:step-2} to the diagram
	\[ \xymatrix{
		\C \times \Del^{\{1\}} \ar^{\cong}[r]\ar[d] & \C \ar^{p}[d] \\
		\D \displaystyle\mathop{\coprod}_{\C \times \Del^{\{0\}}}\Del^1 \times \C  \ar[r] & \D \\
	}\]
	in which the bottom map restricts to the identity on \(\D\) and to the identify transformation from \(p\) to itself on \(\Del^1 \times \C\), we may conclude that \(\C\) is a deformation retract of \(\D\) and hence in particular fibrant.
\end{proof}

Since the notion of a bicategorical equivalence is invariant under the duality operator \(X \mapsto X^{\op}\),
Theorem~\ref{t:fibrant} implies that the notion of a weak \(\infty\)\nbd-bi\-cat\-egory is self-dual. We may summarize the situation as follows:
\begin{cor}
	\label{weak are strong}
	Let \(X\) be a scaled simplicial set. Then the following conditions are equivalent:
	\begin{enumerate}[leftmargin=*]
		\item
		\(X\) is a weak \(\infty\)-bicategory.
		\item
		\(X^{\op}\) is a weak \(\infty\)-bicategory.
		\item
		\(X\) is fibrant in \(\Ss\).
	\end{enumerate}
\end{cor}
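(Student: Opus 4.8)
The plan is to reduce the whole statement to the identification, already in hand, of weak \(\infty\)-bicategories with the fibrant objects of the bicategorical model structure, and then to exploit duality. First I would record the equivalence \((1)\Leftrightarrow(3)\): Theorem~\ref{t:fibrant} supplies \((1)\Rightarrow(3)\), while \((3)\Rightarrow(1)\) is exactly Proposition~\ref{oo-bicat are weak oo-bicat}. Granting this, both the self-duality \((1)\Leftrightarrow(2)\) and the final clause follow quickly.

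For \((1)\Leftrightarrow(2)\) the plan is to show that the duality involution \((-)^{\op}\colon\Ss\to\Ss\), \((X,T_X)\mapsto(X^{\op},T_X)\), is an automorphism of the bicategorical model structure of Theorem~\ref{coherent nerve eq}. Since a model structure is determined by its cofibrations together with its weak equivalences, it is enough to check that both classes are stable under \((-)^{\op}\). For cofibrations this is immediate, as they are precisely the monomorphisms. For weak equivalences, the step I would isolate is the compatibility \(\mathfrak{C}^{\sca}(X^{\op})\cong\mathfrak{C}^{\sca}(X)^{\op}\), where on \(\msCat\) the operation \((-)^{\op}\) reverses \(1\)-morphisms, i.e.\ swaps \(\Map(x,y)\) with \(\Map(y,x)\); one checks this on the generating scaled simplices \(\Del^n_\flat\) and \(\Del^2_\sharp\), where it is induced by the order-reversing isomorphism \([n]\cong[n]^{\op}\), and then propagates it to all of \(\Ss\) since all three functors in sight preserve colimits. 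Because the Dwyer--Kan equivalences in \(\msCat\) are manifestly invariant under \(1\)-morphism reversal --- fully-faithfulness only permutes the two arguments of the mapping objects, and essential surjectivity is phrased via \(\bho\), which is simply sent to its ordinary opposite --- it follows that \(f\) is a bicategorical equivalence if and only if \(f^{\op}\) is. Hence \((-)^{\op}\) is a self-equivalence of the bicategorical model structure, so it preserves fibrant objects; thus \((X,T_X)\) is fibrant if and only if \((X^{\op},T_X)\) is, and applying \((1)\Leftrightarrow(3)\) to both objects yields \((1)\Leftrightarrow(2)\).

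Finally, assuming the equivalent conditions, \((X,T_X)\) is fibrant in the bicategorical model structure, so the unique map \((X,T_X)\to\Del^0\) is a bicategorical fibration (the codomain being terminal) and hence a weak fibration by Remark~\ref{r:bicategorical-weak}; alternatively one can see this directly, since the lifting property of a weak fibration (Definition~\ref{d:weak}) against maps of types (1) and (2) holds because \((X,T_X)\) admits extensions along scaled anodyne maps, and against type (3) maps because, by the self-duality just established, \((X^{\op},T_X)\) does so against their opposites. The one place where I expect to look beyond purely formal manipulations is the compatibility \(\mathfrak{C}^{\sca}(X^{\op})\cong\mathfrak{C}^{\sca}(X)^{\op}\): although standard, it is unavoidable here, because --- in contrast with the scaled inner horns, which are self-dual up to relabelling --- neither the generating set \(\bS\) of scaled anodyne maps (Definition~\ref{anodyne defi}) nor the lifting conditions of Definition~\ref{d:weak} are manifestly stable under \((-)^{\op}\), which is precisely why the self-duality of the notion of weak \(\infty\)-bicategory cannot be seen directly and has to be routed through Theorem~\ref{t:fibrant}.
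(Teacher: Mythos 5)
Your proposal is correct and follows essentially the same route as the paper: the equivalence \((1)\Leftrightarrow(3)\) via Theorem~\ref{t:fibrant} and Proposition~\ref{oo-bicat are weak oo-bicat}, and then \((1)\Leftrightarrow(2)\) by observing that the duality operator preserves bicategorical equivalences (the paper asserts this invariance in one sentence, which your check of \(\mathfrak{C}^{\sca}(X^{\op})\cong\mathfrak{C}^{\sca}(X)^{\op}\) on generators correctly substantiates). The final clause is likewise handled as the paper intends, via Remark~\ref{r:bicategorical-weak} or the self-duality of conditions (2) and (3) of Definition~\ref{d:weak}.
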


\section{The Cisinski model structure for \pdfoo-bicategories}\label{sec:cis}

In this section, we give a different construction of Lurie's model structure for \(\infty\)\nbd-bi\-cat\-e\-gories on the category \(\Ss\) of scaled simplicial sets using the machinery of Cisinski--Olschok recalled in the appendix.
We choose as our subset of monomorphisms the set \(\bS\) the generating scaled anodyne maps of Definition \ref{anodyne defi}.
As interval, we choose \(J_{\sharp}\), whose underlying simplicial set is the \(1\)\nbd-cat\-egorical nerve of the free living groupoid on an invertible arrow (alternatively, it can be described as \(J=\mathrm{Cosk}_0(\{0,1\})\)), \ie the 0-coskeleton of the set with two elements, whose two non-degenerate 2-simplices are marked. The pair \((J_{\sharp},\{0,1\} \to J_{\sharp})\) is then indeed a cartesian cylinder object in the sense of Definition~\ref{d:cylinder}, see also Remark~\ref{r:easy-cylinder}.

\begin{define}
	We will call the \emph{Cisinski model structure} on \(\s^{\sca}\) the model structure of Theorem~\ref{thm:cisinski_model} associated to the set of maps \(\bS\) and the interval object \(J_{\sharp}\). Note that Assumption~\ref{a:standing} holds for \(\s^{\sca}\) thanks to Remark~\ref{alternative def} and Remark~\ref{r:reflective}.
\end{define}

By definition, the cofibrations of the Cisinski model structure on \(\Set^{\sca}_{\Del}\) are the monomorphisms, and the fibrant objects are the scaled simplicial sets \(X\) which admit extensions for the generating anodyne maps of Notation~\ref{nota:anodyne}.
The following result justifies our choice of an interval object:

\begin{prop}
	\label{interval J}
	The inclusions \(i_0\colon\{0\}\rightarrow J_{\sharp}\) and \(i_1\colon\{1\}\rightarrow J_{\sharp}\) are \(\infty\)-bicategorical equivalences of \(\infty\)-bicategories.
\end{prop}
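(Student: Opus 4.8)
The plan is to realise $J_{\sharp}$ as the scaled coherent nerve of an especially simple marked-simplicial category and then to read everything off from the Quillen adjunction of Theorem~\ref{coherent nerve eq}. Let $\mathcal{E}\in\msCat$ be the marked-simplicial category with object set $\{0,1\}$ and $\Map_{\mathcal{E}}(x,y)=\Del^0$ for all $x,y$ (with the evident composition and units), and let $\mathcal{E}_0,\mathcal{E}_1\subseteq\mathcal{E}$ be the full subcategories on $0$ and on $1$; each $\mathcal{E}_i$ is the terminal object of $\msCat$. First I would check that $\mathrm{N}^{\sca}(\mathcal{E})=J_{\sharp}$: forgetting the marking, $\mathcal{E}$ is the groupoid with two objects and a unique isomorphism between any two of them, viewed as an $\s$-enriched category with discrete mapping spaces, so its coherent nerve is the ordinary nerve of that groupoid, namely $J$; and a $2$-simplex $f\colon\mathfrak{C}(\Del^2)\to\mathcal{E}$ is thin in $\mathrm{N}^{\sca}(\mathcal{E})$ precisely when it carries the non-degenerate edge of $\mathfrak{C}(\Del^2)(0,2)$ to a marked edge of $\Map_{\mathcal{E}}(f(0),f(2))=\Del^0$, which is automatic. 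Hence $\mathrm{N}^{\sca}(\mathcal{E})=(J,J_2)=J_{\sharp}$, and likewise $\mathrm{N}^{\sca}(\mathcal{E}_i)=\Del^0$, with $\mathrm{N}^{\sca}$ sending the inclusions $\mathcal{E}_0\hookrightarrow\mathcal{E}$ and $\mathcal{E}_1\hookrightarrow\mathcal{E}$ to $i_0$ and $i_1$, respectively.

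Next I would verify that $\mathcal{E}$ and the $\mathcal{E}_i$ are fibrant in $\msCat$. This follows from the description of fibrant $\s^+$-enriched categories as the locally fibrant ones in \cite{HTT}, together with the fact that $(\Del^0,\emptyset)$ is fibrant in the marked categorical model structure of Theorem~\ref{t:marked-categorical} (its underlying simplicial set is an $\infty$-category and its unique edge is a degenerate equivalence). Then I would check that $\mathcal{E}_0\hookrightarrow\mathcal{E}$ and $\mathcal{E}_1\hookrightarrow\mathcal{E}$ are Dwyer--Kan equivalences: each is fully faithful, being the identity of $\Del^0$ on every mapping object, and each is essentially surjective because $\bho(\mathcal{E})$ is again a connected groupoid with object set $\{0,1\}$, so that every one of its objects is isomorphic to $0$ (respectively to $1$).

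Finally, $\mathrm{N}^{\sca}$ is a right Quillen functor by Theorem~\ref{coherent nerve eq}, hence it preserves fibrant objects and, by Ken Brown's lemma, weak equivalences between fibrant objects. Applying it to the two inclusions above shows at once that $J_{\sharp}=\mathrm{N}^{\sca}(\mathcal{E})$ is an $\infty$-bicategory and that the maps $i_0\colon\{0\}\to J_{\sharp}$ and $i_1\colon\{1\}\to J_{\sharp}$ are bicategorical equivalences of $\infty$-bicategories, as desired. (Fibrancy of $J_{\sharp}$ can also be seen directly via Theorem~\ref{t:fibrant}, since $J$ is $0$-coskeletal with all triangles thin, so $J_{\sharp}$ visibly admits extensions along every map in $\bS$.) No genuine difficulty is expected; the one step requiring a little care is the identification $\mathrm{N}^{\sca}(\mathcal{E})=J_{\sharp}$ together with the matching of $\mathcal{E}_0\hookrightarrow\mathcal{E}$ and $\mathcal{E}_1\hookrightarrow\mathcal{E}$ with $i_0$ and $i_1$, which is pure bookkeeping.
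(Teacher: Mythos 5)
Your argument is correct, but it takes a genuinely different and noticeably heavier route than the paper's. The paper's proof is a one-liner: since every triangle of \(J_{\sharp}\) is thin, the lifting problem for \(J_{\sharp}\to\Del^0\) against an arbitrary monomorphism of scaled simplicial sets reduces to the underlying simplicial sets, and since \(J=\mathrm{Cosk}_0(\{0,1\})\) is \(0\)-coskeletal it reduces further to extending maps of vertex sets into \(\{0,1\}\) along injections, which is always possible. Hence \(J_{\sharp}\to\Del^0\) is a \emph{trivial fibration}; in particular \(J_{\sharp}\) is fibrant and the two sections \(i_0,i_1\) are bicategorical equivalences by two-out-of-three. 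You instead identify \(J_{\sharp}\) as \(\rN^{\sca}(\E)\) for the two-object marked-simplicial groupoid \(\E\) with all mapping objects \(\Del^0\), check that \(\E\) and its one-object full subcategories are fibrant in \(\msCat\) and that the inclusions are Dwyer--Kan equivalences, and then invoke Theorem~\ref{coherent nerve eq} together with Ken Brown's lemma. All of these steps check out (the coherent nerve of a discrete simplicial category is the ordinary nerve, and the thinness condition is vacuous since \(\Map_{\E}(x,y)=\Del^0\) has only marked edges), and your parenthetical appeal to Theorem~\ref{t:fibrant} introduces no circularity since that result is established before this section. What your route buys is a conceptually transparent strictification of \(J_{\sharp}\) as the nerve of a strict groupoid; what the paper's route buys is brevity, independence from the Quillen equivalence with \(\msCat\), and a strictly stronger conclusion, namely that \(J_{\sharp}\to\Del^0\) has the right lifting property against all cofibrations rather than merely being a weak equivalence of fibrant objects.
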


\begin{proof}
	It will suffice to show that the terminal map \(J_{\sharp} \to \Del^0\) is a trivial fibration, i.e., satisfies the right lifting property with respect to all inclusions of scaled simplicial sets. Since every triangle in \(J_{\sharp}\) is thin it will suffice to check that the underlying simplicial set \(J\) satisfies the right lifting property with respect to all inclusions of simplicial sets. Better yet, since \(J\) is defined to be the \(0\)-coskeleton of \(\{0,1\}\) it will suffice to check that \(\{0,1\}\) has the right lifting property with respect to all inclusions of sets. Indeed, every set has this property.
\end{proof}

\begin{cor}
	The class of fibrant objects in the Cisinski model structure contains all \(\infty\)-bicategories and is contained in the class of all weak \(\infty\)-bicategories. It thus coincides with the class of \(\infty\)-bicategories by Theorem~\ref{t:fibrant}.
\end{cor}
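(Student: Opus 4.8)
The plan is to prove the two containments asserted in the statement and then quote Theorem~\ref{t:fibrant}. Recall that, by the Cisinski--Olschok construction recalled in the appendix, the fibrant objects of the Cisinski model structure are precisely the scaled simplicial sets with the right lifting property against the weakly saturated class of \emph{anodyne} maps generated out of $\bS$ and the cylinder $J_{\sharp}$; equivalently, against the trivial cofibrations of that model structure, the two classes coinciding in a Cisinski model structure. In particular this anodyne class contains $\bS$.

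The easy containment is that every fibrant object of the Cisinski model structure is a weak $\infty$-bicategory: such an object has the right lifting property against $\bS$, hence --- since the scaled anodyne maps of Definition~\ref{anodyne defi} are the weakly saturated closure of $\bS$, and the maps against which a fixed object has the right lifting property form a weakly saturated class --- it has the right lifting property against all scaled anodyne maps, which is exactly the definition of a weak $\infty$-bicategory. Combined with Theorem~\ref{t:fibrant} this already shows that every Cisinski-fibrant object is an $\infty$-bicategory.

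For the reverse containment I would show that every anodyne map of the Cisinski model structure is a trivial cofibration of Lurie's bicategorical model structure of Theorem~\ref{coherent nerve eq}; since an $\infty$-bicategory is by definition fibrant there, it would then have the right lifting property against all Cisinski-anodyne maps and hence be Cisinski-fibrant. As both model structures have the monomorphisms as cofibrations, the bicategorical trivial cofibrations form a weakly saturated class of monomorphisms, so it suffices to check that this class (i) contains $\bS$ and (ii) is stable under the operations by which the Cisinski--Olschok recipe builds the anodyne class out of $\bS$ and $J_{\sharp}$, namely pushout--products with the endpoint inclusions $\{0\}\hookrightarrow J_{\sharp}$, $\{1\}\hookrightarrow J_{\sharp}$ and with $\{0,1\}\hookrightarrow J_{\sharp}$. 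Point (i) is Proposition~\ref{oo-bicat are weak oo-bicat}. Point (ii) is where Proposition~\ref{interval J} enters: it tells us that $J_{\sharp}\to\Del^0$ is a trivial fibration, so the functor $J_{\sharp}\times(-)$ carries weak equivalences to weak equivalences and, for any trivial cofibration $K\hookrightarrow L$, the maps $\{e\}\times K\hookrightarrow J_{\sharp}\times K$ and $\{0,1\}\times K\hookrightarrow\{0,1\}\times L$ are again trivial cofibrations; from these facts the relevant pushout--products are seen to be trivial cofibrations by the usual formal manipulations (stability of trivial cofibrations under pushout, two-out-of-three, and the fact that the cartesian product with $J_{\sharp}$ preserves monomorphisms). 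Equivalently, one may invoke the minimality property of Cisinski model structures recorded in the appendix, applied to the bicategorical model structure, which shares its cofibrations, contains $\bS$ among its trivial cofibrations, and admits $J_{\sharp}$ as an interval object by Proposition~\ref{interval J}.

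Putting the two containments together with the identification of weak $\infty$-bicategories and $\infty$-bicategories from Theorem~\ref{t:fibrant} yields the chain of inclusions $\{\infty\text{-bicategories}\}\subseteq\{\text{Cisinski-fibrant objects}\}\subseteq\{\text{weak }\infty\text{-bicategories}\}=\{\infty\text{-bicategories}\}$, and hence the asserted coincidence. The main obstacle is point (ii) above: one must verify that feeding $\bS$ and the interval $J_{\sharp}$ into the Cisinski--Olschok machine produces only bicategorical trivial cofibrations. This is routine model-category bookkeeping, but it genuinely uses Proposition~\ref{interval J} to control the products with $J_{\sharp}$, and one should be careful to argue directly from the fact that $J_{\sharp}\to\Del^0$ is a trivial fibration rather than from a pushout--product axiom for the bicategorical model structure, which may not be available for the cartesian product of scaled simplicial sets.
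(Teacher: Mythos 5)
Your proof is correct and follows the same two-containment skeleton as the paper: Cisinski-fibrant objects are weak \(\infty\)-bicategories because \(\bS\) generates the scaled anodyne maps, and \(\infty\)-bicategories are Cisinski-fibrant because every generating \((J_{\sharp},\bS)\)-anodyne map is a trivial cofibration in the bicategorical model structure. The one place you diverge is the justification of the pushout-product step. The paper simply invokes the fact that Lurie's model structure on \(\Ss\) \emph{is} cartesian (citing \cite[Proposition 3.1.8]{LurieGoodwillie} and \cite[Lemma 4.2.6]{LurieGoodwillie}) together with Proposition~\ref{interval J}, so your closing caution that a pushout-product axiom ``may not be available'' is unfounded --- it is available and is exactly what the paper uses. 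Your alternative, arguing directly from the fact that \(J_{\sharp}\to\Del^0\) is a trivial fibration (so that \(J_{\sharp}\times X\to X\) is a natural trivial fibration by base change, whence \(J_{\sharp}\times(-)\) preserves weak equivalences, and the pushout-products \(\partial^{\eps}\Box i\) and the iterated \(\partial\Box f\) are handled by pushout stability and two-out-of-three), is a valid and slightly more self-contained route; it only needs that monomorphisms of \(\Ss\) are closed under pushout-products (Remark~\ref{alternative def}) rather than the full cartesian model structure axiom. One small looseness: your parenthetical claim that the anodyne maps and the trivial cofibrations of the Cisinski model structure coincide is not automatic (it requires Remark~\ref{r:reflective-2}), but you never actually use it --- for your argument it suffices that fibrant objects are characterized by lifting against the anodyne class, which is Theorem~\ref{thm:cisinski_model}.
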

\begin{proof}
	Since \(\bS\) generates the class of scaled anodyne maps it follows that every Cisinski-fibrant object is a weak \(\infty\)-bicategory. On the other hand, combining Proposition~\ref{interval J} with the fact that Lurie's model structure on scaled simplicial sets is cartesian (Remark~\ref{r:cartesian})  
we may conclude that every generating anodyne in the Cisinski model structure is a trivial cofibration in the bicategorical model structure, and hence every \(\infty\)-bicategory is Cisinski-fibrant. 
	
\end{proof}
Since model structures are determined by the class of cofibrations and fibrant objects (\cite{JoyalQCatsApplications}, Proposition E.1.10) we may conclude that:
\begin{cor}
	The Cisinski model structure on the category \(\Ss\) of scaled simplicial sets with generating set \(\bS\) and interval given by \(J_{\sharp}\) coincides with Lurie's model structure for \(\infty\)-bicategories.
\end{cor}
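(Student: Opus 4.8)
The plan is to invoke the general principle, recorded as Proposition E.1.10 of \cite{JoyalQCatsApplications}, that a model structure on a fixed underlying category is completely determined by its class of cofibrations together with its class of fibrant objects. Thus it suffices to check that the Cisinski model structure and Lurie's bicategorical model structure on \(\Ss\) agree on both of these pieces of data. First I would note that the cofibrations coincide: by construction the Cisinski model structure of Theorem~\ref{thm:cisinski_model} has the monomorphisms as its cofibrations, and the same is true of the bicategorical model structure of Theorem~\ref{coherent nerve eq}.

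Next I would compare the fibrant objects. The fibrant objects of the bicategorical model structure are, by definition, the \(\infty\)-bicategories. On the other side, the preceding corollary identifies the class of Cisinski-fibrant objects with the class of \(\infty\)-bicategories: the inclusion ``Cisinski-fibrant \(\Rightarrow\) weak \(\infty\)-bicategory'' holds because \(\bS\) generates the scaled anodyne maps; the reverse inclusion ``\(\infty\)-bicategory \(\Rightarrow\) Cisinski-fibrant'' uses Proposition~\ref{interval J} together with the fact that Lurie's model structure is cartesian; and the a priori gap between ``weak \(\infty\)-bicategory'' and ``\(\infty\)-bicategory'' is closed by Theorem~\ref{t:fibrant}. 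Having matched both cofibrations and fibrant objects, the cited uniqueness principle forces the two model structures to coincide, which is the assertion of the corollary.

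There is essentially no obstacle remaining at this stage; all the substantive work has already been carried out — verifying that the Cisinski--Olschok machinery applies (Assumption~\ref{a:standing}, via Remark~\ref{alternative def} and Remark~\ref{r:reflective}), checking that \((J_{\sharp},\{0,1\}\to J_{\sharp})\) is a legitimate interval object, establishing in Proposition~\ref{interval J} the needed properties of \(J_{\sharp}\), and, most importantly, proving in Theorem~\ref{t:fibrant} that every weak \(\infty\)-bicategory is already an \(\infty\)-bicategory. The present corollary is the formal bookkeeping step that assembles these inputs into the desired identification of model structures.
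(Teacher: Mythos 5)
Your proposal is correct and follows exactly the paper's argument: the paper likewise invokes Proposition E.1.10 of \cite{JoyalQCatsApplications} after observing that both model structures have the monomorphisms as cofibrations and, by the preceding corollary (which rests on Theorem~\ref{t:fibrant}, Proposition~\ref{interval J}, and the cartesianness of Lurie's model structure), the same fibrant objects. Nothing is missing.
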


\section{The main equivalence}\label{sec:equivalence}

In this section we define an adjunction of the form:
\[ \xymatrixcolsep{1pc}
\vcenter{\hbox{\xymatrix{
			**[l]\Ss \xtwocell[r]{}_{U}^{\iota
			}{'\perp}& **[r] \St_{2}}}}\]
which we show to be a Quillen one. After having established this, we prove it is a Quillen equivalence by making use, among other things, of an explicit fibrant replacement for \(\iota X\), when \(X\) is an \(\infty\)\nbd-bi\-cat\-egory.
\begin{define}
\label{d: iota}
	Let \(U\colon \St \to \Ss\) be the functor \(U(X,M_X) = (X,M_X \cap X_2)\) which forgets the marking in dimension \(\neq 2\). We will call \(U(X,M)\) the \emph{underlying scaled simplicial set} of the stratified set \((X,M)\).
In the other direction, we consider the functor \(\iota\colon \Ss\rightarrow \St\) which sends a scaled simplicial set \((X,T_X)\) to the stratified set \(\iota(X,T_X):=(X,T_X \cup \deg(X))\),
	where \(\deg(X)\) stands for the set of degenerate simplices of \(X\).
	By definition, this is a stratified set whose only non-degenerate marked \(n\)-simplices have \(n=2\).
\end{define}

By construction we have an evident natural isomorphism 
\[(X,T_X) \xrightarrow{\cong} U(X,T_X \cup \deg(X)) = U(\iota(X,T_X)),\] and direct inspection shows that this transformation exhibits \(\iota\) as left adjoint to \(U\).

\begin{lem}
	\label{basic obs}
	The functor \(\iota\colon \Ss \rightarrow \St_{2}\) is fully faithful, preserves monomorphisms and preserves pushouts. In addition, the natural map 
\[ \iota(X\times Y)\rightarrow \iota X \times   \iota Y \]
is an anodyne extension in \(\St_2\), and similarly the natural map
\[\iota\left(X'\times Y \cup_{X \times Y} X\times Y'\right) \to \iota X' \times \iota Y \cup_{\iota X \times \iota Y} \iota X \times \iota Y'\] is a weak equivalence in \(\St_{2}\) for every given pair of inclusions \(X\hookrightarrow X', \ Y \hookrightarrow Y'\).
\end{lem}

Informally, the previous lemma states that \(\iota\) preserves products and pushout-products ``up to homotopy''.

\begin{proof}
	We first note that monomorphisms of stratified sets are detected on the level of the underlying simplicial sets, and hence also on the level of the underlying scaled simplicial sets.
	The first part is then a consequence of the fact that \(\iota\) is a left adjoint functor and the associated unit map \(X \to U(\iota(X))\) is an isomorphism. Since \(U\) preserves products this also shows that
	\(\iota(X\times Y)\) and \(\iota X \times 	\iota Y\) have 
	the same underlying scaled simplicial set, so that 
	the marking of \(\iota(X\times Y)\) and \(\iota X \times 	\iota Y\) only differs in dimension strictly greater than 2. The comparison map (both for the case of products and pushout-products) is therefore in the weak saturation of the set \(\{(\Delta^n,\emptyset)\rightarrow (\Delta^n,\{\Del^n\})\}_{n >2}\), and is hence an anodyne extension.
\end{proof}

The previous lemma tells us that \(\iota\) preserves  cofibrations, since they coincide with monomorphisms for both model structures involved. 
\begin{define}
Given a stratified set \(X\in \St\) denote by \(eq_X\) the set of equivalences of \(X\), \ie 1-simplices \(v\colon x\rightarrow y\) that are equivalences in the underlying scaled simplicial set \(UX\) in the sense of Definition \ref{d: equivalences}.
\end{define}

By definition, the set of equivalences of a stratified set only depends on the underlying scaled simplicial set.

Before delving into the proof of Proposition \ref{Quillen adj}, we need a preliminary lemma. 
\begin{lem}
	\label{fib repl}
	Given an \(\infty\)-bicategory \((X,T_X)\) in \(\Ss\) we have that 
	\[
	\widetilde{X}:=\thb_2 (X,T_X\cup eq_X)
	\]
	is a \(2\)-trivial saturated complicial set and the map
	\(\iota(X,T_X) \to \widetilde{X}\) is a trivial cofibration of stratified sets. In particular, \(\widetilde{X}\) is a fibrant replacement
	of~\(\iota(X,T_X)\).
\end{lem}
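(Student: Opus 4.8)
The plan is to verify separately that $\tilde{X}$ is $2$-trivial, saturated, and complicial, and then to identify the map $\iota(X,T_X)\to\tilde{X}$ as an anodyne extension. That $\tilde{X}$ is $2$-trivial is immediate from the definition of $\thb_2$, since every simplex of dimension $>2$ is marked. Saturation should follow from the fact that the marked $1$-simplices are exactly the equivalences $eq_X$: a saturation morphism $\Delta^3_{\eq}\ast(\Delta^n,\emptyset)\to\thr(\Delta^3)\ast(\Delta^n,\emptyset)$ forces an edge to be marked once it sits inside a marked $3$-simplex with the relevant $2$-out-of-$6$ configuration, and for $n=-1$ this says precisely that the third edge of an ``$E_2$-diagram'' is an equivalence whenever the other two are --- which holds because $X$ is an $\infty$-bicategory, hence a weak $\infty$-bicategory, so that equivalences in the core $\infty$-category $X^{\thi}$ satisfy $2$-out-of-$3$ (and more); the higher $n$ cases reduce to this since all simplices of dimension $>2$ are already marked. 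The main work is the complicial condition: I would check the right lifting property against complicial horns $(\Lam^k_i,M_i|_{\Lam^k_i})\to(\Delta^k,M_i)$ and the $2$- and $3$-dimensional thinness extensions (using the Remark that in $\St_2$ one may restrict to those).

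For the complicial horns, I would split into cases by dimension. For $k\geq 4$ the filling is automatic since all simplices of dimension $\geq 3$ are marked and $X$ admits inner horn fillers (being an $\infty$-bicategory, hence a weak $\infty$-bicategory, whose underlying simplicial set has inner horn fillers); one only needs to check that the new $(k-1)$-face produced is marked in the correct situations, which is automatic. The delicate cases are $k=2$ and $k=3$. For $k=2$, the inner horn $\Lam^2_1$ with appropriate marking data is filled using that $X$ is a weak $\infty$-bicategory (scaled anodyne map $\Lam^2_1\subseteq\Delta^2_\sharp$), and the outer horns $\Lam^2_0,\Lam^2_2$ where the marked edge is an equivalence are filled by Joyal's special outer horn theorem applied to $X^{\thi}$ --- exactly the content recalled in the Remark after the definition of invertible edges. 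For $k=3$: inner horns $\Lam^3_1,\Lam^3_2$ with the complicial marking $M_i$ are filled using Remark~\ref{rmk:j's are anod} (the maps $j_i\colon(\Delta^3,T_i)\to\Delta^3_\sharp$ are scaled anodyne), after first checking the $2$-faces that need to be thin are thin --- here one uses that $T_X$ together with $eq_X$-filled triangles behaves correctly, i.e. a triangle two of whose edges are equivalences is thin, which follows from Proposition~\ref{p:joyal-2} / Corollary~\ref{c:joyal-2}. The outer horns $\Lam^3_0,\Lam^3_3$ (whose marking includes the edge $\Delta^{\{0,1\}}$, resp. $\Delta^{\{2,3\}}$, which is therefore an equivalence) are filled using Lemma~\ref{l:joyal} (special outer horns) applied to $X\to\Delta^0$.

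For the thinness extensions, the $2$-dimensional ones $(\Delta^2,M_{(2,i)})\to\thr(\Delta^2)$ say: a triangle two of whose edges are marked (equivalences) is thin; this is again Corollary~\ref{c:joyal-2} (or Proposition~\ref{p:joyal-2}), since $X$ is a weak $\infty$-bicategory. The $3$-dimensional ones $(\Delta^3,M_{(3,i)})\to\thr(\Delta^3)$ say a $2$-face of a suitably-marked $3$-simplex is thin; for $i=1,2$ this is Remark~\ref{rmk:j's are anod} directly, and for $i=0,3$ one additionally uses that the extra marked edge is an equivalence together with Proposition~\ref{p:joyal-2}. Finally, for the last assertion: the map $\iota(X,T_X)\to\tilde{X}$ changes the marking only by adding all simplices of dimension $>2$ and the equivalence $1$-simplices $eq_X$. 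The first change is in the saturation of $\{(\Delta^n,\emptyset)\to(\Delta^n,\{\Delta^n\})\}_{n>2}$, which consists of $2$-trivializing morphisms, hence anodyne. The second change, marking each equivalence edge, is handled by the saturation morphisms: each $v\in eq_X$ together with a chosen $E_2$-diagram witnessing its invertibility determines a lifting problem against $\Delta^3_{\eq}\ast(\Delta^{-1})=\Delta^3_{\eq}\to\thr(\Delta^3)$, so successively marking the edges of $eq_X$ exhibits $\iota(X,T_X)\to(X,T_X\cup eq_X,\deg)$ as anodyne. Composing, $\iota(X,T_X)\to\tilde X$ is a trivial cofibration; since it is a monomorphism by construction and $\tilde X$ is fibrant by the above, it is a fibrant replacement.

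\textbf{Main obstacle.} I expect the genuinely technical point to be the complicial horn and thinness-extension checks in dimension $3$ with \emph{outer} index ($i=0$ and $i=3$): there the marking data forces an outer edge to be an equivalence, and one must carefully combine the special-outer-horn input (Lemma~\ref{l:joyal}, which requires the map in question to lift against type-(1) and type-(3) anodynes) with the thinness bookkeeping provided by Proposition~\ref{p:joyal-2}. Verifying that all the $2$-faces appearing in these horns are already thin in $\tilde X$ --- so that the relevant scaled-anodyne or special-outer-horn lifting can be invoked --- is where the bulk of the diagram-chasing lies.
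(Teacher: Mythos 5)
There is a genuine gap in your treatment of saturation, and it sits exactly where the paper's proof does its hardest work. You claim that the saturation morphisms \(\Delta^3_{\eq}\ast(\Delta^n,\emptyset)\to\thr(\Delta^3)\ast(\Delta^n,\emptyset)\) for \(n\geq 0\) ``reduce to the \(n=-1\) case since all simplices of dimension \(>2\) are already marked.'' This is false: by the definition of the join, the new markings appearing in \(\thr(\Delta^3)\ast\Delta^0\) that are absent from \(\Delta^3_{\eq}\ast\Delta^0\) include \emph{\(2\)-simplices}, namely the triangles \(e\ast\{4\}\) for \(e\) an edge of \(\Delta^3\) other than \(\Delta^{\{0,2\}},\Delta^{\{1,3\}}\) (concretely \(\Delta^{\{0,1,4\}}\), \(\Delta^{\{1,2,4\}}\), \(\Delta^{\{2,3,4\}}\), \(\Delta^{\{0,3,4\}}\)). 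Two-triviality says nothing about these, and knowing (from the \(n=-1\) case) that the edges \(\Delta^{\{0,1\}}\) etc.\ are equivalences does not by itself make a triangle containing such an edge thin. The paper establishes exactly this point by a separate argument: it extends the given map along the generating scaled anodyne \((\Delta^4,T)\to(\Delta^4,T')\) of Definition~\ref{anodyne defi}(2) — the only place the ``saturation'' axiom of \(\infty\)-bicategories enters — to obtain thinness of \(\Delta^{\{0,1,4\}}\) and \(\Delta^{\{0,3,4\}}\), and then applies Proposition~\ref{p:joyal-2} twice (to the \(3\)-simplices on \(\{0,1,2,4\}\) and \(\{1,2,3,4\}\), using that \(\Delta^{\{0,1\}}\) and \(\Delta^{\{1,2\}}\) are equivalences) to get thinness of \(\Delta^{\{1,2,4\}}\) and \(\Delta^{\{2,3,4\}}\). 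Without some version of this step your \(\tilde X\) is not shown to be saturated, so the fibrancy claim is unproven. (The reduction from general \(n>0\) to \(n=0\) is fine, since each new marked triangle of \(\thr(\Delta^3)\ast\Delta^m\) lies in some \(\thr(\Delta^3)\ast\Delta^{\{k\}}\).)

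Two further remarks on the route. First, the paper does not check outer complicial horns or outer thinness extensions at all: it invokes \cite[Theorem 56]{VerityWeakComplicialI}, which for a \(2\)-trivial stratified set marked by its equivalences reduces fibrancy to inner complicial horns, inner thinness extensions of dimension \(\geq 3\), and the saturation maps. Your plan of attacking the outer horns directly via Lemma~\ref{l:joyal} and the outer thinness extensions via Proposition~\ref{p:joyal-2} is plausible but substantially heavier, and your ``main obstacle'' paragraph consequently misidentifies where the real difficulty lies (it is the saturation check above, not the outer horns). Second, a small misreading: the \(2\)-dimensional thinness extensions \((\Delta^2,M_{(2,i)})\to\thr(\Delta^2)\) do not assert that a triangle with two marked edges is thin; since \(M_{(2,i)}\) already contains the \(2\)-simplex, they assert that a \emph{thin} triangle two of whose edges are equivalences has its third edge an equivalence (\(2\)-out-of-\(3\) in the core), which is what you actually need to verify. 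Your description of the anodyne factorization \(\iota(X,T_X)\to\thb_2\iota(X,T_X)\to\tilde X\) matches the paper, modulo spelling out that producing the required map \(\Delta^3_{\eq}\to\thb_2\iota(X,T_X)\) from an \(E_2\)-witness involves filling an inner horn \(\Lambda^3_2\) and invoking Remark~\ref{rmk:j's are anod} to mark the last face.
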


\begin{proof}	
	We begin by showing that the inclusion \(\iota(X,T_X)\rightarrow \widetilde{X}\) is an anodyne extension. We can factor it as the composite of the obvious maps 
	\[\iota(X,T_X)\rightarrow \thb_2\iota(X,T_X)\rightarrow \widetilde{X} ,\] 
	the first one clearly being an anodyne morphism. Turning to the second one, we will show that it belongs to the weak saturation of the set of maps given by \(\{\Delta^3_{\eq}\rightarrow{\thr(\Delta^3)}\}\). For every equivalence \(v \in X_1\), extend the defining data of Remark \ref{rem:eq} to an inner horn \(\Lam^3_2 \to X\) of the form:
	\begin{center}
		\begin{tikzpicture}
		\squares{
			/squares/label/.cd,
			0=$0$,1=$1$,2=$2$,3=$3$,
			01=$v$, 12=$w$, 23=$v$, 03=$v$,
			012={$\simeq$}, 023={$=$}, 123={$\simeq$},
			0123={$\hat{\alpha}_v$},
			/squares/arrowstyle/.cd,
			02={equal}, 13={equal},
			012={phantom, description}, 023={phantom, description},
			013={RightarrowDashed}, 123={phantom, description},
			0123={tripleDashed},
			/squares/labelstyle/.cd,
			012={anchor=center}, 123={anchor=center}
		}
		\end{tikzpicture}
	\end{center}
	where the \(2\)-simplices denoted by ``\(\simeq\)'' are those provided by Remark \ref{rem:eq}. These data admit an extension to a \(3\)-simplex \(\hat{\alpha}_v\) by 2-triviality of \(\widetilde{X}\) and the fact that \((X,T_X)\) is an \(\infty\)-bicategory, and the resulting \(2\)-simplex \(\hat{\alpha}_v|_{\Del^{\{0,1,3\}}}\) has to be marked, thanks to Remark~\ref{rmk:j's are anod}.
	We thus get a map \(\alpha_v^{\sharp}\colon\Delta^3_{\eq}\rightarrow \thb_2\iota(X,T_X)\). We now observe that the inclusion \(\thb_2\iota(X,T_X)\rightarrow \tilde{X}\) is obtained as a transfinite composite of pushouts of the maps \(\alpha_v^{\sharp}\) along the map  \(\Delta^3_{\eq}\rightarrow{\thr(\Delta^3)}\), as \(v\) varies through all the equivalences of \((X,T_X)\), and is thus an anodyne map. Note that with this procedure we also marked \(w\), and that this is correct since by construction \(w\) is also invertible. 
	
	Let us now show that \(\widetilde{X}\) is fibrant, i.e., admits all lifts against generating anodyne morphisms (Definition~\ref{d:complicial-anodyne}). Since \(\widetilde{X}\) is \(2\)-trivial by construction and is marked by its equivalences,~\cite[Theorem 56]{VerityWeakComplicialI} tells us that it is enough to check that \(X\) admits lifts against inner complicial horn inclusions, inner thinness extensions of dimension \(\geq 3\) and saturation maps. Now for complicial inner horns \((\Lambda^n,M_i|_{\Lam^n_i})\rightarrow (\Delta^n,M_i)\), these lifts already exist at the level of the underlying scaled simplicial set \((X,T_X)=U(\widetilde{X})\) since it is assumed to be an \(\infty\)\nbd-bi\-cat\-e\-gory.
	
	Turning to inner thinness extensions in dimension \(\geq 3\), we only have to check it for \((\Delta^3,M_i') \rightarrow (\Delta^3,M_i'') \) since \(\widetilde{X}\) is \(2\)-trivial.
	In this case, the existence of a lift follows from Remark~\ref{rmk:j's are anod},
	which ensures it already exists at the level of the underlying scaled simplicial set, as above.
	
	Let us now show \(\widetilde{X}\) is saturated. 
	Since all the simplices of dimension \(\geq 3\) in \(\widetilde{X}\) are marked it will suffice to show that \(\widetilde{X}\) admits extensions against the maps \(\Del^3_{\eq} \to \thr(\Del^3)\) and \(\Del^3_{\eq} \ast \Del^0 \to \thr(\Del^3) \ast \Del^0\);
	indeed, for every \(m>0\), a non-degenerate marked \(2\)-simplex of $\Del^3_{\eq}\ast \Del^m$
	(resp.~\(\thr(\Del^3) \ast \Del^m\)) lives inside a stratified sub-simplicial set
	of the form \(\Del^3_{\eq}\ast \Del^{\{k\}}\) (resp.~\(\thr(\Del^3) \ast \Del^{\{k\}}\)), for some \(k \in \{0, 1, \dots, m\}\). 
	In the case of the inclusion \(\Del^3_{\eq} \to \thr(\Del^3)\), we observe that, since both  \(\Delta^3_{\eq}\) and \(\thr(\Delta^3)\) are 1-trivial by definition, any map \(\Delta^3_{\eq}\rightarrow \widetilde{X}\) factors through the core \(\infty\)-category of \(\widetilde{X}\). This is defined, in analogy with the case of scaled simplicial sets, to be the stratified sets whose underlying simplicial set is obtained by keeping only those \(n\)-simplices whose 2-dimensional faces are thin, and whose stratification is obtained by restriction. Therefore, a lift exists by the 2-out-of-6 property of equivalences in an \(\infty\)-category, which, in turn, follows from the analogous property for isomorphisms in the associated homotopy category.	
	For \(\Del^3_{\eq} \ast \Del^0 \to \thr(\Del^3) \ast \Del^0\), consider the following diagram of stratified sets (omitting the functor \(\iota\) for sake of simplicity):
	\[	\begin{tikzcd}
	(\Delta^4,T) \ar[r] \ar[d] & (\Delta^4,T') \ar[d]\ar[ddr, dotted, bend left=20]& \\
	\Delta^3_{\eq} \ast \Delta^0 \ar[drr,"f", bend right=15] \ar[r] & \thr(\Delta^3) \ast \Delta^0 & \\
	& & \widetilde{X}
	\end{tikzcd}\]
	where
	\[T := \{\Delta^{\{0,2,4\}}, \ \Delta^{\{1,2,3\}}, \ \Delta^{\{0,1,3\}}, \ \Delta^{\{1,3,4\}}, \ \Delta^{\{0,1,2\}}\},\]
	 
	\[T' := T\cup\{\Delta^{\{0,3,4\}}, \Delta^{\{0,1,4\}}\}\] and the top horizontal map belongs to the set \ref{item:scaled_anodyne_ii} of Definition \ref{anodyne defi}.
	The precomposition of \(f\) with \((\Delta^4,T)\rightarrow \Delta^3_{\eq}\ast \Delta^0\) admits the dotted extension to \((\Delta^4,T')\),  
	since \((X,T_X)\) is an \(\infty\)-bicategory and \((\Delta^4,T)\rightarrow (\Delta^4,T')\) is a generating anodyne map for the model structure for \(\infty\)-bicategories. We hence obtain an extension of \(f\) to a map \(g\colon W \to \widetilde{X}\), where \(W := (\Del^4,T')\plus{(\Del^4,T)}\Delta^3_{\eq} \ast \Delta^0\). Since \(\widetilde{X}\) admits extensions against \(\Del^3_{\eq} \subseteq \thr(\Del^3)\) as we saw above we may further extend \(g\) to a map \(g'\colon W' \to \widetilde{X}\), where \(W' := W \coprod_{\Del^3_{\eq}} \thr(\Del^3)\).
	
	We now claim that the map \(g'\) extends to all of \(\thr(\Del^3) \ast \Del^0\). To see this, note that \(W'\) and \(\thr(\Del^3) \ast \Del^0\) have the same underlying simplicial set \(\Del^4\) and the same marked edges, and that all the triangles contained in \(\Del^{\{0,1,2,3\}}\) are marked in \(W'\). Out of the six triangles in \(\Del^4\) which contain the vertex \(4\) we have that exactly four are marked in \(W'\): \(\Del^{\{0,2,4\}},\Del^{\{1,3,4\}},\Del^{\{0,1,4\}}\) and \(\Del^{\{0,3,4\}}\), whereas in \(\thr(\Del^3) \ast \Del^0\) all six are marked.
	Now since the marked edges in \(\widetilde{X}\) are exactly those which are equivalences in the \(\infty\)-bicategory \((X,T_X)\) we get that \(g'\) sends the edges \(\Del^{\{0,1\}}\) and \(\Del^{\{1,2\}}\) to equivalences. Applying Proposition~\ref{p:joyal-2} to the \(3\)-simplex \(g'|_{\Del^{\{0,1,2,4\}}}\) we may now conclude that \(g'(\Del^{\{1,2,4\}})\) is in \(T_X\), 
	and the same proposition applied to the \(3\)-simplex \(g'|_{\Del^{\{1,2,3,4\}}}\)
	shows that \(g'(\Del^{\{2,3,4\}})\) is in \(T_X\). Since the markings of \(W'\) and \(\thr(\Del^3) \ast \Del^0\) coincide in dimension greater than 2, we conclude that \(g'\) extends to \(\thr(\Del^3) \ast \Del^0\), as desired.
\end{proof}

\begin{prop}
	\label{Quillen adj}
	The adjunction
	\[ \xymatrixcolsep{1pc}
	\vcenter{\hbox{\xymatrix{
				**[l]\Ss \xtwocell[r]{}_{U}^{\iota
				}{'\perp}& **[r] \St_{2}}}}\] is a Quillen adjunction between the model structure for \(\infty\)-bicategories and that of 2-trivial saturated complicial sets.
\end{prop}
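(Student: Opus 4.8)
The plan is this. By Lemma~\ref{basic obs} the functor $\iota$ already preserves cofibrations (which on both sides are the monomorphisms), so it remains to see that $\iota$ preserves trivial cofibrations. Since every object of $\Ss$ is cofibrant and the bicategorical model structure on $\Ss$ coincides, by \S\ref{sec:cis}, with the Cisinski model structure attached to the pair $(\bS,J_\sharp)$, its trivial cofibrations form the weakly saturated class generated by an \emph{explicit} set of maps: the elements of $\bS$ together with the cylinder-type generators produced by the Cisinski--Olschok construction from $J_\sharp$ and the generating cofibrations $\{\partial\Del^n_\flat\hookrightarrow\Del^n_\flat\}_{n\geq0}\cup\{\Del^2_\flat\hookrightarrow\Del^2_\sharp\}$ of $\Ss$ (see the appendix, Notation~\ref{nota:anodyne} and Theorem~\ref{thm:cisinski_model}). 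As $\iota$ is cocontinuous and preserves monomorphisms, and the trivial cofibrations of $\St_2$ are weakly saturated, it suffices to check that $\iota$ sends each of these generators to a trivial cofibration of $\St_2$.

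For the cylinder generators I would use Lemma~\ref{fib repl}. The scaled simplicial set $J_\sharp$ is an $\infty$-bicategory, since $J_\sharp\to\Del^0$ is a trivial fibration (Proposition~\ref{interval J}), and every edge of $J$ is an equivalence; hence Lemma~\ref{fib repl} identifies the fibrant replacement of $\iota(J_\sharp)$ with $\thr(J)$ and shows that $\iota(J_\sharp)\to\thr(J)$ is a trivial cofibration in $\St_2$. Since $J=\mathrm{Cosk}_0(\{0,1\})$, the same $0$-coskeletal argument as in Proposition~\ref{interval J} shows that $\thr(J)\to\Del^0$ is a trivial fibration of stratified sets, so $\iota(J_\sharp)$ is weakly contractible and, by two-out-of-three, each endpoint inclusion $\{e\}\hookrightarrow\iota(J_\sharp)$ is a trivial cofibration. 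As $\St_2$ is cartesian closed (Remark~\ref{r:closure-nat}), the pushout-product of $\{e\}\hookrightarrow\iota(J_\sharp)$ with $\iota$ of any generating cofibration of $\Ss$ is a trivial cofibration; by the ``pushout-products up to homotopy'' clause of Lemma~\ref{basic obs} this differs by an anodyne extension from $\iota$ of the corresponding cylinder generator, which is therefore a trivial cofibration by two-out-of-three.

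There remain the three families of $\bS$ from Definition~\ref{anodyne defi}. The elementary fact to exploit is that, in $\St_2$, freely marking all simplices of dimension $\geq 3$ of a stratified set is an anodyne operation (a transfinite composite of pushouts of $2$-trivializing morphisms). For the scaled inner horn inclusions one then argues: after applying $\iota$ and composing with the operation of marking all simplices of dimension $\geq 3$ of $\Del^n$, the resulting map also factors as the operation of marking all simplices of dimension $\geq 3$ of the horn (anodyne), followed by the complicial inner horn inclusion $(\Lam^n_i,M_i|_{\Lam^n_i})\to(\Del^n,M_i)$ of Definition~\ref{d:complicial-anodyne}, followed by a further round of marking (anodyne); two-out-of-three then forces $\iota$ of the scaled inner horn to be a trivial cofibration. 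For the type-$(2)$ generator and the type-$(3)$ ``special outer horn'' generators one proceeds similarly, now also pushing out along the degeneracy $\Del^{\{0,1\}}\to\Del^0$ in the type-$(3)$ case, exhibiting $\iota$ of the generator — again up to freely marking high-dimensional simplices — as a cell complex built from complicial inner and outer horn inclusions, thinness extensions of dimension $\le3$, and saturation morphisms, all of which are anodyne in $\St_2$.

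The main obstacle is exactly this last step: the explicit combinatorial bookkeeping for the type-$(2)$ and type-$(3)$ generators, \ie choosing the order in which to adjoin the relevant thin $2$-simplices (and marked edges) so that every intermediate stratified set admits the complicial horn or thinness extension needed at the next stage, and verifying that the resulting cell complex reproduces $\iota$ of the prescribed generating anodyne map. The type-$(1)$ case and the cylinder case are comparatively routine once one has the Cisinski description of the generating trivial cofibrations of $\Ss$ in hand and once one unwinds Lemma~\ref{fib repl} for $J_\sharp$.
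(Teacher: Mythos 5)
Your reduction to generators, your treatment of the cylinder generators via Lemma~\ref{fib repl}, and your treatment of the scaled inner horns all match the paper's proof in substance. The problem is the two remaining families, which you defer as ``combinatorial bookkeeping'': this is precisely where the real content of the proposition lies, and your proposed strategy for them is not just incomplete but very likely unworkable.

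For the generator \((\Del^4,T)\to(\Del^4,T')\) of type (2), the stratified set \(\iota(\Del^4,T)\) (even after freely marking everything in dimension \(\geq 3\)) has \emph{no} non-degenerate marked edges, so no saturation morphism \(\Del^3_{\eq}\ast\Del^n\to\thr(\Del^3)\ast\Del^n\) and no outer thinness extension can be attached to it as a pushout; and a direct check shows that the inner thinness extensions in dimension \(3\) only let you mark \(\Del^{\{0,2,3\}}\) (from the face \(\Del^{\{0,1,2,3\}}\), which has three of its four triangles in \(T\)), never \(\Del^{\{0,1,4\}}\) or \(\Del^{\{0,3,4\}}\) --- every other \(3\)-face of \(\Del^4\) has only two marked triangles. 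So there is no evident cell complex of complicial anodyne maps presenting this generator, and the paper does not produce one. Instead it proves Lemma~\ref{l:saturation-anodyne} by verifying the \emph{lifting property against fibrant objects directly}: it passes to the outer cartesian fibration \(\ovl{X'}_{/y}\to X'\) of Corollary~\ref{c:slice}, uses Proposition~\ref{p:natural-lift} to replace the relevant \(3\)-simplex of the slice by a pointwise-marked cartesian translate landing in a single fiber, invokes the saturation of the fibrant stratified set against \(\Del^3_{\eq}\ast\Del^0\to\thr(\Del^3)\ast\Del^0\) there, and transports the conclusion back along the closure property of Remark~\ref{r:closure}. Similarly, for the type-(3) generators with \(n\geq 3\) the paper proves Lemma~\ref{l:outer-anodyne} by checking the left lifting property against fibrations between fibrant objects, using the moving lemma (\((\sig\dashv\rho)\)-transformations) to deform the given horn to one where a genuine complicial outer horn filling applies --- again not a cell-complex presentation. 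These two lemmas, together with the fibration machinery of \S\ref{sec:fibrations} and \S\ref{sec:moving} that they rely on, are the heart of the proof and are absent from your proposal.
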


We start with the following simple fact.
\begin{lem}
\label{l:inner anod}
  The functor \(\iota\) sends maps in the set \ref{item:scaled_anodyne_i} to complicial trivial cofibrations.
\end{lem}
\begin{proof}
  Given \(0<i<n\), consider the following commutative square, where the horizontal maps are natural inclusions and the right-hand side vertical map is the complicial (inner) horn that features in Definition \ref{d:complicial-anodyne}:
  \[\begin{tikzcd}
   \iota(\Lam^n_i,\Delta^{\{i-1,i,i+1\}}) \ar[d] \ar[r] & (\Lam^n_i,M_i|_{\Lam^n_i}) \ar[d] \\
    \iota(\Del^n_i,\Delta^{\{i-1,i,i+1\}}) \ar[r] & (\Del^n,M_i)
  \end{tikzcd}\]
The horizontal maps are 2-trivializing morphisms in the sense of Definition \ref{d:complicial-anodyne}, so they are complicial trivial cofibrations. Hence, we conclude the proof thanks to the two-out-of-three property of weak equivalences.
\end{proof}

The next two lemmas are the main ingredients in the proof of Proposition \ref{Quillen adj}.

\begin{lem}\label{l:outer-anodyne}
For \(n \geq 3\) the map \[i_n\colon \bigl(\Lam^n_0,\{\Del^{\{0,1\}},\Del^{\{0,1,n\}}\}\bigr) \to \bigl(\Del^n, \{\Del^{\{0,1\}},\Del^{\{0,1,n\}}\}\bigr)\] is a trivial cofibration in \(\St_2\).
\end{lem}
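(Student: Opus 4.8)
The map $i_n$ is evidently a monomorphism, hence a cofibration in $\St_2$; the content of the lemma is that it is also a weak equivalence. Since $\St_2$ is cartesian closed (Remark~\ref{r:closure-nat}) and every stratified set is cofibrant, a cofibration is a weak equivalence as soon as it, together with its pushout-products with the generating monomorphisms $\partial\Del^m\hookrightarrow\Del^m$, has the left lifting property against every $2$-trivial saturated complicial set $Z$. Filtering the relevant products of simplices exactly as in Construction~\ref{cn:filtration} and the proof of Proposition~\ref{p:step-2}, all of these lifting problems reduce to the single assertion that, for every such $Z$ and every $N\geq 2$, a map $(\Lam^N_0,\{\Del^{\{0,1\}},\Del^{\{0,1,N\}}\})\to Z$ extends over $(\Del^N,\{\Del^{\{0,1\}},\Del^{\{0,1,N\}}\})$. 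As the codomain of $i_n$ carries exactly the same marking as its domain, this is only a matter of extending the underlying simplicial map; the sole stratified input is that $\Del^{\{0,1\}}$ is marked in $Z$ and hence, $Z$ being saturated, is sent to an \emph{equivalence} of $Z$, while the triangle $\Del^{\{0,1,N\}}$ is simply carried along. We are thus reduced to a special outer horn property for fibrant objects of $\St_2$.

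To prove this property I would use the slice construction of \S\ref{sec:fibrations}, in the spirit of the proof of Lemma~\ref{l:joyal}. Writing $e=f|_{\Del^{\{0,1\}}}$ and $g=f|_{\Del^{\{2,\dots,N\}}}$, one forms the (stratified) slice of $Z$ over $g$ and, via the analogues of Corollaries~\ref{c:slice} and~\ref{c:slice-3}, identifies the comparison map between the relevant slices --- after passing to its marked core --- with an outer cartesian fibration over $Z$; by Remark~\ref{r:iso} such a fibration is an isofibration, so the extension exists precisely because $e$ is invertible. Concretely, one checks that the underlying scaled simplicial set $UZ$ of $Z$ is a weak $\infty$-bicategory whose equivalences and thin triangles are the marked edges and marked triangles of $Z$, and then runs the argument of Lemma~\ref{l:joyal} (with target $\Del^0$, i.e.\ the lifting problem~\eqref{e:weak-problem}) inside $UZ$. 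Some care is required here because the source horn carries only its initial edge as a marked datum: one first replaces the slices by their marked cores, applies Lemma~\ref{l:joyal} there, and only then reads off the extension of the original horn.

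The main obstacle is precisely this special outer horn statement. Unlike the case of quasicategories, it cannot be obtained merely by invoking Joyal's special outer horn theorem for an underlying $\infty$-category, since the simplicial set underlying a complicial set need not admit all inner fillers; one has to genuinely use the marked $2$-simplices of $Z$ and the saturation axiom, which is exactly what the slice construction of \S\ref{sec:fibrations} packages. Everything else --- the reduction to fibrant targets, the filtrations producing the relative (pushout-product) versions, and the observation that $\Del^{\{0,1,N\}}$ remains inert throughout --- is routine and parallels material already developed in \S\ref{sec:fibrations} and \S\ref{sec:weak-is-strong}.
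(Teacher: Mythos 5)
Your reduction to an absolute outer‑horn filling property for fibrant objects of \(\St_2\) is reasonable, and you correctly identify that property as the crux. The problem is with how you propose to prove it. First, running the argument of Lemma~\ref{l:joyal} inside the underlying scaled simplicial set \(UZ\) presupposes that \(UZ\) admits extensions along the scaled anodyne maps of types (1) and (3) of Definition~\ref{anodyne defi} (this is the hypothesis of Corollary~\ref{c:slice}, on which Lemma~\ref{l:joyal} rests). But in the paper the type~(3) extension property of \(UZ\), for \(Z\) fibrant, is \emph{deduced from} Lemma~\ref{l:outer-anodyne} (see the first line of the proof of Lemma~\ref{l:saturation-anodyne} and the third bullet in the proof of Proposition~\ref{Quillen adj}, where the type~(3) generators for \(n\geq 3\) are exhibited as pushouts of \(i_n\)); I see no independent way to get it, so your argument is circular. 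Second, even granting that \(UZ\) is a weak \(\infty\)-bicategory, Lemma~\ref{l:joyal} does not apply to the data at hand: its hypothesis requires the horn to send \emph{every} triangle of \(\Lam^n_0\) containing \(\Del^{\{0,1\}}\) to a thin triangle (this is what makes the corresponding edge of the slice \(\ovl{\C}_{/f_0}\) marked, hence cartesian, hence an equivalence), whereas here only \(\Del^{\{0,1,n\}}\) is marked and the triangles \(\Del^{\{0,1,j\}}\) for \(2\leq j\leq n-1\) are unconstrained — a \(2\)-cell \(h\Rightarrow g\circ e\) with \(e\) invertible need not be invertible. Passing to marked cores, as you suggest, does not fix this: the relevant edge of the slice simply fails to lie in the marked core.

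The paper's proof sidesteps both issues with the moving lemma (Lemma~\ref{l:moving-lemma}), which needs only \emph{inner} horn fillers in \(UZ\) (these are genuinely available, being inner complicial horns up to marking in dimension \(\geq 3\)). Taking \(\rho\colon\Del^n\to\Del^2\) collapsing \(\{1,\dots,n-1\}\), one deforms the given horn through a \((\sig\dashv\rho)\)-transformation to a horn whose restriction to \(\Del^{\{0,\dots,n-1\}}\) degenerates onto \(\Del^{\{0,1\}}\); at that point all faces containing \(\Del^{\{0,1\}}\) are marked and the filler exists because the resulting map is an honest outer complicial horn inclusion in the sense of Definition~\ref{d:complicial-anodyne}. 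The filler is then transported back along the transformation using the filtration of Remark~\ref{ob:filtration} and Remark~\ref{r:closure-nat}. If you want to keep a slice-based strategy you would first have to prove, independently, a version of the special outer horn lemma whose only scaling input is \(\Del^{\{0,1,n\}}\); as it stands, that is exactly the missing step.
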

\begin{proof}
It suffices to show that \(i_n\) has the left lifting property with respect to all fibrations between fibrant objects \(p\colon (X,M_X) \to (Y,M_Y)\). We therefore consider a lifting problem of the form
\begin{equation}\label{e:lift}
\begin{tikzcd}
(\Lam^n_0,\{\Del^{\{0,1\}},\Del^{\{0,1,n\}}\}) \ar[r, "f"] \ar[d] & (X,M_X) \ar[d, "p"] \\
(\Del^n, \{\Del^{\{0,1\}},\Del^{\{0,1,n\}}\}) \ar[r, "f"] \ar[ur, dotted] & (Y,M_Y)
\end{tikzcd}
\end{equation}
Since \(X\) is fibrant its underlying scaled simplicial set \((X,M_X \cap X_2)\) admits fillers for scaled inner horns by Lemma~\ref{l:inner anod}.
If we let \(\rho\colon\Del^n \to \Del^2\) be the surjective map defined by \(\rho^{-1}(0)=\{0\}, \rho^{-1}(1)=\{1,\dots,n-1\}\) and \(\rho^{-1}(2)=\{n\}\), and we define \(\sig\colon \Del^2 \to \Del^n\) to be the associated minimal section, we can apply Lemma~\ref{l:moving-lemma} to the inclusion \(A=\emptyset \hookrightarrow \Lam^n_0 = B\) (which consists of \((\sig \dashv \rho)\)-admissible simplicial sets, see Examples~\ref{ex:admissible}). Hence, we find a \((\sig \dashv \rho)\)-transformation \(h\colon \Del^1 \times \Lam^n_0 \to X\) with respect to the scaling \(M_X \cap X_2\), which restricts to \(f\) on \(\{1\}\times\Lam^n_0\). 

Let \(M\) be the set of simplices of \(\Del^1 \times \Lam^n_0\) containing the degenerate simplices and the simplices of the following form:
\begin{itemize}
\item
the simplices \(\Del^{\{1\}} \times \Del^{\{0,1\}}\) and \(\Del^{\{1\}} \times \Del^{\{0,1,n\}}\);
\item
the edges of the form \(\Del^1 \times \{i\}\) for \(i = 0,1,n\); 
\item
the triangles of the form \(\Del^{\{(0,i),(1,i),(1,j)\}}\) for every \(i,j \in [n]\);
\item
the triangles of the form \(\Del^{\{(0,i),(0,j),(1,j)\}}\) whenever \(j=0,1,n\) (\ie~\(j\) belongs to the image of \(\sigma\)) or \(j=i+1\) and \(i,j \in \{1,\dots,n-1\}\) (this second case is equivalent to having \(j = i+1\) and \(\rho(i) = \rho(j)\));
\item
the triangles \(\Del^{\{0\}} \times \Del^{\{0,1,i\}}\) for \(i \in \{2,\dots,n-1\}\);
\item
all the simplices of dimension \(\geq 3\);
\end{itemize}
By the definition of \((\sig \dashv \rho)\)-transformation, Remark \ref{rem:trans_deg_edges} and the comments in Notation \ref{n:L-A}, since \((X,M_X)\) is \(2\)-trivial we have that \(h\) extends to a map of stratified sets \((\Del^1 \times \Lam^n_0,M) \to (X,M_X)\). In particular, the second to last subset of \(M\) from the previous list is sent to thin triangles since, by point (2) in Definition~\ref{d:transformation} and by downward induction on \(i\) (see Remark \ref{r:degenerate-moving}), we have that the whole simplex \(h(\Delta^{\{0\}}\times \Delta^{\{0,1,\ldots n-1\}})\) degenerates to \(h(\Delta^{\{0\}}\times \Delta^{\{0,1\}})\). Let \(M'\) be the union of \(M\) with the simplices \(\Del^{\{0\}} \times \Del^{\{0,1\}}\) and \(\Del^{\{0\}} \times \Del^{\{0,1,n\}}\). By Remark~\ref{r:closure-nat} applied to the restrictions of \(h\) to \(\Del^1 \times\Del^{\{0,1\}}\) and \(\Del^1 \times\Del^{\{0,1,n\}}\), we see that \(h\) extends to a map of stratified sets \(h \colon (\Del^1 \times \Lam^n_0,M') \to (X,M_X)\).

Now consider the composed map \(ph\colon (\Del^1 \times \Lam^n_0,M') \to (Y,M_Y)\). Since the scaled simplicial set \((Y,M_Y \cap Y_2)\) also admits fillers for scaled inner horns by Lemma~\ref{l:inner anod} we may apply Lemma~\ref{l:moving-lemma} again, this time to the inclusion \(A=\Lam^n_0 \hookrightarrow \Del^n = B\). In this way, we can extend the \((\sig \dashv \rho)\)-transformation \(ph\) and the initial datum of \(g\) to a \((\sig \dashv \rho)\)-transformation \(H\colon (\Del^1 \times \Del^n,M') \to (Y,M_Y)\). 
Restricting \(h\) and \(H\) to \(\Del^{\{0\}} \subseteq \Del^1\), we now obtain a modified lifting problem of the form
\begin{equation}\label{e:modified-lift}
\xymatrix{
(\Lam^n_0,M'_0|_{\Lam^n_0}) \ar[r]^-{f'}\ar[d] & (X,M_X) \ar[d]^{p} \\
(\Del^n, M'_0) \ar[r]^-{g'}\ar@{.>}[ur]^{h'} & (Y,M_Y) \\
}
\end{equation}
where \(M'_0 := M'|_{\Del^{\{0\}} \times \Del^n}\) coincides with the set of all faces of \(\Del^n\) which contain the edge \(\Del^{\{0,1\}}\). Since the left vertical map in~\eqref{e:modified-lift} is an anodyne extension and the right vertical map is a fibration the dotted lift \(h'\) exists. The maps \(h',h\) and \(H\) now determine a commutative diagram of the form
\begin{equation}\label{e:modified-lift-2}
\xymatrix{
(\Del^{\{0\}} \times \Del^n,M'_0) \displaystyle\mathop{\coprod}_{\Del^{\{0\}} \times \Lam^n_0} (\Del^1 \times \Lam^n_0,M') \ar[r]\ar[d] & (X,M_X) \ar[d]^{p} \\
(\Del^1 \times \Del^n,M') \ar[r]^-{H}\ar@{.>}[ur] & (Y,M_Y) \\
}
\end{equation}
We now claim that the left vertical map in~\eqref{e:modified-lift-2} is an anodyne extension, and hence the lifting problem has a solution. To see this, first we note that \(\Del^n\) contains two non-degenerate faces that are not in \(\Lam^n_0\), the simplex \(\Del^{\{0,\dots,n\}}\) and the simplex \(\Del^{\{1,\dots,n\}}\). Furthermore, by the choice of \(\sig \dashv \rho\) the restriction of \(H\colon \Del^1 \times \Del^n \to Y\) to \(\Del^1 \times \Del^{\{1,\dots,n\}}\) is a \((\sig' \dashv \rho')\)-transformation, where \(\rho' \colon \Del^{\{1,\dots,n\}} \to \Del^{\{1,2\}}\) and \(\sig'\colon \Del^{\{1,2\}} \to \Del^{\{1,\dots,n\}}\) are the restrictions of \(\rho\) and \(\sig\), respectively. We can then factor the left vertical map as a composition of two maps, each of which is a pushout of a map of the form
\begin{equation}\label{e:step}
\big((\Del^{\{0\}} \times \Del^m\displaystyle\mathop{\coprod}_{\Del^{\{0\}} \times \partial \Del^m} \Del^1 \times \partial \Del^m,M''_0\big) \to (\Del^1 \times \Del^m,M'') 
\end{equation}
with \(m=n-1,n\), where the marking \(M''\) includes the edge \(\Del^1 \times \{0\}\), all the triangles of the form \(\Del^{\{(0,i),(1,i),(1,j)\}}\) and all the triangles of the form \(\Del^{\{(0,i),(0,i+1),(1,i+1)\}}\), and \(M''_0\) is the subset of those simplices which are contained in the domain of~\eqref{e:step}. This marking allows us to use the filtration of Remark~\ref{ob:filtration} (or, more precisely, its straightforward adaptation to stratified sets) and we can consequently factor~\eqref{e:step} into a sequence of (complicial) anodyne extensions. The dotted lift in~\eqref{e:modified-lift-2} then yields a lift in~\eqref{e:lift}, by restricting to \(\{1\}\times \Del^n\) as desired.
\end{proof}

\begin{lem}\label{l:saturation-anodyne}
Let \(j\colon(\Delta^4,T)\rightarrow (\Delta^4,T')\) be the generating scaled anodyne appearing second in the list in Definition~\ref{anodyne defi}. Then \(j\) is sent by \(\iota\colon \Ss \rightarrow \St_{2}\) to a trivial cofibration in \(\St_2\).
\end{lem}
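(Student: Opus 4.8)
The plan is to exhibit, for both $S=T$ and $S=T'$, an anodyne extension in $\St_2$ from $\iota(\Del^4,S)$ onto the common stratified set $\thr(\Del^4)$, and then to conclude by two-out-of-three. Granting that $\iota(\Del^4,T)\to\thr(\Del^4)$ and $\iota(\Del^4,T')\to\thr(\Del^4)$ are anodyne extensions, hence weak equivalences, the map $\iota(j)$ fits into the commutative triangle $\iota(\Del^4,T)\xrightarrow{\iota(j)}\iota(\Del^4,T')\to\thr(\Del^4)$ whose other two edges are weak equivalences, so $\iota(j)$ is one as well; being a monomorphism by Lemma~\ref{basic obs}, it is then a trivial cofibration. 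Throughout I would use freely that anodyne extensions in $\St_2$ --- the members of the weakly saturated class generated by complicial horns, thinness extensions, $2$-trivializing morphisms and saturation morphisms --- are stable under pushout and transfinite composition.

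Each anodyne extension is assembled by marking more and more simplices of $\Del^4$. First, pushing out the $2$-trivializing morphisms $(\Del^3,\emptyset)\to(\Del^3,\{\Del^3\})$ along the five tetrahedral faces of $\Del^4$, and $(\Del^4,\emptyset)\to(\Del^4,\{\Del^4\})$ along the identity, produces an anodyne extension $\iota(\Del^4,S)\to\thb_2\iota(\Del^4,S)$, whose target has underlying simplicial set $\Del^4$, thin triangles $S$, no marked edges, and every simplex of dimension $\geq 3$ marked. From $\thb_2\iota(\Del^4,T)$ I would then apply in succession: (i) the dimension-$3$ thinness extension at the vertex $1$ along the face $\Del^{\{0,1,2,3\}}$, legitimate because $\Del^{\{0,1,2\}},\Del^{\{0,1,3\}},\Del^{\{1,2,3\}}\in T$, which adds $\Del^{\{0,2,3\}}$ and all six edges of $\Del^{\{0,1,2,3\}}$ to the marking; (ii) the saturation morphism $\Del^3_{\eq}\ast\Del^0\to\thr(\Del^3)\ast\Del^0$ pushed out along the identity of $\Del^4$, which is legitimate because after step (i) the thin triangles are exactly $\{\Del^{\{0,1,2\}},\Del^{\{0,1,3\}},\Del^{\{0,2,3\}},\Del^{\{1,2,3\}},\Del^{\{0,2,4\}},\Del^{\{1,3,4\}}\}$ and the marked edges include $\Del^{\{0,2\}}$ and $\Del^{\{1,3\}}$, which is precisely the marking the join formula assigns to $\Del^3_{\eq}\ast\Del^0$, and which marks the four remaining triangles $\Del^{\{0,1,4\}},\Del^{\{0,3,4\}},\Del^{\{1,2,4\}},\Del^{\{2,3,4\}}$; (iii) now that all ten triangles of $\Del^4$ are thin, the dimension-$3$ thinness extension at the vertex $1$ along $\Del^{\{0,1,2,4\}}$ and then along $\Del^{\{0,1,3,4\}}$, which marks the remaining edges $\Del^{\{0,4\}},\Del^{\{1,4\}},\Del^{\{2,4\}},\Del^{\{3,4\}}$. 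The result has every simplex of dimension $\geq 1$ marked, i.e.\ it is $\thr(\Del^4)$. For $\iota(\Del^4,T')$ the analogous chain uses only thinness extensions: from $\thb_2\iota(\Del^4,T')$ one applies the dimension-$3$ thinness extension at the vertex $1$ along $\Del^{\{0,1,2,3\}}$, then at the vertex $0$ along $\Del^{\{0,1,2,4\}}$, then at the vertex $0$ along $\Del^{\{0,2,3,4\}}$, checking at each stage that the triangles and the edge prescribed by $M_{(3,i)}$ are already marked; the outcome is again $\thr(\Del^4)$.

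The argument is essentially combinatorial bookkeeping, and the only delicate point is ordering the pushouts so that the input of each thinness extension or saturation morphism is already available; the observation that makes this go through is that a dimension-$3$ thinness extension marks not merely one triangle but all six edges of its tetrahedron, so that once the ten triangles of $\Del^4$ have been made thin one can mop up every remaining edge. A more conceptual but heavier alternative would be to verify directly that $\iota(j)$ has the left lifting property against every fibration between $2$-trivial saturated complicial sets --- which, as $\iota(j)$ is the identity on underlying simplicial sets, reduces to an internal statement about such fibrations --- but the route above stays entirely within formal manipulations of anodyne extensions.
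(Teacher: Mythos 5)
Your strategy cannot work, because the object you factor through is too strong: the map $\iota(\Del^4,T')\to\thr(\Del^4)$ is \emph{not} a weak equivalence in $\St_2$. Marking an edge in a $2$-trivial saturated complicial set asserts that it is an equivalence, and the saturation generator $j$ says nothing about the edges of $\Del^4$ being invertible. Concretely, take $Z$ to be the nerve of the poset $[4]$ with all simplices of dimension $\geq 2$ marked and only degenerate edges marked; this is a fibrant object of $\St_2$, its nondegenerate $4$-simplex defines a map $\iota(\Del^4,T')\to Z$, and this map does not extend over $\thr(\Del^4)$ since no nonidentity edge of $Z$ is marked. Hence $[\thr(\Del^4),Z]\to[\iota(\Del^4,T'),Z]$ is not surjective and your second claimed anodyne extension does not exist, so the two-out-of-three argument collapses. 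The specific combinatorial step that lets you believe otherwise is false: by Definition~\ref{d:complicial-anodyne} a dimension-$3$ thinness extension $(\Del^3,M_i')\to(\Del^3,M_i'')$ adds only the missing codimension-one \emph{faces}; it marks no edges whatsoever. (If you were reading the remark following the existence theorem for $\St_n$ literally, note that its stated target $\thr(\Del^3)$ should be understood as marking only the $2$-faces.) This also kills your step (ii): after step (i) the edges $\Del^{\{0,2\}}$ and $\Del^{\{1,3\}}$ are not marked, so there is no map from $\Del^3_{\eq}\ast\Del^0$ into the stratified set you have built, and the saturation morphism cannot be pushed out. No reordering of pushouts can repair this, since the conclusion $\iota(\Del^4,T')\simeq\thr(\Del^4)$ is simply false.

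The route you dismiss at the end as a ``heavier alternative'' is in fact the only viable one, and it is what the paper does: since $\iota(j)$ is an isomorphism on underlying simplicial sets, it suffices to check the right lifting property for fibrant objects $X$, i.e.\ to show that any $\sigma\colon(\Del^4,T)\to U(X)$ already sends $\Del^{\{0,1,4\}}$ and $\Del^{\{0,3,4\}}$ to thin triangles. The obstruction identified above --- that the edges of $\sigma$ need not be equivalences --- is overcome by viewing $\sigma$ as a $3$-simplex $\tau$ in the slice $\ovl{U(X)}_{/\sigma(4)}$ (an outer cartesian fibration over $U(X)$ by Lemma~\ref{l:outer-anodyne} and Corollary~\ref{c:slice}), lifting a contraction to a pointwise cartesian, pointwise marked natural transformation from some $\tau'$ to $\tau$ where $\tau'$ lands in the fibre over $\sigma(0)$ and hence has \emph{degenerate} edges over $\Del^{\{0,1,2,3\}}$. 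For $\tau'$ the corresponding $4$-simplex genuinely defines a map of stratified sets $\Del^3_{\eq}\ast\Del^0\to X$, the complicial saturation axiom applies, and the conclusion is transported back to $\tau$ using the closure property of marked edges in the slice (Remark~\ref{r:closure}). That transport step is the essential content missing from any purely combinatorial bookkeeping argument.
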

\begin{proof}
It is enough to show that every fibration between fibrant objects satisfies the right lifting property against \(\iota(j)\). In fact, it is enough to check the lifting property against fibrant objects, since the underlying simplicial map of \(j\) is an isomorphism. Let us hence suppose that \(X\) is a fibrant stratified set with underlying scaled simplicial set \(X' := U(X)\) and let \(\sig\colon(\Del^4,T) \to X'\) be a map. Set \(x := \sig(0)\) and \(y := \sig(4)\). By Lemmas~\ref{l:inner anod} and~\ref{l:outer-anodyne}
we have that \(X'\) satisfies the extension property with respect to maps of type~\ref{item:scaled_anodyne_i} and~\ref{item:scaled_anodyne_iii} in Definition~\ref{anodyne defi}. It then follows from Corollary~\ref{c:slice} that the map of scaled simplicial sets \(p\colon \ovl{X'}_{/y} \to X'\) is an outer cartesian fibration and every edge which is marked in \(X'_{/y}\) is \(p\)-cartesian. Since \(\sig\) sends the triangles \(\Delta^{\{1,2,3\}},\Delta^{\{0,1,3\}}\) and \(\Delta^{\{0,1,2\}}\) to thin triangles and \(X\) admits fillers for thinness extensions it follows that \(\sig\) must send \(\Del^{\{0,2,3\}}\) to a thin triangle as well. We may therefore consider the \(4\)-simplex \(\sig\) as encoding a \(3\)-simplex \(\tau\colon\Del^3_{\sharp} \to \ovl{X'}_{/y}\) such that \(p\tau\colon\Del^3_{\sharp} \to X'\) coincides with \(\sig|_{\Del^{\{0,1,2,3\}}}\). Since \(\sig\) sends the triangles \(\Del^{\{0,2,4\}}\) and \(\Del^{\{1,3,4\}}\) to thin triangles in \(X'\) it follows that \(\tau\) sends the edges \(\Del^{\{0,2\}}\) and \(\Del^{\{1,3\}}\) to edges which are marked in \(X'_{/y}\). 

Let \(H= p\tau H'\colon \Del^1_{\flat} \times \Del^3_{\sharp} \to X'\) be the natural transformation from the constant map on \(x\) to \(p\tau\) induced by the unique natural transformation \(H'\colon\Del^1_{\flat} \times \Del^3_{\sharp} \to \Del^3_{\sharp}\) from the constant map on \(0\) to the identity. Now by Corollary~\ref{c:slice} the map $\ovl{X'}_{/y} \to X'$ is an outer fibration (and in particular thin triangle detecting) such that every edge in \(X'\) admits a marked \(p\)-cartesian lift in $\ovl{X'}_{/y}$. Applying Proposition~\ref{p:natural-lift} with respect to the inclusion \(A  = \emptyset \hookrightarrow \Del^3_{\sharp} = B\) we may consequently lift \(H\) to a natural transformation \(\wtl{H}\colon\Del^1_{\flat} \times \Del^3_{\sharp} \to \ovl{X'}_{/y}\) from some \(\tau'\) to \(\tau\), and such that $\wtl{H}(\Del^1_{\flat} \times \{i\})$ is a marked $p$-cartesian lift of $H(\Del^1_{\flat} \times \{i\})$ for every $i=0,...,3$. 
By Remark~\ref{r:closure} we have that the collection of marked edges in \(\ovl{X'}_{/y}\) satisfies a certain closure property: if \(\sig\colon\Del^2 \to \ovl{X'}_{/y}\) is a thin triangle such that \(\sig|_{\Del^{\{1,2\}}}\) is marked in \(X'_{/y}\) then \(\sig|_{\Del^{\{0,1\}}}\) is marked if and only if \(\sig|_{\Del^{\{0,2\}}}\) is marked. It then follows that \(\tau'\colon\Del^3_{\sharp} \to \ovl{X'}_{/y}\) also sends the edges \(\Del^{\{0,2\}}\) and \(\Del^{\{1,3\}}\) to marked edges in \(X'_{/y}\), since the homotopy \(\widetilde{H}\) provides invertible squares between \(\tau'_{\vert \Del^{\{0,2\}} }\) (respectively \( \tau'_{\vert \Del^{\{1,3\}} }\)) and \(\tau_{\vert \Del^{\{0,2\}} }\) (respectively \( \tau_{\vert \Del^{\{1,3\}} }\)), and the latter are marked by construction. In addition the image of \(\tau'\) lies by construction in the fiber \((\ovl{X}'_{/y})_x\) above \(x\). This means that \(\tau'\) corresponds to a \(4\)-simplex \(\sig'\colon (\Del^4,T) \to X'\) such that \(\sig'|_{\Del^{\{0,1,2,3\}}}\) degenerates to the point \(x\). 
It then follows that \(\sig'\) determines a map of \emph{stratified sets} of the form \(\sig''\colon \Del^3_{\eq} \ast \Del^0 \to X\). Since \(X\) is fibrant \(\sig''\) extends to a map \(\thr(\Del^3) \ast \Del^0 \to X\), which implies in particular that \(\sig'\) sends the triangles \(\Del^{\{0,1,4\}}\) and \(\Del^{\{0,3,4\}}\) to thin triangles in \(X'\), and so \(\tau'\colon\Del^3_{\sharp} \to \ovl{X'}_{/y}\) sends the edges \(\Del^{\{0,1\}}\) and \(\Del^{\{0,3\}}\) to marked edges in \(X'_{/y}\). By the closure property for marked edges invoked above we get that the same holds for \(\tau\). This, in turn, means that \(\sig\colon(\Del^4,T) \to X'\) also sends \(\Del^{\{0,1,4\}}\) and \(\Del^{\{0,3,4\}}\) to thin triangles and hence extends to a map \((\Del^4,T') \to X'\), yielding an extension \(\iota(\Del^3,T') \to X\), as desired.
\end{proof}

\begin{proof}[Proof of Proposition~\ref{Quillen adj}]
	We have already observed that \(\iota\) preserves cofibrations in Lemma \ref{basic obs}, so we are left with proving it preserves trivial cofibrations.
	By Proposition~\ref{prop:left-adj_is_quillen} 
	it is enough to check this for maps which are pushout-products of maps in \(\bS \cup \{i_k\colon \{k\}\hookrightarrow J_{\sharp}\}_{k=0,1}\) and cofibrations.
Better yet, by Proposition~\ref{basic obs} we have that \(\iota\) preserves pushout-products up to homotopy, 
and since \(\St_2\) is cartesian, it will just suffice to show that \(\iota\) sends \(\bS \cup \{i_k\colon \{k\}\hookrightarrow J_{\sharp}\}_{k=0,1}\) to trivial cofibrations. We now verify this claim case by case:
	\begin{itemize}
		\item for the map \((\Lambda^n_i,\{\Delta^{\{i-1,i,i+1\}}\}|_{\Lam^n_i})\rightarrow (\Delta^n,\{\Delta^{\{i-1,i,i+1\}}\})\) with \(0<i<n\), we note that this is precisely the content of Lemma \ref{l:inner anod};
		\item for the map \(j\colon(\Delta^4,T)\rightarrow (\Delta^4,T')\), this follows from Lemma~\ref{l:saturation-anodyne};
		\item for the map \((\Lambda^n_0\plus{\Delta^{\{0,1\}}}\Delta^0,\{\Delta^{\{0,1,n\}}\}|_{\Lam^n_0})\rightarrow (\Delta^n\plus{\Delta^{\{0,1\}}}\Delta^0,\{\Delta^{\{0,1,n\}}\})\) with \(n \geq 1\), when \(n=1,2\) this map becomes, after marking all simplices in dimension \(\geq 3\), a pushout of an outer complicial horn inclusion and is hence a trivial cofibration. When \(n \geq 3\) it is instead a pushout of the map appearing in Lemma~\ref{l:outer-anodyne}, and is hence again a trivial cofibration;
		\item		
		for the maps \(i_k\colon \{k\}\hookrightarrow J_{\sharp}\) with \(k=0,1\), Lemma \ref{fib repl} shows that a fibrant replacement of~\( \iota J_{\sharp}\) is given by the fully marked walking isomorphism \(E\). The desired result now follows from the fact that the inclusion \(\thr(\Del^1) \to E\) of the fully marked segment in \(E\) is complicial anodyne~\cite[Observation~43]{VerityWeakComplicialI}, and the inclusion \(\Del^{\{k\}} \hrar \thr(\Del^1)\) is complicial anodyne for each \(k=0,1\).
\qedhere
	\end{itemize}
\end{proof}

We now are in a position to prove the following main result.

\begin{thm}
	\label{Quillen eq}
	The adjunction 
	\[ \xymatrixcolsep{1pc}
	\vcenter{\hbox{\xymatrix{
				**[l]\Ss \xtwocell[r]{}_{U}^{\iota
				}{'\perp}& **[r] \St_{2}}}}\]
	is a Quillen equivalence.
\end{thm}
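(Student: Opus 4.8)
The plan is to establish that the Quillen adjunction $\iota \dashv U$ of Proposition~\ref{Quillen adj} is a Quillen equivalence by checking the standard criterion: for every $\infty$-bicategory (i.e.~fibrant object) $(X,T_X) \in \Ss$ and every cofibrant object — but all objects of $\St_2$ are cofibrant — the composite
\[ X \longrightarrow U(\tilde X) \]
is a bicategorical equivalence, where $\tilde X := \thb_2(X, T_X \cup eq_X)$ is the explicit fibrant replacement of $\iota(X,T_X)$ furnished by Lemma~\ref{fib repl}. More precisely, since $\iota$ is fully faithful and preserves cofibrations, and since $U$ preserves the underlying simplicial set, it suffices to show two things: first, that $\iota$ reflects weak equivalences on cofibrant objects (this is where full faithfulness of $\iota$, together with the fact that $\iota(A) \to \iota(B)$ being a trivial cofibration forces $A \to B$ to be one, since $\iota$ preserves and detects the generating anodynes up to marking in dimension $> 2$), and second — the substantive half — that for every $\infty$-bicategory $X$ the derived counit $\iota U(\tilde X) \to \tilde X$, or equivalently the map $X \to U(\tilde X)$ on underlying scaled simplicial sets, is a bicategorical equivalence.

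The key observation is that $U(\tilde X)$ and $X$ have the \emph{same underlying simplicial set} and the map $X \to U(\tilde X)$ is the identity on underlying simplicial sets; the only thing that can change is the scaling, since $U$ remembers exactly the $2$-dimensional marking of $\tilde X$, which is $T_X \cup eq_X$ rather than $T_X$. So the content reduces to: \emph{an edge $e$ of an $\infty$-bicategory $X$ that is an equivalence is already detected by a thin triangle built from $e$ and its inverse}, together with the converse — which is precisely the characterization of invertible edges recalled in the Remark following Definition~\ref{d:scaled-fib} (via Joyal's special outer horn theorem applied to $X^{\thi}$). Concretely, I would argue that the inclusion $(X, T_X) \hookrightarrow (X, T_X \cup eq_X) = U(\tilde X)$ is a trivial cofibration in the bicategorical model structure. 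For this one shows that adjoining a thin structure to triangles of the form $\Del^{\{0,1,2\}}$ with $\Del^{\{0,1\}}$ an equivalence and $\Del^{\{1,2\}}, \Del^{\{0,2\}}$ appropriately constrained is a bicategorical anodyne extension: this follows from Remark~\ref{rmk:j's are anod} (the maps $j_i \colon (\Del^3, T_i) \to \Del^3_\sharp$ are anodyne) combined with the saturation maps, exactly as in the proof of Lemma~\ref{l:saturation-anodyne}. Alternatively and more cleanly: both $(X,T_X)$ and $(X, T_X \cup eq_X)$ are weak $\infty$-bicategories — hence $\infty$-bicategories by Theorem~\ref{t:fibrant} — with the same underlying simplicial set and the same mapping $\infty$-categories $\Hom_{\C}(x,y)$ (adding equivalences as thin does not change which $2$-cells are invertible), so the identity map between them is a bicategorical equivalence by Theorem~\ref{coherent nerve eq}, since $\fC^{\sca}$ applied to it is the identity on objects and an equivalence on mapping spaces.

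Having established that $X \to U(\tilde X)$ is a bicategorical equivalence, the derived unit condition follows formally. Indeed, for any cofibrant $(X,T_X)$ with fibrant replacement $\iota(X,T_X) \xrightarrow{\sim} \tilde X$, the derived unit is the composite $(X,T_X) \xrightarrow{\cong} U\iota(X,T_X) \to U(\tilde X)$, where the first map is the unit, which is an isomorphism because $\iota$ is fully faithful (Lemma~\ref{basic obs}), and the second is $U$ applied to the trivial cofibration $\iota(X,T_X) \to \tilde X$; by the paragraph above this composite is a bicategorical equivalence. Conversely, for a fibrant stratified set $(Y,M_Y) \in \St_2$ — i.e.~a $2$-trivial saturated complicial set — one must check the derived counit $\iota U(Y,M_Y) \to (Y, M_Y)$ is a weak equivalence in $\St_2$; but $U(Y,M_Y)$ forgets only marking in dimension $> 2$, and since $(Y,M_Y)$ is $2$-trivial, $\iota U(Y,M_Y) \to (Y,M_Y)$ lies in the saturation of $\{(\Del^n,\emptyset) \to (\Del^n,\{\Del^n\})\}_{n>2}$, hence is anodyne, hence a trivial cofibration. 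Thus both the unit (on cofibrant objects) and counit (on fibrant objects) triangle identities deliver weak equivalences, which by the $2$-out-of-$3$ property and the general theory of Quillen equivalences completes the proof.

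\textbf{Main obstacle.} The crux is the claim that $(X,T_X) \hookrightarrow U(\tilde X)$ is a bicategorical equivalence, i.e.~that making all equivalences thin does not change the underlying $(\infty,2)$-category. The cleanest route is via the mapping-category description: one must verify that for $x,y \in X$ the marked simplicial sets $\Hom_{\C}(x,y)$ computed in $(X,T_X)$ and in $U(\tilde X)$ coincide — which amounts to checking that the conditions ``$f|_{\Del^{\{(0,i),(1,i),(1,j)\}}}$ is thin'' defining the $n$-simplices of $\Hom$ are unaffected, since those triangles have a degenerate edge on one side and are already thin in $(X,T_X)$. This is essentially bookkeeping, but it is the step where one genuinely uses that $X$ is already fibrant (Theorem~\ref{t:fibrant}) rather than merely a weak $\infty$-bicategory, so that Propositions~\ref{p:mapping-1} and~\ref{p:detects} are available and the homotopy-theoretic content of ``same mapping categories $\Rightarrow$ bicategorical equivalence'' can be invoked through the Quillen equivalence $\fC^{\sca} \dashv \rN^{\sca}$ of Theorem~\ref{coherent nerve eq}.
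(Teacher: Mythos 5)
Your overall strategy---reducing to the derived unit and counit and using the explicit fibrant replacement $\tilde X = \thb_2(X, T_X \cup eq_X)$ of Lemma~\ref{fib repl}---is the same as the paper's, but the proof hinges on a misreading of the functor $U$, and this creates a genuine gap in the counit direction. The functor $U$ forgets \emph{all} marking except in dimension $2$; since $eq_X$ is a set of $1$-simplices, it contributes nothing to the $2$-dimensional marking of $\tilde X$, so $U(\tilde X) = (X,T_X)$ on the nose and the derived unit is literally an isomorphism (this is the one-line observation the paper makes). Your entire ``substantive half''---showing that $(X,T_X) \to (X, T_X \cup eq_X)$ is a bicategorical equivalence---addresses a problem that does not arise; indeed $(X, T_X\cup eq_X)$ is not even a well-formed scaled simplicial set, as $eq_X$ consists of edges rather than triangles.

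The real cost of this misreading appears in the counit. For a fibrant $(Y,M_Y)$, the stratified set $\iota U(Y,M_Y)$ has \emph{no} nondegenerate marked $1$-simplices, whereas $(Y,M_Y)$, being saturated, has all of its equivalences marked in dimension $1$. Hence the counit $\iota U(Y,M_Y) \to (Y,M_Y)$ does \emph{not} lie in the saturation of the $2$-trivializing maps $\{(\Del^n,\emptyset)\to(\Del^n,\{\Del^n\})\}_{n>2}$: it must also adjoin the dimension-$1$ marking, and that is exactly where the content of the theorem lives. The missing argument (which is the paper's) is to factor the counit through the fibrant replacement $\tilde Y$ of $\iota U(Y,M_Y)$, note that $f\colon \iota U(Y,M_Y)\to\tilde Y$ is anodyne by Lemma~\ref{fib repl} (this is the step that marks the equivalences, using the saturation anodynes $\Delta^3_{\eq}\to\thr(\Delta^3)$), and then show that the induced map $g\colon\tilde Y\to(Y,M_Y)$ is an isomorphism of stratified sets. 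The latter reduces to showing that the marked $1$-simplices of $(Y,M_Y)$ are exactly the equivalences of its underlying scaled simplicial set: one inclusion holds because $g$ is a map of stratified sets, and the other because $\thr(\Del^1)\to\thr(J)$ is anodyne (\cite[Observation 43]{VerityWeakComplicialI}), so every marked edge of a fibrant stratified set is an equivalence. Without this step your proof of the counit direction fails; ironically, the kind of argument you spend on the (vacuous) unit direction is roughly what is needed here, but carried out in $\St_2$ rather than in $\Ss$.
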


\begin{proof}
To show that the \emph{derived} unit is a weak equivalence it will suffice to check its components on a set of representatives of weak equivalence classes. By Lemma~\ref{fib repl}, these components can be represented by an isomorphism \((X,T_X) \to U\wtl{X}=(X,T_X)\) whenever \((X,T_X)\) is fibrant. We are hence left with showing that the derived counit is a weak equivalence.
	Given a fibrant stratified set \((Y,M)\), its counit \(\iota(Y,M \cap Y_2) \to (Y,M)\) must factor through the trivial cofibration \(f\colon\iota(Y,M \cap Y_2)	\to \wtl{Y}\) constructed in Lemma~\ref{fib repl}. It will hence suffice to show that the resulting map
	\[ \wtl{Y} \overset{g}{\longrightarrow} (Y,M) \] 
	is a weak equivalence in \(\St_2\). 
	We now observe that since the counit and \(f\) are both isomorphisms at the level of the underlying scaled simplicial sets, the same holds for \(g\). In addition, the stratified sets \(\wtl{Y}\) and \((Y,M)\) have the same marked simplices in dimension \(\geq 2\) by construction (and since \((Y,M)\) is fibrant and in particular \(2\)-trivial). We hence just need to verify that \(\tilde{Y}\) and \((Y,M)\) have the same marked \(1\)-simplices. In other words, we need to show that the marked edges in \((Y,M)\) are exactly the equivalences. Indeed, this holds for any fibrant stratified set: the marked edges are invertible since \((Y,M)\) has the right lifting property against the trivial cofibration \(\thr(\Del^1) \to \thr(J)\)~\cite[Observation~43]{VerityWeakComplicialI}, and the invertible edges are marked since \((Y,M)\) has the right lifting property against the saturation morphisms introduced in Definition \ref{d:complicial-anodyne}.
\end{proof}

The following corollary gives a pointwise characterization of natural equivalences, and we frame it here for future use:

\begin{cor}
	\label{lem: pointwise eq}
	Let \(X,Y\) be scaled simplicial sets and let \(h\colon \Del^1_{\flat} \times X\to Y\) be a pointwise invertible natural transformation. Then \(h_0\colon X \to Y\) is a bicategorical equivalence if and only if \(h_1\colon X \to Y\) is. Moreover, if \(X\) and \(Y\) are \(\infty\)-bicategories, then \(h\) extends to a map \(h\colon J_{\sharp}\times X \to Y\).
\end{cor}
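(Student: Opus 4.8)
The plan is to first prove a slight strengthening of the last assertion — that it is enough to assume \(Y\) (and not also \(X\)) is an \(\infty\)-bicategory — and then to deduce the ``if and only if'' from it. The main difficulty is that a direct construction of the extension, filling \(J_\sharp\times X\) over the simplices of \(X\), seems to require a transfinite argument because \(J=\mathrm{Cosk}_0(\{0,1\})\) is infinite-dimensional; the idea is to sidestep this by transporting the problem through the Quillen equivalence of Theorem~\ref{Quillen eq}, since in \(\St_2\) the relevant inclusion of interval objects is \emph{anodyne} by a result of Verity. Once there, the argument is routine.

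So suppose \(Y\) is an \(\infty\)-bicategory and \(X\) is arbitrary, and let \(\tilde Y:=\thb_2(Y,T_Y\cup eq_Y)\) be the \(2\)-trivial saturated complicial set of Lemma~\ref{fib repl}; it is fibrant in \(\St_2\) and \(U\tilde Y=Y\). First I would observe that the underlying simplicial map of \(h\) defines a map of stratified sets \(H\colon\thr(\Del^1)\times\iota X\to\tilde Y\). This is a short marking check: the nondegenerate marked edges of \(\thr(\Del^1)\times\iota X\) are exactly the \(\Del^1\times\{x\}\), which \(h\) sends to equivalences of \(Y\), hence to marked edges of \(\tilde Y\) (whose marked \(1\)-simplices are precisely the equivalences); since \(\Del^1\) has no nondegenerate \(2\)-simplices, the marked \(2\)-simplices of \(\thr(\Del^1)\times\iota X\) are precisely the triangles thin in \(\Del^1_\flat\times X\), which \(h\) sends to thin triangles of \(Y\subseteq\tilde Y\); and every simplex of dimension \(>2\) is marked in \(\tilde Y\) by \(2\)-triviality. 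Now \(\thr(\Del^1)\to\thr(J)\) is anodyne (\cite[Observation~43]{VerityWeakComplicialI}), and anodyne maps in \(\St_2\) are stable under pushout--product with monomorphisms, so \(\thr(\Del^1)\times\iota X\to\thr(J)\times\iota X\) is anodyne; since \(\tilde Y\) is fibrant, \(H\) extends to a map \(\widetilde H\colon\thr(J)\times\iota X\to\tilde Y\). Applying the right adjoint \(U\colon\St_2\to\Ss\), which preserves products and satisfies \(U\iota X\cong X\), \(U\thr(\Del^1)=\Del^1_\flat\), \(U\thr(J)=J_\sharp\) and \(U\tilde Y=Y\), produces a map \(\bar h:=U\widetilde H\colon J_\sharp\times X\to Y\) whose restriction to \(\Del^1_\flat\times X=U(\thr(\Del^1)\times\iota X)\) is \(UH=h\). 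This proves the strengthened assertion, and in particular the last sentence of the corollary.

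For the ``if and only if'', I would first reduce to the case where \(Y\) is an \(\infty\)-bicategory: choose a scaled anodyne map \(Y\hookrightarrow Y^{f}\) with \(Y^{f}\) fibrant (possible by the small object argument and Theorem~\ref{t:fibrant}); this is a bicategorical equivalence, so it changes neither whether \(h_0\) nor whether \(h_1\) is one, and the composite still sends each \(\Del^1\times\{x\}\) to an equivalence by the very definition of equivalence in a scaled simplicial set. By the strengthened assertion, \(h\) then extends to \(\bar h\colon J_\sharp\times X\to Y\). By Proposition~\ref{interval J} the inclusions \(\{m\}\hookrightarrow J_\sharp\) are bicategorical equivalences, and since the bicategorical model structure is cartesian (\cite[Proposition~3.1.8 and Lemma~4.2.6]{LurieGoodwillie}) and \(X\) is cofibrant, the inclusions \(s_m\colon\{m\}\times X\hookrightarrow J_\sharp\times X\) are bicategorical equivalences for \(m=0,1\). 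As \(h_m=\bar h\circ s_m\), the classes \([h_0]\) and \([h_1]\) in \(\mathrm{Ho}(\Ss)\) each differ from \([\bar h]\) by an isomorphism, so \([h_0]\) is invertible if and only if \([h_1]\) is; since \(X\) is cofibrant and \(Y\) is fibrant, this says exactly that \(h_0\) is a bicategorical equivalence if and only if \(h_1\) is. The only steps needing genuine care are the marking verification above — crucially using that \(\Del^1\) has no nondegenerate \(2\)-simplices and that \(\tilde Y\) is \(2\)-trivial — and the bookkeeping that \(U\) carries the constructed extension back to \(h\) on the nose; the single external input beyond results in the paper is Verity's observation, already invoked in the proof of Theorem~\ref{Quillen eq}.
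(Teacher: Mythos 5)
Your proof is correct, and it reaches the conclusion by the same overall route as the paper — transport the problem to \(\St_2\) through the Quillen equivalence of Theorem~\ref{Quillen eq}, extend along the anodyne inclusion \(\thr(\Del^1)\to\thr(J)\) of \cite[Observation~43]{VerityWeakComplicialI}, return via \(U\), and deduce the ``if and only if'' from the contractibility of \(J_{\sharp}\). The tactical organization, however, is genuinely different and arguably cleaner. The paper first replaces \emph{both} \(X\) and \(Y\) by fibrant objects (it needs \(X\) fibrant in order to form the explicit replacement \(\iota(X)_f\) of Lemma~\ref{fib repl}), and then performs a three-stage lifting \(H, H', H''\) in \(\St_2\), where the intermediate lift \(H'\) — passing from \(\iota(\Del^1_{\flat})\times\iota(X)_f\) to \(\thr(\Del^1)\times\iota(X)_f\) — is where the pointwise-equivalence hypothesis enters, via a separate extension argument. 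You instead absorb the \(H\) and \(H'\) stages into a single direct marking verification: the hypothesis on \(h\) says precisely that \(h\) already defines a map of stratified sets \(\thr(\Del^1)\times\iota X\to\tilde Y\) (your identification of the nondegenerate marked edges of \(\thr(\Del^1)\times\iota X\) as the \(\Del^1\times\{x\}\), and of its marked \(2\)-simplices as the thin triangles of \(\Del^1_{\flat}\times X\), is correct), so only the single extension along \(\thr(\Del^1)\times\iota X\to\thr(J)\times\iota X\) is needed. This buys you a mild strengthening — the extension exists with \(X\) an arbitrary scaled simplicial set, only \(Y\) fibrant — which in turn lets you run the reduction for the ``if and only if'' with a fibrant replacement of \(Y\) alone. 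The bookkeeping that \(U\) preserves products, that \(U\thr(\Del^1)=\Del^1_{\flat}\), \(U\thr(J)=J_{\sharp}\), \(U\tilde Y=Y\), and that \(U\) carries the extension back to \(h\) on the nose, all checks out. No gaps.
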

\begin{proof}
	Let \(j\colon X \to \tilde{X}\) and \(\phi\colon Y \to \tilde{Y}\) be fibrant replacement maps consisting of anodyne morphisms. By solving the lifting problem depicted below, we get a map \(\tilde{h}\colon \Del^1_{\flat} \times \tilde{X} \to \tilde{Y}\) which is again pointwise invertible, since \(j\) and \(\phi\) do not alter the set of \(0\)-simplices being in the weakly saturated class generated by the morphisms \ref{item:scaled_anodyne_i}, \ref{item:scaled_anodyne_ii} and \ref{item:scaled_anodyne_iii}.
	\[\begin{tikzcd}
	\Del^1_{\flat} \times X \ar[r,"h"] \ar[d,"\Del^1\times j"{swap}] & Y\ar[r,"\phi"] & \tilde{Y}\\
	\Del^1_{\flat} \times \tilde{X} \ar[urr,dotted,"\tilde{h}"{swap}]
	\end{tikzcd}.\]
	Replacing \(X\) by \(\wtl{X}\) and \(Y\) by \(\wtl{Y}\) we may therefore assume without loss of generality that \(X\) and \(Y\) are \(\infty\)-bicategories. 
	
	Consider now the following diagram, where \(\iota\colon \Ss\to \St_2\) is the left Quillen equivalence of \S\ref{sec:equivalence} 
	and for an \(\infty\)-bicategory \(W\) we use \(\wtl{W}\) to denote the explicit fibrant replacement of \(\iota(W)\) of Lemma~\ref{fib repl}: 
	\[\begin{tikzcd}
	\iota(\Del^1_{\flat} \times X)\ar[d,"\simeq"] \ar[r,"\iota(h)"]& \iota(Y) \ar[r,"\simeq"]&\wtl{Y}\\
	\iota(\Del^1_{\flat}) \times \iota(X)  \ar[d,"\simeq"]\\
	\iota(\Del^1_{\flat}) \times \wtl{X} \ar[d]
	\ar[uurr, dotted, start anchor=north east, end anchor=south, "H"{description}]\\
	\thr(\Del^1) \times \wtl{X} \ar[d,"\simeq"]
	\ar[uuurr,dotted,start anchor=north east,  end anchor=south,"H'"{description, pos=.47}]\\
	\thr(J) \times \wtl{X}
	\ar[uuuurr,dotted,start anchor=north east,  end anchor=south, "H''"{description}]
	\end{tikzcd}\]
	A lift \(H\) exists since the composite of the top two left-hand side vertical maps is a trivial cofibration
by Lemma~\ref{fib repl}. The lift denoted by \(H'\) exists by our assumption on pointwise invertibility of \(h\). More precisely, note that the stratified inclusion \(\iota(\Del^1_{\flat}) \times \wtl{X} \hookrightarrow \thr(\Del^1) \times \wtl{X} \) is the identity on the underlying simplicial sets and on marked \(n\)-simplices for \(n>1\). Thus, the existence of \(H'\) follows by showing that given a map \(q\colon\iota(\Del^1_{\flat})\times \thr(\Del^1) \to W\) with \(W\) fibrant in \(\St_2\), if \(q|_{\Del^1 \times \{i\}}\) is marked in \(W\) for every \(i=0,1\), then \(q\) extends to a map from \(\thr(\Del^1)\times \thr(\Del^1)\), which is a straightforward verification. A lift \(H''\) exists since the inclusion \(\thr(\Del^1)\to \thr(J)\) is an anodyne map, as shown in \cite[Observation~43]{VerityWeakComplicialI}. Applying \(U\) we then get a map 
\[U(H'')\colon U(\thr(J) \times \wtl{X}) =  J_{\sharp}\times X \to Y = U(\wtl{Y}), \] 
whose restriction to \(\{0\} \times X\) and \(\{1\} \times X\) is given by \(h_0\) to \(h_1\) respectively. Since \(J_{\sharp}\) is a contractible \(\infty\)-bicategory it follows that \(h_0\) is a bicategorical equivalence if and only if \(h_1\) is.
\end{proof}

\section{The homotopy \pdftwo-category and the scaled \pdftwo-nerve}\label{sec:2-nerve}

In this section we construct a Quillen adjunction:
\[
\xymatrixcolsep{1pc}
\vcenter{\hbox{\xymatrix{
			**[l]\Ss \xtwocell[r]{}_{\mathcal{N}_2}^{\ho_2}{'\perp}& **[r] \nCat{2}}}}
\] and prove the right adjoint \(\mathcal{N}_2\) is homotopy fully faithful, \ie fully faithful in the \(\infty\)-categorical sense. We then prove that if \(\C\) is an \(\infty\)-bicategory then the unit map \(\C \to \mathcal{N}_2\ho_2(\C)\) is \(2\)-conservative in the sense that it detects thin triangles. 

Recall that in \cite{StreetOrientals}, Street defines the free \(\omega\)-category on the \(n\)-simplex, called the \(n\)-th \emph{oriental} and denoted by \(\mathcal{O}_{n}\), which extends to a cosimplicial \(\omega\)-category \(\mathcal{O}_{\bullet}\colon \Delta\to \nCat{\omega}\). We may then apply to it the ``intelligent'' truncation functor \(\tau_{\leq 2}\colon\nCat{\omega}\to \nCat{2}\) which sends an \(\omega\)-category \(X\) to the \(2\)-category having the same \(0\)-cells and \(1\)-cells, and whose \(2\)-cells are equivalence classes \([x]\) of \(2\)-cells in \(X\), where \([x]=[y]\) if there is a zig-zag of 3-cells connecting \(x\) and \(y\). The resulting \(2\)-categories then admit an explicit description (see, e.g., \cite[Corollary A.6]{AraMaltsiCondE}): the objects of \(\cO_n^{\leq 2} := \tau_{\leq 2}\cO_n\) are the elements of ordered set \([n]\), and given \(i,j \in [n]\), the category \(\Hom_{\tau_{\leq 2}\cO_n}(i,j)\) is the partially ordered set of subsets \(S \subseteq [n]\) such that \(\min(S)=i\) and \(\max(S)=j\), with composition given by union.

\begin{define}\label{d:scaled-2}
	We define the \emph{scaled \(2\)-nerve} \(\mathcal{N}_2(\D)\) of a \(2\)-category \(\D\) by the formula 
	\[\mathcal{N}_2(\D) := (\Hom_{\nCat{2}}(\cO_\bullet^{\leq 2},\D),T_{\D}) \in \Ss,\]
	where \(T_{\D}\) denotes the triangles corresponding to those maps \(\cO^{\leq 2}_2 \to \D\) which send the unique non-identity morphism in \(\Hom_{\cO^{\leq 2}_2}(0,2)\) to an isomorphism.
\end{define}

The cosimplicial object \(\cO^{\leq 2}_\bullet\) can be extended to the category \(\Del_{\sca}\) (see Remark~\ref{alternative def}) by sending \([2]_t\) to the \(2\)-category obtained from \(\cO^{\leq 2}_2\) by universally inverting its unique non-invertible \(2\)-cell. This results in a \(2\)-category with objects \(0,1,2\) and with the same mapping categories as \(\cO^{\leq 2}_2\) except that \(\Hom_{\cO^{\leq 2}_2}(0,2)\) is the ``walking isomorphism'', that is the trivial groupoid on two objects. By general considerations the functor \(\mathcal{N}_2\) then admits a left adjoint given by the restriction to scaled simplicial sets of the left Kan extension of \(\cO^{\leq 2}_\bullet\colon\Del_{\sca} \to \nCat{2}\) along the Yoneda embedding of \(\Del_{\sca}\).
We denote this left adjoint by \[\ho_2\colon\Ss \to \nCat{2}.\] 
We then have the following:
\begin{prop}
	\label{nerves coincide}
	The functor \(\ho_2\) is naturally isomorphic to the composite
	\[\Ss\overset{\fC^{\sca}}{\longrightarrow}\msCat\overset{\ho_*}{\longrightarrow}\nCat{2}.\] 
	In particular, \(\ho_2\) is a weak equivalences preserving left Quillen functor, being a composite of such (see Proposition~\ref{p:quillen-2}).\end{prop}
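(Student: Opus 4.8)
The plan is to observe that both $\ho_2$ and $\ho_*\circ\fC^{\sca}$ are colimit-preserving functors out of $\Ss$ and to compare them on a dense generator. The functor $\ho_2$ is colimit-preserving as the left adjoint of $\mathcal N_2$ (equivalently, by its defining universal property as a left Kan extension), while $\ho_*\circ\fC^{\sca}$ is colimit-preserving as a composite of left adjoints, since $\fC^{\sca}\dashv\rN^{\sca}$ and $\ho_*\dashv\rN_*$. By Remark~\ref{alternative def} the category $\Ss$ is the reflective localization of $\Psh(\Delta_{\sca})$ whose local objects are the presheaves $X$ with $X([2]_t)\hookrightarrow X([2])$ a monomorphism, and this condition is satisfied by each representable $[n]$ and $[2]_t$; consequently the canonical functor $y\colon\Delta_{\sca}\to\Ss$, which sends $[n]$ to $\Del^n_\flat$ and $[2]_t$ to $\Del^2_\sharp$, is fully faithful and dense, every scaled simplicial set being canonically a colimit of representables. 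It therefore suffices to exhibit a natural isomorphism $(\ho_*\circ\fC^{\sca})\circ y\cong\cO^{\leq 2}_\bullet$ of functors $\Delta_{\sca}\to\nCat 2$, since $\ho_2\circ y\cong\cO^{\leq 2}_\bullet$ holds by the construction of $\ho_2$.

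I would then compute $\ho_*\fC^{\sca}$ on the objects of $\Delta_{\sca}$. For $[n]$: since $\Del^n_\flat$ has no non-degenerate thin triangles, $\fC^{\sca}(\Del^n_\flat)$ is Lurie's rigidification $\fC[\Del^n]$, with $\Map_{\fC[\Del^n]}(i,j)=\rN(P_{i,j})$ the nerve of the poset $P_{i,j}$ of subsets $S\subseteq\{i,i+1,\dots,j\}$ containing $i$ and $j$, with composition $(S,T)\mapsto S\cup T$, carrying the trivial marking. Applying $\ho_*$ --- which applies $L\ovl{\ho}$ to each mapping object, and $L\ovl{\ho}$ of a trivially marked simplicial set is just its $1$-categorical homotopy category --- yields the $2$-category with object set $[n]$, hom-categories $\ho(\rN(P_{i,j}))=P_{i,j}$ and composition given by union of subsets. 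This is exactly the description of $\cO^{\leq 2}_n=\tau_{\leq 2}\cO_n$ recalled before Definition~\ref{d:scaled-2} (see~\cite[Corollary A.6]{AraMaltsiCondE}). For $[2]_t$: by the definition of $\fC^{\sca}$, declaring the triangle $\Del^{\{0,1,2\}}$ of $\Del^2_\sharp$ thin marks precisely the edge $\{0,2\}\to\{0,1,2\}$ of $\Map_{\fC[\Del^2]}(0,2)\cong\Del^1$ and leaves the other mapping objects unchanged; applying $\ho_*$ then localizes the category $\Hom(0,2)\cong[1]$ at that morphism, producing $\cO^{\leq 2}_2$ with its unique non-invertible $2$-cell universally inverted --- the value prescribed for $[2]_t$ in the extension of $\cO^{\leq 2}_\bullet$ to $\Delta_{\sca}$.

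It then remains to check that these identifications are natural in $\Delta_{\sca}$. On the simplicial operators $[m]\to[n]$ this is the compatibility of the cosimplicial objects $\fC[\Del^\bullet]$ and $\cO^{\leq 2}_\bullet$, which coincide under the ``poset of subsets with union'' description above; on the structural morphism $[2]\to[2]_t$ the functor $y$ acts as the identity on underlying simplicial sets and $\ho_*\fC^{\sca}$ sends it to the universal localization map, which is by construction $\cO^{\leq 2}_2\to\cO^{\leq 2}_{[2]_t}$; and on $\sigma^i_t\colon[2]_t\to[1]$ one verifies directly that the two induced functors to $\cO^{\leq 2}_1=[1]$ agree. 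This gives $\ho_2\cong\ho_*\circ\fC^{\sca}$; the remaining assertions are then formal, namely that $\ho_2$ is left Quillen and preserves weak equivalences because $\fC^{\sca}$ (Theorem~\ref{coherent nerve eq}) and $\ho_*$ (Proposition~\ref{p:quillen-2}) are, and that $\mathcal N_2\cong\rN^{\sca}\circ\rN_*$ by uniqueness of right adjoints. The conceptual reduction of the first paragraph is routine; the part that requires genuine care --- though it is bookkeeping rather than a real obstruction --- is the explicit computation of $\fC^{\sca}$ and $\ho_*$ on representables, in particular matching the composition law on both sides and confirming that the scaling on $\Del^2_\sharp$ produces exactly the walking isomorphism after applying $\ho_*$.
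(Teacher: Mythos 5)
Your proposal is correct and follows essentially the same route as the paper: both arguments reduce, via left Kan extension along the dense inclusion \(\Del_{\sca} \hookrightarrow \Ss\), to comparing the two functors on \(\Del_{\sca}\), identify them on \([n]\) through the common ``poset of subsets'' formula for \(\fC^{\sca}(\Del^n_{\flat})\) and \(\cO^{\leq 2}_n\), and treat \([2]_t\) as the universal inversion of the unique non-invertible \(2\)-cell. The only cosmetic difference is that the paper packages naturality in \(\Del\) by noting that both cosimplicial objects extend to all posets with the same explicit formula, which is the same verification you carry out by hand on the simplicial operators.
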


   By uniqueness of adjoints, it follows that the scaled \(2\)-nerve \(\mathcal{N}_2\) identifies with the composition \(\rN^{\sca} \circ \rN_*\).

\begin{proof}
	Both \(\ho_2\) and the composite \(\ho_*\fC^{\sca}\) are left Kan extensions of their restriction to \(\Del_{\sca}\), and so it will suffice to construct a natural isomorphism on their restriction to \(\Del_{\sca}\).
	Let us first consider the further restriction to the subcategory \(\Del \subseteq \Del_{\sca}\), so that we are dealing with the simplicial objects \(\cO^{\leq 2}_{\bullet}\) and \(\ho_*\Del^{\bullet}\). The isomorphism in this case follows from ~\cite[Corollary A.6]{AraMaltsiCondE} and~\cite[Remark 3.7.5]{LurieGoodwillie}.
	
	To extend this natural isomorphism to \(\Del_{\sca}\) we observe that \(\Del_{\sca}\) is obtained from \(\Del\) by freely adding the object \([2]_t\) and factorizing the all degeneracy maps from \([2]\) to \([0],[1]\) through \([2]_t\) in a compatible manner. 
	The desired extension of the natural isomorphism now follows from the fact that in both cases the arrow \([2] \to [2]_t\) is sent to the universal inversion of the unique non-invertible \(2\)-cell (this can be deduced, for instance, from the fact that \(L\ovl{\ho}(\Delta^1_{\sharp})\) is the walking isomorphism), while the \(2\)-categories associated to \([0]\) and \([1]\) have all their \(2\)-cells invertible.
\end{proof}

We now show the scaled \(2\)-nerve is homotopy fully faithful.

\begin{prop}
	\label{nerve is fully faithful}
	The counit \(\epsilon_{\mathcal{C}}\colon \ho_2\mathcal{N}_2 \mathcal{C} \rightarrow\mathcal{C}\) is an equivalence of \(2\)\nbd-cat\-e\-gories.
	More precisely, it is bijective on objects and an equivalence on hom-categories.
\end{prop}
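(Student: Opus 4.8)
The plan is to use the identifications $\ho_2 \cong \ho_* \fC^{\sca}$ and $\mathcal{N}_2 \cong \rN^{\sca}\rN_*$ of Proposition~\ref{nerves coincide} to recognise $\epsilon_{\C}$ as a composite of two counits. Writing the composite adjunction $\ho_*\fC^{\sca} \dashv \rN^{\sca}\rN_*$ in the usual way, the counit $\epsilon_{\C}$ factors as
\[ \ho_*\fC^{\sca}\rN^{\sca}\rN_*\C \xrightarrow{\ \ho_*(\eta)\ } \ho_*\rN_*\C \xrightarrow{\ \zeta_{\C}\ } \C, \]
where $\eta$ is the counit of $\fC^{\sca}\dashv\rN^{\sca}$ at $\rN_*\C$ and $\zeta_{\C}$ is the counit of~\eqref{e:ho-ne} at $\C$. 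First I would check that $\zeta_{\C}$ is an \emph{isomorphism} of $2$-categories: it is the identity on objects, and on mapping categories it is the counit of $L\ovl{\ho}\dashv\rN^+\iota$ at $\Map_{\C}(x,y)$. Unwinding Definition~\ref{d:marked-nerve}, for a category $\D$ one has $\ovl{\ho}\rN^+\iota(\D)\cong(\D,\Iso(\D))$ (the marked core of $\rN\D$ is the nerve of the core groupoid), hence $L\ovl{\ho}\rN^+\iota(\D)\cong\D[\Iso(\D)^{-1}]$; since we are localising at arrows that are already invertible, the universal property identifies this canonically with $\D$, so $\zeta_{\C}$ is an isomorphism.

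It then remains to show that $\ho_*(\eta)$ is a $2$-categorical equivalence. For this I would first note that $\rN_*\C$ is fibrant in $\msCat$: by Lemma~\ref{l:quillen} together with Theorem~\ref{t:marked-categorical}, $\rN^+\iota$ sends every category to a fibrant marked simplicial set, so every mapping object $\Map_{\rN_*\C}(x,y)=\rN^+\iota(\Map_{\C}(x,y))$ is fibrant; thus $\rN_*\C$ is locally fibrant, hence fibrant. Since every object of $\Ss$ is cofibrant and $\fC^{\sca}\dashv\rN^{\sca}$ is a Quillen equivalence (Theorem~\ref{coherent nerve eq}), the counit $\eta\colon\fC^{\sca}\rN^{\sca}\rN_*\C\to\rN_*\C$ is a weak equivalence in $\msCat$, i.e.\ a Dwyer--Kan equivalence. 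Applying $\ho_*$, which preserves weak equivalences by Proposition~\ref{p:quillen-2}, we conclude that $\ho_*(\eta)$ is a weak equivalence in $\nCat{2}$, i.e.\ a $2$-categorical equivalence.

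Finally I would assemble the two halves. The composite $\epsilon_{\C}=\zeta_{\C}\circ\ho_*(\eta)$ is a $2$-categorical equivalence (an isomorphism composed with an equivalence), so by the characterisation of $2$-categorical equivalences recalled just before Proposition~\ref{p:quillen-2} it induces equivalences on all mapping categories. It is moreover bijective on objects: $\zeta_{\C}$ is, and $\ho_*(\eta)$ is the identity on objects because $\fC^{\sca}$, $\rN^{\sca}$ and $\ho_*$ all act as the identity on the relevant object/vertex sets, so $\eta$, and hence $\ho_*(\eta)$, is the identity on objects. This yields the refined statement. The only parts that are not pure transport of weak equivalences along Quillen functors are the identification of $\zeta_{\C}$ with an isomorphism (the localisation computation) and the fibrancy of $\rN_*\C$; I expect neither to pose a serious obstacle, the former being the most delicate bookkeeping step.
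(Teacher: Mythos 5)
Your proposal is correct and follows essentially the same route as the paper: both factor \(\epsilon_{\C}\) through the two counits via Proposition~\ref{nerves coincide}, handle the \(\fC^{\sca}\dashv\rN^{\sca}\) part by fibrancy of \(\rN_*\C\) plus the Quillen equivalence and Proposition~\ref{p:quillen-2}, and identify the \(\ho_*\dashv\rN_*\) counit on hom-categories with the counit of \(L\ovl{\ho}\dashv\rN^+\iota\). The only (harmless) differences are that you compute \(L\ovl{\ho}\rN^+\iota(\D)\cong\D\) explicitly where the paper invokes full faithfulness of the right adjoints \(\iota\) and \(\rN^+\), and you justify fibrancy of \(\rN_*\C\) via local fibrancy rather than via all \(2\)-categories being fibrant in Lack's model structure.
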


\begin{proof}
	By Proposition~\ref{nerves coincide} the adjunction \(\ho_2 \dashv \N_2\) can be identified with the composition of the adjunctions \(\fC^{\sca} \dashv \rN^{\sca}\) and \(\ho_* \dashv \rN\). It follows that the counit map \(\epsilon_{\C}\) factors as a composition
	\begin{equation}\label{e:composition} 
	\ho_*\fC^{\sca}\rN^{\sca}\rN_*(\C) \to \ho_*\rN_*(\C) \to \C 
	\end{equation}
	where the first map is induced by the counit of the adjunction \(\fC^{\sca} \dashv \rN^{\sca}\) and the second is the counit of \(\ho_* \dashv \rN_*\). Since \(\fC^{\sca} \dashv \rN^{\sca}\) is a Quillen equivalence and \(\rN_*(\C)\) is fibrant (since all objects in \(\nCat{2}\) are fibrant) the first map in~\eqref{e:composition} is the image under \(\ho_*\) of a weak equivalence, and is hence a \(2\)-categorical equivalence by Proposition~\ref{p:quillen-2}. 
The second map in~\eqref{e:composition} is also a 2-categorical equivalence: Indeed, it is bijective on objects by definition, and at the level of hom-categories it is given by the counit of \(L\ovl{\ho} \dashv \rN^+\iota\), which is an isomorphism since both \(\iota\colon\Cat \to \RelCat\) and \(\rN^+\colon\RelCat \to \s^+\) are fully-faithful right adjoints.	
\end{proof}

We now address the unit map of \(\ho_2 \dashv \N_2\).
\begin{thm}\label{conservativity thm}
	Let \(\C\) be an \(\infty\)-bicategory. Then the unit map
	\[ \C \to \N_2\ho_2(\C) \]
	is \ndef{\(2\)-conservative} in the sense that a triangle in \(\C\) is thin if and only if its image in \(\N_2\ho_2(\C)\) is thin. 
\end{thm}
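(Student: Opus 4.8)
The plan is to use the identifications $\N_2 \cong \rN^{\sca}\circ\rN_*$ and $\ho_2 \cong \ho_*\circ\fC^{\sca}$ of Proposition~\ref{nerves coincide}, and to reduce the statement to an explicit computation with the scaled coherent nerve. The canonical map $\eta_\C\colon\C\to\N_2\ho_2(\C)$ is a morphism of scaled simplicial sets, so it automatically carries thin triangles to thin triangles; only the reverse implication requires an argument. Note also that $\N_2\ho_2(\C)$ is an \(\infty\)-bicategory: $\ho_2(\C)$ is fibrant in $\nCat{2}$ (every $2$-category is), hence $\rN_*\ho_2(\C)$ is fibrant in $\msCat$ by the right Quillen functor of Proposition~\ref{p:quillen-2}, hence $\rN^{\sca}\rN_*\ho_2(\C)$ is fibrant in $\Ss$ by Theorem~\ref{coherent nerve eq}.

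The first step I would take is to reduce to the case $\C = \rN^{\sca}(\E)$ for a \emph{fibrant} marked-simplicial category $\E$. Choosing a fibrant replacement $\fC^{\sca}(\C)\hookrightarrow\E$ in $\msCat$, the adjoint map $w\colon\C\to\rN^{\sca}(\E)$ is a bicategorical equivalence (it transposes to a weak equivalence, $\C$ is cofibrant and $\E$ is fibrant, and $\fC^{\sca}\dashv\rN^{\sca}$ is a Quillen equivalence), and both its source and target are \(\infty\)-bicategories, so $w$ detects thin triangles by Proposition~\ref{p:detects}. Since $\ho_2$ preserves weak equivalences (Propositions~\ref{nerves coincide} and~\ref{p:quillen-2}) and $\N_2$ is right Quillen, $\N_2\ho_2(w)$ is a bicategorical equivalence between \(\infty\)-bicategories and hence also detects thin triangles. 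The naturality square for $\eta$ relating $\eta_\C$, $\eta_{\rN^{\sca}\E}$, $w$ and $\N_2\ho_2(w)$ then shows that it suffices to prove the theorem when $\C=\rN^{\sca}(\E)$.

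So assume $\C=\rN^{\sca}(\E)$ with $\E$ fibrant. The counit $\fC^{\sca}\rN^{\sca}(\E)\to\E$ is a weak equivalence (it transposes to the identity), so $\phi\colon\ho_2(\C)=\ho_*\fC^{\sca}\rN^{\sca}(\E)\to\ho_*(\E)$ is a $2$-categorical equivalence and $\N_2(\phi)$ is a bicategorical equivalence of \(\infty\)-bicategories, which again detects thin triangles by Proposition~\ref{p:detects}. A short diagram chase with the triangle identities of $\fC^{\sca}\dashv\rN^{\sca}$ and the naturality of the unit of $\ho_*\dashv\rN_*$ shows that $\N_2(\phi)\circ\eta_\C=\rN^{\sca}(u)$, where $u\colon\E\to\rN_*\ho_*(\E)$ is the unit of $\ho_*\dashv\rN_*$; we are thus reduced to showing that $\rN^{\sca}(u)$ detects thin triangles. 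This last point I would verify by hand: a $2$-simplex of $\rN^{\sca}(\E)$ is a functor $f\colon\fC(\Del^2)\to\E$, and by definition it is thin precisely when the image $e_f$ of the unique non-degenerate edge of $\fC(\Del^2)(0,2)$ is a marked edge of $Q:=\Map_\E(f(0),f(2))$; as $\E$ is fibrant, $Q$ is an \(\infty\)-category whose marked edges are exactly its equivalences, so $f$ is thin iff $e_f$ is an equivalence in $Q$. On the other hand the mapping object of $\rN_*\ho_*(\E)$ is $\rN^+\iota\bigl(\Hom_{\ho_*\E}(f(0),f(2))\bigr)$, and unwinding the definitions of $\rN_*$, of $\ho_*=L\ovl{\ho}$, and of the marked nerve one identifies $\Hom_{\ho_*\E}(f(0),f(2))$ with the homotopy category $\ho(Q)$ in such a way that $u$ induces on this mapping object the canonical localization $Q\to\rN\ho(Q)$; hence $\rN^{\sca}(u)(f)$ is thin iff the image of $e_f$ is an isomorphism in $\ho(Q)$. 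The conclusion then follows from the standard fact that an edge of an \(\infty\)-category is an equivalence if and only if it becomes an isomorphism in the homotopy category (\cite{JoyalQCatsAndKan},~\cite{HTT}).

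I expect the main obstacle to be precisely the compatibility assertion in the previous paragraph: that after transporting along the Quillen equivalence $\fC^{\sca}\dashv\rN^{\sca}$ and along $\ho_*\dashv\rN_*$ the unit really does induce the localization functor $Q\to\ho(Q)$ on each mapping \(\infty\)-category. Establishing this amounts to tracking the images of $e_f$ through the units and counits of the three adjunctions $\fC^{\sca}\dashv\rN^{\sca}$, $\ho_*\dashv\rN_*$ and $L\ovl{\ho}\dashv\rN^+\iota$; it is entirely formal but requires care, and everything else in the proof is either a routine reduction via Proposition~\ref{p:detects} or the invocation of a well-known fact about quasi-categories.
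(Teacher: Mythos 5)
Your proposal is correct and follows essentially the same route as the paper: reduce via Proposition~\ref{p:detects} to the case \(\C = \rN^{\sca}(\E)\) with \(\E\) fibrant, use the triangle identities to reduce to showing that \(\rN^{\sca}\) applied to the unit \(\E \to \rN_*\ho_*(\E)\) detects thin triangles, and then check this on mapping objects using the fact that an edge of a quasi-category is an equivalence if and only if it becomes an isomorphism in the homotopy category. The compatibility point you flag as the main obstacle is exactly what the paper isolates as Proposition~\ref{pre-conservativity thm} (where the identification \(\ovl{\ho}(\C^{\natural}) \cong \ho(\C)\) makes the unit the localization map), so your concern is resolved the same way there.
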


\begin{rem}
	Triangles in \(\N_2\ho_2(\C)\) correspond to \(2\)-functors \(\cO^{\leq 2}_2 \to \ho_2(\C)\), and hence to lax commutative triangles 
	\[ \xymatrix{
		& y \ar^{g}[dr] & \\
		x\ar[ur]^{f}\ar[rr]_{h} & \twocell{u}{\alp}{0.4}{0}{0.12} & z \\
	}\]
	in \(\ho_2\C\). By definition, such a triangle is thin in \(\N_2\ho_2(\C)\) if and only if the \(2\)-cell \(\alp\) is invertible. We then interpret Theorem~\ref{conservativity thm} as saying that the homotopy \(2\)-category \emph{detects invertibility of \(2\)-cells}. 
\end{rem}

Before we come to the proof of Theorem~\ref{conservativity thm} let us recall its \((\infty,1)\)-categorical analogue:

\begin{prop}\label{pre-conservativity thm}
	Let \(\C^{\natural} = (\C,\Eq(\C))\) be a fibrant marked simplicial set, that is an \(\infty\)-category \(\C\) marked by its equivalences. Then the unit map \(\C^{\natural} \to \rN^+\iota L\ovl{\ho}(\C^{\natural})\) detects marked edges. 
\end{prop}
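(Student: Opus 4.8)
The plan is to unwind the definitions on both sides and reduce everything to the standard characterization of equivalences in a quasi-category. First I would record that, by Definition~\ref{d:marked-nerve}, the marked simplicial set $\rN^+\ovl{\ho}(\C^{\natural})$ has underlying simplicial set $\rN(\ho(\C))$ and marked edges $\mathrm{Arr}\bigl(\him(\C,\Eq(\C))\bigr)$, so that the unit map is, on underlying simplicial sets, the canonical map $\C \to \rN(\ho(\C))$. Consequently an edge $e$ of $\C$ is sent to a marked edge of $\rN^+\ovl{\ho}(\C^{\natural})$ precisely when its class $[e]$ in $\ho(\C)$ lies in the subcategory $\him(\C,\Eq(\C))$. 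It therefore suffices to show that, for an edge $e$ of $\C$, one has $e \in \Eq(\C)$ if and only if $[e] \in \him(\C,\Eq(\C))$.

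For the forward implication, if $e$ is marked then, since a $1$-simplex has only itself as a $1$-dimensional face, $e$ is a $1$-simplex of the marked core $\kappa(\C,\Eq(\C))$. Hence $[e]$ lies in the image of $\ho\,\kappa(\C,\Eq(\C)) \to \ho(\C)$, and this image is contained in $\him(\C,\Eq(\C))$ by definition.

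For the reverse implication, the key observation is that the wide subcategory $\Iso(\ho(\C))$ of isomorphisms of $\ho(\C)$ contains the class of every equivalence of $\C$ — this is exactly the definition of $\Eq(\C)$ recalled in Theorem~\ref{t:marked-categorical} — and hence contains the image of $\ho\,\kappa(\C,\Eq(\C)) \to \ho(\C)$, whose morphisms are composites of classes of marked edges. Since $\him(\C,\Eq(\C))$ is by construction the \emph{smallest} subcategory of $\ho(\C)$ containing that image, we obtain $\him(\C,\Eq(\C)) \subseteq \Iso(\ho(\C))$. Thus $[e] \in \him(\C,\Eq(\C))$ forces $[e]$ to be an isomorphism in $\ho(\C)$, i.e. $e \in \Eq(\C)$; and since $\C^{\natural}$ is fibrant this is the same as $e$ being marked. (Equivalently, one may first use closure of $\him$ under composition to write $[e]$ as a finite composite of classes of equivalences, each invertible in $\ho(\C)$.)

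I do not expect a genuine obstacle here: the argument is essentially a matter of matching the definitions of the marked core, of $\him$, and of the functors $\ovl{\ho}$ and $\rN^+$, together with the remark that fibrancy of $\C^{\natural}$ (Theorem~\ref{t:marked-categorical}) encodes precisely that $\C$ is a quasi-category whose marked edges are its equivalences. The only point requiring a little care is the containment $\him(\C,\Eq(\C)) \subseteq \Iso(\ho(\C))$, which is what prevents $\him$ from being larger than the class of equivalences; everything else is formal.
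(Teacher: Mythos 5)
Your proof is correct and follows essentially the same route as the paper's: both reduce the statement to the identity $\him(\C,\Eq(\C)) = \Iso(\ho(\C))$, which holds because marked edges are exactly the equivalences, i.e.\ exactly the edges becoming isomorphisms in $\ho(\C)$. You merely unwind the definitions of $\kappa$, $\him$, $\ovl{\ho}$ and $\rN^+$ more explicitly than the paper does; no new idea or gap is involved.
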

\begin{proof}
	Since all marked edges are equivalences we have that 
	\(\him(\C^{\natural}) \subseteq \ho(\C)\) consists of isomorphisms and hence
	\(L\ovl{\ho}(\C^{\natural}) = L(\ho(\C),\him(\C^{\natural})) \cong \ho(\C)\). The desired claim is then equivalent to saying that an arrow in \(\C\) is an equivalence if and only if its corresponding arrow in \(\ho(\C)\) is an isomorphism. Indeed, this is simply the definition of equivalences. 
\end{proof}

\begin{proof}[Proof of Theorem~\ref{conservativity thm}]
	We first argue that in order to prove the claim for \(\C\) we may replace \(\C\) by any equivalent model. To see this, suppose that \(f\colon\C \to \D\) is a bicategorical equivalence of \(\infty\)-bicategories and consider the commutative square
	\[ \xymatrix{
		\C \ar[r]^-{\simeq}\ar[d] & \D \ar[d] \\
		\N_2\ho_2\C \ar[r]^-{\simeq} & \N_2\ho_2\D \ .
	}
	\]
	Since the left Quillen functor \(\ho_2\) preserves weak equivalences by Proposition~\ref{nerves coincide} and the right Quillen functor \(\N_2\) preserves weak equivalences since every \(2\)-category is fibrant we have that the bottom horizontal map is a weak equivalence. By Proposition~\ref{p:detects} it now follows that both horizontal maps detect thin triangles.
	The desired claim for \(\D\) hence implies the same for \(\C\). We may consequently assume without loss of generality that \(\C\) is of the form \(\rN^{\sca}(\E)\) for some fibrant and cofibrant marked-simplicial category \(\E\). 
	Now in this case the counit map
\[\epsilon_{\E} \colon \fC^{\sca}\rN^{\sca}(\E) \to \E \]
	is a weak equivalence of marked-simplicial categories. Consider the commutative diagram
	\[ \xymatrix{
		\rN^{\sca}(\E) \ar[d]\ar@{=}[dr] &  \\
		\rN^{\sca}\fC^{\sca}\rN^{\sca}(\E) \ar[r]\ar[d] & \rN^{\sca}(\E) \ar[d] \\
		\rN^{\sca}\rN_*\ho_*\fC^{\sca}\rN^{\sca}(\E) \ar[r] & \rN^{\sca}\rN_*\ho_*(\E) 
	}\]
	in which the top left vertical map is induced by the unit of \(\fC^{\sca} \dashv \rN^{\sca}\), the two horizontal maps by the counit of \(\fC^{\sca} \dashv \rN^{\sca}\), and the two bottom vertical maps by the unit of \(\ho_* \dashv \rN_*\). The diagonal equality is then a consequence of the triangle identities, and the composed vertical map is the one we wish to show detects thin triangles (see Proposition~\ref{nerves coincide}). Note that the bottom horizontal map is a weak equivalence, since \(\ho_*\) sends the counit equivalence between cofibrant objects to a weak equivalence of (fibrant) 2-categories, which is then preserved by the composite right Quillen functor \(\rN^{\sca}\rN_*\). We may hence show instead that the right vertical map detects thin triangles. But this map is obtained by applying \(\rN^{\sca}\) to the unit map \(\E \to \rN_*\ho_*\E\). By the definition of thin triangles in scaled nerves of marked simplicial categories, it will now suffice to show that for every \(x,y \in \E\) the map \(\Map_{\E}(x,y) \to \Map_{\rN_*\ho_*\E}(x,y) = \rN^+\iota L\ovl{\ho}\Map_{\E}(x,y)\) detects marked edges. Indeed, since \(\E\) is fibrant each \(\Map_{\E}(x,y)\) is fibrant as a marked simplicial set and so the desired result follows from Proposition~\ref{pre-conservativity thm}. 
\end{proof}

\appendix

\section{Recollections on Cisinski's and Olschok's theory}

Let \(\Kk\) be a locally presentable category. Recall that an arrow \(f\colon X \to Y\) in \(\Kk\) is called a \ndef{monomorphism} if \(\Hom_{\Kk}(Z,X) \to \Hom_{\Kk}(Z,Y)\) is injective for every \(Z \in \Kk\). We will fix the following standing assumption:

\begin{assume}\label{a:standing}
	The class of monomorphisms in \(\Kk\) is weakly saturated, and is generated as a weakly saturated class by a \emph{set} \(\mathfrak{M}\) of monomorphisms. In addition, the map \(\emptyset \to X\) from the initial object to any other object is a monomorphism.
\end{assume}

\begin{rem}\label{r:reflective}
	Assumption~\ref{a:standing} holds for every presheaf category and more generally any topos. Furthermore, if Assumption~\ref{a:standing} holds for \(\Kk\) and \(\Aa \subseteq \Kk\) is a reflective subcategory such that the reflector \(r\colon \Kk \to \Aa\) preserves monomorphisms then Assumption~\ref{a:standing} also holds for \(\Aa\). 
\end{rem}

\begin{notate}[Grothendieck]
  We call \emph{trivial fibration} any arrow of~\(\Kk\)
which has the right lifting property with respect to
all monomorphisms of \(\Kk\). A class of arrows \(\W\) is a \emph{localiser} if it
satisfies the following conditions:
\begin{enumerate}
  \item the class \(\W\) has the \(2\)-out-of-\(3\) property;
  \item the class \(\W\) contains all trivial fibrations;
  \item the class of monomorphisms which are also elements of \(\W\)
  is closed under pushout and transfinite compositions.
\end{enumerate}
We shall always assume that a localizer \(\W\) is closed under retracts
(cf.~\cite[\S4]{OlschokLeft}).
\end{notate}

In this appendix, we shall review some basic elements
of the theory developed by Cisinski~\cite{CisinskiPrefaisceaux} (see also~\cite[\S2.4]{CisinskiHigherCats})
and Olschok~\cite{OlschokLeft}
which studies the conditions under which, given a small set \(S\) of monomorphisms
of \(\Kk\), there is a model category structure on~\(\Kk\) where the cofibrations
are the monomorphisms and the class \(\W(S)\) of weak equivalences
is the smallest localiser containing the set \(S\). 
We note that the theory is developed more generally in~\cite{OlschokLeft}, starting from a suitable factorization systems on \(\Kk\). The case we consider here then corresponds to taking the factorization system of monomorphisms and trivial fibrations. 
In general, we tend here to follow more closely the notation and terminology of~\cite{CisinskiPrefaisceaux}.
Another short summary of these results, but closer in form to~\cite{OlschokLeft},
is provided in~\cite[\S2]{GaucherLeftDetermined}.

\begin{notate}
  Let \(\Kk\) be a locally presentable category. For \(f \colon x \to y\) and \(g \colon z \to w\)
  two arrows of \(\Kk\), we shall denote by \(f \hat{\times} g \colon (x \times w) \coprod_{x \times z} (y \times z) \to y \times w\)
  the pushout-product of \(f\) and \(g\).
\end{notate}

\begin{define}\label{d:cylinder}
	Let \(\Kk\) be a locally presentable category with terminal object~\(e\).
	A \emph{cylinder} of \(\Kk\) is a pair \(\mathcal{I} =(I, \partial)\)
	where \(I\) is an object of \(\Kk\) and \(\partial \colon e \coprod e \to I\)
	is a monomorphism in \(\Kk\); for \(\eps=0, 1\) we denote by
	\(\partial^{\eps}\colon e \to I\) the composition of \(\partial\) with each of the two canonical maps \(e \to e \coprod e\) (which are always monos by our assumption~\ref{a:standing}).
	We say that a cylinder \(\I\) is \ndef{cartesian} if the following two conditions hold:
	\begin{enumerate}
		\item the functor \(I \times - \colon \Kk \to \Kk\) admits a
		right adjoint \(\Km(I, -) \colon \Kk \to \Kk\) (which is then called the \emph{path functor});
		\item for every monomorphism \(i\) of \(\Kk\),
		the arrows \(\partial^\eps \hat{\times} i\), \(\eps=0, 1\),
		and \(\partial \hat{\times} i\)
		are monomorphisms.
	\end{enumerate}
\end{define}

\begin{rem}\label{r:easy-cylinder}
If \(\Kk\) is cartesian closed and the collection of monomorphisms is closed under pushout-products then any cylinder in \(\Kk\) is cartesian.
\end{rem}

\begin{rem}
The notion of cylinder in Definition~\ref{d:cylinder} is a special case of the more general notion of functorial cylinder, see \cite[Definition~2.9]{OlschokLeft}, where the functor corresponding to a cylinder \(\I = (I,\partial)\) is given by \(CX := I \times X\) (cf.\ \cite[Exemple~1.3.8]{CisinskiPrefaisceaux}).  
\end{rem}

\begin{notate}\label{nota:anodyne}
	Let \(\Kk\) be a locally presentable category, \(S\) a
	set of monomorphisms of \(\Kk\) and
	\(\mathcal I=(I, \partial)\) a cylinder.
	For any set \(T\) of arrows of \(\Kk\) we denote by \(\mathbf{\Lambda}(T)\)
	the set of arrows of the form \(\partial \hat{\times} f\), for \(f\) in \(T\). We then define  
	\begin{description}
		\item[$(0)$] \(\Lambda_{\mathcal{I}}^0(S) = S \cup 	\{\partial^\eps \hat{\times} i : \eps=0, 1\text{ and }i \in \mathfrak{M}\}
\),
		\item[$(i)$] \(\Lambda_{\mathcal{I}}^{i+1}(S) = \mathbf{\Lambda}(\Lambda_{\mathcal I}^i)\), \(i >0\),
		\item[$(\infty)$] \(\Lambda_{\mathcal{I}}(S) = \bigcup_{i \geq 0} \Lambda_{\mathcal I}^i\)
	\end{description}
	We shall say that \(\Lambda_{\mathcal{I}}(S)\) is a set of
	\emph{generating \((\mathcal{I}, S)\)-anodyne maps},
	or simply \emph{generating anodyne maps}, and the smallest
	weakly saturated class of \(\Kk\) containing \(\Lambda_{\mathcal{I}}(S)\)
	will be denoted by \(\An_{\mathcal{I}}(S)\), or simply \(\An\),
	and called the class of \emph{\((\mathcal{I}, S)\)-anodyne maps},
	or simply \emph{anodyne maps}.
\end{notate}

\begin{thm}[\cite{CisinskiPrefaisceaux}, \cite{OlschokLeft}]\label{thm:cisinski_model}
	Let \(\Kk\) be a locally presentable category satisfying Assumption~\ref{a:standing},
	\(\mathcal I=(I, \partial)\) a cartesian cylinder
	and \(S\) a subset of monomorphisms of \(\Kk\). 
	Then there exists a model category structure on \(\Kk\)
	having the monomorphisms as cofibrations and such that
	the fibrant objects are precisely the objects of \(\Kk\)
	having the right lifting property with respect
	to the set \(\Lambda_{\mathcal{I}}(S)\).
	Moreover, the class of weak equivalences \(\W(S)\) is the smallest
	localiser of \(\Kk\) containing \(\Lambda_{\mathcal{I}}(S)\).
\end{thm}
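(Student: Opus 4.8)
Since this statement is a recollection of results of Cisinski \cite{CisinskiPrefaisceaux} and Olschok \cite{OlschokLeft}, the plan is to indicate the skeleton of their argument rather than to reprove it in full. First I would declare the \emph{anodyne maps} to be the weakly saturated class $\An_{\mathcal{I}}(S)$ generated by $\Lambda_{\mathcal{I}}(S)$, the \emph{naive fibrations} to be the maps with the right lifting property against $\An_{\mathcal{I}}(S)$ (equivalently, against the set $\Lambda_{\mathcal{I}}(S)$; the small object argument applies because $\Kk$ is locally presentable and $\Lambda_{\mathcal{I}}(S)$ is a set), and the \emph{fibrant objects} to be those $X$ for which $X \to e$ is a naive fibration. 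The structural heart of the construction is the observation that the cylinder axioms of Definition~\ref{d:cylinder} force the functor $I \times -$ to preserve monomorphisms and to carry anodyne maps to anodyne maps; from this one extracts that the $I$-homotopy relation on the set of maps into a fibrant object is an equivalence relation compatible with composition on both sides, so that fibrant objects and $I$-homotopy classes of maps organize into a category.

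Next I would single out the candidate class of weak equivalences: those $f\colon X \to Y$ such that $[Y,Z] \to [X,Z]$ is a bijection on $I$-homotopy classes for every fibrant $Z$. One checks that this class satisfies $2$-out-of-$3$, is closed under retracts, contains the trivial fibrations (maps right orthogonal to all monomorphisms), and has the property that monomorphisms in it are stable under pushout and transfinite composition --- hence it is a localiser. It contains $S$ because each generating map of $\Lambda_{\mathcal{I}}(S)$ becomes a homotopy equivalence after fibrant replacement, and it is the \emph{smallest} such localiser: any localiser containing $S$ must contain $\An_{\mathcal{I}}(S)$, and then, via fibrant replacement and a retract argument, must contain every map of the candidate class. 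This identifies $\W(S)$ with the homotopically defined class.

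With cofibrations $=$ monomorphisms, weak equivalences $= \W(S)$, and fibrations defined by the right lifting property against the monomorphisms lying in $\W(S)$ --- which one shows to be exactly the anodyne maps --- the model structure is assembled by the standard recognition principle: the two factorizations come from the small object argument applied to $\mathfrak{M}$ and to $\Lambda_{\mathcal{I}}(S)$, while the nonformal half of the lifting axiom reduces to the statement that trivial fibrations are right orthogonal to all monomorphisms, combined with the usual retract trick. The promised description of fibrations \emph{between fibrant objects} as precisely the maps right orthogonal to $\Lambda_{\mathcal{I}}(S)$ then falls out once one knows that a naive fibration with fibrant source and target is a weak equivalence if and only if it is a trivial fibration.

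I expect this last equivalence to be the main obstacle: it is the analogue of Ken Brown's lemma in this setting, and proving it requires the full cylinder calculus --- one uses that $I \times -$ preserves anodyne maps to upgrade a homotopy section of such a fibration into an honest section after precomposition with an anodyne map, and then concludes by a retract argument. The remaining points (closure of $\An_{\mathcal{I}}(S)$ under pushout-products with monomorphisms, exactness of the $I$-homotopy relation, the $2$-out-of-$3$ property for $\W(S)$) are formal consequences of Assumption~\ref{a:standing} and the cylinder axioms, and for these I would simply refer to \cite{CisinskiPrefaisceaux,OlschokLeft}.
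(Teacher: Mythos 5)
The paper itself offers no proof of this statement: it is quoted from the literature, the entire justification being a pointer to Theorem~1.3.22 of \cite{CisinskiPrefaisceaux} and Theorem~3.16 of \cite{OlschokLeft}. Your outline does reproduce the broad strategy of those proofs (anodyne maps and naive fibrations via the small object argument, the \(I\)-homotopy calculus on maps into fibrant objects, the homotopical description of the weak equivalences and their identification with the smallest localiser, and the key lemma that a naive fibration between fibrant objects is a weak equivalence if and only if it is a trivial fibration). However, one step in your assembly of the model structure is genuinely wrong and would derail the argument if taken literally.

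You assert that the monomorphisms lying in \(\W(S)\) are ``exactly the anodyne maps'', and you use this to obtain the (trivial cofibration, fibration) factorization from the small object argument applied to \(\Lambda_{\mathcal{I}}(S)\). In general one only has the inclusion \(\An_{\mathcal{I}}(S) \subseteq \W(S) \cap \{\text{monomorphisms}\}\), and it is typically strict; the Joyal model structure on simplicial sets (a Cisinski model structure in exactly this sense) is the standard example, where no explicit generating set for the trivial cofibrations is known even though \(\Lambda_{\mathcal{I}}(S)\) is explicit. This is precisely why the theorem only identifies the fibrations \emph{between fibrant objects} with the maps having the right lifting property against \(\Lambda_{\mathcal{I}}(S)\): a general naive fibration need not be a fibration of the model structure. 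Consequently the small object argument on \(\Lambda_{\mathcal{I}}(S)\) produces (anodyne, naive fibration) factorizations, not the required (trivial cofibration, fibration) ones. The actual proofs circumvent this either by an accessibility (Smith-type) argument producing an abstract generating set for the trivial cofibrations, or by a recognition principle that builds the model structure from the cofibrations, the fibrant objects, and the naive fibrations between them (compare \cite[Proposition E.1.10]{JoyalQCatsApplications}, which the paper invokes elsewhere for a related purpose). Note that your final paragraph, which correctly isolates the lemma about naive fibrations between fibrant objects, is in tension with the earlier identification: if the trivial cofibrations were exactly the anodyne maps, that lemma and the theorem's restriction to fibrant objects would be superfluous.
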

\begin{proof}
When \(\Kk\) is a presheaf category this is Cisinski's original result proven in~\cite[Theorem~1.3.22]{CisinskiPrefaisceaux}. In general
the statement is a particular case of~\cite[Theorem~3.16]{OlschokLeft};
see also~\cite[Theorem~2.5 and Proposition~2.6]{GaucherLeftDetermined}.
\end{proof}

\begin{rem}\label{rmk:fibrations-between-fibrants}
	It follows from~\cite[Corollary~3.29]{OlschokLeft} (see also Proposition~1.3.36 of~\cite{CisinskiPrefaisceaux}) that the fibrations between fibrant objects of the model category of Theorem~\ref{thm:cisinski_model} are precisely the maps having the right lifting property with respect to the set~ \(\Lambda_{\mathcal{I}}(S)\). 
\end{rem}

\begin{rem}\label{rmk:weak_equivalences}
	The weak equivalences of the model category structure
	of Theorem~\ref{thm:cisinski_model} are the morphisms \(p \colon X \to Y\) of \(\Kk\)
	such that for every fibrant object \(Z\) we have that the
	induced function \(p^* \colon \Kk(Y, Z) \to \Kk(X, Z)\)
	becomes a bijection when mod out by the usual relation
	of \(\mathcal{I}\)\nbd-ho\-mot\-o\-py given by the interval \(I\times -\);
  see~\cite[Définition~1.3.21]{CisinskiPrefaisceaux} and \cite[Definition~3.12]{OlschokLeft}.
\end{rem}

\begin{prop}\label{prop:left-adj_is_quillen}
	Let \(\Kk\) be a locally presentable category,
	\(\mathcal I\) a cartesian cylinder, \(S\) a subset of monomorphisms of \(\Kk\).
	Consider a model category \(\mathcal{M}\)
	and a functor \(F \colon \Kk \to \mathcal{M}\) preserving small
	colimits and mapping monomorphisms of \(\Kk\) to cofibrations of \(\mathcal{M}\).
	Then \(F\) is a left Quillen functor if and only if
	it maps the maps in \(\Lam_{\I}(S)\) to trivial cofibrations in \(\M\).
\end{prop}

\begin{proof}
	In the case where \(\Kk\) is a presheaf category this is Proposition~2.4.40 of~\cite{CisinskiHigherCats}, but the argument works in general: the existence of a right adjoint \(G\colon \M \to \Kk\) follows from the adjoint functor theorem (this does not require \(\M\) to be locally presentable, see~\cite[Remark 5.5.2.10]{HTT}). By virtue of~\cite[Proposition E.2.14]{JoyalQCatsApplications} it now suffices to show that \(G\) preserves fibrations between fibrant objects. By Remark~\ref{rmk:fibrations-between-fibrants} this is equivalent to \(F\) sending \(\Lam_{\I}(S)\) to trivial cofibrations. 
\end{proof}

\bibliographystyle{amsplain}
\bibliography{biblio}
\end{document}